\DeclareMathAlphabet{\mathpzc}{OT1}{pzc}{m}{it}
\DeclareMathAlphabet{\mathscrbf}{OMS}{mdugm}{b}{n}
\definecolor{myorange}{RGB}{225,127,0}
\definecolor{mygreen}{RGB}{0,225,0}
\definecolor{mypurple}{RGB}{128,0,128}
\definecolor{myred}{RGB}{255,0,0}
\definecolor{myblue}{RGB}{0,0,195}
\definecolor{myyellow}{RGB}{210,210,0}
\definecolor{mycream}{RGB}{200,200,150}
\definecolor{dummy}{RGB}{10,10,10}
\definecolor{mygray}{gray}{0.7}
\definecolor{verdigris}{RGB}{65,165,165}
\definecolor{orchid}{RGB}{143,40,194}
\definecolor{lava}{RGB}{207,16,32}
\definecolor{mydarkblue}{RGB}{10,10,170}
\tikzset{anchorbase/.style={baseline={([yshift=-0.5ex]current bounding box.center)}},
  tinynodes/.style={font=\tiny,text height=0.75ex,text depth=0.15ex},
  arc/.style={line width=1.5, color=black},
  arcd/.style={line width=1.5, color=white},
  arcdo/.style={line width=1.5, dotted, color=mygray},
  doline/.style={line width=1.0, color=orchid, densely dotted},
  daline/.style={line width=1.0, color=lava, densely dashed},
}
\tikzstyle directed=[postaction={decorate,decoration={markings,
    mark=at position #1 with {\arrow[line width=0.35mm]{>}}}}]
\tikzstyle rdirected=[postaction={decorate,decoration={markings,
    mark=at position #1 with {\arrow[line width=0.35mm]{<}}}}]
\tikzstyle ddirected=[postaction={decorate,decoration={markings,
    mark=at position #1 with {\arrow[line width=0.35mm]{<>}}}}]
\tikzstyle{snakeline} = [decorate, decoration={pre length=0.2cm,
\newcommand{\setword}[2]{%
  \phantomsection
  #1\def\@currentlabel{\unexpanded{#1}}\label{#2}%
}
\newcommand{\neatafrac}[2]{#1/#2}
\providecommand{\leftsquigarrow}{%
  \mathrel{\mathpalette\reflect@squig\relax}%
}
\newcommand{\reflect@squig}[2]{%
  \reflectbox{$\m@th#1\rightsquigarrow$}%
}
\newcommand{\K}{\mathbb{K}}
\newcommand{\fieldp}[1][p]{\mathbb{F}_{#1}}
\newcommand{\Z}{\mathbb{Z}}
\newcommand{\N}{\mathbb{Z}_{\geq 0}}
\newcommand{\Np}{\mathbb{Z}_{> 0}}
\newcommand{\R}{\mathbb{R}}
\newcommand{\Sp}{\mathbb{S}^{1}}
\newcommand{\sets}[1]{\mathsf{#1}}
\newcommand{\elem}[1]{\mathpzc{#1}}
\newcommand{\algebras}[1]{\mathfrak{#1}}
\newcommand{\anyalg}{\algebras{A}}
\newcommand{\calg}{\algebras{R}}
\newcommand{\ualg}{\algebras{C}}
\newcommand{\core}{\algebras{C}ore}
\newcommand{\cset}{\sets{X}}
\newcommand{\cmset}{\sets{M}}
\newcommand{\ciset}{\sets{E}}
\newcommand{\coset}{\sets{O}}
\newcommand{\cbasis}{\sets{C}}
\newcommand{\ceps}{\boldsymbol{\varepsilon}}
\newcommand{\cepsmap}{\boldsymbol{\underline{\varepsilon}}}
\newcommand{\csetf}{|\sets{X}|<\infty}
\newcommand{\cbas}[2]{\mathtt{C}_{#1}^{#2}}
\newcommand{\invo}{\star}
\newcommand{\order}{<}
\newcommand{\ord}[1]{<_{#1}}
\newcommand{\Ord}[1]{\leq_{#1}}
\newcommand{\csubset}{\sets{I}}
\newcommand{\cideal}{\mathrm{I}}
\newcommand{\ccideal}{\sets{J}}
\newcommand{\cccideal}{\sets{J}^{\oplus}}
\newcommand{\cpair}[1]{\phi_{#1}}
\newcommand{\Cpair}[1]{\phi^{#1}}
\newcommand{\prmod}{P}
\newcommand{\dmod}{\Delta}
\newcommand{\lmod}{L}
\newcommand{\dbas}[2]{\mathtt{M}_{#1}^{#2}}
\newcommand{\diso}[1]{\Theta^{#1}}
\newcommand{\Diso}{\Theta^{-1}}
\newcommand{\piso}[1]{\Gamma^{#1}}
\newcommand{\dnumber}[1]{\mathrm{d}_{#1}}
\newcommand{\dmatrix}{\elem{D}}
\newcommand{\cnumber}[1]{\mathrm{c}_{#1}}
\newcommand{\cmatrix}{\elem{C}}
\newcommand{\img}{\mathrm{im}}
\newcommand{\acts}{\centerdot}
\newcommand{\isum}{\,\raisebox{.225cm}{\rotatebox{270}{$\uplus$}}\,}
\newcommand{\friends}[1][]{(\dagger_{#1})}
\newcommand{\radi}[1]{\mathrm{rad}(#1)}
\newcommand{\Radi}{\mathrm{Rad}}
\newcommand{\csetz}{\sets{X}_{0}}
\newcommand{\homc}{\mathrm{Hom}_{\calg}}
\newcommand{\homk}{\mathrm{Hom}_{\K}}
\newcommand{\ehomc}{\mathrm{End}_{\calg}}
\newcommand{\extc}{\mathrm{Ext}_{\calg}}
\newcommand{\kdual}{\mathcal{D}}
\newcommand{\typeA}{\mathrm{A}}
\newcommand{\typeAt}{\tilde{\mathrm{A}}}
\newcommand{\marrow}{\tikz[baseline=-2.5,scale=0.25]{\draw[->] (0,0) to (.75,0);}}
\newcommand{\videm}[1]{\mathtt{#1}}
\newcommand{\paths}[2]{\videm{#1}\marrow\videm{#2}}
\newcommand{\pathss}[3]{\videm{#1}\marrow\videm{#2}\marrow\videm{#3}}
\newcommand{\pathsss}[4]{\videm{#1}\marrow\videm{#2}\marrow\videm{#3}\marrow\videm{#4}}
\newcommand{\pathssss}[5]{\videm{#1}\marrow\videm{#2}\marrow\videm{#3}\marrow\videm{#4}\marrow\videm{#5}}
\newcommand{\funnyvee}{\tikz[baseline=-4.5,scale=0.25]{\draw[very thick, verdigris] (0,0) to (.5,-1) to (1,0);}}
\newcommand{\funnywedge}{\tikz[baseline=3.5,scale=0.25]{\draw[very thick, verdigris] (0,0) to (.5,1) to (1,0);}}
\newcommand{\funnyVee}{\tikz[baseline=-4.5,scale=0.25]{\draw[very thick, mygreen] (0,0) to (.5,-1) to (1,0);}}
\newcommand{\funnyWedge}{\tikz[baseline=3.5,scale=0.25]{\draw[very thick, mygreen] (0,0) to (.5,1) to (1,0);}}
\newcommand{\aarc}[1][n]{\algebras{A}rc^{\mathrm{ann}}_{#1}}
\newcommand{\uarc}[1][n]{\algebras{A}rc_{#1}}
\newcommand{\numberarcs}{n}
\newcommand{\down}{{\scriptstyle\vee}}
\newcommand{\up}{{\scriptstyle\wedge}}
\newcommand{\Down}{\funnyvee}
\newcommand{\Up}{\funnywedge}
\newcommand{\Downn}{\funnyVee}
\newcommand{\Upp}{\funnyWedge}
\newcommand{\rotright}{\rho}
\newcommand{\cups}{\tikz[baseline=-5.9,scale=0.4]{\draw[thin, white] (0,-.5) to [out=90, in=180] (.25,0) to [out=0, in=90] (.5,-.5);\draw[thin] (0,0) to [out=270, in=180] (.25,-.5) to [out=0, in=270] (.5,0);}}
\newcommand{\caps}{\tikz[baseline=-5.9,scale=0.4]{\draw[thin, white] (0,0) to [out=270, in=180] (.25,-.5) to [out=0, in=270] (.5,0);\draw[thin] (0,-.5) to [out=90, in=180] (.25,0) to [out=0, in=90] (.5,-.5);}}
\newcommand{\circles}{\mathcal{C}}
\newcommand{\icup}{i\hspace*{.01cm}\cups}
\newcommand{\icap}{i\hspace*{.01cm}\caps}
\newcommand{\ecup}{e\hspace*{.01cm}\cups}
\newcommand{\ecap}{e\hspace*{.01cm}\caps}
\newcommand{\lcup}{l\hspace*{.01cm}\cups}
\newcommand{\lcap}{l\hspace*{.01cm}\caps}
\newcommand{\ucup}{u\hspace*{.01cm}\cups}
\newcommand{\ucap}{u\hspace*{.01cm}\caps}
\newcommand{\setp}[1][p]{\sets{F}_{#1}}
\newcommand{\slt}[1][2]{\mathfrak{sl}_{#1}}
\newcommand{\uslt}[1][0]{\algebras{u}_{#1}(\slt)}
\newcommand{\qpar}{\elem{q}}
\newcommand{\suslt}{\algebras{u}_{\qpar}(\slt)}
\newcommand{\egen}{\elem{E}}
\newcommand{\fgen}{\elem{F}}
\newcommand{\hgen}{\elem{H}}
\newcommand{\hidem}{\elem{1}}
\newcommand{\hidemb}{{\color{black}\elem{1}}}
\theoremstyle{definition}
\newtheorem{theoremm}{Theorem}[section]
\declaretheorem[style=definition,name=Theorem,qed=$\square$,numberlike=theoremm]{theorem}
\declaretheorem[style=definition,name=Theorem,qed=$\blacksquare$,numberlike=theoremm]{theoremqed}
\declaretheorem[style=definition,name=Corollary,qed=$\blacksquare$,numberlike=theoremm]{corollary}
\declaretheorem[style=definition,name=Lemma,qed=$\square$,numberlike=theoremm]{lemma}
\declaretheorem[style=definition,name=Lemma,qed=$\blacksquare$,numberlike=theoremm]{lemmaqed}
\declaretheorem[style=definition,name=Proposition,qed=$\square$,numberlike=theoremm]{proposition}
\declaretheorem[style=definition,name=Proposition,qed=$\blacksquare$,numberlike=theoremm]{propositionqed}
\declaretheorem[style=definition,name=Example,qed=$\blacktriangle$,numberlike=theorem]{example}
\declaretheorem[style=definition,name=Definition,qed=$\blacktriangle$,numberlike=theorem]{definition}
\declaretheorem[style=definition,name=Remark,qed=$\blacktriangle$,numberlike=theorem]{remark}
\declaretheorem[style=definition,name=Further directions,qed=$\blacktriangle$,numberlike=theorem]{furtherdirections}
\numberwithin{equation}{section}
\let\fullref\autoref
\def\makeautorefname#1#2{\expandafter\def\csname#1autorefname\endcsname{#2}}
\begin{document}
\vbadness=10001
\hbadness=10001
\title[Relative cellular algebras]{Relative cellular algebras}
\author[M. Ehrig and D. Tubbenhauer]{Michael Ehrig and Daniel Tubbenhauer}

\address{M.E.: Beijing Institute of Technology, School of Mathematics and Statistics, Liangxiang Campus of Beijing Institute of Technology, Fangshan District, 100488 Beijing, China}
\email{micha.ehrig@outlook.com}

\address{D.T.: Institut f\"ur Mathematik, Universit\"at Z\"urich, Winterthurerstrasse 190, Campus Irchel, Office Y27J32, CH-8057 Z\"urich, Switzerland, \href{www.dtubbenhauer.com}{www.dtubbenhauer.com}}
\email{daniel.tubbenhauer@math.uzh.ch}

\begin{abstract}
In this paper we generalize 
cellular algebras by allowing 
different partial orderings relative to 
fixed idempotents. For these 
relative cellular algebras we classify and construct simple modules, and 
we obtain other characterizations in analogy to cellular algebras.

We also give several examples
of algebras that are relative cellular,
but not cellular. Most prominently, the restricted enveloping 
algebra and the small quantum group for $\slt$, and an annular version of arc algebras.
\end{abstract}
\maketitle
\tableofcontents
\renewcommand{\theequation}{\thesection-\arabic{equation}}
%
%%%%%%%%%%%%%%%%%%%%%%%%%%%
\section{Introduction}\label{section:intro}
%%%%%%%%%%%%%%%%%%%%%%%%%%%

Arguably the two main problems in the representation theory of, say, algebras
are the classification and the construction of simple modules. However, for most algebras 
both problems -- non-linear in nature -- are out of reach.

In pioneering work \cite{gl1} Graham--Lehrer 
introduced the notion of a \textit{cellular algebra}, i.e. 
an algebra equipped with a so-called \textit{cell datum}. 
For example, of key importance for this paper, the cell datum 
comes with a set $\cset$ and a partial order $<$ on it; the latter  
plays an important role since it yields an ``upper triangular way'' 
to construct certain ``standard, easy'' modules, called \textit{cell modules}.
The usefulness of 
the cell datum comes from the fact that it provides a method 
to systematically reduce hard questions about the 
representation theory of such algebras to problems in 
linear algebra. 
In well-behaved cases these linear algebra problems 
can be solved, giving e.g. a parametrization of 
the isomorphism classes of simple modules 
via a subset of $\cset$, and a construction 
of a representative for each class. Thus, cellular algebras 
provide a method to solve the
classification and the construction problem. Other 
upshots of cellular algebras are that they have certain
reciprocity laws -- allowing to recover the multiplicities 
of simple modules in indecomposable projective modules via the multiplicities of simple modules in cell modules -- 
or that they give various ways to study the blocks of the algebra in question.

After Graham--Lehrer's paper appeared a lot of 
interesting algebras have found to be cellular -- among the more popular 
ones are various diagram algebras and Hecke algebras of finite Coxeter type -- 
and proving cellularity of algebras has turned out to be a very useful tool 
in representation theory. In fact, another motivation for studying cellular
algebras is to understand these various examples of the theory by putting them into an
axiomatic framework, revealing hidden connections.
However, by far not all algebras are cellular since e.g. their Cartan matrix 
has to be positive definite.

In this paper we (strictly) generalize the 
notion of a cellular algebra to what we call 
a \textit{relative cellular algebra}, i.e. 
an algebra equipped with a \textit{relative cell datum}. 
For example, the relative cell datum 
comes with a set $\cset$, but now with several partial 
orders $\ord{\ceps}$ on it, one for each idempotent $\ceps$ 
from a preselected set of idempotents. 
Taking only one idempotent $\ceps=1$, namely 
the unit, and only one partial order $\ord{1}=<$, we recover the 
setting of Graham--Lehrer. 

Surprisingly, most of the theory of cellular algebras 
still works in this relative setup. Thus, relative 
cellular algebras generalize the useful framework of cellular 
algebras to a larger class. For example, relative cellular algebras 
can have a positive semidefinite Cartan matrix.

However, the proofs are 
fairly different from the original ones, carefully incorporating the 
various partial orders.
The purpose of our 
paper is to explain this in detail.

Along the way we give examples of algebras 
that are relative cellular, but not cellular in the sense of 
Graham--Lehrer.

%%%%%%%%%%%%%%%%%%%%%%%%%%%
\subsection*{The papers content in a nutshell}\label{subsection:intro-a}
%%%%%%%%%%%%%%%%%%%%%%%%%%%

Our exposition follows closely \cite{gl1}.

\begin{enumerate}[label=(\roman*)]

\setlength\itemsep{.15cm}

\item In \fullref{section:basic-cell} we introduce our generalization 
of cellularity. The crucial new ingredient 
hereby is (\ref{definition:cell-algebra}.c) asking for a 
set $\ciset$ of idempotents and partial orders 
$\ord{\ceps}$ for each 
$\ceps\in\ciset$. Then 
we define cell modules for relative cellular algebras, 
and discuss a basis free version of 
relative cellularity.
Further, in \fullref{subsection:cell-examples}, 
we give some first non-trivial examples of 
relative cellular algebras that are not cellular. 

\item \fullref{section:basic-cell-props} is the main technical heart of 
the paper where we recover relative versions of some of the 
facts that hold for cellular algebras. 
Most prominently, the construction and classification
of simple modules in \fullref{theorem:simple-set}, and 
some reciprocity laws in \fullref{subsec:BGG}.

\item In the fourth 
section, see \fullref{section:rel-cell-res},
we show that the restricted enveloping algebras of $\slt$  
in positive characteristic are relative cellular algebras. 
We recover the entire 
(well-known, of course) representation theory of these 
algebras from the general theory of relative cellular algebras. 
We note that the case of the small 
quantum groups for $\slt$ at roots of unity works mutatis mutandis, 
giving very similar statements.

\item Finally, in \fullref{section:arc-stuff}
we discuss another, and in some sense the motivating, 
example for relative cellularity:
an annular version of arc algebras. 
We think of this section as being interesting in its own right 
since annular arc algebras have potential
connections to e.g. homological knot theory, 
exotic $t$-structures, Springer fibers and 
modular representation theory.

\end{enumerate}

Moreover, we tried to make the paper reasonably 
self-contained, and we tried to 
keep the exposition as easy as possible.
In fact, throughout the text 
we have included several remarks 
about potential further directions.

\begin{remark}\label{remark:sbased}
Note that any finite-dimensional algebra over an algebraically closed field is 
standardly based (``cellular without involution'')
in the sense of \cite{duru}, cf. \cite[Theorem 6.4.1]{cozh}.
However, the ``naive'' standard defining base which one can produce via an algorithm 
can be fairly useless. We see relative cellular algebras as 
being in between cellular and standardly based algebras, keeping 
some of the nice properties of cellular algebras as e.g.
reciprocity laws, a symmetric $\mathrm{Ext}$-quiver and a more 
useful cell structure as we will see in our examples.
\end{remark}

\subsection*{Conventions}\label{subsec:conventions}

We work over any field $\K$ and 
algebras, maps etc. are assumed to 
be over $\K$, $\K$-linear etc., and 
$\otimes=\otimes_{\K}$. Moreover, 
if not stated otherwise we work with 
finite-dimensional, left modules. 
(Even for potentially infinite-dimensional algebras.)
By an idempotent $\ceps$ we always understand 
a non-zero element in some algebra $\anyalg$ with 
$\ceps^{2}=\ceps$.

We use some colors in this paper, none of which are essential, 
and reading the paper in black-and-white is entirely possible.

\subsection*{Acknowledgements}\label{subsec:acknowledgements}

We thank Gwyn Bellamy, Kevin Coulembier, Andrew Mathas,
Catharina Stroppel and Oded Yacobi for discussions and inspirations, 
and the referees for a careful reading of the manuscript and helpful comments. 
M.E. likes to thank Vinoth Nandakumar for making him aware of the annular arc algebra.
We acknowledge a still nameless toilet paper 
roll for visualizing the concept of a cylinder for us.

A part of this paper was written during the 
Junior Hausdorff Trimester Program ``Symplectic Geometry and 
Representation Theory'' of the Hausdorff Research Institute 
for mathematics (HIM) in Bonn, 
and the hospitality of the HIM during this 
period is gratefully acknowledged.
M.E. was partially supported by the Australian Research Council Grant DP150103431.
\section{Relative cellularity}\label{section:basic-cell}

%%%%%%%%%%%%%%%%%%%%%%%%%%%
\subsection{A generalization of cellularity}\label{subsection:cell-def}
%%%%%%%%%%%%%%%%%%%%%%%%%%%

Following \cite{gl1} we define:

\begin{definition}\label{definition:cell-algebra}
A \textit{relative cellular algebra} is an associative algebra $\calg$ together with a \textit{(relative) cell datum}, i.e.
\begin{gather}\label{eq:rel-cell-datum}
(\cset,\cmset,\cbasis,
{}^{\invo},
\ciset,\coset,\cepsmap)
\end{gather}
such that the following hold.
\smallskip
\begin{enumerate}[label=(\alph*)]

\setlength\itemsep{.15cm}

\item We have a set $\cset$, 
and $\cmset = \{\cmset(\lambda)\mid\lambda\in\cset\}$ is
a collection of finite, non-empty sets such that 
\begin{gather}
\cbasis\colon 
{\textstyle\coprod_{\lambda\in\cset}}\,
\cmset(\lambda)\times\cmset(\lambda)\rightarrow\calg
\end{gather}
is an injective map with image forming a basis of $\calg$.  
For $S,T\in\cmset(\lambda)$ we write 
$\cbasis(S,T)=\cbas{S,T}{\lambda}$ from now on.

\item We have an anti-involution 
${}^{\invo}\colon\calg\to\calg$ such that 
$(\cbas{S,T}{\lambda})^\invo=\cbas{T,S}{\lambda}$.

\item We have a set $\ciset$ of pairwise orthogonal 
idempotents, all fixed by 
${}^{\invo}$, i.e. $\ceps^{\invo}=\ceps$ for all $\ceps\in\ciset$.
Further, $\coset=\{\ord{\ceps}\mid\ceps\in\ciset\}$ 
is a set
of partial orders $\ord{\ceps}$ on $\cset$, and  
$\cepsmap$ is a map 
$\cepsmap\colon\coprod_{\lambda\in\cset}
\cmset(\lambda)\rightarrow\ciset$ sending $S$ 
to $\cepsmap(S)=\ceps_{S}$ such that
\\
\noindent\begin{tabularx}{0.9\textwidth}{XX}
\begin{equation}\hspace{-7.5cm}\label{eq:idem-props-1}
\ceps\calg\ceps\,\cbas{S,T}{\lambda} \in \calg(\Ord{\ceps}\!\lambda),
\phantom{\begin{cases}
a,
\\
b,
\end{cases}}\hspace*{-1cm}
\end{equation} &
\begin{equation}\hspace{-6.5cm}\label{eq:idem-props-2}
\ceps\cbas{S,T}{\lambda}=
\begin{cases}
\cbas{S,T}{\lambda}, &\text{if }\ceps_{S}=\ceps,
\\
0, &\text{if }\ceps_{S}\neq\ceps,
\end{cases}
\end{equation}
\end{tabularx}\\
for all $\lambda\in\cset$, $S,T\in\cmset(\lambda)$ and $\ceps \in \ciset$.
Hereby, for $\ceps\in\ciset$, we let
\begin{gather}\label{eq:notation-new}
\calg(\Ord{\ceps}\!\lambda)=\K\{
\cbas{S,T}{\lambda}\mid\mu\in\cset,\mu\Ord{\ceps}\!\lambda,S,T\in\cmset(\mu)
\},
\end{gather}
a notation which we also use for $\ord{\ceps}$ rather than for $\Ord{\ceps}$, having the evident meaning.

\item For $\lambda\in\cset$, 
$S,T\in\cmset(\lambda)$ and $a\in\calg$ we have
\begin{gather}\label{eq:mult-left}
a \cbas{S,T}{\lambda} 
\in
{\textstyle\sum_{S^{\prime} \in \cmset(\lambda)}}\, 
r_{a}(S^{\prime},S)\,\cbas{S^{\prime},T}{\lambda} 
+ 
\calg(\ord{\ceps_{T}}\!\lambda)
\ceps_{T},
\end{gather}
with scalars $r_{a}(S^{\prime},S)\in\K$ only depending on 
$a,S,S^{\prime}$.
\end{enumerate}
\smallskip

We call the set 
$\{\cbas{S,T}{\lambda}\mid\lambda\in\cset,S,T\in \cmset(\lambda)\}$ 
a \textit{relative cellular basis}.
\end{definition}

The first examples of relative cellular 
algebras are cellular algebras $\ualg$ in the sense 
of \cite[Definition 1.1]{gl1}.
As we will see in (\ref{proposition:cell-relative-cell}.b) below, 
the relative cell datum in this case is
$(\cset,\cmset,\cbasis,{}^{\invo},\{1\},\{\ord{1}\},\cepsmap)$, 
with $\cepsmap$ mapping everything to $1$.

As in the cellular setup, a relative 
cell datum is not unique. 
Nevertheless, we say that an algebra 
$\calg$ \textit{is relative cellular} if 
there exist some relative cell datum. 
(Similarly, if we have already fixed part 
of the relative cell datum as e.g. 
the anti-involution ${}^{\invo}$.)

\begin{remark}\label{remark:fin-dim}
The basic 
properties of relative cellular algebras do not 
require $\csetf$; an extra assumption equivalent to $\calg$ being finite-dimensional, cf. 
(\ref{definition:cell-algebra}.a).
However, numerous results later on, for example 
\fullref{theorem:simple-set}, will make this additional assumption.
\end{remark}

The following is our version of an observation from \cite[Remark 2.4]{GoGr-cellularity-jones-basic}.

\begin{lemma}\label{lemma:involution-char-2}
Let $\mathrm{char}(\K)\neq 2$. 
If $\calg$ has a datum as in \fullref{definition:cell-algebra} 
except that
\begin{gather}
(\cbas{S,T}{\lambda})^\invo=\cbas{T,S}{\lambda}+\calg(\ord{\ceps_{T}}\!\lambda)
\end{gather}
holds instead of (\ref{definition:cell-algebra}.b), then $\calg$ is relative cellular.
\end{lemma}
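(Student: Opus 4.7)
The plan is to produce a new basis by symmetrization, exploiting $\mathrm{char}(\K)\neq 2$. Define
\begin{gather*}
\cbbas{S,T}{\lambda} := \tfrac{1}{2}\,\ceps_{S}\bigl(\cbas{S,T}{\lambda} + (\cbas{T,S}{\lambda})^{\invo}\bigr)\ceps_{T}.
\end{gather*}
Sandwiching by $\ceps_{S}$ and $\ceps_{T}$ places $\cbbas{S,T}{\lambda}\in\ceps_{S}\calg\ceps_{T}$, so (\ref{eq:idem-props-2}) for the new family follows from the pairwise orthogonality of $\ciset$. Taking the anti-involution and using $\ceps^{\invo}=\ceps$ immediately yields $(\cbbas{S,T}{\lambda})^{\invo} = \cbbas{T,S}{\lambda}$, so (\ref{definition:cell-algebra}.b) holds on the nose.

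To verify that $\{\cbbas{S,T}{\lambda}\}$ is still a basis, I use the hypothesis $(\cbas{T,S}{\lambda})^{\invo}-\cbas{S,T}{\lambda}\in\calg(\ord{\ceps_{S}}\lambda)$ together with the $\invo$-dual of (\ref{definition:cell-algebra}.d) (obtained by applying the anti-involution and reading off coefficients), which yields $\cbas{S,T}{\lambda}\ceps_{T}-\cbas{S,T}{\lambda}\in\sum_{\ceps\in\ciset}\calg(\ord{\ceps}\lambda)$. A direct expansion then gives
\begin{gather*}
\cbbas{S,T}{\lambda} \in \cbas{S,T}{\lambda} + {\textstyle\sum_{\ceps\in\ciset}}\,\calg(\ord{\ceps}\lambda),
\end{gather*}
exhibiting the change of basis as triangular in the $\lambda$-level. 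Linear independence of $\{\cbbas{S,T}{\lambda}\}$ follows by a leading-term argument: given any finite support $\Lambda$ of a hypothetical vanishing combination, one picks $\lambda^{*}\in\Lambda$ maximal with respect to the combined relation induced by all of the $\ord{\ceps}$, whereupon the $\cbas{\cdot,\cdot}{\lambda^{*}}$-coefficients of the sum (read off in the original basis) reduce to the corresponding $c_{S,T}^{\lambda^{*}}$, forcing them to vanish and contradicting $\lambda^{*}\in\Lambda$. Properties (\ref{definition:cell-algebra}.a), (\ref{definition:cell-algebra}.c.1), and (\ref{definition:cell-algebra}.d) then transfer from $\{\cbas{S,T}{\lambda}\}$ to $\{\cbbas{S,T}{\lambda}\}$ because triangularity ensures that the filtered pieces $\calg(\Ord{\ceps}\lambda)$ and $\calg(\ord{\ceps}\lambda)$ coincide in both bases.

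The main obstacle is precisely this maximality step: the various $\ord{\ceps}$ need not be mutually compatible, so their union may a priori admit cycles on a finite subset of $\cset$. I would resolve this by noting that any such cycle, when propagated through the triangular correction terms, would produce a nontrivial $\K$-linear relation among the original basis $\{\cbas{S,T}{\lambda}\}$, contradicting (\ref{definition:cell-algebra}.a); hence the relevant finite supports always admit maximal elements, and the triangularity argument goes through.
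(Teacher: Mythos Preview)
Your approach is the same symmetrization idea as the paper's. The paper writes the new elements as $\cbas{S,T}{\lambda}+\tfrac{1}{2}f(\lambda,S,T)$ with $f(\lambda,S,T)=(\cbas{S,T}{\lambda})^{\invo}-\cbas{T,S}{\lambda}\in\calg(\ord{\ceps_T}\lambda)$ and does not sandwich by idempotents; in particular each correction sits in a \emph{single} $\calg(\ord{\ceps_T}\lambda)$ rather than the mixture $\sum_{\ceps}\calg(\ord{\ceps}\lambda)$ you are forced into.

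Your argument has two gaps. First, invoking ``the $\invo$-dual of (\ref{definition:cell-algebra}.d)'' to control $\cbas{S,T}{\lambda}\ceps_{T}$ is circular: passing from left to right multiplication via ${}^{\invo}$ (as in \fullref{lemma:cell-algebra-star-1}) uses the strong form of (\ref{definition:cell-algebra}.b), which is precisely what you are trying to establish. With only the weak hypothesis you have no direct handle on right multiplication by $\ceps_{T}$, so the claimed containment $\cbbas{S,T}{\lambda}\in\cbas{S,T}{\lambda}+\sum_{\ceps}\calg(\ord{\ceps}\lambda)$ is not actually justified; unwinding the error terms one sees they can descend through \emph{iterated} orders, not just one step.

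Second, and more seriously, your last paragraph does not close the basis argument. The union of the $\ord{\ceps}$ \emph{does} contain cycles on finite subsets of $\cset$ --- already in \fullref{example:rel-cell-algebras-3} one has $\videm{3}\ord{e_{\videm{1}}}\videm{1}$ together with $\videm{1}\ord{\ceps}\videm{3}$ --- yet the $\cbas{S,T}{\lambda}$ are a perfectly good basis there. A cycle among the orders is a combinatorial feature of $(\cset,\coset)$ and carries no information about linear relations among the $\cbas{S,T}{\lambda}$; ``propagating'' such a cycle through the correction terms yields an identity between elements of $\calg$, not a contradiction to (\ref{definition:cell-algebra}.a). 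Hence your maximality step, and with it the linear-independence argument, remains open.
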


\begin{proof}
The proof is the same as in \cite[Remark 2.4]{GoGr-cellularity-jones-basic}: 
The condition $(\cbas{S,T}{\lambda})^\invo=\cbas{T,S}{\lambda}+\calg(\ord{\ceps_{T}}\!\lambda)$ 
implies that, for all $\lambda\in\cset$ and 
$S,T\in\cmset(\lambda)$, we can find a unique $f(\lambda,S,T)\in\calg(\ord{\ceps_{T}}\!\lambda)$ such that 
$(\cbas{S,T}{\lambda})^\invo=\cbas{T,S}{\lambda}+f(\lambda,S,T)$. Then the set
$\{\cbas{S,T}{\lambda}+\tfrac{1}{2}f(\lambda,S,T)\mid\lambda\in\cset,S,T\in\cmset(\lambda)\}$ 
can be taken as a relative cellular basis.
\end{proof}

\begin{remark}\label{remark:involution-char-2}
Note that \fullref{lemma:involution-char-2} implies that 
imposing $(\cbas{S,T}{\lambda})^\invo=\cbas{T,S}{\lambda}+\calg(\ord{\ceps_{T}}\!\lambda)$ 
is equivalent to imposing $(\cbas{S,T}{\lambda})^\invo=\cbas{T,S}{\lambda}$ unless $\mathrm{char}(\K)=2$. 
However, in contrast to the case of cellular algebras where $\cepsmap$ is constant, 
$(\cbas{S,T}{\lambda})^\invo=\cbas{T,S}{\lambda}+\calg(\ord{\ceps_{T}}\!\lambda)$ is 
not symmetric (this comes from our choice to 
work with left modules) and some of our arguments in \fullref{section:basic-cell-props} fail 
if we would only require $(\cbas{S,T}{\lambda})^\invo=\cbas{T,S}{\lambda}+\calg(\ord{\ceps_{T}}\!\lambda)$ 
instead of $(\cbas{S,T}{\lambda})^\invo=\cbas{T,S}{\lambda}$.
\end{remark}

\begin{furtherdirections}\label{remark:ground-field}
We could also work more generally over rings 
instead of the field $\K$, as e.g. Graham--Lehrer \cite{gl1}. 
This could be useful to extend the notion of relative cellularity 
to some affine setup as in \cite{kx3}. 
However, most of the results in \fullref{section:basic-cell-props} use 
the fact that we work over a 
field. So, for convenience, we decided not to do so.
\end{furtherdirections}

If not stated otherwise, fix a relative cellular 
algebra $\calg$ in the following. Moreover, let us introduce a notation that will appear throughout 
the paper: for a subset $\csubset\subset\cset$ 
we fix the linear subspace
\begin{gather}\label{eq:ideal-algebra-notation}
\calg(\csubset)
=
\K\{
\cbas{S,T}{\lambda}
\mid
\lambda \in \csubset,S,T\in\cmset(\lambda)
\}
\subset
\calg.
\end{gather}
Often these subspaces will be defined with 
respect to $\ord{\ceps}$, for this 
we abuse notation and, for example, $\calg(\ord{\ceps}\!\lambda)$ 
can be understood as 
$\calg(\{\mu\in\cset\mid\mu\ord{\ceps}\!\lambda\})$ 
and similar for analogous expressions.
Further, by an \textit{ideal $\cideal$ in the 
poset $(\cset,\ord{\ceps})$}, 
\textit{$\ord{\ceps}$-ideal} for short, we understand 
a subset of $\emptyset\neq\cideal\subset\cset$ such that $\cideal$ 
is a directed, lower set in the order-theoretical sense.  
(For example, $\ord{\ceps}\!\lambda=\{\mu\in\cset\mid\mu\ord{\ceps}\!\lambda\}$ is 
an $\ord{\ceps}$-ideal.)

%%%%%%%%%%%%%%%%%%%%%%%%%%%
\subsection{First properties}\label{subsection:cell-lemmas}
%%%%%%%%%%%%%%%%%%%%%%%%%%%

The (very basic) statements below will 
be crucial for the definition of cell modules.

\begin{lemma}\label{lemma:cell-algebra-star-1}
The following properties hold.
\smallskip
\begin{enumerate}[label=(\alph*)]

\setlength\itemsep{.15cm}

\item For 
$\lambda\in\cset$, $S,T\in\cmset(\lambda)$, and 
$\ceps\in\ciset$ we have
\begin{gather}
\cbas{S,T}{\lambda}\,\ceps\calg\ceps\in\calg(\Ord{\ceps}\!\lambda),
\quad\quad 
\cbas{S,T}{\lambda}\ceps =
\begin{cases}
\cbas{S,T}{\lambda}, &\text{if }\ceps_{T}=\ceps,
\\
0, &\text{if }\ceps_{T}\neq\ceps.
\end{cases}
\end{gather}

\item If 
$\ceps\in\ciset$ and $\csubset\subset\cset$, then
$\ceps\calg(\csubset)\subset
\calg(\csubset)\supset\calg(\csubset)\ceps$.

\item For an $\ord{\ceps}$-ideal $\cideal_{\ceps}$
we have that $\calg(\cideal_{\ceps})\ceps$ is 
a left and $\ceps\calg(\cideal_{\ceps})$ 
is a right ideal in $\calg$.

\item For $\lambda\in\cset$, $S,T\in\cmset(\lambda)$, 
and $a\in\calg$ we have
\begin{gather}\label{eq:mult-right}
\cbas{S,T}{\lambda} a 
\in
{\textstyle\sum_{T^{\prime}\in\cmset(\lambda)}}
r_{a^{\invo}}(T^{\prime},T)\,\cbas{S,T^{\prime}}{\lambda} 
+ 
\ceps_{S}\calg(\ord{\ceps_{S}}\!\lambda),
\end{gather}
with the same scalars 
$r_{a^{\invo}}(T^{\prime},T)$ as in (\ref{definition:cell-algebra}.d).\qedhere
\end{enumerate}
\end{lemma}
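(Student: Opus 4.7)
The plan is to exploit the anti-involution ${}^\invo$ to turn the left-sided properties in \fullref{definition:cell-algebra}(c,d) into the right-sided analogues stated here, and then deduce the ideal statements as direct consequences. I would handle the four parts in the order (a), (d), (b), (c).

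First, for (a) I would apply ${}^\invo$ to \eqref{eq:idem-props-1} and \eqref{eq:idem-props-2}. Since $(\cbas{S,T}{\lambda})^{\invo}=\cbas{T,S}{\lambda}$ and $\ceps^{\invo}=\ceps$, the anti-involution merely permutes the basis elements that span $\calg(\Ord{\ceps}\!\lambda)$, hence preserves this subspace. After swapping $S$ and $T$, both claims in (a) drop out immediately.

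The same trick handles (d): substitute $a\mapsto a^{\invo}$ and $(S,T)\mapsto(T,S)$ in \eqref{eq:mult-left}, then apply ${}^\invo$ to both sides. The error term transforms as $\bigl(\calg(\ord{\ceps_{S}}\!\lambda)\,\ceps_{S}\bigr)^{\invo}=\ceps_{S}\,\calg(\ord{\ceps_{S}}\!\lambda)$, and the scalars appearing are exactly $r_{a^{\invo}}(T^{\prime},T)$ as specified. Part (b) is then essentially one line: by \eqref{eq:idem-props-2} each generator $\ceps\,\cbas{S,T}{\lambda}$ of $\ceps\,\calg(\csubset)$ is either $\cbas{S,T}{\lambda}$ or $0$, so lies in $\calg(\csubset)$; the right-multiplication inclusion uses (a) identically.

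For (c), the key computation is that $\calg(\cideal_{\ceps})\,\ceps$ is closed under left multiplication by arbitrary $a\in\calg$. A generator $\cbas{S,T}{\lambda}\,\ceps$ with $\lambda\in\cideal_{\ceps}$ is non-zero only when $\ceps_{T}=\ceps$, in which case it equals $\cbas{S,T}{\lambda}$. Expanding $a\,\cbas{S,T}{\lambda}$ via \eqref{eq:mult-left} gives a combination of the $\cbas{S^{\prime},T}{\lambda}$ (which again satisfy $\ceps_{T}=\ceps$, hence lie in $\calg(\cideal_{\ceps})\,\ceps$) plus an element of $\calg(\ord{\ceps}\!\lambda)\,\ceps\subset\calg(\cideal_{\ceps})\,\ceps$; the inclusion is exactly where I use that $\cideal_{\ceps}$ is a lower set for $\ord{\ceps}$. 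The right-ideal statement for $\ceps\,\calg(\cideal_{\ceps})$ then follows by applying ${}^\invo$ to the left-ideal statement (or equivalently by repeating the argument with (d) in place of \eqref{eq:mult-left}).

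The only real obstacle is bookkeeping: one must keep the correct order ($\ord{\ceps_{T}}$ versus $\ord{\ceps_{S}}$) attached to the correct idempotent in each error term, and verify at each stage that the relevant subspaces are stable under ${}^\invo$. Once the ${}^\invo$-symmetry is set up cleanly, no further ideas are required.
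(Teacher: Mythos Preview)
Your proposal is correct and essentially identical to the paper's proof: both derive (a), (b), (d) by applying ${}^{\invo}$ to the corresponding left-sided conditions in \fullref{definition:cell-algebra}, and both obtain (c) by expanding $a\,\cbas{S,T}{\lambda}\ceps$ via \eqref{eq:mult-left} and using $\ceps_{T}\ceps\in\{0,\ceps\}$ to control the error term. The only differences are cosmetic (you reorder the parts and pre-reduce to $\ceps_{T}=\ceps$ in (c), whereas the paper handles both cases of $\ceps_{T}\ceps$ at the end).
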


\begin{proof}
\textit{(\ref{lemma:cell-algebra-star-1}.a).} This follows 
by applying ${}^{\invo}$ to (\ref{definition:cell-algebra}.c).
\medskip

\noindent \textit{(\ref{lemma:cell-algebra-star-1}.b).} 
The first inclusion follows from (\ref{definition:cell-algebra}.c) and the 
second by applying ${}^{\invo}$.
\medskip

\noindent \textit{(\ref{lemma:cell-algebra-star-1}.c).} 
For the left-ideal-statement 
let $\cbas{S,T}{\lambda}\in\calg(\cideal_{\ceps})\ceps$. 
Then -- by (\ref{definition:cell-algebra}.d) -- we have
\begin{gather}
a\cbas{S,T}{\lambda}\ceps
\in
{\textstyle\sum_{S^{\prime}\in\cmset(\lambda)}}\,
r_{a}(S^{\prime},S)\,\cbas{S^{\prime},T}{\lambda}\ceps 
+
\calg(\ord{\ceps_{T}}\!\lambda)\ceps_{T}\ceps.
\end{gather}
But either $\ceps_{T}\ceps=0$ or they 
agree and the last term is inside the linear subspace. 
The right-ideal-statement is again obtained using ${}^{\invo}$.
\medskip

\noindent \textit{(\ref{lemma:cell-algebra-star-1}.d).} 
By applying ${}^{\invo}$ directly to (\ref{definition:cell-algebra}.d).
\end{proof}

Combining (\ref{definition:cell-algebra}.c)
and (\ref{lemma:cell-algebra-star-1}.a) we obtain:

\begin{corollary}\label{lemma:epsilon-structure}
Let $a\in\calg$ such 
that $\ceps a=a= a\ceps$ 
for $\ceps\in\ciset$. Then 
\begin{gather}
a\in\K\{
\cbas{S,T}{\lambda}
\mid
\lambda\in\cset,S,T\in\cmset(\lambda),\ceps_{S}=\ceps_{T}=\ceps
\}.
\end{gather}
The same holds for $a^{\invo}$ as well.
\end{corollary}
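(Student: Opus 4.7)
The plan is to write $a$ in the relative cellular basis as
\[
a \;=\; \sum_{\lambda\in\cset}\;\sum_{S,T\in\cmset(\lambda)} c^{\lambda}_{S,T}\,\cbas{S,T}{\lambda},
\]
with only finitely many scalars $c^{\lambda}_{S,T}\in\K$ non-zero (this uses (\ref{definition:cell-algebra}.a)), and then to pin down the support of the expansion by exploiting the two idempotency hypotheses separately.

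First I would use the left hypothesis $\ceps a = a$. Applying $\ceps$ termwise and invoking (\ref{eq:idem-props-2}) collapses the sum to those basis vectors with $\ceps_S=\ceps$, while the others vanish. Comparing with the original expansion via the linear independence of the relative cellular basis forces $c^{\lambda}_{S,T}=0$ whenever $\ceps_S\neq\ceps$. Then I would run the mirrored argument with the right hypothesis $a\ceps=a$, now relying on the right-sided analogue of (\ref{eq:idem-props-2}) which was already packaged in (\ref{lemma:cell-algebra-star-1}.a); this kills all coefficients with $\ceps_T\neq\ceps$. The two conditions combined yield precisely the claimed membership.

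For the last sentence about $a^{\invo}$, the key observation is the fixing condition $\ceps^{\invo}=\ceps$ from (\ref{definition:cell-algebra}.c). Applying ${}^{\invo}$ to the hypotheses gives $\ceps a^{\invo}=(a\ceps)^{\invo}=a^{\invo}$ and symmetrically $a^{\invo}\ceps=a^{\invo}$, so $a^{\invo}$ satisfies the same assumptions and the first part of the proof applies verbatim; alternatively, one can note that (\ref{definition:cell-algebra}.b) sends the allowed basis set for $a$ bijectively to the allowed basis set for $a^{\invo}$ (swapping $S\leftrightarrow T$, which preserves the condition $\ceps_S=\ceps_T=\ceps$).

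I do not foresee any genuine obstacle: once the expansion is written down, both steps are just a linear-independence comparison of coefficients. The only point requiring care is that the right-sided counterpart of (\ref{eq:idem-props-2}), which is not built into (\ref{definition:cell-algebra}) directly but derived from the anti-involution in (\ref{lemma:cell-algebra-star-1}.a), is available at this stage of the paper; everything else is immediate.
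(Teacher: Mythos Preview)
Your proposal is correct and is exactly the argument the paper has in mind: the corollary is stated immediately after the sentence ``Combining (\ref{definition:cell-algebra}.c) and (\ref{lemma:cell-algebra-star-1}.a) we obtain,'' and your write-up simply unpacks that combination via the basis expansion and coefficient comparison. The handling of $a^{\invo}$ via $\ceps^{\invo}=\ceps$ is also precisely what is intended.
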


Additionally, \fullref{lemma:cell-algebra-star-1} gives 
us a further relation to cellular algebras.

\begin{proposition}\label{proposition:cell-relative-cell}
Let $\calg$ be a relative cellular algebra 
with cell datum $(\cset,\cmset,\cbasis,{}^{\invo},\ciset,\coset,\cepsmap)$, 
and let $\ualg$ be a cellular algebra 
with cell datum 
$(\cset,\cmset,\cbasis,{}^\invo)$ and order 
$\order$ on $\cset$.
\smallskip
\begin{enumerate}[label=(\alph*)]
\setlength\itemsep{.15cm}

\item For all $\ceps\in\ciset$, the algebra $\ceps\calg\ceps$ is a  cellular algebra with cell datum 
$(\cset,\cmset_{\ceps},\cbasis_{\ceps},{}^{\invo})$ 
and the partial order on $\cset$ given by $\ord{\ceps}$,
\begin{gather}
\cmset_{\ceps}(\lambda)
=
\{S\in\cmset(\lambda)
\mid\ceps\cbas{S,T}{\lambda} 
= 
\cbas{S,T}{\lambda}\text{ for }T\in\cmset(\lambda)\},
\end{gather}
and $\cbasis_{\ceps}$ being 
the restriction of $\cbasis$ to 
${\textstyle\coprod_{\lambda\in\cset}}\, 
\cmset_{\ceps}(\lambda)\times\cmset_{\ceps}(\lambda)$.

\item The algebra $\ualg$ is relative 
cellular with relative cell datum 
$(\cset,\cmset,\cbasis,{}^{\invo},\{1\},\{\ord{1}\},\cepsmap)$, 
with $\cepsmap$ mapping everything to $1$.\qedhere
\end{enumerate}
\end{proposition}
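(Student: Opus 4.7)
The strategy for both parts is to verify the relevant axioms directly, leveraging the material already established in \fullref{lemma:cell-algebra-star-1} and \fullref{lemma:epsilon-structure}.

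For part (b) the argument is essentially just unpacking definitions. With $\ciset=\{1\}$ the single idempotent is $\invo$-fixed, the pairwise orthogonality requirement is vacuous, and condition (\ref{eq:idem-props-2}) is automatic since $\cepsmap\equiv 1$. Axioms (a) and (b) of \fullref{definition:cell-algebra} are inherited verbatim from the cellularity of $\ualg$. The containment (\ref{eq:idem-props-1}) reduces to the standard cellular filtration condition, and (\ref{eq:mult-left}) with $\ceps_T=1$ reads $a\cbas{S,T}{\lambda}\in\sum r_a(S',S)\cbas{S',T}{\lambda}+\ualg(\ord{1}\!\lambda)$, which is exactly the multiplication axiom of a Graham--Lehrer cellular algebra.

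For part (a), I first make $\cmset_\ceps(\lambda)$ explicit: by (\ref{eq:idem-props-2}) the defining condition $\ceps\cbas{S,T}{\lambda}=\cbas{S,T}{\lambda}$ is equivalent to $\ceps_S=\ceps$. Combining (\ref{eq:idem-props-2}) with its right-sided analogue (\ref{lemma:cell-algebra-star-1}.a) shows that each such $\cbas{S,T}{\lambda}$ with $S,T\in\cmset_\ceps(\lambda)$ lies in $\ceps\calg\ceps$, while conversely \fullref{lemma:epsilon-structure} says every element of $\ceps\calg\ceps$ is a $\K$-linear combination of such elements. This gives the basis, and the anti-involution axiom is immediate from $\ceps^\invo=\ceps$.

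The cellular multiplication axiom in $\ceps\calg\ceps$ is the core computation. Given $a\in\ceps\calg\ceps$ and $S,T\in\cmset_\ceps(\lambda)$, (\ref{eq:mult-left}) together with $\ceps_T=\ceps$ gives
\begin{gather*}
a\cbas{S,T}{\lambda}\in\sum_{S'\in\cmset(\lambda)}r_a(S',S)\cbas{S',T}{\lambda}+\calg(\ord{\ceps}\!\lambda)\ceps.
\end{gather*}
Left-multiplying by $\ceps$ (which fixes the left-hand side, since $a\in\ceps\calg\ceps$) annihilates all summands with $\ceps_{S'}\neq\ceps$ by (\ref{eq:idem-props-2}), restricting the sum to $S'\in\cmset_\ceps(\lambda)$ and placing the remainder in $\ceps\calg(\ord{\ceps}\!\lambda)\ceps$. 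The one step that requires care is identifying this remainder with the cellular lower filtration of $\ceps\calg\ceps$: expanding in the relative cellular basis and applying (\ref{eq:idem-props-2}) and (\ref{lemma:cell-algebra-star-1}.a) simultaneously shows that $\ceps\calg(\ord{\ceps}\!\lambda)\ceps$ is spanned precisely by $\{\cbas{S',T'}{\mu}\mid\mu\ord{\ceps}\!\lambda,\ S',T'\in\cmset_\ceps(\mu)\}$, which matches the cellular lower term. This identification is the main technical point, though it is routine bookkeeping rather than a conceptual obstacle.
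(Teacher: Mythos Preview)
Your proof is correct and follows essentially the same approach as the paper: for (b) both verify the relative cell datum axioms directly from the Graham--Lehrer datum with $\ciset=\{1\}$, and for (a) both establish the basis of $\ceps\calg\ceps$ via the idempotent conditions and then check the cellular multiplication rule by applying \eqref{eq:mult-left} and cutting down with $\ceps$. You are somewhat more explicit than the paper in invoking \fullref{lemma:epsilon-structure} for the basis and in spelling out why $\ceps\calg(\ord{\ceps}\!\lambda)\ceps$ coincides with the cellular lower term, but these are elaborations of steps the paper leaves implicit rather than a different argument.
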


\begin{proof}
\textit{(\ref{proposition:cell-relative-cell}.a).} 
That $\cmset_{\ceps}$ and $\cbasis_{\ceps}$ 
give a bijection with a basis of $\ceps\calg\ceps$ 
follows by combining (\ref{definition:cell-algebra}.c) 
and (\ref{lemma:cell-algebra-star-1}.a). 
So we are left with checking the multiplication rule for cellular 
algebras. For $a\in\calg$, $\lambda\in\cset$, 
and $S,T\in\cmset(\lambda)$ with 
$\ceps_{S}=\ceps_{T}=\ceps$, 
we use (\ref{definition:cell-algebra}.c) and get
\begin{gather}
\ceps a \ceps\,\cbas{S,T}{\lambda}\in 
{\textstyle\sum_{S^\prime \in \cmset(\lambda), \ceps_{S^\prime}=\ceps}}\,
r_{\ceps a\ceps}(S^{\prime},S)\,
\cbas{S^\prime,T}{\lambda} 
+ 
\ceps\calg(\ord{\ceps}\lambda)\ceps\subset\ceps\calg\ceps.
\end{gather}
\smallskip

\noindent\textit{(\ref{proposition:cell-relative-cell}.b).} 
By construction, (\ref{definition:cell-algebra}.a) and (\ref{definition:cell-algebra}.b) 
are part of the cell datum $(\cset,\cmset,\cbasis,{}^{\invo})$. 
Next, the set $\ciset$ for (\ref{definition:cell-algebra}.c) can be taken to be 
$\ciset=\{1\}$ (with $1$ being the unit 
of $\calg$) satisfying $1^{\invo} = 1$. The 
partial ordering $\order$ of $\ualg$ is the 
partial ordering $\ord{1}$ for the unit. 
Note hereby that \eqref{eq:idem-props-1} follows 
from (\ref{definition:cell-algebra}.d), while \eqref{eq:idem-props-2} is automatic.
\end{proof}

\begin{remark}\label{remark:diff-from-usual}
For any cellular algebra $\ualg$ and any idempotent $\ceps$ 
fixed by ${}^{\invo}$, $\ceps\ualg\ceps$ is 
cellular, see \cite[Proposition 4.3]{kx1}. However, 
\fullref{proposition:cell-relative-cell} is different since we do not assume 
$\calg$ to be cellular to begin with.
\end{remark}

\begin{remark}\label{remark-example-rel-cell-algebras-1}
As we have seen in the proof of (\ref{proposition:cell-relative-cell}.b), 
the two conditions \eqref{eq:idem-props-1} and \eqref{eq:idem-props-2} 
are ``invisible'' in the non-relative setup. 
However, they are crucial for our purposes e.g. 
\eqref{eq:idem-props-1} is used in \fullref{lemma:cell-algebra-star-2} --
a crucial ingredient for proving \fullref{theorem:simple-set}.
\end{remark}

Note that the map $\cepsmap$ is always 
surjective by (\ref{definition:cell-algebra}.a) and (\ref{definition:cell-algebra}.c). 
Furthermore, only finitely many elements 
of $\ciset$ act non-trivially on a given 
element on $\calg$. Thus, the following is immediate.

\begin{lemmaqed}\label{lemma:decomposition-unit}
If $\csetf$, then $\calg$ is unital 
with unit $\sum_{\ceps\in\ciset}\ceps$. 
Otherwise $\calg$ is locally unital with set 
of local units being all finite sums of elements in $\ciset$.
\end{lemmaqed}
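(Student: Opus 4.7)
The plan is straightforward: both statements follow by combining the pairwise orthogonality of the idempotents in $\ciset$ with the basis-level action rules \eqref{eq:idem-props-2} and its right-sided analogue from (\ref{lemma:cell-algebra-star-1}.a).

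First I would record the pointwise identity that does all the work. Fix a basis element $\cbas{S,T}{\lambda}$. By \eqref{eq:idem-props-2}, $\ceps_{S}$ is the \emph{unique} element of $\ciset$ that fixes $\cbas{S,T}{\lambda}$ from the left; every other $\ceps\in\ciset$ annihilates it, because the $\ceps$ in $\ciset$ are pairwise orthogonal and hence either $\ceps=\ceps_{S}$ or $\ceps \cbas{S,T}{\lambda}=0$. The symmetric statement for right multiplication, with $\ceps_{T}$ playing the role of $\ceps_{S}$, comes from (\ref{lemma:cell-algebra-star-1}.a).

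Second, for the case $\csetf$, the remark immediately preceding the lemma tells us that $\cepsmap$ is surjective; since its domain is the finite set $\coprod_{\lambda\in\cset}\cmset(\lambda)$, this forces $\ciset$ itself to be finite. Thus $e=\sum_{\ceps\in\ciset}\ceps$ is a well-defined element of $\calg$, and the observation above gives, for every basis element,
\begin{gather*}
e\cdot\cbas{S,T}{\lambda}
\;=\;\ceps_{S}\cdot\cbas{S,T}{\lambda}
\;=\;\cbas{S,T}{\lambda}
\;=\;\cbas{S,T}{\lambda}\cdot\ceps_{T}
\;=\;\cbas{S,T}{\lambda}\cdot e,
\end{gather*}
since only one summand on each side survives. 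Extending linearly, $e$ is the unit of $\calg$.

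Third, in the infinite case, any $a\in\calg$ is a finite linear combination of basis elements $\cbas{S_{i},T_{i}}{\lambda_{i}}$. Let $F\subset\ciset$ be the (finite) set of idempotents $\ceps_{S_{i}},\ceps_{T_{i}}$ appearing, and set $e_{a}=\sum_{\ceps\in F}\ceps$. The same basis-level computation gives $e_{a}\cdot a = a = a\cdot e_{a}$, so every finite sum of elements of $\ciset$ is an idempotent and these sums form a directed system of local units for $\calg$. There is no real obstacle: the only subtlety is to avoid writing $\sum_{\ceps\in\ciset}\ceps$ when $\ciset$ is infinite, which is exactly why the statement bifurcates on $\csetf$.
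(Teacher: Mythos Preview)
Your argument is correct and is exactly the expansion of what the paper has in mind: the paper gives no proof at all, merely noting beforehand that $\cepsmap$ is surjective and that only finitely many $\ceps\in\ciset$ act nontrivially on any given element, and then declares the lemma ``immediate.'' Your write-up simply spells out these two observations using \eqref{eq:idem-props-2} and (\ref{lemma:cell-algebra-star-1}.a), so there is nothing to add.
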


There is a quotient functor from the category 
of $\calg$-modules to modules over $\calg(\ciset)=\bigoplus_{\ceps\in\ciset}\ceps\calg\ceps$. 
By \fullref{lemma:decomposition-unit}, this gives a bijection between the isomorphism 
classes of simples for both algebras. However, some properties of this quotient 
functor depend on the choice of the set $\ciset$, and e.g. $\calg$ is in general not a projective 
$\calg(\ciset)$-module since the projectives 
of both algebras might be fairly different. See also \fullref{remark:homological-orders} below.

%%%%%%%%%%%%%%%%%%%%%%%%%%%
\subsection{Existence of cell modules}\label{subsection:cellmodules}
%%%%%%%%%%%%%%%%%%%%%%%%%%%

We proceed by defining cell modules.

\begin{definition}\label{definition:cell-module}
For $\lambda\in\cset$ and $T\in\cmset(\lambda)$ 
let $\dmod(\lambda;T) = 
\K\{\dbas{S,T}{\lambda}\mid S \in\cmset(\lambda) 
\}$. We define an action $\acts$ of $\calg$ on 
$\dmod(\lambda;T)$ by setting
\begin{gather}
a\acts \dbas{S,T}{\lambda}= 
{\textstyle\sum_{S^{\prime} \in\cmset(\lambda)}}\, 
r_{a}(S^{\prime},S)\,\dbas{S^{\prime},T}{\lambda},
\end{gather}
with $r_{a}(S^{\prime},S)$ being defined by \eqref{eq:mult-left}.
\end{definition}

\begin{lemma}\label{lemma:cell-defined}
The action from \fullref{definition:cell-module} 
defines the structure of an $\calg$-mo\-dule 
on $\dmod(\lambda;T)$. Further, there is an isomorphism 
of $\calg$-modules
$\dmod(\lambda;T)\cong\dmod(\lambda;T^{\prime})$ 
for any $T,T^{\prime}\in\cmset(\lambda)$.
\end{lemma}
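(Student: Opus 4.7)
The plan is to realise $\dmod(\lambda;T)$ as a sub-quotient of $\calg$ viewed as a left module over itself, so that both the module axioms and the $T$-independence (hence the isomorphism) become formal consequences of \fullref{definition:cell-algebra} together with \fullref{lemma:cell-algebra-star-1}.

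For the well-definedness, I would introduce the subspace
\[
V_{T} = \K\{\cbas{S,T}{\lambda}\mid S\in\cmset(\lambda)\} + \calg(\ord{\ceps_{T}}\!\lambda)\ceps_{T}\subset\calg.
\]
The multiplication rule \eqref{eq:mult-left} says that for every $a\in\calg$ and $S\in\cmset(\lambda)$ the product $a\cbas{S,T}{\lambda}$ lies in $V_{T}$, and by (\ref{lemma:cell-algebra-star-1}.c) applied to the $\ord{\ceps_{T}}$-ideal $\ord{\ceps_{T}}\!\lambda$ the subspace $\calg(\ord{\ceps_{T}}\!\lambda)\ceps_{T}$ is itself a left ideal. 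Hence both $V_{T}$ and $\calg(\ord{\ceps_{T}}\!\lambda)\ceps_{T}$ are left $\calg$-modules, and so is the quotient. The images of $\cbas{S,T}{\lambda}$ in this quotient are linearly independent, because $\calg(\ord{\ceps_{T}}\!\lambda)\ceps_{T}$ is spanned by cellular basis vectors indexed by $\mu\ord{\ceps_{T}}\!\lambda$ (using \eqref{eq:idem-props-2} via (\ref{lemma:cell-algebra-star-1}.a) to collapse $\cbas{S',T'}{\mu}\ceps_{T}$ to either $0$ or $\cbas{S',T'}{\mu}$), none of which equals a $\cbas{S,T}{\lambda}$ with index $\lambda$. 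Therefore the $\K$-linear map $\dbas{S,T}{\lambda}\mapsto\cbas{S,T}{\lambda}+\calg(\ord{\ceps_{T}}\!\lambda)\ceps_{T}$ is a bijection, and transporting the $\calg$-action along this bijection produces exactly the formula in \fullref{definition:cell-module} by reading off \eqref{eq:mult-left} modulo $\calg(\ord{\ceps_{T}}\!\lambda)\ceps_{T}$. This endows $\dmod(\lambda;T)$ with an $\calg$-module structure.

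For the isomorphism $\dmod(\lambda;T)\cong\dmod(\lambda;T^{\prime})$, I would simply send $\dbas{S,T}{\lambda}$ to $\dbas{S,T^{\prime}}{\lambda}$; this is a $\K$-linear bijection by construction, and it intertwines the $\calg$-actions because the scalars $r_{a}(S^{\prime},S)$ governing both actions depend, by the stipulation in (\ref{definition:cell-algebra}.d), only on $a,S,S^{\prime}$ and not on $T$.

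The main obstacle is conceptual rather than computational: one has to verify that $\calg(\ord{\ceps_{T}}\!\lambda)\ceps_{T}$ really is a left ideal (so that the quotient construction makes sense), which requires recognising $\ord{\ceps_{T}}\!\lambda$ as a $\ord{\ceps_{T}}$-ideal and invoking (\ref{lemma:cell-algebra-star-1}.c). Once that is in place, both assertions reduce to associativity of the multiplication in $\calg$ and to the explicit $T$-independence built into the relative cell axioms.
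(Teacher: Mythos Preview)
Your argument is correct. You take a different route from the paper's own proof: the paper verifies the module axioms directly by showing that the structure constants satisfy $r_{1}(S',S)=\delta_{S,S'}$ and
\[
r_{a'a}(S'',S)=\sum_{S'\in\cmset(\lambda)} r_{a'}(S'',S')\,r_{a}(S',S),
\]
the latter obtained by expanding $a'(a\cbas{S,T}{\lambda})$ and $(a'a)\cbas{S,T}{\lambda}$ via \eqref{eq:mult-left} and using (\ref{lemma:cell-algebra-star-1}.c) to absorb the error terms. You instead realise $\dmod(\lambda;T)$ as the subquotient $V_{T}/\calg(\ord{\ceps_{T}}\!\lambda)\ceps_{T}$ of the regular module, so that associativity and unitality are inherited from $\calg$ rather than checked by hand. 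Both arguments hinge on the same technical point---namely that $\calg(\ord{\ceps_{T}}\!\lambda)\ceps_{T}$ is a left ideal, supplied by (\ref{lemma:cell-algebra-star-1}.c)---but your packaging has the bonus of exhibiting the cell module concretely inside $\calg$, while the paper's version isolates the coefficient identity, which is reused later (e.g.\ in \fullref{lemma:cell-dual}). The isomorphism $\dmod(\lambda;T)\cong\dmod(\lambda;T')$ is handled identically in both.
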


\begin{proof}
The coefficient $r_{a}(S^{\prime},S)$ is -- by definition --
additive with respect to $a$, 
and one has $r_{1}(S^{\prime},S)=\delta_{S,S^{\prime}}$. 
Moreover, one also has
\begin{gather}
\begin{aligned}
a^{\prime}(a\cbas{S,T}{\lambda}) 
& \;\in\; a^\prime 
{\textstyle\sum_{S^{\prime}\in\cmset(\lambda)}}\, 
r_{a}(S^{\prime},S)\,\cbas{S^{\prime},T}{\lambda}
+ 
a^{\prime}\calg(\ord{\ceps_{T}}\!\lambda)\ceps_{T}
\\
& \;\subset\;{\textstyle\sum_{S^{\prime},S^{\prime\prime}\in\cmset(\lambda)}}\,
r_{a^{\prime}}(S^{\prime\prime},S^{\prime})r_{a}(S^{\prime},S)\,
\cbas{S^{\prime\prime},T}{\lambda} +\calg(\ord{\ceps_{T}}\!\lambda)\ceps_{T},
\end{aligned}
\end{gather}
where the inclusion is due to 
\eqref{eq:mult-left} and (\ref{lemma:cell-algebra-star-1}.c), and
\begin{gather}
(a^{\prime}a)\cbas{S,T}{\lambda}
\in 
{\textstyle\sum_{S^{\prime\prime}\in\cmset(\lambda)}}\,
r_{a^{\prime}a}(S^{\prime\prime},S)\,\cbas{S^{\prime\prime},T}{\lambda} 
+ 
\calg(\ord{\ceps_{T}}\!\lambda)\ceps_{T}.
\end{gather}
Thus, we have
\begin{gather}
r_{a^{\prime}a}(S^{\prime\prime},S) 
={\textstyle\sum_{S^{\prime}\in\cmset(\lambda)}}\,
r_{a^{\prime}}(S^{\prime\prime},S^{\prime}) r_{a}(S^{\prime},S) 
\;\text{ for }a,a^{\prime}\in\calg. 
\end{gather}
This in turn implies 
$a^{\prime}\acts(a\acts\dbas{S,T}{\lambda}) 
= (a^{\prime}a)\acts\dbas{S,T}{\lambda}$. Hence, we get 
a well-defined $\calg$-module structure on 
$\dmod(\lambda,T)$. Since $r_{a}(S^{\prime},S)$ is independent of the 
second index, the assignment 
$\dbas{S,T}{\lambda}\mapsto\dbas{S,T^{\prime}}{\lambda}$ 
gives an $\calg$-module isomorphism.
\end{proof}

Due to \fullref{lemma:cell-defined} we omit 
the $T$ in the definition and notation of $\dmod(\lambda;T)$. 
We call $\dmod(\lambda)$ a \textit{cell module},
and we denote the basis elements of $\dmod(\lambda)$ 
by $\dbas{S}{\lambda}$ only. Furthermore -- having \fullref{lemma:cell-defined} -- 
we can define right $\calg$-modules:

\begin{definition}\label{definition:cell-dual}
We define the right $\calg$-module $\dmod(\lambda)^{\invo}$ on 
the same vector space as $\dmod(\lambda)$ by 
setting $\dbas{S}{\lambda}\acts a = a^{\invo}\acts\dbas{S}{\lambda}$.
\end{definition}

We get -- by construction -- the following identification:.

\begin{lemmaqed}\label{lemma:cell-dual}
The linear extension of the assignment
\begin{gather}
\diso{\lambda}\colon
\dmod(\lambda)\otimes\dmod(\lambda)^{\invo}
\rightarrow\calg(\{\lambda\}),
\;
\diso{\lambda}(\dbas{S}{\lambda},\dbas{T}{\lambda}) 
=\cbas{S,T}{\lambda},
\end{gather}
is an isomorphism of vector spaces.
\end{lemmaqed}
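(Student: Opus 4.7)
The plan is to argue directly from the bases on both sides, since the statement is essentially a restatement of how the cellular basis is indexed. First I would observe that, by \fullref{definition:cell-module}, the vector space $\dmod(\lambda)$ has the basis $\{\dbas{S}{\lambda}\mid S\in\cmset(\lambda)\}$, and $\dmod(\lambda)^{\invo}$ is the same underlying vector space (just with a right action defined via ${}^{\invo}$), so it carries the same basis $\{\dbas{T}{\lambda}\mid T\in\cmset(\lambda)\}$. Consequently, the tensor product $\dmod(\lambda)\otimes\dmod(\lambda)^{\invo}$ has the basis $\{\dbas{S}{\lambda}\otimes\dbas{T}{\lambda}\mid S,T\in\cmset(\lambda)\}$, indexed by $\cmset(\lambda)\times\cmset(\lambda)$.

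Next I would unpack the target: by \eqref{eq:ideal-algebra-notation} applied to $\csubset=\{\lambda\}$, we have
\begin{gather*}
\calg(\{\lambda\})=\K\{\cbas{S,T}{\lambda}\mid S,T\in\cmset(\lambda)\},
\end{gather*}
and by (\ref{definition:cell-algebra}.a) the map $\cbasis$ is injective with image a basis of $\calg$, so its restriction to $\cmset(\lambda)\times\cmset(\lambda)$ is a bijection onto the indicated spanning set, which is therefore a basis of $\calg(\{\lambda\})$.

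Finally, I would note that $\diso{\lambda}$ is defined by sending the basis element $\dbas{S}{\lambda}\otimes\dbas{T}{\lambda}$ to $\cbas{S,T}{\lambda}$, hence it carries a basis of the domain bijectively to a basis of the codomain. By linearity, $\diso{\lambda}$ is therefore a well-defined vector space isomorphism. There is no real obstacle here; the only subtlety worth flagging is that $\dmod(\lambda)^{\invo}$ is defined on the \emph{same} underlying vector space as $\dmod(\lambda)$, so no care is needed to match bases between the two factors of the tensor product.
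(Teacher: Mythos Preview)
Your proof is correct and matches the paper's approach: the paper states this lemma without proof, prefacing it with ``We get -- by construction -- the following identification,'' and your argument is precisely the unpacking of that phrase via the explicit bases on both sides.
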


%%%%%%%%%%%%%%%%%%%%%%%%%%%
\subsection{A basis free definition of relative cellularity}\label{subsection:cell-def-no-basis}
%%%%%%%%%%%%%%%%%%%%%%%%%%%

In this section we let 
$\anyalg$ be an algebra with a 
fixed anti-involution ${}^{\invo}$ and 
a set $\ciset$ of pairwise orthogonal idempotents, all fixed by 
${}^{\invo}$. Furthermore, denote by $\mathbb{K}[\ciset]$ the 
semigroup algebra generated by the elements of $\ciset$. 
Following \cite[Definition 3.2]{kx1} we define:

\begin{definition}\label{definition:cell-ideal}
Let $\ccideal\subset\anyalg$ denote a linear 
subspace, and let $\dmod$ denote a 
finite-dimensional, left $\anyalg$-module. 
Assume that the following hold:
\smallskip
\begin{enumerate}[label=(\alph*)]

\setlength\itemsep{.15cm}

\item The linear subspace $\ccideal$ is 
fixed under ${}^{\invo}$, i.e. $\ccideal^{\invo}=\ccideal$.

\item The linear subspace $\ccideal$ is a $\mathbb{K}[\ciset]$-bimodule.

\item There 
is a $\mathbb{K}[\ciset]$-bimodule isomorphism 
$\Diso\colon\ccideal\xrightarrow{\cong}
\dmod\otimes\dmod^{\invo}$ 
and a diagram
\begin{gather}
\begin{tikzcd}[ampersand replacement=\&,row sep=scriptsize,column sep=scriptsize,arrows={shorten >=-.5ex,shorten <=-.5ex},labels={inner sep=.1ex}]
\ccideal
\arrow{rr}{\Diso}
\arrow[swap]{d}{{}^{\invo}}
\arrow[phantom,xshift=.15cm,yshift=-.1cm,swap]{drr}{\text{$\circlearrowleft$}}
\&
\phantom{.}
\&
\dmod\otimes\dmod^{\invo}\phantom{.}
\arrow[xshift=-.55ex]{d}{\,x\otimes y\mapsto y\otimes x}
\\
\ccideal
\arrow[swap]{rr}{(\Diso)^{\invo}}
\&
\phantom{.}
\&
\dmod\otimes\dmod^{\invo},
\\
\end{tikzcd}
\end{gather}
where $\dmod^{\invo}$ is the right $\anyalg$-module 
on the same 
vector space as $\dmod$ and
right action of $\anyalg$ defined 
via $x\acts a = a^{\invo}\acts x$.
\end{enumerate}
\smallskip

Then we call $\ccideal$ a \textit{cell space}.
\end{definition}

\begin{proposition}\label{proposition:cell-basis-free}
A finite-dimensional algebra $\anyalg$ is relative 
cellular with respect to ${}^{\invo}$ and $\ciset$ if and only if:
\smallskip
\begin{enumerate}[label=(\alph*)]

\setlength\itemsep{.15cm}

\item The elements of $\ciset$ give a decomposition of the unit of $\anyalg$.

\item There is some index set $\cset$ with $\csetf$ and a 
vector space decomposition of 
$\anyalg$ into cell spaces, i.e.
$\anyalg={\textstyle\bigoplus_{\lambda\in\cset}}\,\ccideal_{\lambda}$.

\item For each $\ceps\in\ciset$ there is 
an enumeration $\cset=\{\lambda_{1},\lambda_{2},\cdots,\lambda_{m}\}$ such that
\begin{gather}\label{eq:cell-chain}
\begin{gathered}
0\subset\cccideal_{\lambda_{1}}\ceps
\subset\cccideal_{\lambda_{2}}\ceps\subset\cdots\subset
\cccideal_{\lambda_{i}}\ceps
\subset\cdots\subset
\cccideal_{\lambda_{m}}\ceps
\subset\anyalg\ceps,
\end{gathered}
\end{gather}
is a chain of $\anyalg$-submodules 
$\cccideal_{\lambda_{i}}\ceps=
\bigoplus_{j=1}^{i}\ccideal_{\lambda_{i}}\ceps$.

\item The submodule $\cccideal_{\lambda_{i}}\ceps$ 
as in \eqref{eq:cell-chain}
is a right $\ceps^{\prime}\anyalg\ceps^{\prime}$-module for any $\ceps^{\prime} \in \ciset$.\qedhere
\end{enumerate}
\end{proposition}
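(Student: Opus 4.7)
The plan is to prove the two implications separately, translating between the basis-centric datum of \fullref{definition:cell-algebra} and the structural data of cell spaces in \fullref{definition:cell-ideal}.

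For the forward direction, assume $\anyalg=\calg$ is relative cellular and set $\ccideal_{\lambda} := \calg(\{\lambda\})$ using the notation of \eqref{eq:ideal-algebra-notation}. Then (a) is \fullref{lemma:decomposition-unit} (using $\csetf$), and (b) reduces to showing each $\ccideal_{\lambda}$ is a cell space: the $\mathbb{K}[\ciset]$-bimodule structure is furnished by \eqref{eq:idem-props-2} together with (\ref{lemma:cell-algebra-star-1}.a), the isomorphism $\Diso$ with $\dmod(\lambda)\otimes\dmod(\lambda)^{\invo}$ is exactly \fullref{lemma:cell-dual}, and the required diagram commutes because $(\cbas{S,T}{\lambda})^{\invo}=\cbas{T,S}{\lambda}$. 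For (c), fix $\ceps\in\ciset$ and pick a linear extension $\lambda_{1},\dots,\lambda_{m}$ of $\ord{\ceps}$; then $\cccideal_{\lambda_{i}}\ceps$ is stable under left multiplication by $\anyalg$ because (\ref{definition:cell-algebra}.d) and (\ref{lemma:cell-algebra-star-1}.c) push the leftover term into $\calg(\ord{\ceps_{T}}\!\lambda_{j})\ceps$, which sits in $\bigoplus_{k<j}\ccideal_{\lambda_{k}}\ceps$ by the choice of enumeration. Finally, (d) follows from (\ref{lemma:cell-algebra-star-1}.a): for $\ceps=\ceps^{\prime}$ the product lies in $\calg(\Ord{\ceps}\!\lambda_{j})\ceps\subset\cccideal_{\lambda_{j}}\ceps$, and for $\ceps\neq\ceps^{\prime}$ orthogonality of idempotents makes the product vanish.

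For the converse, given the structural data in (a)--(d), I would construct a relative cellular basis as follows. The $\mathbb{K}[\ciset]$-bimodule structure on each $\ccideal_{\lambda}$ transported through $\Diso$ gives $\dmod(\lambda)$ the structure of a left $\mathbb{K}[\ciset]$-module, hence a decomposition $\dmod(\lambda)=\bigoplus_{\ceps\in\ciset}\ceps\,\dmod(\lambda)$. Choose a basis $\{\dbas{S}{\lambda}\mid S\in\cmset(\lambda)\}$ adapted to this decomposition and set $\cepsmap(S)=\ceps$ whenever $\dbas{S}{\lambda}\in\ceps\dmod(\lambda)$; then define
\begin{gather*}
\cbas{S,T}{\lambda} := \Diso^{-1}(\dbas{S}{\lambda}\otimes\dbas{T}{\lambda}).
\end{gather*}
The decomposition in (b) ensures this is a basis of $\anyalg$, the commutative diagram in \fullref{definition:cell-ideal} delivers (\ref{definition:cell-algebra}.b), and the chosen adaptation of the basis delivers \eqref{eq:idem-props-2} together with its mirror from (\ref{lemma:cell-algebra-star-1}.a). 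The partial order $\ord{\ceps}$ is taken to be the total order induced by the enumeration in (c).

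The main obstacle will be verifying the multiplicative axiom (\ref{definition:cell-algebra}.d), i.e.\ that left multiplication by $a\in\anyalg$ on $\cbas{S,T}{\lambda}$ produces a sum of $\cbas{S^{\prime},T}{\lambda}$ with the \emph{same} right index $T$ modulo $\calg(\ord{\ceps_{T}}\!\lambda)\ceps_{T}$. The key move is to pass to the subquotient $\cccideal_{\lambda_{i}}\ceps_{T}/\cccideal_{\lambda_{i-1}}\ceps_{T}$, which by (c) is a left $\anyalg$-module isomorphic via $\Diso$ to $\dmod(\lambda_{i})\otimes\ceps_{T}\dmod(\lambda_{i})$; condition (d) implies that the right $\mathbb{K}[\ciset]$-action commutes with the left $\anyalg$-action on this subquotient, forcing $\anyalg$ to act only on the first tensor factor. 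Spelling this out yields scalars $r_{a}(S^{\prime},S)$ independent of $T$ and shows that the remainder lies in $\cccideal_{\lambda_{i-1}}\ceps_{T}=\calg(\ord{\ceps_{T}}\!\lambda)\ceps_{T}$, which is exactly \eqref{eq:mult-left}. Axiom \eqref{eq:idem-props-1} then follows by combining the submodule property from (c) with (d) for $\ceps=\ceps^{\prime}$.
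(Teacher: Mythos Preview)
Your forward direction matches the paper: take $\ccideal_{\lambda}=\calg(\{\lambda\})$, use \fullref{lemma:cell-dual} and \fullref{lemma:decomposition-unit}, pick a linear extension of $\ord{\ceps}$ for the chain in (c), and derive (d) from (\ref{lemma:cell-algebra-star-1}.a) plus orthogonality. In the converse, your construction of the basis, of $\cepsmap$, of the orders, and your verifications of (\ref{definition:cell-algebra}.b), \eqref{eq:idem-props-2} and \eqref{eq:idem-props-1} also agree with the paper.

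The gap is in your argument for (\ref{definition:cell-algebra}.d). You pass to the subquotient $\ccideal_{\lambda}\ceps_{T}\cong\dmod(\lambda)\otimes\dmod(\lambda)^{\invo}\ceps_{T}$ and claim that commutation of the left $\anyalg$-action with the right $\K[\ciset]$-action forces $\anyalg$ to act on the first tensor factor alone. This inference does not hold: the commutation is automatic by associativity of multiplication in $\anyalg$, and on the single eigenspace $\dmod(\lambda)^{\invo}\ceps_{T}$ every element of $\K[\ciset]$ acts by a scalar, so commuting with it imposes no constraint whatsoever. The crucial point is that $\Diso$ is only a $\K[\ciset]$-bimodule isomorphism, not an $\anyalg$-module map, so the transported left $\anyalg$-action on $\dmod(\lambda)\otimes\dmod(\lambda)^{\invo}\ceps_{T}$ has no a~priori reason to respect the tensor factorisation. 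Nothing you have written rules out $a\,\cbas{S,T}{\lambda}$ producing, modulo lower terms, basis elements $\cbas{S',T'}{\lambda}$ with $T'\neq T$ but $\ceps_{T'}=\ceps_{T}$ --- precisely what must be excluded for the coefficients $r_{a}(S',S)$ to be independent of $T$. Invoking condition (d) of the Proposition does not help either: it upgrades the right action on the subquotient from $\K[\ciset]$ to $\ceps_{T}\anyalg\ceps_{T}$, but $\Diso$ is not asserted to intertwine that with the natural right action on the second tensor factor, so the same obstruction remains. The paper's own proof is extremely terse at this spot, citing only (\ref{definition:cell-ideal}.b), so you are not overlooking an argument spelled out there; but the step as you have written it does not go through.
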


\begin{proof}
\textit{\fullref{definition:cell-algebra}$\Rightarrow$\fullref{proposition:cell-basis-free}.}
Fix $\ceps\in\ciset$. Since 
$\ord{\ceps}$ is a partial order on $\cset$, we can
inductively construct 
the linear subspaces 
$\ccideal_{\lambda_{i}}\ceps\subset\anyalg\ceps$ by 
starting with
\begin{gather}
\ccideal_{\lambda_{1}}\ceps=
\K\{
\cbas{S,T}{\lambda_{1}}
\mid
S,T\in\cmset(\lambda_{1}),
\ceps_{T}=\ceps
\}
\stackrel{\eqref{eq:idem-props-2}}{=}
\K\{
\cbas{S,T}{\lambda_{1}}\ceps
\mid
S,T\in\cmset(\lambda_{1}),
\ceps_{T}=\ceps
\}
\end{gather}
for some $\ord{\ceps}$-minimal $\lambda_{1}\in\cset$. 
Then we set $\cccideal_{\lambda_{i}}\ceps=
\bigoplus_{j=1}^{i}\ccideal_{\lambda_{i}}\ceps$, 
and the so constructed linear spaces are submodules and 
satisfy the cell chain 
condition \eqref{eq:cell-chain} by 
(\ref{definition:cell-algebra}.d). Moreover, orthogonality 
and the ${}^{\invo}$-version of \eqref{eq:idem-props-1} 
(see (\ref{lemma:cell-algebra-star-1}.a)) 
shows that (\ref{proposition:cell-basis-free}.d) holds as well.

Further, define
$\ccideal_{\lambda}=
\bigoplus_{\ceps\in\ciset}\ccideal_{\lambda}\ceps$. 
These are cell spaces:
By (\ref{definition:cell-algebra}.b) and the fact that 
$\ceps_{S}=\ceps$ for some 
$\ceps\in\ciset$ we get (\ref{definition:cell-ideal}.a), 
while (\ref{definition:cell-ideal}.b) follows from 
\eqref{eq:idem-props-2}.
Next -- by virtue 
of construction -- $\ccideal_{\lambda}=\anyalg(\{\lambda\})$. 
Thus, we can set $\dmod_{\lambda}\cong\dmod(\lambda)$, whose properties 
-- by \fullref{lemma:cell-dual} --
give (\ref{definition:cell-ideal}.c) by 
defining $\Diso(\cbas{S,T}{\lambda})=
(\dbas{S}{\lambda},\dbas{T}{\lambda})$.
Finally -- by (\ref{definition:cell-algebra}.a), 
\fullref{lemma:decomposition-unit} and 
finite-dimensionality -- we get (\ref{proposition:cell-basis-free}.a) and 
(\ref{proposition:cell-basis-free}.b).
\medskip

\noindent \textit{\fullref{proposition:cell-basis-free}$\Rightarrow$\fullref{definition:cell-algebra}.} 
First, let $\cset=\{\lambda\mid\ccideal_{\lambda}\text{ is a cell space}\}$.
For any cell space $\ccideal_{\lambda}$ we first fix 
a basis $\{\dbas{S}{\lambda}\}$ of its associated $\dmod_{\lambda}$. 
Note that -- by finite-dimensionality
-- we can choose this to be a basis consisting of common eigenvectors for $\K[\ciset]$, and we thus can 
demand that this basis satisfies either 
$\ceps\dbas{S}{\lambda}=\dbas{S}{\lambda}$ or 
$\ceps\dbas{S}{\lambda}=0$ for each $\ceps\in\ciset$.
The $\lambda,S,T$ play hereby the role of some indexes, where 
we set $\cmset(\lambda)$ to be the set of all $S,T$'s that appear 
in this enumeration.
Next, use (\ref{definition:cell-ideal}.c) to define 
$\cbasis(S,T)=\cbas{S,T}{\lambda}=\Diso
(\dbas{S}{\lambda}\otimes\dbas{T}{\lambda})$ for $S,T\in\cmset(\lambda)$.
Since we have already fixed ${}^{\invo}$, this defines 
the relative cell datum up to the part about idempotents.
To define the remaining data, first note that $\ciset$ is 
already given. Moreover, the cell chain condition \eqref{eq:cell-chain} 
gives rise to a partial ordering $\ord{\ceps}$ on $\cset$ 
for each $\ceps\in\ciset$.
Next, observe that $\ceps(S)\cbas{S,T}{\lambda}=\cbas{S,T}{\lambda}$
for precisely one $\ceps(S)\in\ciset$ 
due to the choice of the 
basis $\{\dbas{S}{\lambda}\}$, orthogonality and (\ref{proposition:cell-basis-free}.a). 
Thus, we can define $\ceps_{S}=\ceps(S)$, and we get the last part of the
relative cell datum.

It remains to check that we have defined a relative cell datum. 
First, note that all $\cmset(\lambda)$'s are finite because -- by assumption -- 
the $\dmod_{\lambda}$'s are finite-dimensional, 
while $\csetf$ -- also by assumption.
Second -- by (\ref{proposition:cell-basis-free}.b) -- we have an isomorphism 
of vector spaces $\calg\cong\bigoplus_{\lambda\in\cset}\ccideal_{\lambda}$, 
showing that (\ref{definition:cell-algebra}.a) holds. 
That (\ref{definition:cell-algebra}.b) holds on the nose follows from 
the commutative diagram in (\ref{definition:cell-ideal}.c), while 
(\ref{definition:cell-algebra}.d) follows from (\ref{definition:cell-ideal}.b).
Finally, it remains to show \eqref{eq:idem-props-1} and 
\eqref{eq:idem-props-2}, where the latter is clear by 
construction of $\cepsmap$. The remaining part follows then 
by applying ${}^{\invo}$ to (\ref{proposition:cell-basis-free}.d).
\end{proof}

\begin{furtherdirections}\label{remark:cell-ideals}
As explained in \cite{kx1}, the basis free formulation of cellularity 
is connected to ideals in the setting of 
quasi-hereditary algebras. In the relative setup we lose 
the ideal structure (cf. (\ref{proposition:cell-basis-free}.c) 
and (\ref{proposition:cell-basis-free}.d)) and we 
do not know what the relative version 
of the connection to quasi-hereditary algebras is.
\end{furtherdirections}

%%%%%%%%%%%%%%%%%%%%%%%%%%%
\subsection{Examples of relative cellular algebras}\label{subsection:cell-examples}
%%%%%%%%%%%%%%%%%%%%%%%%%%%

\begin{remark}\label{remark:pos-def}
For the following examples recall that the \textit{Cartan matrix} 
$\cmatrix(\anyalg)$ of some finite-dimensional algebra $\anyalg$ is defined 
by counting the multiplicities of the 
simples $\lmod$ in the 
indecomposable projectives $\prmod$.
Now, it follows from \cite[Proposition 3.2]{kx2}
that $\cmatrix(\ualg)$ is symmetric 
and positive definite in case  
$\ualg$ is a cellular algebra.
\end{remark}

\begin{example}\label{example:rel-cell-algebras-2}
Consider the type 
$\typeA_{n}$ graphs with doubled edges 
(where we exclude the case $n=2$ because it 
requires a slightly different 
setup):
\begin{gather}\label{eq:rel-cell-algebras-2}
\begin{gathered}
\typeA_{n}
=
\xy
(0,-2)*{
\begin{tikzcd}[ampersand replacement=\&,row sep=scriptsize,column sep=scriptsize,arrows={shorten >=-.5ex,shorten <=-.5ex},labels={inner sep=.225ex}]
\videm{1}
\arrow[yshift=.5ex,->]{rr}
\&
\phantom{.}
\&
\videm{2}
\arrow[yshift=-.5ex,->]{ll}
\arrow[yshift=.5ex,->]{rr}
\&
\phantom{.}
\&
\videm{3}
\arrow[yshift=-.5ex,->]{ll}
\arrow[yshift=.5ex,-]{r}
\&
\cdots
\arrow[yshift=-.5ex,->]{l}
\arrow[yshift=.5ex,->]{r}
\&
\videm{n}
\arrow[yshift=-.5ex,-]{l}
\\
\end{tikzcd}};
\endxy
,
\quad
(\paths{i}{j})^{\invo}=\paths{j}{i},
\\
\text{Relations: }
\begin{gathered}
\text{All $2$-cycles at the vertex $\videm{i}$ are equal, i.e. }
\pathss{i}{j}{i}=\pathss{i}{k}{i};
\\
\text{Going two steps in one direction is zero, i.e. }
\pathss{i}{j}{k}=0 \text{, for } \videm{i} \neq \videm{k}.
\end{gathered}
\end{gathered}
\end{gather}
We let $\ualg(\typeA_{n})$ be the quotient of the 
path algebra of $\typeA_{n}$ (multiplication $\circ$ being 
composition of paths $\paths{i}{j}\circ\paths{j}{k}=\pathss{i}{j}{k}$) 
with relations as in \eqref{eq:rel-cell-algebras-2}. 
Up to base change one gets:
\begin{gather}
\cmatrix(\ualg(\typeA_{3}))=
\begin{psmallmatrix}
2 & 1 & 0 \\
1 & 2 & 1 \\
0 & 1 & 2 \\
\end{psmallmatrix}
,\;\;
\cmatrix(\ualg(\typeA_{4}))=
\begin{psmallmatrix}
2 & 1 & 0 & 0\\
1 & 2 & 1 & 0\\
0 & 1 & 2 & 1\\
0 & 0 & 1 & 2\\
\end{psmallmatrix}
,\;\;
\cmatrix(\ualg(\typeA_{5}))=
\begin{psmallmatrix}
2 & 1 & 0 & 0 & 0\\
1 & 2 & 1 & 0 & 0\\
0 & 1 & 2 & 1 & 0\\
0 & 0 & 1 & 2 & 1\\
0 & 0 & 0 & 1 & 2\\
\end{psmallmatrix}
,\;\;
\text{etc.},
\end{gather}
all of which are positive definite.
The algebra $\ualg(\typeA_{n})$ is known as the 
\textit{type $\typeA_{n}$ zigzag algebra}, cf. \cite[Section 3]{hk1}. 
Let us discuss the case $n=2$ with respect to 
cellularity in detail, the general case 
works mutatis mutandis. 

First, the 
$\ualg(\typeA_{3})$-action on itself 
is given by pre-composition of paths, and the algebra
can be equipped with the anti-involution ${}^{\invo}$ 
indicated in \eqref{eq:rel-cell-algebras-2} that fixes the vertex 
idempotents $e_{\videm{1}},e_{\videm{2}},e_{\videm{3}}$. Clearly,
$\ualg(\typeA_{3})$ has one-dimensional simple modules 
$\lmod(\videm{i})$ for $i\in\{1,2,3\}$ where $e_{\videm{j}}$ acts by $\delta_{ij}$.

The algebra $\ualg(\typeA_{3})$ is a relative cellular 
algebra with respect to ${}^{\invo}$. 
As a relative cell datum we can take
\begin{gather}
\begin{gathered}
\cset
=\{\videm{0}\ord{1}\videm{1}\ord{1}\videm{2}\ord{1}\videm{3}\},
\\
\cmset(\videm{0})=\{\paths{1}{2}\},
\;
\cmset(\videm{1})=\{e_{\videm{1}},\paths{2}{1}\},
\;
\cmset(\videm{2})=\{e_{\videm{2}},\paths{3}{2}\},
\;
\cmset(\videm{3})=\{e_{\videm{3}}\},
\;
\cbas{S,T}{\videm{i}}=S\circ T^{\invo},
\\
\ciset=\{1\},
\quad
\cepsmap(\pathss{1}{2}{1})=
\cepsmap(e_{\videm{1}})=
\cepsmap(\paths{2}{1})=
\cepsmap(e_{\videm{2}})=
\cepsmap(\paths{3}{2})=
\cepsmap(e_{\videm{3}})=
1.
\end{gathered}
\end{gather}
Note that $\ciset=\{1\}$ is the same choice as in 
(\ref{proposition:cell-relative-cell}.b), and 
$\ualg(\typeA_{3})$ is actually cellular. 
Now, the cellular 
basis and cell modules are given as follows, where 
we write $\videm{i}$ on top of the columns containing 
$\dmod(\lambda;T)$'s with $\ceps_{T}=e_{\videm{i}}$ 
(in the notation from \fullref{definition:cell-module}):
\begin{gather}
\begin{tikzpicture}[baseline=(current bounding box.center)]
  \matrix (m) [matrix of math nodes, nodes in empty cells, row 3/.style={nodes={minimum height=10mm}}, row sep={.1cm}, column sep={.1cm}, text height=1.5ex, text depth=0.25ex, ampersand replacement=\&] {
 \& \videm{1} \& \& \& \& \videm{2} \& \& \& \& \videm{3} \&
\\
e_{\videm{1}} \& \& \paths{2}{1} \& \& e_{\videm{2}} \& \& \paths{3}{2} \& \& e_{\videm{3}} \& \& \phantom{\paths{1}{3}}
\\
\phantom{\paths{3}{1}} \& \& \pathss{1}{2}{1}
\&
\&
\paths{1}{2} \& \&  \renewcommand*{\arraystretch}{.6}
\begin{matrix}\pathss{2}{1}{2}\\=\\\pathss{2}{3}{2}\end{matrix}
\&
\&
\paths{2}{3} \& \&  \renewcommand*{\arraystretch}{.6}
\begin{matrix}\pathss{3}{1}{3}\\=\\\pathss{3}{2}{3}\end{matrix}
\\
    };
    \draw[thin,dotted] ($(m-1-1.south)+(-.5,0)$) to ($(m-1-1.south)+(9.0,0)$);
	\draw[thin,dotted] ($(m-1-1.east)+(2.45,.25)$) to ($(m-1-1.east)+(2.45,-2.05)$);
	\draw[thin,dotted] ($(m-1-1.east)+(5.7,.25)$) to ($(m-1-1.east)+(5.7,-2.05)$);
	\draw[thin, mygreen] (-1.975,.5) rectangle (-4.5,0);
	\draw[thin, black] (-1.975,-.1) rectangle (-4.5,-1.1);
	\draw[thin, myred] (1.3,.5) rectangle (-1.3,0);
	\draw[thin, mygreen] (1.3,-.1) rectangle (-1.3,-1.1);
	\draw[thin, myblue] (1.925,.5) rectangle (4.55,0);
	\draw[thin, myred] (1.925,-.1) rectangle (4.55,-1.1);
	\draw[thin, white, fill=white] (-3.9,.05) rectangle (-3.0,-.45);
	\draw[thin, white, fill=white] (-.7,.05) rectangle (.2,-.45);
	\draw[thin, white, fill=white] (3.5,.05) rectangle (2.6,-.45);
	\node[mygreen] at (-3.4,.2) {$\dmod(\videm{1})$};
	\node[black] at (-3.4,-.6) {$\dmod(\videm{0})$};
	\node[myred] at (-.15,.2) {$\dmod(\videm{2})$};
	\node[mygreen] at (-.15,-.6) {$\dmod(\videm{1})$};
	\node[myblue] at (3.1,.2) {$\dmod(\videm{3})$};
	\node[myred] at (3.1,-.6) {$\dmod(\videm{2})$};
	\draw[thick,->] (-3.3,0) to node [left] {{\tiny$\ord{1}$}} (-3.3,-.25) to (-3.3,-.4);
	\draw[thick,->] (-.1,0) to node [left] {{\tiny$\ord{1}$}} (-.1,-.25) to (-.1,-.4);
	\draw[thick,->] (3.15,0) to node [left] {{\tiny$\ord{1}$}} (3.15,-.25) to (3.15,-.4);
\end{tikzpicture}
\end{gather}
The left action going in the indicated direction 
(or it stays within the $\dmod$'s)
as one easily checks. Note the directedness: 
$\dmod(\videm{0})\xleftarrow{\text{{\tiny$\ord{1}$}}}\dmod(\videm{1})
\xleftarrow{\text{{\tiny$\ord{1}$}}}\dmod(\videm{2})
\xleftarrow{\text{{\tiny$\ord{1}$}}}\dmod(\videm{3})$, 
making the cell modules well-defined since they are obtained by modding out 
terms that are $\ord{1}$-smaller.

Further, the indecomposable projectives are
\begin{gather}
\begin{tikzpicture}[baseline=(current bounding box.center)]
	\node[mygreen] at (.5,1.5) {$\prmod(\videm{1})=\ualg(\typeA_{3})e_{\videm{1}}$};
	\draw[thin, mygreen, dotted] (-.5,1.25) rectangle (1.5,-.85);
	\node[mygreen] at (0,.5) {$\lmod(\videm{1})$};
	\node[myred] at (0,0) {$\lmod(\videm{2})$};
	\draw[thin, mygreen] (-.4,.75) rectangle (.4,-.25);
	\node[mygreen] at (0,1) {$\dmod(\videm{1})$};
	\node[mygreen] at (1,-.5) {$\lmod(\videm{1})$};
	\draw[thin, black] (.6,-.25) rectangle (1.4,-.75);
	\node[black] at (1,0) {$\dmod(\videm{0})$};
\end{tikzpicture}
,\quad
\begin{tikzpicture}[baseline=(current bounding box.center)]
	\node[myred] at (.5,1.5) {$\prmod(\videm{2})=\ualg(\typeA_{3})e_{\videm{2}}$};
	\draw[thin, myred, dotted] (-.5,1.25) rectangle (1.5,-.85);
	\node[myred] at (0,.5) {$\lmod(\videm{2})$};
	\node[myblue] at (0,0) {$\lmod(\videm{3})$};
	\draw[thin, myred] (-.4,.75) rectangle (.4,-.25);
	\node[myred] at (0,1) {$\dmod(\videm{2})$};
	\node[mygreen] at (1,0) {$\lmod(\videm{1})$};
	\node[myred] at (1,-.5) {$\lmod(\videm{2})$};
	\draw[thin, mygreen] (.6,.25) rectangle (1.4,-.75);
	\node[mygreen] at (1,.5) {$\dmod(\videm{1})$};
\end{tikzpicture}
,\quad
\begin{tikzpicture}[baseline=(current bounding box.center)]
	\node[myblue] at (.5,1.5) {$\prmod(\videm{3})=\ualg(\typeA_{3})e_{\videm{3}}$};
	\draw[thin, myblue, dotted] (-.5,1.25) rectangle (1.5,-.85);
	\node[myblue] at (0,.5) {$\lmod(\videm{3})$};
	\draw[thin, myblue] (-.4,.75) rectangle (.4,.25);
	\node[myblue] at (0,1) {$\dmod(\videm{3})$};
	\node[myred] at (1,0) {$\lmod(\videm{2})$};
	\node[myblue] at (1,-.5) {$\lmod(\videm{3})$};
	\draw[thin, myred] (.6,.25) rectangle (1.4,-.75);
	\node[myred] at (1,.5) {$\dmod(\videm{2})$};
\end{tikzpicture}
\end{gather}
which have the indicated $\dmod$-filtrations. 
We will see in \fullref{proposition:cell-filtration} that this is a general feature, 
with partial order in the filtration being relative. See also \makeautorefname{example}{Examples}
\fullref{example:rel-cell-algebras-3} and 
\ref{example:rel-cell-algebras-4} below.
\end{example}

\makeautorefname{example}{Example}

Morally speaking, in the relative setup we can separate 
parts that are cellular by using the idempotents 
in $\ciset$. Here two prototypical examples:

\begin{example}\label{example:rel-cell-algebras-3}
(We use a notation similar as in \fullref{example:rel-cell-algebras-2}.)
Consider the following family of quivers, i.e. the cycles 
on $n$ vertices with 
double edges:
\begin{gather}\label{eq:rel-cell-algebras-3}
\begin{gathered}
\typeAt_{n}=
\xy
(0,-2)*{
\begin{tikzcd}[ampersand replacement=\&,row sep=scriptsize,column sep=scriptsize,arrows={shorten >=-.5ex,shorten <=-.5ex},labels={inner sep=.225ex}]
\videm{1}
\arrow[yshift=.5ex,->]{rr}
\arrow[xshift=-.5ex,<-]{dd}
\arrow[xshift=.5ex,->]{dd}
\&
\phantom{.}
\&
\videm{2}
\arrow[yshift=-.5ex,->]{ll}
\arrow[xshift=.5ex,->]{dd}
\arrow[xshift=-.5ex,<-]{dd}
\\
\phantom{.}
\&
\phantom{.}
\&
\phantom{.}
\\
\videm{n}
\arrow[yshift=.5ex,-]{r}
\arrow[yshift=-.5ex,<-]{r}
\&
\cdots
\&
\videm{3}
\arrow[yshift=.5ex,<-]{l}
\arrow[yshift=-.5ex,-]{l}
\\
\end{tikzcd}};
\endxy
,
\quad
(\paths{i}{j})^{\invo}=\paths{j}{i},
\\
\text{Relations:} \,
\begin{gathered}
\text{All $2$-cycles at the vertex $\videm{i}$ are equal, i.e. }
\pathss{i}{j}{i}=\pathss{i}{k}{i};
\\
\text{Going two steps in one direction is zero, e.g.}
\pathss{1}{n}{n{-}1}\!=\!0.
\end{gathered}
\end{gathered}
\end{gather}
(The case $n=2$ is special and excluded.)
As in \fullref{example:rel-cell-algebras-2}, 
we let $\calg(\typeAt_{n})$ be the corresponding 
quotient of the path algebra of $\typeAt_{n}$, with relations 
given in \eqref{eq:rel-cell-algebras-3}, and anti-involution 
${}^{\invo}$ 
given by swapping the orientations of the arrows. 
Again, the Cartan matrices are easy to calculate and up to base change:
\begin{gather}
\cmatrix(\calg(\typeAt_{3}))=
\begin{psmallmatrix}
2 & 1 & 1 \\
1 & 2 & 1 \\
1 & 1 & 2 \\
\end{psmallmatrix}
,\;\;
\cmatrix(\calg(\typeAt_{4}))=
\begin{psmallmatrix}
2 & 1 & 0 & 1\\
1 & 2 & 1 & 0\\
0 & 1 & 2 & 1\\
1 & 0 & 1 & 2\\
\end{psmallmatrix}
,\;\;
\cmatrix(\calg(\typeAt_{5}))=
\begin{psmallmatrix}
2 & 1 & 0 & 0 & 1\\
1 & 2 & 1 & 0 & 0\\
0 & 1 & 2 & 1 & 0\\
0 & 0 & 1 & 2 & 1\\
1 & 0 & 0 & 1 & 2\\
\end{psmallmatrix}
,\;\;
\text{etc.}
\end{gather}
The algebra $\calg(\typeAt_{n})$ is known as 
the \textit{type $\typeAt_{n}$ zigzag algebra}, and is for 
example studied in the context of categorical 
actions, see e.g. \cite[Section 3.1]{gtw1} or \cite[Section 2.3]{mt1}.
In contrast to $\ualg(\typeA_{n})$, the algebra $\calg(\typeAt_{n})$ 
is not cellular (at least 
for even $n$ where the Cartan matrix is only positive semidefinite, cf. 
\fullref{remark:pos-def}, although this holds in general, see \cite[Theorem A]{et-zigzag}), but 
it is relative cellular as we discuss now in the case $n=3$, 
the general case again being similar.

In this case we take the following relative cell datum. Let 
$\ceps=e_{\videm{2}}+e_{\videm{3}}$ and let
\begin{gather}
\begin{gathered}
\cset
=\{\videm{2}\ord{e_{\videm{1}}}\videm{3}\ord{e_{\videm{1}}}\videm{1}\}
=\{\videm{1}\ord{\ceps}\videm{2}\ord{\ceps}\videm{3}\},
\\
\cmset(\videm{1})=\{e_{\videm{1}},\paths{2}{1}\},
\quad
\cmset(\videm{2})=\{e_{\videm{2}},\paths{3}{2}\},
\quad
\cmset(\videm{3})=\{e_{\videm{3}},\paths{1}{3}\},
\quad
\cbas{S,T}{\videm{i}}=S\circ T^{\invo},
\\
\ciset=\{e_{\videm{1}},\ceps\},
\,
\cepsmap(e_{\videm{1}})=
\cepsmap(\paths{1}{3})=
e_{\videm{1}},
\cepsmap(e_{\videm{2}})=
\cepsmap(e_{\videm{3}})=
\cepsmap(\paths{2}{1})=
\cepsmap(\paths{3}{2})=\ceps.
\end{gathered}
\end{gather}
Next, the relative cellular basis and the cell modules:
\begin{gather}
\begin{tikzpicture}[baseline=(current bounding box.center)]
  \matrix (m) [matrix of math nodes, nodes in empty cells, row 3/.style={nodes={minimum height=10mm}}, row sep={.1cm}, column sep={.1cm}, text height=1.5ex, text depth=0.25ex, ampersand replacement=\&] {
 \& \videm{1} \& \& \& \& \videm{2} \& \& \& \& \videm{3} \&
\\
e_{\videm{1}} \& \& \paths{2}{1} \& \& e_{\videm{2}} \& \& \paths{3}{2} \& \& e_{\videm{3}} \& \& \paths{1}{3}
\\
\paths{3}{1} \& \&  \renewcommand*{\arraystretch}{.6}
\begin{matrix}\pathss{1}{2}{1}\\=\\\pathss{1}{3}{1}\end{matrix}
\&
\&
\paths{1}{2} \& \&  \renewcommand*{\arraystretch}{.6}
\begin{matrix}\pathss{2}{1}{2}\\=\\\pathss{2}{3}{2}\end{matrix}
\&
\&
\paths{2}{3} \& \&  \renewcommand*{\arraystretch}{.6}
\begin{matrix}\pathss{3}{1}{3}\\=\\\pathss{3}{2}{3}\end{matrix}
\\
    };
    \draw[thin,dotted] ($(m-1-1.south)+(-.5,0)$) to ($(m-1-1.south)+(9.0,0)$);
	\draw[thin,dotted] ($(m-1-1.east)+(2.45,.25)$) to ($(m-1-1.east)+(2.45,-2.05)$);
	\draw[thin,dotted] ($(m-1-1.east)+(5.7,.25)$) to ($(m-1-1.east)+(5.7,-2.05)$);
	\draw[thin, mygreen] (-1.975,.5) rectangle (-4.5,0);
	\draw[thin, myblue] (-1.975,-.1) rectangle (-4.5,-1.1);
	\draw[thin, myred] (1.3,.5) rectangle (-1.3,0);
	\draw[thin, mygreen] (1.3,-.1) rectangle (-1.3,-1.1);
	\draw[thin, myblue] (1.925,.5) rectangle (4.55,0);
	\draw[thin, myred] (1.925,-.1) rectangle (4.55,-1.1);
	\draw[thin, white, fill=white] (-3.9,.05) rectangle (-3.0,-.45);
	\draw[thin, white, fill=white] (-.7,.05) rectangle (.2,-.45);
	\draw[thin, white, fill=white] (3.5,.05) rectangle (2.6,-.45);
	\node[mygreen] at (-3.4,.2) {$\dmod(\videm{1})$};
	\node[myblue] at (-3.4,-.6) {$\dmod(\videm{3})$};
	\node[myred] at (-.15,.2) {$\dmod(\videm{2})$};
	\node[mygreen] at (-.15,-.6) {$\dmod(\videm{1})$};
	\node[myblue] at (3.1,.2) {$\dmod(\videm{3})$};
	\node[myred] at (3.1,-.6) {$\dmod(\videm{2})$};
	\draw[thick,->] (-3.3,0) to node [left] {{\tiny$\ord{e_{\videm{1}}}$}} (-3.3,-.25) to (-3.3,-.4);
	\draw[thick,->] (-.1,0) to node [left] {{\tiny$\ord{\ceps}$}} (-.1,-.25) to (-.1,-.4);
	\draw[thick,->] (3.15,0) to node [left] {{\tiny$\ord{\ceps}$}} (3.15,-.25) to (3.15,-.4);
\end{tikzpicture}
\end{gather}
Hereby we like to stress the difference 
between {\color{mygreen}$\dmod(\videm{1})$} in the left and middle column: 
The one in the left column is $\dmod(\videm{1},e_{\videm{1}})$, 
the other is $\dmod(\videm{1},\paths{2}{1})$, the first of which 
is defined using the partial order $\ord{e_{\videm{1}}}$, the second the 
partial order $\ord{\ceps}$.

The indecomposable projectives themselves are
\begin{gather}
\begin{tikzpicture}[baseline=(current bounding box.center)]
	\node[mygreen] at (.5,1.5) {$\prmod(\videm{1})=\calg(\typeAt_{3})e_{\videm{1}}$};
	\draw[thin, mygreen, dotted] (-.5,1.25) rectangle (1.5,-.85);
	\node[mygreen] at (0,.5) {$\lmod(\videm{1})$};
	\node[myred] at (0,0) {$\lmod(\videm{2})$};
	\draw[thin, mygreen] (-.4,.75) rectangle (.4,-.25);
	\node[mygreen] at (0,1) {$\dmod(\videm{1})$};
	\node[myblue] at (1,0) {$\lmod(\videm{3})$};
	\node[mygreen] at (1,-.5) {$\lmod(\videm{1})$};
	\draw[thin, myblue] (.6,.25) rectangle (1.4,-.75);
	\node[myblue] at (1,.5) {$\dmod(\videm{3})$};
\end{tikzpicture}
,\quad
\begin{tikzpicture}[baseline=(current bounding box.center)]
	\node[myred] at (.5,1.5) {$\prmod(\videm{2})=\calg(\typeAt_{3})e_{\videm{2}}$};
	\draw[thin, myred, dotted] (-.5,1.25) rectangle (1.5,-.85);
	\node[myred] at (0,.5) {$\lmod(\videm{2})$};
	\node[myblue] at (0,0) {$\lmod(\videm{3})$};
	\draw[thin, myred] (-.4,.75) rectangle (.4,-.25);
	\node[myred] at (0,1) {$\dmod(\videm{2})$};
	\node[mygreen] at (1,0) {$\lmod(\videm{1})$};
	\node[myred] at (1,-.5) {$\lmod(\videm{2})$};
	\draw[thin, mygreen] (.6,.25) rectangle (1.4,-.75);
	\node[mygreen] at (1,.5) {$\dmod(\videm{1})$};
\end{tikzpicture}
,\quad
\begin{tikzpicture}[baseline=(current bounding box.center)]
	\node[myblue] at (.5,1.5) {$\prmod(\videm{3})=\calg(\typeAt_{3})e_{\videm{3}}$};
	\draw[thin, myblue, dotted] (-.5,1.25) rectangle (1.5,-.85);
	\node[myblue] at (0,.5) {$\lmod(\videm{3})$};
	\node[mygreen] at (0,0) {$\lmod(\videm{1})$};
	\draw[thin, myblue] (-.4,.75) rectangle (.4,-.25);
	\node[myblue] at (0,1) {$\dmod(\videm{3})$};
	\node[myred] at (1,0) {$\lmod(\videm{2})$};
	\node[myblue] at (1,-.5) {$\lmod(\videm{3})$};
	\draw[thin, myred] (.6,.25) rectangle (1.4,-.75);
	\node[myred] at (1,.5) {$\dmod(\videm{2})$};
\end{tikzpicture}
\end{gather}
which have order depended cyclic patterns.
\end{example}

\begin{remark}\label{remark:homological-orders}
Note that \fullref{example:rel-cell-algebras-3} also shows the 
dependence of the homological characterizations of cell modules on the choice 
of idempotents and their associated partial orders. If one chooses the finer 
set of idempotents $e_{\videm{1}}$, $e_{\videm{2}}$, and $e_{\videm{3}}$ and partial orders
\begin{gather}\label{eq:prime-set}
\cset
=\{\videm{3}\ord{e_\videm{1}}\videm{1}\ord{e_\videm{1}}\videm{2}\}
=\{\videm{1}\ord{e_\videm{2}}\videm{2}\ord{e_\videm{2}}\videm{3}\}
=\{\videm{2}\ord{e_\videm{3}}\videm{3}\ord{e_\videm{3}}\videm{1}\},
\end{gather}
one checks that $\calg(\typeAt_{3})$ is also relative cellular with this choice. 
But, in contrast to the choice in 
\fullref{example:rel-cell-algebras-3}, the cell module  $\dmod(\videm{i},e_{\videm{i}})$ 
is now the maximal quotient of $\prmod(\videm{i})$ with all composition factors $\lmod(\videm{j})$ 
satisfying $\videm{i}\leq_{e_\videm{i}}\videm{j}$. This is reminiscent of the properties of 
standard modules for quasi-hereditary algebras and was for example used in \cite{xi} to give 
homological characterizations of when a cellular algebra is quasi-hereditary. In the relative 
cellular case these homological characterizations depend decisively 
on the choice of idempotents and the partial orders.

We also stress that the $\calg(\typeAt_{3})(\ciset)$-module structure of $\calg(\typeAt_{3})$ 
depends on $\ciset$. This can be seen by comparing 
the cases with $\ciset$ being as in \ref{example:rel-cell-algebras-3} and $\ciset$ being as in \eqref{eq:prime-set}. 
Moreover, in both cases the sets of the isomorphism classes of simples of $\calg(\typeAt_{3})$ and 
$\calg(\typeAt_{3})(\ciset)$ contain three one-dimensional modules, but the indecomposable projectives of 
$\calg(\typeAt_{3})(\ciset)$ depend on the choice of $\ciset$.
\end{remark}

\begin{example}\label{example:rel-cell-algebras-4}
(We use a notation similar as in \fullref{example:rel-cell-algebras-2}.)
As in \fullref{example:rel-cell-algebras-3} we use the 
graphs $\typeAt_{n}$ to define 
a quiver algebra $\calg^{\prime}(\typeAt_{n})$. 
But we impose the relations in \eqref{eq:rel-cell-algebras-4} 
instead of those in \eqref{eq:rel-cell-algebras-3}. 
(We keep the anti-involution ${}^{\invo}$.)
\begin{gather}\label{eq:rel-cell-algebras-4}
\text{Relations: }
\begin{gathered}
\text{All $2$-cycles at the vertex $\videm{i}$ are equal, i.e. }
\pathss{i}{j}{i}=\pathss{i}{k}{i};
\\
\text{Going around the circle is zero, e.g. }
\pathssss{1}{2}{\cdots}{n}{1}=0.
\end{gathered}
\end{gather}
The Cartan matrices are, up to base change, now
\begin{gather}
\cmatrix(\calg^{\prime}(\typeAt_{3}))=
\begin{psmallmatrix}
3 & 3 & 3 \\
3 & 3 & 3 \\
3 & 3 & 3 \\
\end{psmallmatrix}
,\;\;
\cmatrix(\calg^{\prime}(\typeAt_{4}))=
\begin{psmallmatrix}
4 & 4 & 4 & 4\\
4 & 4 & 4 & 4\\
4 & 4 & 4 & 4\\
4 & 4 & 4 & 4\\
\end{psmallmatrix}
,\;\;
\cmatrix(\calg^{\prime}(\typeAt_{5}))=
\begin{psmallmatrix}
5 & 5 & 5 & 5 & 5\\
5 & 5 & 5 & 5 & 5\\
5 & 5 & 5 & 5 & 5\\
5 & 5 & 5 & 5 & 5\\
5 & 5 & 5 & 5 & 5\\
\end{psmallmatrix}
,\;\;
\text{etc.},
\end{gather}
which are not positive definite giving us that the
$\calg^{\prime}(\typeAt_{n})$ are, by see \fullref{remark:pos-def}, not cellular algebras. 
However, they are relative cellular, where we
as before discuss the $n=3$ case in detail, 
the general case being similar. We can take
\begin{gather}
\begin{gathered}
\cset
=\{\videm{3}\ord{e_{\videm{1}}}\videm{2}\ord{e_{\videm{1}}}\videm{1}\}
=\{\videm{1}\ord{e_{\videm{2}}}\videm{3}\ord{e_{\videm{2}}}\videm{2}\}
=\{\videm{2}\ord{e_{\videm{3}}}\videm{1}\ord{e_{\videm{3}}}\videm{3}\},
\\
\cmset(\videm{1})=\{e_{\videm{1}},\paths{3}{1},\pathss{2}{3}{1}\},
\quad
\cmset(\videm{2})=\{e_{\videm{2}},\paths{1}{2},\pathss{3}{1}{2}\},
\quad
\cmset(\videm{3})=\{e_{\videm{3}},\paths{2}{3},\pathss{1}{2}{3}\},
\\
\cbas{S,T}{\videm{i}}=S\circ T^{\invo},
\quad
\ciset=\{e_{\videm{1}},e_{\videm{2}},e_{\videm{3}}\},
\quad
\cepsmap(\paths{i}{\cdot})=e_{\videm{i}}.
\end{gathered}
\end{gather}
The relative cellular basis and the cell modules are then
\begin{gather}
\begin{tikzpicture}[baseline=(current bounding box.center)]
  \matrix (m) [matrix of math nodes, nodes in empty cells, row 1/.style={nodes={minimum height=7mm}}, row 3/.style={nodes={minimum height=13mm}}, row sep={.1cm}, column sep={3.0cm,between origins}, text height=1.5ex, text depth=0.25ex, ampersand replacement=\&] {
 \& \videm{1} \&  
\\
e_{\videm{1}} \& \paths{3}{1} \& \pathss{2}{3}{1}
\\
\paths{2}{1} \& \renewcommand*{\arraystretch}{.6}\begin{matrix}
\pathss{1}{2}{1}\\
=\\
\pathss{1}{3}{1}\end{matrix} \& \renewcommand*{\arraystretch}{.6}\begin{matrix}
\pathsss{3}{1}{2}{1}\\
=\\
\pathsss{3}{1}{3}{1}\end{matrix}
\\
\pathss{3}{2}{1} \& \renewcommand*{\arraystretch}{.6}\begin{matrix}\pathsss{2}{3}{2}{1}
\\ = \\ \pathsss{2}{1}{2}{1}\end{matrix} \& \renewcommand*{\arraystretch}{.6}\begin{matrix}\pathssss{1}{2}{3}{2}{1}
\\ = \\ \pathssss{1}{2}{1}{2}{1}\end{matrix}
\\
    };
	\draw[thin,dotted] ($(m-1-1.south)+(-.75,.2)$) to ($(m-1-1.south)+(11.25,.2)$);
	\draw[thin,dotted] ($(m-1-1.east)+(7.25,.25)$) to ($(m-1-1.east)+(7.25,-3.2)$);
	\draw[thin,dotted] ($(m-1-1.east)+(9.25,.25)$) to ($(m-1-1.east)+(9.25,-3.2)$);
	\draw[thin, mygreen] (3.8,1.1) rectangle (-3.8,.15);
	\draw[thin, myred] (3.8,.1) rectangle (-3.8,-.85);
	\draw[thin, myblue] (3.8,-.9) rectangle (-3.8,-1.85);
	\draw[thin, white, fill=white] (-2.3,.6) rectangle (-1.3,-1.4);
	\node[mygreen] at (-1.7,.6) {$\dmod(\videm{1})$};
	\node[myred] at (-1.7,-.4) {$\dmod(\videm{2})$};
	\node[myblue] at (-1.7,-1.4) {$\dmod(\videm{3})$};
	\draw[thick,->] (-1.7,.35) to node [left] {{\tiny$\ord{e_{\videm{1}}}$}} (-1.7,0) to (-1.7,-.1);
	\draw[thick,->] (-1.7,-.65) to node [left] {{\tiny$\ord{e_{\videm{1}}}$}} (-1.7,-1) to (-1.7,-1.15);
	\node at (5.2,1.35) {$\videm{2}$};
	\node[myred] at (5.2,.6) {$\dmod(\videm{2})$};
	\node[myblue] at (5.2,-.4) {$\dmod(\videm{3})$};
	\node[mygreen] at (5.2,-1.4) {$\dmod(\videm{1})$};
	\draw[thick,->] (5.2,.35) to node [left] {{\tiny$\ord{e_{\videm{2}}}$}} (5.2,0) to (5.2,-.1);
	\draw[thick,->] (5.2,-.65) to node [left] {{\tiny$\ord{e_{\videm{2}}}$}} (5.2,-1) to (5.2,-1.15);
	\node at (7.2,1.35) {$\videm{3}$};
	\node[myblue] at (7.2,.6) {$\dmod(\videm{3})$};
	\node[mygreen] at (7.2,-.4) {$\dmod(\videm{1})$};
	\node[myred] at (7.2,-1.4) {$\dmod(\videm{2})$};
	\draw[thick,->] (7.2,.35) to node [left] {{\tiny$\ord{e_{\videm{3}}}$}} (7.2,0) to (7.2,-.1);
	\draw[thick,->] (7.2,-.65) to node [left] {{\tiny$\ord{e_{\videm{3}}}$}} (7.2,-1) to (7.2,-1.15);
\end{tikzpicture}
\end{gather}
with the cell modules in the second and third columns 
being analog.

The indecomposable projectives themselves are
\begin{gather}
\begin{tikzpicture}[baseline=(current bounding box.center)]
	\node[mygreen] at (1,1.5) {$\prmod(\videm{1})=\calg^{\prime}(\typeAt_{3})e_{\videm{1}}$};
	\draw[thin, mygreen, dotted] (-.5,1.25) rectangle (2.5,-1.85);
	\node[mygreen] at (0,.5) {$\lmod(\videm{1})$};
	\node[myblue] at (0,0) {$\lmod(\videm{3})$};
	\node[myred] at (0,-.5) {$\lmod(\videm{2})$};
	\draw[thin, mygreen] (-.4,.75) rectangle (.4,-.75);
	\node[mygreen] at (0,1) {$\dmod(\videm{1})$};
	\node[myred] at (1,0) {$\lmod(\videm{2})$};
	\node[mygreen] at (1,-.5) {$\lmod(\videm{1})$};
	\node[myblue] at (1,-1) {$\lmod(\videm{3})$};
	\draw[thin, myred] (.6,.25) rectangle (1.4,-1.25);
	\node[myred] at (1,.5) {$\dmod(\videm{2})$};
	\node[myblue] at (2,-.5) {$\lmod(\videm{3})$};
	\node[myred] at (2,-1) {$\lmod(\videm{2})$};
	\node[mygreen] at (2,-1.5) {$\lmod(\videm{1})$};
	\draw[thin, myblue] (1.6,-.25) rectangle (2.4,-1.75);
	\node[myblue] at (2,0) {$\dmod(\videm{3})$};
\end{tikzpicture}
,\quad
\begin{tikzpicture}[baseline=(current bounding box.center)]
	\node[myred] at (1,1.5) {$\prmod(\videm{2})=\calg^{\prime}(\typeAt_{3})e_{\videm{2}}$};
	\draw[thin, myred, dotted] (-.5,1.25) rectangle (2.5,-1.85);
	\node[myred] at (0,.5) {$\lmod(\videm{2})$};
	\node[mygreen] at (0,0) {$\lmod(\videm{1})$};
	\node[myblue] at (0,-.5) {$\lmod(\videm{3})$};
	\draw[thin, myred] (-.4,.75) rectangle (.4,-.75);
	\node[myred] at (0,1) {$\dmod(\videm{2})$};
	\node[myblue] at (1,0) {$\lmod(\videm{3})$};
	\node[myred] at (1,-.5) {$\lmod(\videm{2})$};
	\node[mygreen] at (1,-1) {$\lmod(\videm{1})$};
	\draw[thin, myblue] (.6,.25) rectangle (1.4,-1.25);
	\node[myblue] at (1,.5) {$\dmod(\videm{3})$};
	\node[mygreen] at (2,-.5) {$\lmod(\videm{1})$};
	\node[myblue] at (2,-1) {$\lmod(\videm{3})$};
	\node[myred] at (2,-1.5) {$\lmod(\videm{2})$};
	\draw[thin, mygreen] (1.6,-.25) rectangle (2.4,-1.75);
	\node[mygreen] at (2,0) {$\dmod(\videm{1})$};
\end{tikzpicture}
,\quad
\begin{tikzpicture}[baseline=(current bounding box.center)]
	\node[myblue] at (1,1.5) {$\prmod(\videm{3})=\calg^{\prime}(\typeAt_{3})e_{\videm{3}}$};
	\draw[thin, myblue, dotted] (-.5,1.25) rectangle (2.5,-1.85);
	\node[myblue] at (0,.5) {$\lmod(\videm{3})$};
	\node[myred] at (0,0) {$\lmod(\videm{2})$};
	\node[mygreen] at (0,-.5) {$\lmod(\videm{1})$};
	\draw[thin, myblue] (-.4,.75) rectangle (.4,-.75);
	\node[myblue] at (0,1) {$\dmod(\videm{3})$};
	\node[mygreen] at (1,0) {$\lmod(\videm{1})$};
	\node[myblue] at (1,-.5) {$\lmod(\videm{3})$};
	\node[myred] at (1,-1) {$\lmod(\videm{2})$};
	\draw[thin, mygreen] (.6,.25) rectangle (1.4,-1.25);
	\node[mygreen] at (1,.5) {$\dmod(\videm{1})$};
	\node[myred] at (2,-.5) {$\lmod(\videm{2})$};
	\node[mygreen] at (2,-1) {$\lmod(\videm{1})$};
	\node[myblue] at (2,-1.5) {$\lmod(\videm{3})$};
	\draw[thin, myred] (1.6,-.25) rectangle (2.4,-1.75);
	\node[myred] at (2,0) {$\dmod(\videm{2})$};
\end{tikzpicture}
\end{gather}
which again have (quite heavy) cyclic patterns.
\end{example}

\begin{remark}\label{left-to-you}
In the above three examples we leave it to the reader to check 
that (\ref{definition:cell-algebra}.a) to (\ref{definition:cell-algebra}.d) hold. 
(For \fullref{example:rel-cell-algebras-2}: 
(\ref{definition:cell-algebra}.d) is the most crucial 
thing to be checked, with (\ref{definition:cell-algebra}.c) then being automatic. 
See also the proof of (\ref{proposition:cell-relative-cell}.b) 
and \fullref{remark-example-rel-cell-algebras-1}. 
For \fullref{example:rel-cell-algebras-3}: 
In this case \eqref{eq:idem-props-1} 
needs to be checked. It follows since 
e.g. $e_{\videm{1}}\calg(\typeAt_{3})e_{\videm{1}}$ equals the linear 
span of all $2$-cycles at the vertex $\videm{1}$ that 
are either $e_{\videm{1}}$ or act on everything except 
$e_{\videm{1}}$ as zero.
For \fullref{example:rel-cell-algebras-4}: 
Again, \eqref{eq:idem-props-1} 
is non-trivial. However, it can be checked 
by keeping in mind that 
$e_{\videm{i}}\calg^{\prime}(\typeAt_{3})e_{\videm{i}}$ equals the linear 
span of all $2$-cycles at the vertex $\videm{i}$.)
\end{remark}

\begin{example}\label{example:rel-cell-algebras-res-enveloping-algebras}
Let $\K$ be a field of positive characteristic $p>0$.
In \fullref{section:rel-cell-res} we show that the restricted enveloping 
algebra $\uslt$ is relative cellular, but not cellular. 
(Except in case $p=2$ where $\uslt$ is actually 
already cellular, see \fullref{remark:p-is-two}.)

Similarly,
let $\K$ be any field and fix $\qpar\in\K$ to be a root of unity, $\qpar\neq\pm 1$.
The case of the so-called small quantum group $\suslt$ at $\qpar$
associated to $\slt$ (see e.g. \cite{lu1})
works mutatis mutandis as for $\uslt$, i.e. $\suslt$ is relative cellular, 
but not cellular as long as $\qpar\neq\pm\sqrt{-1}$.
\end{example}

\begin{example}\label{example:rel-cell-algebras-5}
Another example is an annular version of 
arc algebras $\aarc$ that we discuss in detail in 
\fullref{section:arc-stuff}. Note that 
$\aarc$ is again not a cellular algebra, but only 
a relative cellular algebra, cf. \fullref{proposition:not-cell}.
\end{example}

\begin{furtherdirections}\label{remark:tilting}
The most famous examples of cellular algebras are 
coming from centralizer algebras as e.g. Hecke, Temperley--Lieb 
or Brauer algebras. These arise from fairly general 
constructions via the theory of tilting modules, see e.g. 
\cite{ast1} or \cite[Appendix A]{bt2}. We do not know what
the relative version of this is.
\end{furtherdirections}
\section{Simple and projective modules}\label{section:basic-cell-props}

\makeautorefname{theorem}{Theorems}

In the present section we discuss the representation theory 
of relative cellular algebras, following \cite[Sections 2 and 3]{gl1}. 
We stress hereby that some of the statements, 
e.g. \fullref{theorem:simple-set} 
and \ref{theorem:bgg}, hold verbatim as for cellular algebras. 
However, our proofs here are, and have to be, quite different.

\makeautorefname{theorem}{Theorem}

We continue to use the notation from 
\fullref{section:basic-cell}. In particular, 
$\calg$ denotes a relative cellular algebra with 
relative cell datum as in \eqref{eq:rel-cell-datum}.

\subsection{Simple quotients of cell modules}\label{subsection:simples}

First, we define a bilinear 
form on cell modules to get a better handle on their structure.

\begin{lemma}\label{lemma:pairing-first}
Let $\lambda\in\cset$ and $a\in\calg$. 
Then, for $S,T,U,V \in \cmset(\lambda)$, we have
\begin{gather}
\cbas{U,S}{\lambda}\,a\,\cbas{T,V}{\lambda} 
\in 
\cpair{a}(S,T)\cbas{U,V}{\lambda} 
+ 
\left(
\ceps_{U}\calg(\ord{\ceps_{U}}\!\lambda)
\cap
\calg(\ord{\ceps_{V}}\!\lambda)\ceps_{V}
\right),
\end{gather}
where $\cpair{a}(S,T)
=r_{\cbas{U,S}{\lambda}a}(U,T)
=r_{a^{\invo}\cbas{V,T}{\lambda}}(V,S)\in\K$.
\end{lemma}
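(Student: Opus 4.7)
The plan is to reduce to the special case of a product of two cellular basis elements at level $\lambda$ and then bootstrap. Concretely, I would first establish that for any $U,X,Y,V\in\cmset(\lambda)$,
\begin{gather*}
\cbas{U,X}{\lambda}\cbas{Y,V}{\lambda}
\in
\psi(X,Y)\cbas{U,V}{\lambda}
+
\bigl(\ceps_{U}\calg(\ord{\ceps_{U}}\!\lambda)\cap\calg(\ord{\ceps_{V}}\!\lambda)\ceps_{V}\bigr),
\end{gather*}
where $\psi(X,Y)=r_{\cbas{U,X}{\lambda}}(U,Y)=r_{\cbas{V,Y}{\lambda}}(V,X)$ depends only on $X$ and $Y$. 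This follows by expanding the product in two ways: first with the left-multiplication formula (\ref{definition:cell-algebra}.d) applied to $\cbas{U,X}{\lambda}$ acting on $\cbas{Y,V}{\lambda}$, producing a sum over $Y'\in\cmset(\lambda)$ modulo $\calg(\ord{\ceps_{V}}\!\lambda)\ceps_{V}$; and second with the right-multiplication formula (\ref{lemma:cell-algebra-star-1}.d), producing a sum over $X'\in\cmset(\lambda)$ modulo $\ceps_{U}\calg(\ord{\ceps_{U}}\!\lambda)$. Since neither error contains any level-$\lambda$ basis element, the two level-$\lambda$ parts must be equal; the only common basis element between the two sums is $\cbas{U,V}{\lambda}$, so linear independence forces all off-diagonal coefficients to vanish and pins down the remaining coefficient as the single scalar $\psi(X,Y)$, automatically independent of $U$ and $V$.

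For the general case, I would use (\ref{lemma:cell-algebra-star-1}.d) to write
\begin{gather*}
\cbas{U,S}{\lambda}\,a=\sum_{S'\in\cmset(\lambda)}r_{a^{\invo}}(S',S)\,\cbas{U,S'}{\lambda}+c,\qquad c\in\ceps_{U}\calg(\ord{\ceps_{U}}\!\lambda),
\end{gather*}
and then right-multiply by $\cbas{T,V}{\lambda}$. The basis-times-basis identity handles each $\cbas{U,S'}{\lambda}\cbas{T,V}{\lambda}$, producing $\psi(S',T)\cbas{U,V}{\lambda}$ modulo the claimed intersection. For the remainder $c\cdot\cbas{T,V}{\lambda}$, the right-ideal property of $\ceps_{U}\calg(\ord{\ceps_{U}}\!\lambda)$ from (\ref{lemma:cell-algebra-star-1}.c) already places it in $\ceps_{U}\calg(\ord{\ceps_{U}}\!\lambda)$. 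Expanding $c\cdot\cbas{T,V}{\lambda}$ also via (\ref{definition:cell-algebra}.d) and using that $\ceps_{U}\calg(\ord{\ceps_{U}}\!\lambda)$ contains no level-$\lambda$ basis element forces all its level-$\lambda$ coefficients to vanish, so this remainder also sits inside $\calg(\ord{\ceps_{V}}\!\lambda)\ceps_{V}$. Combining yields the main formula with $\cpair{a}(S,T)=\sum_{S'}r_{a^{\invo}}(S',S)\,\psi(S',T)$.

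The two stated identifications of $\cpair{a}(S,T)$ are then formal. Applying (\ref{definition:cell-algebra}.d) with $b=\cbas{U,S}{\lambda}a$ acting on $\cbas{T,V}{\lambda}$ and comparing the level-$\lambda$ part with the expansion above yields $r_{b}(U,T)=\cpair{a}(S,T)$ (and $r_{b}(T',T)=0$ for all $T'\neq U$); this proves the first identification and simultaneously shows that $\cpair{a}(S,T)$ does not depend on $U$. For the second identification I would apply ${}^{\invo}$ to the whole identity—which interchanges the data $(U,S,a,T,V)$ with $(V,T,a^{\invo},S,U)$ and swaps the two ideal spaces in the error term—and then re-run the previous argument on the relabeled identity to read the coefficient off the other side.

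The main obstacle is ensuring that the error lives in the \emph{intersection} rather than merely the sum of the two ideal spaces. In the non-relative setup a single partial order suffices and this question is invisible; here one has to juggle $\ord{\ceps_{U}}$ and $\ord{\ceps_{V}}$ simultaneously and use that \emph{both} orders exclude level $\lambda$ from their respective ``strictly-below'' subspaces in order to force the unwanted coefficients to vanish through linear independence of the level-$\lambda$ cellular basis elements.
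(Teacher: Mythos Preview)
Your argument is correct and rests on the same mechanism as the paper's: expand the triple product in two ways using the left-multiplication rule \eqref{eq:mult-left} and the right-multiplication rule \eqref{eq:mult-right}, then compare the level-$\lambda$ parts using linear independence. The paper, however, does this in a single step rather than via your two-stage reduction: it applies \eqref{eq:mult-left} directly with $b=\cbas{U,S}{\lambda}a$ acting on $\cbas{T,V}{\lambda}$, and \eqref{eq:mult-right} directly to $\cbas{U,S}{\lambda}$ multiplied by $a\cbas{T,V}{\lambda}$, and compares coefficients. Since neither error term contains any level-$\lambda$ basis element, the two expansions force the level-$\lambda$ part to be a scalar multiple of $\cbas{U,V}{\lambda}$ and simultaneously show that the error lies in the intersection. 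In other words, what you do in your ``identifications'' paragraph is already the entire proof; your preliminary special case and bootstrapping step are correct but redundant.
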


\begin{proof}
We apply \eqref{eq:mult-left} respectively 
\eqref{eq:mult-right} and compare coefficients. 
The statement then follows immediately.
\end{proof}

Thus, we can define $\cpair{a}(S,T)$ as 
in \fullref{lemma:pairing-first} and this definition
is independent of $U,V\in\cmset(\lambda)$. Of special
importance is the case where $a=\ceps_\lambda$ is 
a local unit for the set $\{\cbas{S,T}{\lambda} \mid S,T \in \cmset(\lambda)\}$, where 
we observe that $\cpair{\ceps_\lambda}(S,T)$ is the same for any such local unit.

\begin{definition}\label{definition:pairing}
For $\lambda\in\cset$ we define a bilinear 
form $\Cpair{\lambda}\colon\dmod(\lambda)\times\dmod(\lambda)\rightarrow\K$ 
by setting $\Cpair{\lambda}(\dbas{S}{\lambda},\dbas{T}{\lambda}) 
=\cpair{\ceps_\lambda}(S,T)$ for $S,T\in\cmset(\lambda)$, and extending bilinearly.
\end{definition}

For (\ref{proposition:form-properties}.c) of the following lemma recall 
$\diso{\lambda}$ as defined in \fullref{lemma:cell-dual}. 
Its proof is mutatis mutandis as in \cite[Proposition 2.4]{gl1} and omitted.

\begin{lemmaqed}\label{proposition:form-properties}
For $\lambda\in\cset$ we have the following.
\smallskip
\begin{enumerate}[label=(\alph*)]

\setlength\itemsep{.15cm}

\item The bilinear form $\Cpair{\lambda}$ is symmetric.

\item For $a\in\calg$ and $x,y\in\dmod(\lambda)$ we 
have $\Cpair{\lambda}(a\acts x,y) 
=\Cpair{\lambda}(x,a^{\invo}\acts y)$.

\item For $u,x,y\in\dmod(\lambda)$ 
we have $\diso{\lambda}(u \otimes x)\acts y 
=\Cpair{\lambda}(x,y)u$.\qedhere

\end{enumerate} 
\end{lemmaqed}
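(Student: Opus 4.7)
The plan is to imitate the argument of Graham--Lehrer \cite[Proposition 2.4]{gl1}, using \fullref{lemma:pairing-first} (specialised to $a=\ceps_\lambda$) as an intrinsic formula that extracts $\cpair{\ceps_\lambda}(S,T)$ as the coefficient of $\cbas{U,V}{\lambda}$ in the triple product $\cbas{U,S}{\lambda}\ceps_\lambda\cbas{T,V}{\lambda}$ modulo strictly lower terms. Each of the three claims will then follow by expanding an appropriate triple product $\cbas{U,S}{\lambda}\,b\,\cbas{T,V}{\lambda}$ in two different ways and comparing these top-level coefficients. The main obstacle I anticipate is the bookkeeping of error terms: in the cellular setting of \cite{gl1} left- and right-lower terms share a single two-sided ideal, whereas here the left errors sit in $\ceps_U\calg(\ord{\ceps_U}\!\lambda)$, the right errors in $\calg(\ord{\ceps_V}\!\lambda)\ceps_V$, and the two partial orders $\ord{\ceps_U}$ and $\ord{\ceps_V}$ may disagree. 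The saving observation, already packaged into \fullref{lemma:pairing-first}, is that each kind of error strictly lies below $\lambda$ in its own order and hence contributes nothing at the $\lambda$-level of the relative cellular basis, so the top-level coefficient comparisons proceed unambiguously.

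For (a), I would apply ${}^\invo$ to the identity of \fullref{lemma:pairing-first} with $a=\ceps_\lambda$. The local unit $\ceps_\lambda$ is the sum of those $\ceps\in\ciset$ that occur as some $\ceps_S$ for $S\in\cmset(\lambda)$, and each such $\ceps$ is fixed by $\invo$ by (\ref{definition:cell-algebra}.c), so $\ceps_\lambda^\invo=\ceps_\lambda$. Together with $(\cbas{S,T}{\lambda})^\invo=\cbas{T,S}{\lambda}$, this identifies $\cpair{\ceps_\lambda}(S,T)$ with the $\cbas{V,U}{\lambda}$-coefficient of $\cbas{V,T}{\lambda}\ceps_\lambda\cbas{S,U}{\lambda}$. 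A direct application of \fullref{lemma:pairing-first} to the same product, with the roles $(U,S,T,V)$ replaced by $(V,T,S,U)$, identifies the same coefficient with $\cpair{\ceps_\lambda}(T,S)$, forcing symmetry.

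For (b), I would expand $\cbas{U,S}{\lambda}\,a\,\cbas{T,V}{\lambda}$ in two ways: first via (\ref{definition:cell-algebra}.d) applied to $a\cbas{T,V}{\lambda}$, then collapsing each $\cbas{U,S}{\lambda}\cbas{T',V}{\lambda}$ using \fullref{lemma:pairing-first}; alternatively via (\ref{lemma:cell-algebra-star-1}.d) applied to $\cbas{U,S}{\lambda}a$, then collapsing each $\cbas{U,S'}{\lambda}\cbas{T,V}{\lambda}$. Equating the resulting $\cbas{U,V}{\lambda}$-coefficients yields
\begin{gather*}
{\textstyle\sum_{T'\in\cmset(\lambda)}}\,r_a(T',T)\,\cpair{\ceps_\lambda}(S,T')
=
{\textstyle\sum_{S'\in\cmset(\lambda)}}\,r_{a^\invo}(S',S)\,\cpair{\ceps_\lambda}(S',T),
\end{gather*}
which unpacks to $\Cpair{\lambda}(\dbas{S}{\lambda},a\acts\dbas{T}{\lambda})=\Cpair{\lambda}(a^\invo\acts\dbas{S}{\lambda},\dbas{T}{\lambda})$; extending bilinearly and swapping $a\leftrightarrow a^\invo$ gives (b).

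For (c), I would compare the top-level part of the expansion of $\cbas{U,S}{\lambda}\cbas{T,V}{\lambda}$ under (\ref{definition:cell-algebra}.d), which equals $\sum_{T'}r_{\cbas{U,S}{\lambda}}(T',T)\cbas{T',V}{\lambda}$, with the top-level part given by \fullref{lemma:pairing-first}, which equals $\cpair{\ceps_\lambda}(S,T)\cbas{U,V}{\lambda}$. Linear independence of the $\cbas{T',V}{\lambda}$ for $T'\in\cmset(\lambda)$ modulo lower terms forces $r_{\cbas{U,S}{\lambda}}(T',T)=\delta_{T',U}\,\cpair{\ceps_\lambda}(S,T)$, which by \fullref{definition:cell-module} translates into $\cbas{U,S}{\lambda}\acts\dbas{T}{\lambda}=\Cpair{\lambda}(\dbas{S}{\lambda},\dbas{T}{\lambda})\,\dbas{U}{\lambda}$; since $\diso{\lambda}(\dbas{U}{\lambda}\otimes\dbas{S}{\lambda})=\cbas{U,S}{\lambda}$, extending bilinearly yields (c).
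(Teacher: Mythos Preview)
Your proposal is correct and follows precisely the approach the paper indicates: the paper omits the proof entirely, stating that it is ``mutatis mutandis as in \cite[Proposition 2.4]{gl1}'', and your argument is exactly the Graham--Lehrer coefficient comparison adapted to the relative setting, with the necessary care that the left and right error terms lie below $\lambda$ in the respective orders $\ord{\ceps_U}$ and $\ord{\ceps_V}$ (so neither interferes with the $\lambda$-level coefficients). Your explicit verification that the chosen local unit $\ceps_\lambda$ is $\invo$-fixed is the one genuinely new check needed beyond \cite{gl1}, and you handle it correctly.
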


The main use of $\Cpair{\lambda}$ is 
\fullref{corollary:radical} below: Elements of $\dmod(\lambda)$ 
not contained in the radical of $\Cpair{\lambda}$ 
are cyclic generators for $\dmod(\lambda)$. Hereby, 
as usual, the \textit{radical of $\Cpair{\lambda}$} is 
linear subspace of $\dmod(\lambda)$ given by
$\radi{\lambda} = 
\{x\in\dmod(\lambda)\mid\Cpair{\lambda}(x,y)=0 
\text{ for all } y \in \dmod(\lambda)\}$.

\begin{lemma}\label{lemma:cell-cyclic}
Let $\lambda\in\cset$ and 
$z\in\dmod(\lambda)$. Then
\begin{gather}
\calg(\{\lambda\})\acts z 
= 
\img(\Cpair{\lambda}({}_{-},z)) 
\dmod(\lambda)\subset\calg\acts z.
\end{gather}
In particular, if 
$\img(\Cpair{\lambda}({}_{-},z))=\K$, then 
we have $\dmod(\lambda)=\calg(\{\lambda\})\acts z=\calg\acts z$.
\end{lemma}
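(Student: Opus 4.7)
The plan is to leverage the isomorphism $\diso{\lambda}\colon\dmod(\lambda)\otimes\dmod(\lambda)^{\invo}\xrightarrow{\cong}\calg(\{\lambda\})$ from \fullref{lemma:cell-dual} together with (\ref{proposition:form-properties}.c), which converts the action of an element of $\calg(\{\lambda\})$ on $z$ into an explicit expression involving the bilinear form $\Cpair{\lambda}({}_{-},z)$. Once this is set up the statement essentially writes itself.

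First I would note that the inclusion $\calg(\{\lambda\})\acts z\subset\calg\acts z$ is immediate, since $\calg(\{\lambda\})$ is a linear subspace of $\calg$ and the action is linear in the first argument. So the content of the lemma lies in the equality $\calg(\{\lambda\})\acts z=\img(\Cpair{\lambda}({}_{-},z))\dmod(\lambda)$.

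To prove this, I would take an arbitrary $a\in\calg(\{\lambda\})$ and, using $\diso{\lambda}$, write $a=\sum_{i}\diso{\lambda}(u_{i}\otimes x_{i})$ for some $u_{i},x_{i}\in\dmod(\lambda)$. Applying (\ref{proposition:form-properties}.c) term by term yields
\begin{gather}
a\acts z=\sum_{i}\diso{\lambda}(u_{i}\otimes x_{i})\acts z=\sum_{i}\Cpair{\lambda}(x_{i},z)\,u_{i}.
\end{gather}
As $a$ ranges over $\calg(\{\lambda\})$ the pairs $(u_{i},x_{i})$ range over all of $\dmod(\lambda)\times\dmod(\lambda)$ (by surjectivity of $\diso{\lambda}$ and bilinearity), so the right-hand side runs precisely through $\img(\Cpair{\lambda}({}_{-},z))\cdot\dmod(\lambda)$. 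This proves the equality.

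For the ``in particular'' statement, observe that $\Cpair{\lambda}({}_{-},z)\colon\dmod(\lambda)\to\K$ is a $\K$-linear map, so its image is a $\K$-subspace of $\K$, i.e. either $\{0\}$ or $\K$; the hypothesis $\img(\Cpair{\lambda}({}_{-},z))=\K$ then forces $\img(\Cpair{\lambda}({}_{-},z))\dmod(\lambda)=\dmod(\lambda)$, and combined with the already established equality and the trivial inclusion $\calg(\{\lambda\})\acts z\subset\calg\acts z\subset\dmod(\lambda)$ we get the chain $\dmod(\lambda)=\calg(\{\lambda\})\acts z=\calg\acts z$. There is no real obstacle here — the work has all been done by \fullref{lemma:cell-dual} and \fullref{proposition:form-properties}; the only thing to be slightly careful about is that ``$\img(\Cpair{\lambda}({}_{-},z))\dmod(\lambda)$'' means the product of a subspace of $\K$ with $\dmod(\lambda)$, not a sum over a basis, so the two-valued nature of subspaces of $\K$ is what makes the ``in particular'' clean.
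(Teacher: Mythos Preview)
Your proof is correct and follows essentially the same approach as the paper: both use \fullref{lemma:cell-dual} to identify $\calg(\{\lambda\})$ with $\dmod(\lambda)\otimes\dmod(\lambda)^{\invo}$ via $\diso{\lambda}$, and then apply (\ref{proposition:form-properties}.c) to compute the action on $z$. The only cosmetic difference is that the paper works directly with basis elements $\cbas{S,T}{\lambda}=\diso{\lambda}(\dbas{S}{\lambda}\otimes\dbas{T}{\lambda})$ to show both inclusions separately, whereas you write a general element of $\calg(\{\lambda\})$ as a sum of simple tensors; the content is identical.
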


\begin{proof}
Let $y\in\dmod(\lambda)$ and 
$S,T\in\cmset(\lambda)$. 
By (\ref{proposition:form-properties}.c) we have
\begin{gather}
\cbas{S,T}{\lambda}\acts z 
= 
\diso{\lambda}
(\dbas{S}{\lambda}\otimes\dbas{T}{\lambda})\acts z 
= 
\Cpair{\lambda}(\dbas{T}{\lambda},z)\dbas{S}{\lambda}
\in
\img(\Cpair{\lambda}({}_{-},z)) 
\dmod(\lambda),
\end{gather}
and conversely
\begin{gather}
\Cpair{\lambda}(y,z)\dbas{S}{\lambda} 
= 
\diso{\lambda}(\dbas{S}{\lambda} \otimes y)\acts z 
\in\calg(\{\lambda\})\acts z.
\end{gather}
Hence, we have equality. The special case is then clear.
\end{proof}

Since we work over a field we get as a direct consequence:

\begin{corollary}\label{corollary:radical}
We have
$z\in\dmod(\lambda)\setminus\radi{\lambda}$ if and only if 
$\calg(\{\lambda\})\acts z =\dmod(\lambda)$.
\end{corollary}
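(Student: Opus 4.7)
The plan is to apply \fullref{lemma:cell-cyclic} and exploit the fact that $\K$ is a field, so that $\img(\Cpair{\lambda}({}_{-},z))$ is a $\K$-subspace of $\K$ and therefore either $0$ or all of $\K$.

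First I would handle the forward direction. Suppose $z\in\dmod(\lambda)\setminus\radi{\lambda}$. By definition of the radical, there exists $y\in\dmod(\lambda)$ with $\Cpair{\lambda}(y,z)\neq 0$ (using the symmetry from (\ref{proposition:form-properties}.a) to ensure the asymmetric-looking condition $\Cpair{\lambda}({}_{-},z)\neq 0$ is equivalent to $z\not\in\radi{\lambda}$). Hence $\img(\Cpair{\lambda}({}_{-},z))$ contains a nonzero scalar and, being a $\K$-linear subspace of $\K$, must equal $\K$. The ``in particular'' part of \fullref{lemma:cell-cyclic} then gives $\calg(\{\lambda\})\acts z=\dmod(\lambda)$.

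For the reverse direction, assume $z\in\radi{\lambda}$. Then $\Cpair{\lambda}(y,z)=0$ for every $y\in\dmod(\lambda)$, so $\img(\Cpair{\lambda}({}_{-},z))=0$ and \fullref{lemma:cell-cyclic} yields $\calg(\{\lambda\})\acts z=0$. Since $\cmset(\lambda)$ is non-empty by (\ref{definition:cell-algebra}.a), $\dmod(\lambda)\neq 0$, and hence $\calg(\{\lambda\})\acts z\neq\dmod(\lambda)$. Contrapositively, $\calg(\{\lambda\})\acts z=\dmod(\lambda)$ forces $z\not\in\radi{\lambda}$.

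There is essentially no obstacle here; the statement is a clean consequence of \fullref{lemma:cell-cyclic}, with the only subtlety being the invocation of symmetry of $\Cpair{\lambda}$ to translate between $\Cpair{\lambda}(z,{}_{-})$ and $\Cpair{\lambda}({}_{-},z)$, and the observation that a $\K$-subspace of the one-dimensional space $\K$ is either trivial or everything.
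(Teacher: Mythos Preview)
Your proof is correct and follows exactly the approach the paper intends: the paper simply says this is a direct consequence of \fullref{lemma:cell-cyclic} since we work over a field, and your argument spells out precisely that reasoning. Your explicit mention of symmetry of $\Cpair{\lambda}$ to match the radical definition with the linear form $\Cpair{\lambda}({}_{-},z)$, and your observation that $\dmod(\lambda)\neq 0$, are the right details to fill in.
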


Next,  
$\radi{\lambda}$ allows us to deduce that cell modules 
have either a trivial or a simple head.

\begin{proposition}\label{proposition:to-define-Ls}
Let $\lambda\in\cset$.
\smallskip
\begin{enumerate}[label=(\alph*)]

\setlength\itemsep{.15cm}

\item The radical 
$\radi{\lambda}$ is a submodule 
of $\dmod(\lambda)$.

\item If $\Cpair{\lambda}$ is non-zero, then 
$\neatafrac{\dmod(\lambda)}{\radi{\lambda}}$ is simple.

\item If $\Cpair{\lambda}$ is non-zero, then 
$\neatafrac{\dmod(\lambda)}{\radi{\lambda}}$ 
is the head of $\dmod(\lambda)$.\qedhere
\end{enumerate}
\end{proposition}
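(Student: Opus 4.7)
My plan is to derive all three parts from the preceding results (\ref{proposition:form-properties}) and \fullref{corollary:radical}, essentially reducing each to a one-line observation.

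For part (a), I will use the invariance of the form established in (\ref{proposition:form-properties}.b). Explicitly, if $z\in\radi{\lambda}$, $a\in\calg$, and $y\in\dmod(\lambda)$, then
\begin{gather}
\Cpair{\lambda}(a\acts z,y) = \Cpair{\lambda}(z,a^{\invo}\acts y) = 0,
\end{gather}
since $a^{\invo}\acts y \in \dmod(\lambda)$ and $z$ is in the radical. Hence $a\acts z\in\radi{\lambda}$, so $\radi{\lambda}$ is a submodule.

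For parts (b) and (c), the key observation is that \fullref{corollary:radical} immediately implies that \emph{every} proper submodule of $\dmod(\lambda)$ is contained in $\radi{\lambda}$: if $N\subset\dmod(\lambda)$ is a submodule and $N\not\subset\radi{\lambda}$, pick $z\in N\setminus\radi{\lambda}$; the corollary gives $\calg(\{\lambda\})\acts z=\dmod(\lambda)$, forcing $N=\dmod(\lambda)$. Now the assumption $\Cpair{\lambda}\neq 0$ means $\radi{\lambda}$ is itself a proper submodule of $\dmod(\lambda)$. Combining these two facts shows that $\radi{\lambda}$ is the unique maximal submodule of $\dmod(\lambda)$. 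Consequently, $\dmod(\lambda)/\radi{\lambda}$ is simple (proving (b)) and is the head of $\dmod(\lambda)$, i.e.\ the quotient by its Jacobson radical (proving (c)).

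I do not anticipate a genuine obstacle here: all the technical work has already been absorbed into \fullref{lemma:cell-cyclic} and its corollary (which handle the relative setup via the distinguished local units $\ceps_{\lambda}$), and into (\ref{proposition:form-properties}.b) (which gives invariance of the pairing). The proposition is then a formal consequence, paralleling the corresponding statement for cellular algebras in \cite{gl1}.
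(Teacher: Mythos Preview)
Your proposal is correct and follows essentially the same approach as the paper: part (a) is deduced from the invariance property (\ref{proposition:form-properties}.b), and parts (b) and (c) from \fullref{corollary:radical} by observing that any element outside $\radi{\lambda}$ generates $\dmod(\lambda)$, so every proper submodule lies in $\radi{\lambda}$. The only cosmetic difference is that the paper proves (b) directly from the generation statement and then argues separately for (c), whereas you first establish that $\radi{\lambda}$ is the unique maximal submodule and derive both (b) and (c) from that; the content is identical.
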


\begin{proof}
\textit{(\ref{proposition:to-define-Ls}.a).} 
This follows immediately from (\ref{proposition:form-properties}.b).
\smallskip

\noindent\textit{(\ref{proposition:to-define-Ls}.b).} 
By \fullref{corollary:radical}, any 
$z\in\dmod(\lambda)\setminus\radi{\lambda}$ generates $\dmod(\lambda)$.
Thus, the claim follows.
\smallskip

\noindent\textit{(\ref{proposition:to-define-Ls}.c).} 
Again by \fullref{corollary:radical}, any 
$z\in\dmod(\lambda)\setminus\radi{\lambda}$ generates $\dmod(\lambda)$. 
Hence, any proper 
submodule of 
$\dmod(\lambda)$ is 
contained in $\radi{\lambda}$. Thus, $\radi{\lambda}$ 
is the unique maximal submodule of $\dmod(\lambda)$ and 
so equal to the (representation theoretical) radical $\Radi(\dmod(\lambda))$. 
(Recall that $\Radi(\dmod(\lambda))$ is 
intersection of all proper, maximal submodules of $\dmod(\lambda)$.)
\end{proof}

We write $\csetz=\{\lambda\in\cset\mid\Cpair{\lambda}\text{ is non-zero}\}$.
Having \fullref{proposition:to-define-Ls}
we can define:

\begin{definition}\label{definition:the-Ls}
For $\lambda\in\csetz$, 
we set $\lmod(\lambda)=\neatafrac{\dmod(\lambda)}{\radi{\lambda}}$.
\end{definition}

\subsection{Morphisms between cell modules}\label{subsection:cell-morphisms}

In contrast to the setup of cellular algebras, 
the existence of morphisms between cell modules 
is a less useful tool as we will see.

\begin{lemma}\label{lemma:submodule-lemma}
Let $\lambda\in\csetz$, 
$\mu\in\cset$, and 
$f \in \homc(\dmod(\lambda),\neatafrac{\dmod(\mu)}{N})$ non-zero for 
some submodule $N\subset\dmod(\mu)$. Then there exists 
$S\in\cmset(\lambda)$ such that 
$\mu\Ord{\ceps_{S}}\!\lambda$.
\end{lemma}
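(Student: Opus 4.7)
The plan is to exploit the non-vanishing of $\Cpair{\lambda}$, guaranteed by $\lambda\in\csetz$, to produce a specific basis element of $\calg(\{\lambda\})$ that acts non-trivially on $\dmod(\mu)/N$, and then to use (\ref{lemma:cell-algebra-star-1}.d) to restrict which $\mu$'s this is compatible with.

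First I would use $\Cpair{\lambda}\neq 0$ to pick $T,U\in\cmset(\lambda)$ with $\Cpair{\lambda}(\dbas{T}{\lambda},\dbas{U}{\lambda})\neq 0$. By \fullref{corollary:radical}, $\dbas{U}{\lambda}$ is then a cyclic generator of $\dmod(\lambda)$, so the non-triviality of $f$ forces $f(\dbas{U}{\lambda})\neq 0$ — otherwise $f(\dmod(\lambda))=f(\calg\acts \dbas{U}{\lambda})=\calg\acts f(\dbas{U}{\lambda})=0$, contradicting $f\neq 0$.

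Next I would combine \fullref{lemma:pairing-first} (applied with $a$ a local unit for $\{\cbas{S,T}{\lambda}\mid S,T\in\cmset(\lambda)\}$) with \fullref{definition:cell-module} to establish the action identity
\begin{gather*}
\cbas{S,T}{\lambda}\acts\dbas{U}{\lambda}
=\Cpair{\lambda}(\dbas{T}{\lambda},\dbas{U}{\lambda})\,\dbas{S}{\lambda}
\end{gather*}
in $\dmod(\lambda)$, valid for every $S\in\cmset(\lambda)$. Applying the $\calg$-linear map $f$ then transfers this identity to $\dmod(\mu)/N$, giving $\cbas{S,T}{\lambda}\acts f(\dbas{U}{\lambda})=\Cpair{\lambda}(\dbas{T}{\lambda},\dbas{U}{\lambda})\,f(\dbas{S}{\lambda})$. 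Choosing $S\in\cmset(\lambda)$ such that $f(\dbas{S}{\lambda})\neq 0$ (possible since $f\neq 0$), the right-hand side is non-zero, so $\cbas{S,T}{\lambda}$ acts non-trivially on $f(\dbas{U}{\lambda})$ in $\dmod(\mu)/N$, hence non-trivially on $\dmod(\mu)$ itself. This forces the existence of $V,W,Y\in\cmset(\mu)$ with $r_{\cbas{S,T}{\lambda}}(V,W)\neq 0$, where $r_{\cbas{S,T}{\lambda}}(V,W)$ is the scalar from \eqref{eq:mult-left} for the product $\cbas{S,T}{\lambda}\cdot\cbas{W,Y}{\mu}$.

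Finally, by (\ref{lemma:cell-algebra-star-1}.d), $\cbas{S,T}{\lambda}\cdot\cbas{W,Y}{\mu}\in\calg(\{\lambda\})+\ceps_S\calg(\ord{\ceps_S}\!\lambda)\subset\calg(\Ord{\ceps_S}\!\lambda)$. The scalar $r_{\cbas{S,T}{\lambda}}(V,W)$ is precisely the coefficient of the basis element $\cbas{V,Y}{\mu}$ in the cellular basis expansion of this product, because the ``lower'' part $\calg(\ord{\ceps_Y}\!\mu)\ceps_Y$ appearing in \eqref{eq:mult-left} consists only of basis elements $\cbas{A,B}{\nu}$ with $\nu\ord{\ceps_Y}\!\mu$ and hence contains no $\cbas{V,Y}{\mu}$-contribution. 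If $\mu\not\Ord{\ceps_S}\!\lambda$, then $\cbas{V,Y}{\mu}\notin\calg(\Ord{\ceps_S}\!\lambda)$ and this coefficient must therefore vanish — contradicting $r_{\cbas{S,T}{\lambda}}(V,W)\neq 0$. Hence $\mu\Ord{\ceps_S}\!\lambda$, which is the desired conclusion with $S$ being the element chosen above.

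The main obstacle will be the bookkeeping step at the end: making sure that $r_{\cbas{S,T}{\lambda}}(V,W)$ truly coincides with the $\cbas{V,Y}{\mu}$-coefficient in the basis expansion of the product, so that the subspace containment coming from (\ref{lemma:cell-algebra-star-1}.d) really forces vanishing when $\mu\not\Ord{\ceps_S}\!\lambda$. In the classical cellular case this subtlety is invisible because only one partial order is in play; here it is the crucial bridge between the action-theoretic description of $r$ on cell modules and the multiplicative control provided by the relative cell datum.
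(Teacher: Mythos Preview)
Your argument is correct and follows essentially the same route as the paper's proof: use $\lambda\in\csetz$ to find a generator of $\dmod(\lambda)$, deduce that some basis element $\cbas{S,T}{\lambda}\in\calg(\{\lambda\})$ acts non-trivially on $\dmod(\mu)$, and then read off $\mu\Ord{\ceps_S}\!\lambda$ from (\ref{lemma:cell-algebra-star-1}.d). Your version is slightly more explicit in that you construct the specific $\cbas{S,T}{\lambda}$ directly via the pairing identity $\cbas{S,T}{\lambda}\acts\dbas{U}{\lambda}=\Cpair{\lambda}(\dbas{T}{\lambda},\dbas{U}{\lambda})\,\dbas{S}{\lambda}$ (which is (\ref{proposition:form-properties}.c)), whereas the paper first picks a generic $a\in\calg(\{\lambda\})$ and then extracts suitable $S,T$; the final bookkeeping step linking $r_{\cbas{S,T}{\lambda}}(V,W)$ to the cellular basis coefficient is identical in both.
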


\begin{proof}
Since $\Cpair{\lambda}$ is non-zero there exists -- 
by \fullref{corollary:radical} -- a generator $z \in \dmod(\lambda)$ 
such that $\calg(\{\lambda\})\acts z=\dmod(\lambda)$. 
Then there exists $a\in\calg(\{\lambda\})$ such 
that $f(a\acts z)=a\acts f(z)\neq 0$, i.e. there exist 
$U,U^{\prime}\in\cmset(\mu)$ such that $r_{a}(U,U^{\prime}) \neq 0$.

This implies that 
there exist $S,T\in\cmset(\lambda)$ such 
that for all $V \in \cmset(\mu)$ the expansion 
of $\cbas{S,T}{\lambda}\cbas{U,V}{\mu}$, using 
(\ref{lemma:cell-algebra-star-1}.d), contains a 
non-zero summand in $\calg(\{\mu\})$. Thus, 
$\mu\Ord{\ceps_{S}}\!\lambda$.
\end{proof}

As can be seen in \fullref{lemma:submodule-lemma}, 
it is possible to have morphism in both 
``directions'', and obtain 
$\lambda\Ord{\ceps}\!\mu\Ord{\ceps^{\prime}}\!\lambda$. 
But we might still have $\lambda\neq\mu$ in case 
$\ceps\neq\ceps^{\prime}$. This is in contrast to 
the framework of cellular algebras.

Let us give an alternative formulation of \fullref{lemma:submodule-lemma}.

\begin{lemma}\label{lemma:submodule-lemma-alt2}
Let $\lambda,\mu\in\cset$ and $S,T\in\cmset(\lambda)$ such 
that $\cbas{S,T}{\lambda}\acts\dmod(\mu)\neq 0$ for some basis element $\cbas{S,T}{\lambda}$. 
Then $\mu\Ord{\ceps_{S}}\!\lambda$.
\end{lemma}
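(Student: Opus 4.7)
The plan is to directly unwind the hypothesis using the two multiplication formulas for $\calg$ and track coefficients.

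By \fullref{definition:cell-module}, the condition $\cbas{S,T}{\lambda}\acts\dmod(\mu)\neq 0$ is equivalent to saying that there exist $U,U'\in\cmset(\mu)$ with $r_{\cbas{S,T}{\lambda}}(U',U)\neq 0$. Fixing any $V\in\cmset(\mu)$, the definition \eqref{eq:mult-left} translates this into the statement that the product $\cbas{S,T}{\lambda}\cbas{U,V}{\mu}$ has a non-zero summand lying in $\calg(\{\mu\})$ when expanded in the relative cellular basis.

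The main step is then to expand the \emph{same} product via the right-multiplication formula (\ref{lemma:cell-algebra-star-1}.d) applied with $a=\cbas{U,V}{\mu}$:
\begin{gather*}
\cbas{S,T}{\lambda}\cbas{U,V}{\mu} \in \sum_{T'\in\cmset(\lambda)} r_{\cbas{V,U}{\mu}}(T',T)\,\cbas{S,T'}{\lambda} + \ceps_{S}\calg(\ord{\ceps_{S}}\!\lambda).
\end{gather*}
Here the first summand lives entirely inside $\calg(\{\lambda\})$, whereas the second is contained in $\calg(\ord{\ceps_{S}}\!\lambda)$ by (\ref{lemma:cell-algebra-star-1}.b).

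To finish, one tracks the non-zero $\calg(\{\mu\})$-component through this decomposition. By the injectivity of the relative cellular basis map (\ref{definition:cell-algebra}.a), either (i) a basis element $\cbas{U',V}{\mu}\in\calg(\{\mu\})$ appears among the terms $\cbas{S,T'}{\lambda}$ of the first sum, which forces $\lambda=\mu$ (and hence $\mu\Ord{\ceps_{S}}\!\lambda$ by reflexivity of $\Ord{\ceps_{S}}$); or (ii) the $\calg(\{\mu\})$-part must come from $\ceps_{S}\calg(\ord{\ceps_{S}}\!\lambda)\subset\calg(\ord{\ceps_{S}}\!\lambda)$, which by the definition \eqref{eq:notation-new} forces $\mu\ord{\ceps_{S}}\!\lambda$. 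In either case $\mu\Ord{\ceps_{S}}\!\lambda$, as required. The only mildly delicate point is the dichotomy above; everything else is routine bookkeeping once one recognizes that expanding the product from the right automatically inserts the $\ord{\ceps_{S}}$-filtration that carries the required information about $\mu$.
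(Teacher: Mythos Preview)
Your proof is correct and follows essentially the same approach as the paper's: translate the hypothesis into a non-zero $\calg(\{\mu\})$-component of the product $\cbas{S,T}{\lambda}\cbas{U,V}{\mu}$, then expand that same product via the right-multiplication rule \eqref{eq:mult-right} to place it inside $\calg(\{\lambda\})+\ceps_{S}\calg(\ord{\ceps_{S}}\!\lambda)\subset\calg(\Ord{\ceps_{S}}\!\lambda)$, forcing $\mu\Ord{\ceps_{S}}\!\lambda$. The paper states this in one sentence; your version spells out the dichotomy and the supporting references (\ref{lemma:cell-algebra-star-1}.b) and (\ref{definition:cell-algebra}.a) explicitly, but the argument is the same.
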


\begin{proof}
By assumption there exists 
$U,V\in\cmset(\mu)$ such that 
the expansion 
of $\cbas{S,T}{\lambda}\cbas{U,V}{\mu}$, using 
(\ref{lemma:cell-algebra-star-1}.d), contains a 
non-zero summand in $\calg(\{\mu\})$. Thus, $\mu\Ord{\ceps_{S}}\lambda$.
\end{proof}

Despite the fact that hom-spaces 
between cell modules are not as useful as 
in the case of cellular algebras, 
the following is surprisingly still true.

\begin{propositionqed}\label{proposition:Delta-maps}
If $\lambda\in\csetz$, then $\ehomc(\dmod(\lambda))=\K$.
\end{propositionqed}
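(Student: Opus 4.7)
The plan is to exploit the identification of $\calg(\{\lambda\})$ with $\dmod(\lambda) \otimes \dmod(\lambda)^{\invo}$ from \fullref{lemma:cell-dual}, together with the key identity $\diso{\lambda}(u \otimes x) \acts y = \Cpair{\lambda}(x,y)\, u$ from (\ref{proposition:form-properties}.c). The idea is that elements of $\calg(\{\lambda\})$ act on $\dmod(\lambda)$ as rank-one operators encoded by the bilinear form, which is very rigid, and any endomorphism must commute with all these rank-one operators simultaneously, forcing it to be scalar.

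More concretely, let $f \in \ehomc(\dmod(\lambda))$ be arbitrary and fix $u, x, y \in \dmod(\lambda)$. Setting $a = \diso{\lambda}(u \otimes x) \in \calg(\{\lambda\}) \subset \calg$, I would compute $f(a \acts y)$ in two ways. On the one hand $a \acts y = \Cpair{\lambda}(x,y)\, u$ by (\ref{proposition:form-properties}.c), so $f(a \acts y) = \Cpair{\lambda}(x,y)\, f(u)$. On the other hand, since $f$ is $\calg$-linear, $f(a \acts y) = a \acts f(y) = \Cpair{\lambda}(x, f(y))\, u$. Equating these gives the identity
\begin{gather}
\Cpair{\lambda}(x,y)\, f(u) = \Cpair{\lambda}(x, f(y))\, u,
\end{gather}
valid for \emph{all} $u, x, y \in \dmod(\lambda)$.

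Since $\lambda \in \csetz$ by hypothesis, $\Cpair{\lambda}$ is non-zero, so I can pick $x_{0}, y_{0} \in \dmod(\lambda)$ with $\Cpair{\lambda}(x_{0}, y_{0}) \neq 0$. Specializing the identity to $x = x_{0}$, $y = y_{0}$ then gives $f(u) = c\, u$ for all $u \in \dmod(\lambda)$, where $c = \Cpair{\lambda}(x_{0}, f(y_{0})) / \Cpair{\lambda}(x_{0}, y_{0}) \in \K$ depends only on $f$ (not on $u$). Hence $f$ is scalar multiplication, which yields $\ehomc(\dmod(\lambda)) \cong \K$.

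I do not expect a genuine obstacle here: all the ingredients (the isomorphism $\diso{\lambda}$, the rank-one formula for its action, and the non-vanishing of $\Cpair{\lambda}$) are already in place, and the derivation is a direct two-line computation. The only minor point to be careful about is that the identity must be pushed through for \emph{all} arguments before specializing, rather than first restricting to a generator $z$ as in the cellular setup; the formulation via $\diso{\lambda}$ avoids any need to invoke \fullref{corollary:radical} or cyclicity.
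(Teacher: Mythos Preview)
Your proof is correct and follows essentially the same approach as the paper: both hinge on the rank-one identity $\diso{\lambda}(u\otimes x)\acts y=\Cpair{\lambda}(x,y)\,u$ from (\ref{proposition:form-properties}.c) together with the non-vanishing of $\Cpair{\lambda}$. The only notable difference is that the paper phrases the argument for the slightly more general situation of maps $\dmod(\lambda)\to\dmod(\lambda)/N$ (showing any such map is $x\mapsto rx+N$), a form that is invoked again later in the proof of \fullref{proposition:decomp-matrix}; your endomorphism-only version suffices for the proposition as stated but would need the obvious one-line adaptation to cover that later use.
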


\begin{proof}
We prove the following claim, which immediately 
implies the proposition.
\medskip

\noindent\setword{\ref{proposition:Delta-maps}.Claim}{Delta-maps}.
Let $\lambda\in\csetz$ and let $N\subset\dmod(\lambda)$ be some submodule. 
Then any element $f\in\homc(\dmod(\lambda),\neatafrac{\dmod(\lambda)}{N})$ 
is of the form $f(x)=rx+N$ for some $r\in\K$.
\medskip

\noindent Proof of \ref{Delta-maps}.
By assumption we can 
choose $y,y^{\prime}\in \dmod(\lambda)$ such that 
$\Cpair{\lambda}(y,y^{\prime})=1$. 
(Recall that we work over a field.) Fix $u$ 
such that $f(y^{\prime})=u+N$ 
and set $r=\Cpair{\lambda}(y,u)$. Then
$f(x)=f(\Cpair{\lambda}(y,y^{\prime})x)
=\diso{\lambda}(x\otimes y)\acts f(y^{\prime})=
\diso{\lambda}(x\otimes y)\acts u+N$. 
Hence, we get 
$f(x)= \Cpair{\lambda}(y,u)x +N = rx+N$.
\end{proof}

\subsection{Projective modules}\label{subsec:projectives}

We have already seen in \fullref{subsection:cell-morphisms} 
that some statements from cellular algebras 
are quite different in the relative setup. Even more, from now on 
the relative setup needs some very careful treatment of 
the involved partial orders, all of which is trivial for cellular algebras. 

We start with some 
statements about idempotents. 
In the following we 
call an idempotent 
$e\in\calg$ an 
\textit{idempotent summand of $\ceps\in\ciset$} 
if $\ceps e=e=e \ceps$. 
In this case we write $e\isum\ceps$.

\begin{remark}\label{remark:decomposition-unit}
By \fullref{lemma:decomposition-unit}, at least in case 
$\csetf$, we can restrict 
our attention to $e\isum\ceps$: Since we 
get a(n orthogonal) decomposition of the unit, we can find 
$\ceps\in\ciset$ for all 
indecomposable projectives $\prmod$ of $\calg$ 
such that $\prmod\cong\calg e$ for 
primitive $e\isum\ceps$. Thus, 
up to isomorphism, it suffices to study the projectives 
of the form $\calg e$ for $e\isum\ceps$.
\end{remark}

\begin{lemma}\label{lemma:cell-algebra-star-2}
Let $e\isum\ceps$ and 
$\cideal_{\ceps}$ an 
$\ord{\ceps}$-ideal. Then the following hold.
\smallskip
\begin{enumerate}[label=(\alph*)]

\setlength\itemsep{.15cm}

\item One has $e\calg(\{\lambda\})\subset\calg(\Ord{\ceps}\lambda)\supset\calg(\{\lambda\})e$.

\item One has $e\calg(\cideal_{\ceps})\subset\calg(\cideal_{\ceps})
\supset\calg(\cideal_{\ceps})e$.

\item One has $\calg(\cideal_{\ceps})e 
=\calg(\cideal_{\ceps})\cap\calg e$, 
and $e\calg(\cideal_{\ceps}) = 
\calg(\cideal_{\ceps})\cap e\calg$.

\item One has $e\in\K\{
\cbas{S,T}{\lambda}
\mid
\lambda\in\cset,S,T\in\cmset(\lambda),\ceps_{S}=\ceps_{T}=\ceps
\}$.\qedhere
\end{enumerate}
\end{lemma}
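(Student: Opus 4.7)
The plan is to proceed part by part, using the defining properties of a relative cellular algebra together with already established consequences; the parts build on each other so the order matters.

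For part (a), I would start by exploiting the fact that $e \isum \ceps$ means $e = \ceps e \ceps$. For a basis element $\cbas{S,T}{\lambda} \in \calg(\{\lambda\})$, write $e\,\cbas{S,T}{\lambda} = \ceps e \ceps\,\cbas{S,T}{\lambda}$. By \eqref{eq:idem-props-2}, the innermost $\ceps\,\cbas{S,T}{\lambda}$ is either $0$ (if $\ceps_{S}\neq\ceps$) or $\cbas{S,T}{\lambda}$ (if $\ceps_{S}=\ceps$). In the non-trivial case we land in $\ceps\calg\ceps\,\cbas{S,T}{\lambda}$, which sits inside $\calg(\Ord{\ceps}\!\lambda)$ by \eqref{eq:idem-props-1}. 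The right-handed statement is completely analogous, with (\ref{lemma:cell-algebra-star-1}.a) playing the role of \eqref{eq:idem-props-1} and \eqref{eq:idem-props-2}.

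For part (b), since $\cideal_{\ceps}$ is an $\ord{\ceps}$-ideal (so in particular a lower set with respect to $\ord{\ceps}$), we have $\calg(\Ord{\ceps}\!\lambda) \subset \calg(\cideal_{\ceps})$ for every $\lambda \in \cideal_{\ceps}$. Applying part (a) basis-element-by-basis-element and summing over $\lambda \in \cideal_{\ceps}$ gives both inclusions. Part (c) is then essentially formal: the inclusion $\calg(\cideal_{\ceps})e \subset \calg(\cideal_{\ceps}) \cap \calg e$ is immediate from (b), and for the reverse inclusion take any $x \in \calg(\cideal_{\ceps}) \cap \calg e$; writing $x = ye$ and using $e^{2}=e$ yields $x = xe$, whence $x \in \calg(\cideal_{\ceps})e$. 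The same argument with ${}^{\invo}$ gives the corresponding statement on the right.

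For part (d), since $e \isum \ceps$ exactly says $\ceps e = e = e \ceps$, this is precisely the hypothesis of \fullref{lemma:epsilon-structure} applied with $a = e$, so the conclusion is immediate.

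The main obstacle is really only in part (a): one has to notice that the two idempotent conditions \eqref{eq:idem-props-1} and \eqref{eq:idem-props-2} must be used jointly. Without \eqref{eq:idem-props-2} killing off the summands with $\ceps_{S}\neq\ceps$, multiplying by $e$ would drag the product out through the triangular term $\calg(\ord{\ceps_{T}}\!\lambda)\ceps_{T}$ appearing in (\ref{definition:cell-algebra}.d), and the partial orders $\ord{\ceps_{T}}$ and $\ord{\ceps}$ need not be comparable. The role of \eqref{eq:idem-props-2} is precisely to force us into the ``diagonal'' case $\ceps_{S} = \ceps$, after which \eqref{eq:idem-props-1} controls everything with respect to the correct order $\ord{\ceps}$. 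Once part (a) is in hand, parts (b), (c), (d) are essentially bookkeeping.
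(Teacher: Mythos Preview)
Your proof is correct and follows essentially the same route as the paper's: part (a) via $e\in\ceps\calg\ceps$ and \eqref{eq:idem-props-1}, part (b) from (a) and the lower-set property, part (c) by the idempotent trick $x=xe$, and part (d) from \fullref{lemma:epsilon-structure}.

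One comment on your closing paragraph, though: your claim that \eqref{eq:idem-props-2} is essential for part (a) is not accurate. The paper's argument is strictly simpler here: since $e=\ceps e\ceps\in\ceps\calg\ceps$, one has $e\,\cbas{S,T}{\lambda}\in\ceps\calg\ceps\,\cbas{S,T}{\lambda}\subset\calg(\Ord{\ceps}\!\lambda)$ directly by \eqref{eq:idem-props-1}, with no case distinction on $\ceps_{S}$ and no appeal to \eqref{eq:idem-props-2} or to (\ref{definition:cell-algebra}.d). Your detour through \eqref{eq:idem-props-2} is harmless but unnecessary, and the worry about being ``dragged out through the triangular term'' never arises because \eqref{eq:idem-props-1} already controls the product with respect to the correct order $\ord{\ceps}$.
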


\begin{proof}
\textit{(\ref{lemma:cell-algebra-star-2}.a).} 
By \eqref{eq:idem-props-1} and 
(\ref{lemma:cell-algebra-star-1}.a), since $\ceps e=e=e \ceps$ implies that
$e\in\ceps\calg\ceps$.
\medskip

\noindent\textit{(\ref{lemma:cell-algebra-star-2}.b).} This 
follows from (\ref{lemma:cell-algebra-star-2}.a) 
since $\cideal_{\ceps}$ is an $\ord{\ceps}$-ideal.
\medskip

\noindent \textit{(\ref{lemma:cell-algebra-star-2}.c).} 
We only prove the first statement, the second 
is obtained by applying ${}^{\invo}$. By definition we get
$\calg(\cideal_{\ceps}) e \subset \calg e$, 
and by (\ref{lemma:cell-algebra-star-2}.a) we get
$\calg(\cideal_{\ceps}) e \subset \calg(\cideal_{\ceps})$. Hence, 
the left-hand side is contained in the right-hand side. 
Let $ae \in \calg(\cideal_{\ceps}) \cap \calg e$. 
We expand and -- by assumption -- obtain 
$ae = \sum_{\mu \in \cideal_{\ceps}, S,T \in \cmset(\mu)} 
r_{\mu,S,T} \cbas{S,T}{\mu}$ for some scalars $r_{\mu,S,T}\in\K$. Thus,
\begin{gather}
ae = (ae)e = 
{\textstyle\sum_{\mu \in \cideal_{\ceps}, S,T \in \cmset(\mu)}}\,
r_{\mu,S,T} \cbas{S,T}{\mu} e 
\in\calg(\cideal_{\ceps})e.
\end{gather}
It follows that the right-hand side is also contained 
in the left-hand side.
\medskip

\noindent\textit{(\ref{lemma:cell-algebra-star-2}.d).} 
This follows immediately from 
\fullref{lemma:epsilon-structure} by assumption on $e$.
\end{proof}

\begin{definition}\label{definition:idempotent-order}
For $e\isum\ceps$ 
we define a partial order $\ord{e}$ on $\cset$ as 
being $\ord{\ceps}$.
\end{definition}

We write $\ord{e}=\ord{\ceps}$ etc. in the following.

If the partial order with respect to which an ideal in 
$\cset$ is defined agrees with the partial order 
$\ord{e}$ for some $e\isum\ceps$, then 
we can define submodules inside the corresponding 
projective module $\prmod_{e}=\calg e$ to obtain suitable filtrations.

\begin{lemma}\label{lemma:projective-submodule-lemma}
Let $e\isum\ceps$ and 
$\cideal_{\ceps}$ a $\ord{\ceps}$-ideal. 
Then $\calg(\cideal_{\ceps})e$ is a submodule.

In case $\csetf$, there exists a 
filtration $\prmod_{e}=\prmod_{0} \supset \prmod_{1} 
\supset\cdots 
\supset\prmod_{r} =\{0\}$ such 
that $\neatafrac{\prmod_{i}}{\prmod_{i+1}}=\prmod_{e}(\{\lambda_{i}\})$ 
for some $\lambda_{i}\in\cset$.
\end{lemma}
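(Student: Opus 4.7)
The plan is to dispatch the submodule claim with one line from (\ref{lemma:cell-algebra-star-1}.c), and then build the filtration by iteratively peeling off $\ord{\ceps}$-maximal elements of $\cset$.

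For the first assertion, I would observe that $e = e\ceps$, so $\calg(\cideal_{\ceps})e = \calg(\cideal_{\ceps})\ceps e$. By (\ref{lemma:cell-algebra-star-1}.c), $\calg(\cideal_{\ceps})\ceps$ is a left ideal of $\calg$, hence so is its right translate $\calg(\cideal_{\ceps})\ceps e$; this visibly lies in $\calg e = \prmod_{e}$ and is the desired $\calg$-submodule.

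For the filtration, using $\csetf$ I would enumerate $\cset=\{\mu_{1},\ldots,\mu_{r}\}$ so that each $\mu_{i+1}$ is $\ord{\ceps}$-maximal in the tail $\cideal_{i} := \{\mu_{i+1},\ldots,\mu_{r}\}$. Then $\cset = \cideal_{0} \supset \cideal_{1} \supset \cdots \supset \cideal_{r} = \emptyset$ is a descending chain of $\ord{\ceps}$-lower sets with $\cideal_{i}\setminus\cideal_{i+1} = \{\mu_{i+1}\}$. Setting $\prmod_{i} := \calg(\cideal_{i})e$, the first part produces a descending chain of submodules $\prmod_{e} = \prmod_{0} \supset \prmod_{1} \supset \cdots \supset \prmod_{r} = 0$. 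To identify the subquotients as $\prmod_{e}(\{\mu_{i+1}\}) := \calg(\{\mu_{i+1}\})e$, I would combine (\ref{lemma:cell-algebra-star-2}.c) -- which gives $\prmod_{j} = \calg(\cideal_{j})\cap\calg e$ -- with the basis decomposition $\calg(\cideal_{i}) = \calg(\cideal_{i+1}) \oplus \calg(\{\mu_{i+1}\})$: any $xe \in \prmod_{i}$ splits as $x_{1}e + x_{2}e$ with $x_{1}\in\calg(\cideal_{i+1})$ and $x_{2}\in\calg(\{\mu_{i+1}\})$, so $\prmod_{i} = \prmod_{i+1} + \calg(\{\mu_{i+1}\})e$; the sum is direct because $\calg(\cideal_{i+1}) \cap \calg(\{\mu_{i+1}\}) = 0$ by basis disjointness. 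The composite $\calg(\{\mu_{i+1}\})e \hookrightarrow \prmod_{i}\twoheadrightarrow\prmod_{i}/\prmod_{i+1}$ is then a $\K$-linear isomorphism, which transports the $\calg$-action on the subquotient to $\prmod_{e}(\{\mu_{i+1}\})$ and yields the claimed identification.

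The only fiddly point I anticipate is the auxiliary identity $\calg(\{\mu_{i+1}\})\cap\calg e = \calg(\{\mu_{i+1}\})e$ used implicitly above; it falls out of $e^{2}=e$, which forces $x = xe$ for any $x\in\calg e$. After that, the whole argument is bookkeeping on top of (\ref{lemma:cell-algebra-star-1}.c) and (\ref{lemma:cell-algebra-star-2}.c), together with the definition $\ord{e} := \ord{\ceps}$. There is no substantive representation-theoretic obstacle beyond the careful tracking of how $\ord{\ceps}$-lower sets interact with the right ideal generated by $e$; the key conceptual input is that right multiplication by $e$ does not disturb the $\ord{\ceps}$-filtration of $\calg$ because $e$ is subordinate to $\ceps$.
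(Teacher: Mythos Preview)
Your first paragraph is clean and correct: factoring $\calg(\cideal_{\ceps})e=\calg(\cideal_{\ceps})\ceps\, e$ and invoking (\ref{lemma:cell-algebra-star-1}.c) is a tidy alternative to the paper's direct expansion via (\ref{definition:cell-algebra}.d). The paper instead writes out $a\,\cbas{S,T}{\lambda}e$ and observes that the error term $\friends\in\calg(\ord{\ceps_{T}}\lambda)\ceps_{T}e$ either vanishes (if $\ceps_{T}\neq\ceps$) or lands in $\calg(\cideal_{\ceps})e$; your route bypasses this.

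The filtration construction by peeling off $\ord{\ceps}$-maximal elements is also what the paper does (``choose a maximal chain of $\ord{e}$-ideals''). However, your subquotient identification has two problems.

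First, you have the wrong target. The paper defines $\prmod_{e}(\{\lambda\}):=\calg(\Ord{e}\lambda)e/\calg(\ord{e}\lambda)e$, a genuine $\calg$-module quotient; it is \emph{not} $\calg(\{\lambda\})e$, which is only a vector space. Transporting the action from $\prmod_{i}/\prmod_{i+1}$ onto $\calg(\{\mu_{i+1}\})e$ is circular: you must compare with the already-defined object $\calg(\Ord{e}\mu_{i+1})e/\calg(\ord{e}\mu_{i+1})e$.

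Second, the directness of $\prmod_{i+1}+\calg(\{\mu_{i+1}\})e$ does not follow from $\calg(\cideal_{i+1})\cap\calg(\{\mu_{i+1}\})=0$. Right multiplication by $e$ need not keep $\calg(\{\mu_{i+1}\})$ inside itself: by (\ref{lemma:cell-algebra-star-1}.d), $\cbas{S,T}{\mu_{i+1}}e$ acquires a tail in $\ceps_{S}\calg(\ord{\ceps_{S}}\mu_{i+1})$, and when $\ceps_{S}\neq\ceps$ this tail is governed by a different order. So a nonzero $x_{2}e$ with $x_{2}\in\calg(\{\mu_{i+1}\})$ can lie entirely in $\calg(\cideal_{i+1})\cap\calg e=\prmod_{i+1}$, killing injectivity of your composite.

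The fix is short: use the inclusion $\calg(\Ord{e}\mu_{i+1})e\hookrightarrow\prmod_{i}$ instead. Since $\Ord{e}\mu_{i+1}\subset\cideal_{i}$ and $\ord{e}\mu_{i+1}\subset\cideal_{i+1}$, one gets an $\calg$-map $\prmod_{e}(\{\mu_{i+1}\})\to\prmod_{i}/\prmod_{i+1}$. Surjectivity follows from your decomposition $\prmod_{i}=\prmod_{i+1}+\calg(\{\mu_{i+1}\})e$ together with $\calg(\{\mu_{i+1}\})e\subset\calg(\Ord{e}\mu_{i+1})e$. Injectivity uses (\ref{lemma:cell-algebra-star-2}.c) twice: the kernel is $\bigl(\calg(\Ord{e}\mu_{i+1})\cap\calg(\cideal_{i+1})\cap\calg e\bigr)/\calg(\ord{e}\mu_{i+1})e=\calg(\ord{e}\mu_{i+1})e/\calg(\ord{e}\mu_{i+1})e=0$, since $\Ord{e}\mu_{i+1}\cap\cideal_{i+1}=\ord{e}\mu_{i+1}$.
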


Hereby, similarly to \eqref{eq:ideal-algebra-notation},
we let 
$\prmod_{e}(\{\lambda\})=
\neatafrac{\calg(\Ord{e}\lambda)e}{\calg(\ord{e}\lambda)e}$.

\begin{proof}
For $\cbas{S,T}{\lambda} \in \calg(\cideal_{\ceps})$ we have
\begin{gather}
a \cbas{S,T}{\lambda} e 
= 
{\textstyle\sum_{S^{\prime} \in \cmset(\lambda)}}\,
r_a(S^{\prime},S) \cbas{S^{\prime},T}{\lambda} e + \friends
\end{gather}
with $\friends\in\calg(\ord{\ceps_{T}}\lambda)\ceps_{T}e$ 
by (\ref{definition:cell-algebra}.d). Then either 
$\ceps_{T}e=0$ in case $\ceps_{T}\neq\ceps$, 
and the extra terms just vanish, or $\ord{e}=\ord{\ceps_{T}}$ 
and $\ceps_{T} e=e$. Hence, $\friends\in\calg(\cideal_{\ceps})$.

Finally, 
choose a maximal chain of $\ord{e}$-ideals -- whose existence is guaranteed 
by $\csetf$ --  
and the statement about the filtration follows immediately.
\end{proof}

Analogously to \fullref{lemma:cell-dual}, we let 
$\piso{\lambda}\colon\dmod(\lambda)
\otimes
(\dmod(\lambda)^{\invo}\acts e)
\rightarrow\prmod_{e}(\{\lambda\})$ defined via
$\piso{\lambda}(\dbas{S}{\lambda} 
\otimes(\dbas{T}{\lambda}\acts e)) 
=\cbas{S,T}{\lambda}e$. 
(Below we write 
$\dbas{S}{\lambda} 
\otimes\dbas{T}{\lambda}\acts e$ etc. for short.)
Note that the first step of the proof of 
\fullref{proposition:lambda-piece-iso} shows that $\piso{\lambda}$ is well-defined.

\begin{proposition}\label{proposition:lambda-piece-iso}
Let $\lambda\in\cset$ and $e\isum\ceps$. 
Then $\piso{\lambda}$ 
is an $\calg$-module isomorphism.
If additionally $\lambda\in\csetz$, then
$\homc(\prmod_{e}(\{\lambda\}),\dmod(\lambda)) 
\cong
\homk(\dmod(\lambda)^{\invo}\acts e,\K)$.
\end{proposition}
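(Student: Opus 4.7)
The plan is to verify that $\piso{\lambda}$ is an $\calg$-module isomorphism directly, and then deduce the Hom-computation via tensor-Hom adjunction together with \fullref{proposition:Delta-maps}.

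First, for well-definedness, I would apply (\ref{lemma:cell-algebra-star-1}.d) to rewrite
\begin{gather*}
{\textstyle\sum_{T}}\,c_{T}\cbas{S,T}{\lambda}e
= \diso{\lambda}\bigl(\dbas{S}{\lambda}\otimes({\textstyle\sum_{T}}c_{T}\dbas{T}{\lambda})\acts e\bigr) + \xi,
\end{gather*}
with $\xi\in\ceps_{S}\calg(\ord{\ceps_{S}}\!\lambda)$. If $\sum_{T}c_{T}(\dbas{T}{\lambda}\acts e)=0$, the leading term vanishes, so the whole expression sits in $\ceps_{S}\calg(\ord{\ceps_{S}}\!\lambda)\cap\calg(\Ord{e}\!\lambda)$. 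Any $\mu\ord{\ceps_{S}}\!\lambda$ satisfying $\mu\Ord{e}\!\lambda$ must actually satisfy $\mu\ord{e}\!\lambda$ (strictness of $\ord{\ceps_{S}}$ rules out $\mu=\lambda$), so this intersection lies in $\calg(\ord{e}\!\lambda)$; multiplying by $e$ once more places it in $\calg(\ord{e}\!\lambda)e$. Hence $\piso{\lambda}$ descends to the tensor product.

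For $\calg$-linearity, apply \eqref{eq:mult-left} to $a\cbas{S,T}{\lambda}e$: the correction lies in $\calg(\ord{\ceps_{T}}\!\lambda)\ceps_{T}e$, which is zero when $\ceps_{T}\neq\ceps$ (in which case $\cbas{S,T}{\lambda}e=0$ anyway by (\ref{lemma:cell-algebra-star-1}.a)) and equals $\calg(\ord{e}\!\lambda)e$ when $\ceps_{T}=\ceps$; either way it vanishes modulo $\calg(\ord{e}\!\lambda)e$, and the leading terms reproduce the action on the first tensor factor. Surjectivity is immediate from the vector space decomposition $\calg(\Ord{e}\!\lambda)=\calg(\ord{e}\!\lambda)\oplus\calg(\{\lambda\})$. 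For injectivity, suppose $\piso{\lambda}(\sum_{S}\dbas{S}{\lambda}\otimes v_{S})=0$ with each $v_{S}\acts e=v_{S}$ and $v_{S}=\sum_{T}c_{S,T}\dbas{T}{\lambda}$; the same expansion combined with $v_{S}\acts e=v_{S}$ shows that
\begin{gather*}
{\textstyle\sum_{S,T}}\,c_{S,T}\cbas{S,T}{\lambda}\in\calg(\ord{e}\!\lambda)+{\textstyle\sum_{S}}\calg(\ord{\ceps_{S}}\!\lambda),
\end{gather*}
whose intersection with $\calg(\{\lambda\})$ is zero (both summands on the right are supported on basis elements $\cbas{S',T'}{\mu}$ with $\mu\neq\lambda$), so linear independence of the cellular basis forces all $c_{S,T}=0$.

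For the Hom-statement, combine the just-established isomorphism with the fact that $\calg$ acts on $\dmod(\lambda)\otimes(\dmod(\lambda)^{\invo}\acts e)$ through the first tensor factor only, while $\dmod(\lambda)^{\invo}\acts e$ is a bare $\K$-vector space. Tensor-Hom adjunction over $\K$ then gives
\begin{gather*}
\homc(\prmod_{e}(\{\lambda\}),\dmod(\lambda))\cong\homk\bigl(\dmod(\lambda)^{\invo}\acts e,\ehomc(\dmod(\lambda))\bigr),
\end{gather*}
and \fullref{proposition:Delta-maps} collapses $\ehomc(\dmod(\lambda))$ to $\K$ since $\lambda\in\csetz$. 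The most delicate point throughout is the well-definedness and injectivity, both of which hinge on the intersection argument controlling the $\ord{\ceps_{S}}$-lower-order terms inside $\calg(\Ord{e}\!\lambda)$.
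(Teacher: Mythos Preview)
Your proof is correct and follows essentially the same strategy as the paper: verify well-definedness, $\calg$-linearity, surjectivity, and injectivity of $\piso{\lambda}$ directly, then derive the Hom-statement via tensor--Hom adjunction and \fullref{proposition:Delta-maps}. The only noteworthy variation is in the injectivity step: you exploit the idempotency $v_{S}\acts e=v_{S}$ to identify the leading term of $\sum_{S,T}c_{S,T}\cbas{S,T}{\lambda}e$ with $\sum_{S,T}c_{S,T}\cbas{S,T}{\lambda}$ itself and then argue by support, whereas the paper extracts the vanishing of the coefficients $\sum_{T}r_{S,T}r_{e^{\invo}}(T',T)$ directly and feeds this back into the expansion of $\dbas{T}{\lambda}\acts e$; both are rearrangements of the same computation. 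One small omission worth making explicit: when you assert that the error term lies in $\calg(\Ord{e}\!\lambda)$, this uses (\ref{lemma:cell-algebra-star-2}.a) (or (\ref{lemma:cell-algebra-star-2}.b)), and the final passage from $\calg(\ord{e}\!\lambda)\cap\calg e$ to $\calg(\ord{e}\!\lambda)e$ is precisely (\ref{lemma:cell-algebra-star-2}.c).
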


\begin{proof}
\textit{Well-definedness of $\piso{\lambda}$.} 
Define $\overline{\piso{\lambda}}(\dbas{S}{\lambda},\dbas{T}{\lambda}\acts e) 
= \cbas{S,T}{\lambda}e$ and extend bilinearly to obtain 
$\overline{\piso{\lambda}}\colon\dmod(\lambda) 
\times 
\dmod(\lambda)^{\invo}\acts e 
\rightarrow\prmod_{e}(\{\lambda\})$. 
If $\overline{\piso{\lambda}}$ is well-defined, 
then it is by definition bilinear. 
So let $\sum_{T\in\cmset(\lambda)} 
r_{T}(\dbas{S}{\lambda},\dbas{T}{\lambda}\acts e)=0$ 
for some scalar $r_{T}\in\K$ and 
some element $[\dbas{S}{\lambda},\dbas{T}{\lambda}\acts e]\in\dmod(\lambda)\times 
\dmod(\lambda)^{\invo}\acts e$. Then
\begin{gather}
{\textstyle\sum_{T \in \cmset(\lambda)}}\,
r_{T}[\dbas{S}{\lambda},\dbas{T}{\lambda}\acts e]
=
{\textstyle\sum_{T,T^{\prime}\in \cmset(\lambda)}}\,
r_{T}r_{e^{\invo}}(T^{\prime},T)[\dbas{S}{\lambda},\dbas{T^{\prime}}{\lambda}].
\end{gather}
Hence, $\sum_{T \in \cmset(\lambda)} 
r_{T}r_{e^{\invo}}(T^{\prime},T)=0$ for all $T^{\prime}\in \cmset(\lambda)$, 
and we have
\begin{gather} 
\begin{gathered}
\overline{\piso{\lambda}}
\left(
{\textstyle\sum_{T \in \cmset(\lambda)}}\,
r_{T}[\dbas{S}{\lambda},\dbas{T}{\lambda}\acts e]
\right)
=
{\textstyle\sum_{T \in \cmset(\lambda)}}\,
r_{T}\cbas{S,T}{\lambda}e
\\
=
{\textstyle\sum_{T,T^{\prime} \in \cmset}}\,
r_{T}r_{e^{\invo}}(T^{\prime},T)
\cbas{S,T^{\prime}}{\lambda} + \friends.
\end{gathered}
\end{gather}
Hereby $\friends\in\calg(\ord{\ceps_{S}}\lambda)$ by 
\eqref{eq:mult-right} and $\friends\in\calg(\Ord{e}\lambda)$ 
by (\ref{lemma:cell-algebra-star-2}.b), 
together giving $\friends\in\calg(\ord{e}\lambda)$. 
Since we also 
have that $\friends=\friends e$, it follows 
that $\friends\in\calg e$. By 
(\ref{lemma:cell-algebra-star-2}.c) we then get that 
$\friends\in\calg(\ord{e}\lambda)e$ and so it 
vanishes in $\prmod_{e}({\lambda})$. Thus, $\overline{\piso{\lambda}}$ 
is well-defined and consequently $\piso{\lambda}$ as well.
\medskip

\noindent\textit{Surjectivity of $\piso{\lambda}$.}
This is immediate by noting that 
-- due to (\ref{lemma:cell-algebra-star-2}.c) -- $\prmod_{e}(\{\lambda\})$ is 
generated by elements of the form $\cbas{S,T}{\lambda}e$ for 
$S,T\in\cmset(\lambda)$ and these are in the image 
of $\piso{\lambda}$.
\medskip

\noindent\textit{Injectivity of $\piso{\lambda}$.}
Let $\sum_{S,T \in \cmset}r_{S,T}\dbas{S}{\lambda} 
\otimes \dbas{T}{\lambda}\acts e$ be in the kernel of $\piso{\lambda}$ 
for some scalars $r_{S,T}\in\K$, i.e. 
$\sum_{S,T\in\cmset(\lambda)}r_{S,T}\cbas{S,T}{\lambda}e 
\in\calg(\ord{e}\lambda)e$. By 
(\ref{lemma:cell-algebra-star-2}.c) we have $\calg(\ord{e}\lambda)e = 
\calg(\ord{e}\lambda)\cap\calg e$ and so 
expanding with \eqref{eq:mult-right} we obtain
\begin{gather}
{\textstyle\sum_{S,T \in \cmset(\lambda)}}\,
r_{S,T} \cbas{S,T}{\lambda}e 
= 
{\textstyle\sum_{S,T,T^{\prime} \in \cmset(\lambda)}}\,
r_{S,T} r_{e^{\invo}}(T^{\prime},T) \cbas{S,T^{\prime}}{\lambda} + \friends,
\end{gather}
with $\friends\in\calg(\ord{e}\lambda)$ by 
\eqref{eq:mult-right} and (\ref{lemma:cell-algebra-star-2}.b). Thus, 
$\sum_{S,T} r_{S,T} r_{e^{\invo}}(T^{\prime},T)=0$ for 
all $T^{\prime}\in\cmset(\lambda)$, due to (\ref{lemma:cell-algebra-star-2}.c). 
This in turn implies that
\begin{gather}
{\textstyle\sum_{S,T \in \cmset(\lambda)}}\,
r_{S,T} \dbas{S}{\lambda}
\otimes
\dbas{T}{\lambda}\acts e 
= 
{\textstyle\sum_{S,T,T^{\prime} \in \cmset(\lambda)}}\,
r_{S,T} r_{e^{\invo}}(T^{\prime},T)
\dbas{S}{\lambda}
\otimes \dbas{T^\prime}{\lambda} = 0.
\end{gather}
Hence, $\piso{\lambda}$ is injective. 
\medskip

\noindent\textit{$\piso{\lambda}$ is a $\calg$-module map.}
For $\piso{\lambda}$ to be a $\calg$-module map we observe that
\begin{gather}
a \cbas{S,T}{\lambda} e 
=
{\textstyle\sum_{S^{\prime} \in \cmset(\lambda)}}\,
r_{a}(S^{\prime},S) \cbas{S^{\prime},T}{\lambda}e + \friends e,
\end{gather}
where $\friends e\in 
\calg(\ord{\ceps_{S}}\lambda)\ceps_{S}e 
\subset \calg(\ord{e} \lambda)e$, 
which is zero in $\prmod_{e}(\{\lambda\})$. Thus, $\piso{\lambda}$ is 
a $\calg$-module map.
\medskip

Finally, for the isomorphism, let $\lambda\in\csetz$. By the above
\begin{gather} 
\begin{aligned}
\homc(\prmod_{e}(\{\lambda\}),\dmod(\lambda)) 
&
\cong
\homc(\dmod(\lambda)\otimes\dmod(\lambda)^{\invo}\acts e,\dmod(\lambda)) 
\\
&\cong 
\homk(\dmod(\lambda)^{\invo}\acts e,
\ehomc(\dmod(\lambda)))
\\
&\cong 
\homk(\dmod(\lambda)^{\invo}\acts e,\K),
\end{aligned}
\end{gather}
where the second isomorphism is the tensor-hom adjunction, and the 
last isomorphism follows from \fullref{proposition:Delta-maps}.
\end{proof}

In addition to statements 
about $\prmod_{e}(\{\lambda\})$, we will also need some 
knowledge about slightly more general quotients 
of $\calg(\cideal_{\ceps})e$.

\begin{lemma}\label{lemma:projective-quotients-sum}
Let $e\isum\ceps$ and $\cideal_{\ceps}$ an 
$\ord{\ceps}$-ideal. 
Assume that $\cideal_{\ceps}$ contains $\ord{\ceps}$-maximal 
elements $\lambda_{1},\cdots,\lambda_{r}$ and let 
$\cideal^{\prime}_{\ceps} = \cideal_{\ceps} 
\setminus \{\lambda_{1},\cdots,\lambda_{r}\}$. Then
\begin{gather}
\neatafrac{\calg(\cideal_{\ceps})e}{\calg(\cideal^{\prime}_{\ceps})e}
\cong 
\prmod_{e}(\{\lambda_{1}\})\oplus\cdots\oplus\prmod_{e}(\{\lambda_{r}\}),
\end{gather}
which is an isomorphism of $\calg$-modules.
\end{lemma}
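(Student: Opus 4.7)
The plan is to construct an explicit isomorphism by assembling canonical maps $\phi_{i}\colon\prmod_{e}(\{\lambda_{i}\})\to\calg(\cideal_{\ceps})e/\calg(\cideal^{\prime}_{\ceps})e$ into a direct sum $\Phi=\bigoplus_{i}\phi_{i}$ and then verifying surjectivity and injectivity separately using the cellular basis. First I would define each $\phi_{i}$: since $\cideal_{\ceps}$ is a lower set and $\lambda_{i}$ is $\ord{\ceps}$-maximal in $\cideal_{\ceps}$, one has $\ord{\ceps}\lambda_{i}\subset\cideal^{\prime}_{\ceps}$ while also $\Ord{\ceps}\lambda_{i}\subset\cideal_{\ceps}$. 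Using $\ord{e}=\ord{\ceps}$, the inclusion $\calg(\Ord{e}\lambda_{i})e\hookrightarrow\calg(\cideal_{\ceps})e$ sends $\calg(\ord{e}\lambda_{i})e$ into $\calg(\cideal^{\prime}_{\ceps})e$, so composing with the quotient map descends to a $\calg$-module map $\phi_{i}$; both relevant subspaces are $\calg$-submodules by \fullref{lemma:projective-submodule-lemma}.

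For surjectivity I would use the cellular basis: $\calg(\cideal_{\ceps})e$ is spanned by the elements $\cbas{S,T}{\mu}e$ with $\mu\in\cideal_{\ceps}$. Those with $\mu\in\cideal^{\prime}_{\ceps}$ die in the quotient, while those with $\mu=\lambda_{i}$ lie in $\calg(\Ord{e}\lambda_{i})e$ and therefore in the image of $\phi_{i}$, giving surjectivity of $\Phi$.

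For injectivity, suppose $\Phi((x_{i})_{i})=0$ and lift each $x_{i}$ to $v_{i}\in\calg(\Ord{e}\lambda_{i})e$, so $\sum_{i}v_{i}\in\calg(\cideal^{\prime}_{\ceps})e$. By (\ref{lemma:cell-algebra-star-2}.a) and (\ref{lemma:cell-algebra-star-2}.b), each $v_{i}$ also sits inside $\calg(\Ord{\ceps}\lambda_{i})$, so its expansion in the cellular basis splits uniquely as $v_{i}=v_{i}^{\mathrm{top}}+v_{i}^{\mathrm{low}}$ with $v_{i}^{\mathrm{top}}\in\calg(\{\lambda_{i}\})$ and $v_{i}^{\mathrm{low}}\in\calg(\ord{\ceps}\lambda_{i})\subset\calg(\cideal^{\prime}_{\ceps})$. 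Then $\sum_{i}v_{i}^{\mathrm{top}}=\sum_{i}v_{i}-\sum_{i}v_{i}^{\mathrm{low}}\in\calg(\cideal^{\prime}_{\ceps})$; since the subspaces $\calg(\{\lambda_{i}\})$ are linearly disjoint from $\calg(\cideal^{\prime}_{\ceps})$ and from one another by the cellular basis, each $v_{i}^{\mathrm{top}}$ vanishes. Consequently $v_{i}=v_{i}^{\mathrm{low}}\in\calg(\ord{\ceps}\lambda_{i})\cap\calg e=\calg(\ord{e}\lambda_{i})e$ by (\ref{lemma:cell-algebra-star-2}.c), and hence $x_{i}=0$ in $\prmod_{e}(\{\lambda_{i}\})$.

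The main delicate point I anticipate is the last equality $\calg(\ord{\ceps}\lambda_{i})\cap\calg e=\calg(\ord{e}\lambda_{i})e$: without (\ref{lemma:cell-algebra-star-2}.c) one would only know that each $v_{i}$ lies in $\calg(\cideal^{\prime}_{\ceps})\cap\calg e$ but not necessarily inside $\calg(\cideal^{\prime}_{\ceps})e$, which would break the descent to $\prmod_{e}(\{\lambda_{i}\})$. The argument therefore crucially exploits the cell-datum axiom \eqref{eq:idem-props-1} feeding into that lemma, and more broadly the compatibility $e\isum\ceps$ with the partial order $\ord{\ceps}$.
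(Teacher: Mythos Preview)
Your proof is correct and follows essentially the same approach as the paper. Both arguments build the isomorphism from the natural inclusions $\calg(\Ord{e}\lambda_{i})e\hookrightarrow\calg(\cideal_{\ceps})e$ and pivot on (\ref{lemma:cell-algebra-star-2}.c) to identify the relevant kernels; the paper packages the bookkeeping as intersections of images in the quotient whereas you check injectivity and surjectivity of an explicit $\Phi$, but the underlying computations coincide.
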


\begin{proof}
Let $\cideal_{\ceps}^{\Ord{e}\lambda_{k}}
=
\{ \mu \in \cideal_{\ceps} \mid \mu \Ord{e}\lambda_{k} \}$ 
for $k=1,\cdots,r$, 
and define $\cideal_{\ceps}^{\ord{e}\lambda_{k}}$ analogously. 
By assumption, we have 
$\calg(\cideal_{\ceps}) 
=
\sum_{k=1}^r \calg(\cideal_{\ceps}^{\Ord{e}\lambda_{k}})$ 
and 
$\calg(\cideal^{\prime}_{\ceps}) 
=
\sum_{k=1}^{r} \calg(\cideal_{\ceps}^{\ord{e}\lambda_{k}})$. 
Additionally, we clearly 
have $\calg(\cideal^{\prime}_{\ceps})
\cap
\calg(\cideal_{\ceps}^{\Ord{e}\lambda_{k}})
=
\calg(\cideal_{\ceps}^{\ord{e}\lambda_{k}})$. 
Thus -- using (\ref{lemma:cell-algebra-star-2}.c) -- we 
obtain $\calg(\cideal^{\prime}_{\ceps})e \cap
\calg(\cideal_{\ceps}^{\Ord{e}\lambda_{k}})e =
\calg(\cideal_{\ceps}^{\ord{e}\lambda_{k}})e$. 
Hence, the image of 
$\calg(\cideal_{\ceps}^{\Ord{e}\lambda_{k}})e$ 
in $\neatafrac{\calg(\cideal_{\ceps})e}{\calg(\cideal^{\prime}_{\ceps})e}$ 
is isomorphic to 
$\prmod_{e}(\{\lambda_{k}\})=
\neatafrac{\calg(\cideal_{\ceps}^{\Ord{e}\lambda_{k}})e}{\calg(\cideal_{\ceps}^{\ord{e}\lambda_{k}})e}$.

In addition, for $1\leq k,l\leq r$ and $k\neq l$, 
\begin{gather}
\calg(\cideal_{\ceps}^{\Ord{e}\lambda_{k}})e 
\cap
\calg(\cideal_{\ceps}^{\Ord{e}\lambda_{l}})e 
=
\calg(\cideal_{\ceps}^{\Ord{e}\lambda_{k}})
\cap
\calg(\cideal_{\ceps}^{\Ord{e}\lambda_{l}})
\cap\calg e
=
\calg(\cideal_{\ceps}^{\ord{e}\lambda_{k}})e
\cap
\calg(\cideal_{\ceps}^{\ord{e}\lambda_{l}})e.
\end{gather}
Thus, the images of 
$\calg(\cideal_{\ceps}^{\Ord{e}\lambda_{k}})e$ 
and $\calg(\cideal_{\ceps}^{\Ord{e}\lambda_{l}})e$ 
in $\neatafrac{\calg(\cideal_{\ceps})e}{\calg(\cideal^{\prime}_{\ceps})e}$ 
have trivial intersection. Together this gives the statement.
\end{proof}

\subsection{Classification of simples}\label{subsec:classification}

Altogether we are now ready to 
prove the main statement of this section.

\begin{theorem}\label{theorem:simple-set}
Let $\csetf$.
The set $\{[\lmod(\lambda)]\mid\lambda\in\csetz\}$ 
gives a complete, non-redundant set of isomorphism 
classes of simple $\calg$-modules.
\end{theorem}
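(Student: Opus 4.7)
The plan is to mirror the Graham--Lehrer classification, leveraging the machinery developed earlier in the section. I argue completeness and non-redundance separately.

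For completeness, let $L$ be a simple $\calg$-module. By \fullref{lemma:decomposition-unit} there exists $\ceps\in\ciset$ with $\ceps L\neq 0$, and I pick a primitive $e\isum\ceps$ with $eL\neq 0$, so that $\prmod_{e}=\calg e$ is a projective cover of $L$. Combining \fullref{lemma:projective-submodule-lemma} and \fullref{lemma:projective-quotients-sum}, I take the first non-trivial layer of $\prmod_{e}$ along an $\ord{e}$-maximal filtration: this is a direct-sum quotient $\bigoplus_{k}\prmod_{e}(\{\lambda_{k}\})$ of $\prmod_{e}$ indexed by the $\ord{e}$-maximal $\lambda_{k}$'s in the relevant subideal. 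Since the head of $\prmod_{e}$ is the simple $L$, exactly one summand $\prmod_{e}(\{\lambda\})$ is non-zero and has $L$ as its head; by \fullref{proposition:lambda-piece-iso}, this summand is a direct sum of copies of $\dmod(\lambda)$, so $\dmod(\lambda)\twoheadrightarrow L$.

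The main obstacle is then to show $\lambda\in\csetz$. If $\Cpair{\lambda}=0$, then \fullref{proposition:form-properties}(c) forces $\calg(\{\lambda\})$ to act as zero on $\prmod_{e}(\{\lambda\})$. Since $\prmod_{e}(\{\lambda\})$ is a cyclic quotient of $\prmod_{e}$, it is generated by the image $\bar{e}$ of $e$. Expanding $e=\sum_{\mu}e_{\mu}$ via \fullref{lemma:epsilon-structure}, the summands $e_{\mu}\cdot\bar{e}$ for $\mu\neq\lambda$ vanish in $\prmod_{e}(\{\lambda\})$ (they land in other, zero, direct summands produced by \fullref{lemma:projective-quotients-sum}, or in lower filtration layers), so $\bar{e}=e_{\lambda}\cdot\bar{e}=0$ by the triviality of the $\calg(\{\lambda\})$-action. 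This contradicts $\prmod_{e}(\{\lambda\})\neq 0$. Hence $\lambda\in\csetz$, and (\ref{proposition:to-define-Ls}.c) identifies $L\cong\lmod(\lambda)$.

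For non-redundance, suppose $\lmod(\lambda)\cong\lmod(\mu)$ with $\lambda,\mu\in\csetz$, and fix $\ceps$ with $\ceps\lmod(\lambda)\neq 0$. A direct computation using \fullref{lemma:pairing-first} and (\ref{lemma:cell-algebra-star-1}.a) shows that $\Cpair{\lambda}(\dbas{S}{\lambda},\dbas{T}{\lambda})=0$ unless $\ceps_{S}=\ceps_{T}$, so $\radi{\lambda}$ respects the idempotent decomposition of $\dmod(\lambda)$. This yields $\dbas{S}{\lambda}\notin\radi{\lambda}$ with $\ceps_{S}=\ceps$, which is a cyclic generator of $\dmod(\lambda)$ by \fullref{corollary:radical}. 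Pushing $\dbas{S}{\lambda}$ through the composition $\dmod(\lambda)\twoheadrightarrow\lmod(\lambda)\xrightarrow{\cong}\lmod(\mu)$ and acting by a suitable $\cbas{U,S^{\prime}}{\lambda}$ with $\ceps_{U}=\ceps_{S^{\prime}}=\ceps$ (using the above vanishing to force the relevant indices into the $\ceps$-block), I obtain a basis element that acts non-trivially on $\dmod(\mu)$, so \fullref{lemma:submodule-lemma-alt2} delivers $\mu\Ord{\ceps}\lambda$. The symmetric argument yields $\lambda\Ord{\ceps}\mu$, hence $\lambda=\mu$.
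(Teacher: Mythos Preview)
Your proof is correct. The completeness half runs along the same lines as the paper's: pick a primitive $e\isum\ceps$, locate the unique $\ord{e}$-maximal $\lambda$ with $\prmod_e(\{\lambda\})\neq 0$ via the simple-head argument and \fullref{lemma:projective-quotients-sum}, and then show $\lambda\in\csetz$. The paper derives the last step by expanding $e^{\invo}$ and showing $\dmod(\lambda)^{\invo}\acts e=0$ when $\Cpair{\lambda}=0$; you instead expand $e$ itself and kill the generator $\bar{e}$ of the cyclic quotient $\prmod_e(\{\lambda\})$. These are essentially dual computations and equally valid, though you should make explicit that $e\in\calg(\cideal)$ for the ideal $\cideal$ at which the first non-trivial layer occurs (this follows from \fullref{lemma:cell-algebra-star-2}(c) once $\prmod_e=\calg(\cideal)e$), so that the expansion $e=\sum_\mu e_\mu$ really only runs over $\mu\in\cideal$.

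Your non-redundancy argument is genuinely different from the paper's and somewhat cleaner. The paper first reduces, via the completeness step, to the case where $\lambda$ is the $\ord{e}$-maximal element of $\cideal_P$ for some primitive $e\isum\ceps$; the constraint $e\in\calg(\Ord{e}\lambda)$ then forces the idempotent indices appearing in the key products to lie in the $\ceps$-block, yielding $\mu\Ord{e}\lambda$ and (by a separate argument through $\dmod(\mu)$) $\lambda\Ord{e}\mu$. You bypass this reduction entirely by observing that $\Cpair{\lambda}$ is block-diagonal with respect to the $\ciset$-decomposition (a direct consequence of orthogonality and \fullref{lemma:cell-algebra-star-1}(a)), which lets you locate a generator $\dbas{S}{\lambda}\notin\radi{\lambda}$ with $\ceps_S=\ceps$ and a pairing partner $S'$ with $\ceps_{S'}=\ceps$ directly. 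The resulting argument is symmetric in $\lambda$ and $\mu$ and avoids appealing back to the projective-cover structure. What the paper's route buys is that it simultaneously establishes the characterization of $\prmod(\lambda)$ used immediately afterwards in \fullref{lemma:choices-idempotents}; your route isolates the classification statement more cleanly but would need that lemma argued separately.
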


\begin{proof}
There are three statements to be proven: That the 
$\lmod(\lambda)$'s are simple, that all simples appear, and that 
$\lmod(\lambda)\cong\lmod(\mu)$ if and only if $\lambda=\mu$.
\medskip

\noindent\textit{Simplicity.}
By \fullref{proposition:to-define-Ls},
the $\lmod(\lambda)$ are simple $\calg$-modules.
\medskip

\noindent\textit{Completeness.}
Let $e\isum\ceps$, with $e$ being primitive. Then the head of its associated 
indecomposable projective
$\prmod_{e}$ is simple, and we can obtain every 
simple module by considering the heads of the 
indecomposable projectives of $\calg$.

Let $\cideal_{P}$ denote the $\ord{e}$-ideal 
in $\cset$ generated by $\{\lambda\in\cset\mid\prmod_{e}(\{\lambda\})\neq 0\}$. 
Thus, $\prmod_{e}=\calg(\cideal_{P})e$ and -- by 
(\ref{lemma:cell-algebra-star-2}.c) and by applying 
${}^{\invo}$ -- one has 
$e,e^{\invo}\in\calg(\cideal_{P})$.

Let $\lambda_{\max}\in\cideal_{P}$ be $\ord{e}$-maximal. 
Then -- by construction 
-- $\prmod_{e}(\{\lambda_{\max}\})\neq 0$.
\medskip

\noindent\setword{\ref{theorem:simple-set}.Claim.a}{mainthm-1}.
The form $\Cpair{\lambda_{\max}}$ is non-zero, i.e. $\lambda_{\max}\in\csetz$.
\medskip

\noindent Proof of \ref{mainthm-1}.
Assume $\Cpair{\lambda_{\max}}$ to be zero. 
By \fullref{lemma:cell-cyclic} we know that 
\begin{gather}
\cbas{U,V}{\lambda_{\max}}\acts\dbas{T}{\lambda_{\max}} 
=
\Cpair{\lambda_{\max}}(\dbas{V}{\lambda_{\max}},
\dbas{T}{\lambda_{\max}})\dbas{U}{\lambda_{\max}} 
=
0,
\end{gather}
for all $T,U,V\in\cmset(\lambda_{\max})$.

Expanding 
$e^{\invo} 
=\sum_{\mu\in\cideal_{P},S,T\in\cmset(\mu)}
r(\mu,S,T)
\cbas{S,T}{\mu}$ with $r(\mu,S,T)\in\K$, we see
\begin{gather}\label{eq:theorem-class-sum}
\begin{gathered}
e^{\invo}\cbas{V,U}{\lambda_{\max}} 
=
{\textstyle\sum_{\mu\in\cideal_{P}\setminus\lambda_{\max},S,T \in \cmset(\mu)}}\,
r(\mu,S,T) \cbas{S,T}{\mu}
\cbas{V,U}{\lambda_{\max}}
\\
= 
{\textstyle\sum_{\mu \in \cideal_{P} \setminus \lambda_{\max},S,T \in \cmset(\mu)}}\,
{\textstyle\sum_{T^{\prime}\in\cmset(\mu)}}\,
r(\mu,S,T) r_{\cbas{U,V}{\lambda_{\max}}}(T^{\prime},T)\cbas{S,T^{\prime}}{\mu} 
+
\friends[\mu],
\end{gathered} 
\end{gather}
where $\friends[\mu]\in\ceps_{S} 
\calg(\ord{\ceps_{S}}\mu)$ by \eqref{eq:mult-right}.
Hence, $e^{\invo}\friends[\mu] \in e^{\invo} 
\ceps_{S}\calg(\ord{\ceps_{S}} \mu)$.
Recalling that $e\isum\ceps$, this  
is either zero if $\ceps_{S}\neq \ceps$,
or $e^{\invo}\ceps_{S}
\calg(\ord{\ceps_{S}}\mu) = e^{\invo}
\calg(\ord{e}\mu)\subset\calg(\ord{e}\mu)$, with the final 
inclusion due to (\ref{lemma:cell-algebra-star-2}.a).

Multiplying the sum in \eqref{eq:theorem-class-sum} 
with $e^{\invo}$ we obtain 
an element inside $e^{\invo}\calg(\cideal_{P}\setminus\lambda_{\max})$ 
that is contained in $\calg(\cideal_{P}\setminus\lambda_{\max})$ 
by (\ref{lemma:cell-algebra-star-2}.a). 
Thus, $e^{\invo}\cbas{V,U}{\lambda_{\max}}$ contains no 
summand in $\calg(\{\lambda_{\max}\})$ and we get 
$e^{\invo}\acts\dbas{V}{\lambda_{\max}} = 0$ for all 
$V\in\cmset(\lambda_{\max})$, implying $\dmod(\lambda_{\max})^{\invo}\acts e = 0$.
Since $\prmod_{e}(\{\lambda_{\max}\})\cong\dmod(\lambda_{\max})
\otimes\dmod(\lambda_{\max})^{\invo}\acts e$ 
by \fullref{proposition:lambda-piece-iso}, we thus 
obtain $\prmod_{e}(\{\lambda_{\max}\})=0$. This is a 
contradiction to the choice of 
$\lambda_{\max}$ being a $\ord{e}$-maximal element. 
Thus, $\Cpair{\lambda_{\max}}$ is non-zero.
\medskip

\noindent\setword{\ref{theorem:simple-set}.Claim.b}{mainthm-2}.
$\dmod(\lambda_{\max})$ is a quotient of $\prmod_{e}(\{\lambda_{\max}\})$.
\medskip

\noindent Proof of \ref{mainthm-2}.
First, \ref{mainthm-1} and \fullref{proposition:lambda-piece-iso}
imply that 
\begin{gather}
\homc(\prmod_{e}(\{\lambda_{\max}\}),\dmod(\lambda_{\max})) 
\cong
\homk(\dmod(\lambda_{\max})^{\invo}\acts e,\K)
\neq 0.
\end{gather}
Using this identification, choose a 
linear form $f$ on $\dmod(\lambda_{\max})^{\invo}\acts e$ 
and elements $xe\in\dmod(\lambda_{\max})^{\invo}\acts e$ such that 
$f(xe)=1$ (recalling that 
we work over a field). Let now $z\in\dmod(\lambda_{\max})$ be 
a generator (note that existence of $z$ follows from \fullref{lemma:cell-cyclic}). 
Then, using again 
$\prmod_{e}(\{\lambda_{\max}\})\cong\dmod(\lambda_{\max})
\otimes\dmod(\lambda_{\max})^{\invo}\acts e$, 
we obtain that $f$ corresponds to the map sending 
$z\otimes xe$ to $f(xe)z=z$. Hence, $\dmod(\lambda_{\max})$ 
is a quotient of $\prmod_{e}(\{\lambda_{\max}\})$.
\medskip

By \ref{mainthm-2} and \fullref{proposition:to-define-Ls}, 
we get that $\lmod(\lambda_{\max})$ is a quotient of $\prmod_{e}(\{\lambda_{\max}\})$.
With the choice of $\lambda_{\max}$ being $\ord{e}$-maximal we have that 
$\prmod_{e}(\{\lambda_{\max}\})$ is a 
quotient of $\prmod_{e}$ itself, and thus the head of $\prmod_{e}$ 
contains $\lmod(\lambda_{\max})$. Since $\prmod_{e}$ is indecomposable, 
it has a simple head. Thus, it has to be $\lmod(\lambda_{\max})$.
So the completeness will follow after we have established 
\ref{mainthm-3}:
\medskip

\noindent\setword{\ref{theorem:simple-set}.Claim.c}{mainthm-3}.
There are no primitive idempotents $e$ with $aea^{-1}\!\!\not\!\!\isum\ceps$ 
for all $\ceps\in\ciset$ and all units $a\in\calg$.
\medskip

\noindent Proof of \ref{mainthm-3}. This 
follows from \fullref{lemma:decomposition-unit}, see also 
\fullref{remark:decomposition-unit}.
\medskip

\noindent\textit{Non-redundancy.}
We continue to use the notation from above.
\medskip

\noindent\setword{\ref{theorem:simple-set}.Claim.d}{mainthm-4}.
The ideal $\cideal_{P}$ has a unique $\ord{e}$-maximal element.
\medskip

\noindent Proof of \ref{mainthm-4}.
Assume that $\cideal_{P}$ has $\ord{e}$-maximal elements 
$\lambda_{0},\ldots,\lambda_{r}$. Then for each 
of these we know that $\prmod_{e}(\{\lambda_{k}\})\neq 0$ 
and $\Cpair{\lambda_{k}}$ is non-zero, i.e. $\dmod(\lambda_{k})$ 
has a simple quotient. (This is \ref{mainthm-1}.)
Then -- by \fullref{lemma:projective-quotients-sum} --
we have that
\begin{gather}
\neatafrac{\calg(\cideal_{P})e}
{\calg(\cideal_{P}\setminus \{\lambda_{0},\cdots,\lambda_{r}\})e}
\cong 
\prmod_{e}(\{\lambda_{0}\}) 
\oplus\cdots\oplus 
\prmod_{e}(\{\lambda_{r}\}).
\end{gather}
This in turn implies that 
$\prmod_{e}$ has $\lmod(\lambda_{0})\oplus\cdots\oplus\lmod(\lambda_{r})$ 
as a quotient, which is a contradiction to $\prmod_{e}$ being 
indecomposable. Hence, the ideal $\cideal_{P}$ 
has a unique maximal element that we denote by $\lambda_{\max}$.
\medskip

Now, \ref{mainthm-5} will establish non-redundancy, which will finish the proof.
\medskip

\noindent\setword{\ref{theorem:simple-set}.Claim.e}{mainthm-5}.
$\lmod(\lambda)\cong\lmod(\mu)$ implies $\lambda=\mu$ for $\lambda,\mu\in\csetz$.
\medskip

\noindent Proof of \ref{mainthm-5}.
Without loss of generality, assume that 
$\lambda$ is a $\ord{e}$-maximal element in an 
ideal $\cideal_{P}$ for some 
indecomposable projective $\prmod_{e}$ 
corresponding to $e\isum\ceps$ primitive
for some $\ceps\in\ciset$.
(This is sufficient 
since we already proved above that 
simples obtained for these 
elements of $\cset$ give a complete set of 
isomorphism classes of simples.)

We first observe that we have a quotient map 
\begin{gather}
\pi_{\lambda}\colon
\prmod_{e}\twoheadrightarrow
\lmod(\lambda)\overset{\cong}{\longrightarrow}\lmod(\mu),
\end{gather}
with $z_{\lambda}=\pi_{\lambda}(e)$ 
being a generator of $\lmod(\mu)$. 
Thus, one has $e\acts z_{\lambda}=z_{\lambda}$. 
Note now that $e\in\calg(\Ord{e}\lambda)$
since $\lambda$ is unique $\ord{e}$-maximal by \ref{mainthm-4}.
Thus, (\ref{lemma:cell-algebra-star-2}.d) implies 
that there exists $\eta\Ord{e}\lambda$, $S,T\in\cmset(\eta)$ 
with $\ceps_{S}=\ceps_{T}=\ceps$ 
and $U,V\in\cmset(\mu)$ such that the product
\begin{gather}
\cbas{S,T}{\eta}\cbas{U,V}{\mu}
\in
{\textstyle\sum_{T^{\prime}\in\cmset(\eta)}}\,
r_{\cbas{V,U}{\mu}}(T^{\prime},T) \cbas{S,T^{\prime}}{\eta} 
+
\ceps_{S}^{\invo}\calg(\ord{\ceps_{S}}\eta),
\end{gather}
expanded using \eqref{eq:mult-right}, 
contains a summand in $\calg(\{\mu\})$. 
Hence, with $\ceps_{S}=\ceps$ 
(giving $\ord{e}=\ord{\ceps_{S}}$) it 
follows that $\mu\Ord{e}\eta\Ord{e}\lambda$.

On the other hand -- by \fullref{lemma:cell-cyclic} -- we have 
$\dmod(\mu)=\calg(\{\mu\})\acts z$ for some 
generator $z\in\dmod(\mu)$ giving another quotient map 
\begin{gather}
\psi_{\lambda}\colon
\dmod(\mu)\twoheadrightarrow
\lmod(\mu)\overset{\cong}{\longrightarrow}\lmod(\lambda).
\end{gather}
Fix now $z_{\lambda}$ as above and choose 
$y\in\dmod(\mu)$ with 
$\psi_{\lambda}(y)=z_{\lambda}$. 
Then there exists $a\in\calg(\{\mu\})$ 
such that $y = a\acts z$, but 
\begin{gather}
\psi_{\lambda}((\ceps a)\acts z) 
= \ceps\acts\psi_{\lambda}(a\acts z) 
= \ceps\acts\psi_{\lambda}(y) 
= \ceps\acts z_{\lambda} 
= e\acts z_{\lambda}=z_{\lambda},
\end{gather}
so we can assume that $\ceps a = a$ 
and $a\acts \psi_{\lambda}(z)\neq 0$. 
So there exist $S,T\in\cmset(\lambda)$ 
and $U,V\in\cmset(\mu)$ with $\ceps_{U}=\ceps$ such that
\begin{gather}
\cbas{U,V}{\mu}\cbas{S,T}{\lambda}
\in
{\textstyle\sum_{T^{\prime}\in\cmset(\eta)}}\,
r_{\cbas{T,S}{\lambda}}(V^\prime,V)\cbas{U,V^{\prime}}{\mu}
+
\ceps_{U}^{\invo}\calg(\ord{\ceps_{U}}\mu),
\end{gather}
expanded using \eqref{eq:mult-right}, contains a 
summand in $\calg(\{\lambda\})$. Thus, with $\ceps_{U}=\ceps$ 
(giving $\ord{e}=\ord{\ceps_{U}}$)
it follows that $\lambda\Ord{e}\mu$.  

Hence, altogether we have $\lambda=\mu$.
\end{proof}

Note that the primitive idempotent $e$ such 
that $\calg e$ has $\lmod(\lambda)$ as its head 
is not unique. But if we demand 
the choice of an idempotent summand of some 
$\ceps_{\lambda}\in\ciset$, then $\ceps_{\lambda}$ is 
unique. In particular, the associated partial order
$\ord{\ceps_{\lambda}}$ is independent 
of the choice of $e$.
Thus -- having \fullref{theorem:simple-set} -- we can define: 

\begin{definition}\label{definition:decomp-matrix}
Let $\csetf$ and $\lambda\in\csetz$.
We denote by $\prmod(\lambda)$ the indecomposable 
projective module corresponding to $\lmod(\lambda)$. 
\end{definition}

The partial 
order associated to $\prmod(\lambda)$ is denoted by $\ord{\lambda}$.

\begin{proposition}\label{proposition:cell-filtration}
Let $\lambda\in\csetz$. Then $\prmod(\lambda)$ has a 
filtration by cell modules $\dmod(\mu)$ such 
that $\mu\Ord{\lambda}\lambda$.
\end{proposition}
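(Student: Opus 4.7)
The plan is to exhibit the desired filtration by combining Lemma \ref{lemma:projective-submodule-lemma} with Proposition \ref{proposition:lambda-piece-iso}, and then to show that the cell modules $\dmod(\mu)$ appearing in the subquotients necessarily satisfy $\mu\Ord{\lambda}\lambda$.

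First, I choose a primitive idempotent $e\isum\ceps_{\lambda}$ such that $\prmod(\lambda)\cong\calg e=\prmod_{e}$; by the definition of $\ord{\lambda}$ we have $\ord{e}=\ord{\lambda}$. Next, I apply Lemma \ref{lemma:projective-submodule-lemma}, which uses a maximal chain of $\ord{e}$-ideals in $\cset$ (possible since $\csetf$) to produce a filtration
\begin{gather}
\prmod_{e}=\prmod_{0}\supset\prmod_{1}\supset\cdots\supset\prmod_{r}=\{0\},
\quad
\prmod_{i}/\prmod_{i+1}\cong \prmod_{e}(\{\lambda_{i}\}),
\end{gather}
for some enumeration of the elements of $\cset$. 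By Proposition \ref{proposition:lambda-piece-iso}, each non-zero subquotient is isomorphic as an $\calg$-module to
\begin{gather}
\prmod_{e}(\{\lambda_{i}\})\cong \dmod(\lambda_{i})\otimes(\dmod(\lambda_{i})^{\invo}\acts e),
\end{gather}
and since the $\calg$-action on this tensor product only involves the left factor, this is a direct sum of finitely many copies of the cell module $\dmod(\lambda_{i})$. I then refine the filtration by inserting intermediate submodules within each $\prmod_{i}/\prmod_{i+1}\cong\dmod(\lambda_{i})^{\oplus d_i}$, obtaining a filtration whose successive subquotients are either zero or individual cell modules.

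It remains to show that every $\mu\in\cset$ for which $\dmod(\mu)$ actually appears in this refined filtration satisfies $\mu\Ord{\lambda}\lambda$. Let $\cideal_{P}$ be the $\ord{e}$-ideal generated by $\{\mu\in\cset\mid\prmod_{e}(\{\mu\})\neq 0\}$; these are precisely the $\mu$'s that can appear. By Claim \ref{mainthm-4} of the proof of Theorem \ref{theorem:simple-set}, $\cideal_{P}$ has a unique $\ord{e}$-maximal element, and by Claim \ref{mainthm-2} (combined with the fact that the head of $\prmod_{e}$ is simple), this maximal element is precisely $\lambda$. Hence every $\mu\in\cideal_{P}$ satisfies $\mu\Ord{e}\lambda=\mu\Ord{\lambda}\lambda$, which proves the proposition.

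The main thing to get right is the last step: identifying the unique $\ord{e}$-maximal element of $\cideal_{P}$ with $\lambda$. This is the only place where we need to invoke non-trivial results from the proof of Theorem \ref{theorem:simple-set}; the rest is a direct assembly of earlier structural lemmas. The refinement of the filtration into one with cell-module subquotients is straightforward because each $\prmod_{e}(\{\lambda_{i}\})$ is literally a direct sum of copies of a single cell module.
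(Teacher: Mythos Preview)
Your proof is correct and follows essentially the same approach as the paper. The paper's proof is more terse, simply citing ``the proof of \fullref{theorem:simple-set}'' for the fact that $\prmod(\lambda)=\calg(\Ord{\lambda}\lambda)e$, whereas you unpack this by explicitly invoking Claims \ref{mainthm-4} and \ref{mainthm-2} to identify the unique $\ord{e}$-maximal element of $\cideal_{P}$ with $\lambda$; both then conclude via \fullref{lemma:projective-submodule-lemma} and \fullref{proposition:lambda-piece-iso}.
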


\begin{proof}
By the proof of 
\fullref{theorem:simple-set} we know 
that $\prmod(\lambda)=\calg(\Ord{\lambda}\lambda)e$ 
for some $e\isum\ceps_{\lambda}$ primitive. 
The statement follows by
\fullref{lemma:projective-submodule-lemma} and the 
description of the subquotients as direct sums of 
cell modules from \fullref{proposition:lambda-piece-iso}.
\end{proof}

The examples in \fullref{subsection:cell-examples} illustrate \fullref{proposition:cell-filtration}.

\subsection{Reciprocity laws}\label{subsec:BGG}

Throughout the rest of the section assume $\csetf$. 
Let $\lambda\in\csetz$ and $\mu\in\cset$.

We denote by $\dnumber{\mu,\lambda}=[\dmod(\mu):\lmod(\lambda)]$ 
the \textit{Jordan--H\"older multiplicity of $\lmod(\lambda)$ 
in $\dmod(\mu)$} and by 
$\dmatrix=\dmatrix(\calg)=(\dnumber{\mu,\lambda})_{\mu\in\cset,\lambda\in\csetz}$ 
the \textit{decomposition matrix of $\calg$}. (We warn 
the reader that $\dmatrix$ is, in contrast to the Cartan matrix, not necessarily a square matrix.)

In contrast to \cite[Proposition 3.6]{gl1}, the matrix $\dmatrix$ is not upper 
triangular, cf. \fullref{example:CD-matrix}. But we have the 
following relative version.

\begin{proposition}\label{proposition:decomp-matrix}
Let $\lambda\in\csetz$ and $\mu\in\cset$. 
Then $\dnumber{\mu,\lambda}=0$ 
unless $\mu\Ord{\lambda}\lambda$.
Furthermore, we have $\dnumber{\lambda,\lambda}=1$.
\end{proposition}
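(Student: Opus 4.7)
The plan is to exploit the structure of a primitive idempotent $e \isum \ceps_{\lambda}$ with $\prmod(\lambda) = \calg e$. From the proof of \fullref{theorem:simple-set}, the associated ideal $\cideal_{P}$ has $\lambda$ as its unique $\ord{\lambda}$-maximum, so $e \in \calg(\cideal_{P}) \subset \calg(\Ord{\lambda}\lambda)$. Combined with (\ref{lemma:cell-algebra-star-2}.d), this forces $e$ to be a $\K$-linear combination of basis elements $\cbas{S,T}{\eta}$ with $\eta \Ord{\lambda} \lambda$ and $\ceps_{S}=\ceps_{T}=\ceps_{\lambda}$.

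For the first assertion, assume $\lmod(\lambda)$ appears as a composition factor of $\dmod(\mu)$. Since $e$ maps to a cyclic generator of $\lmod(\lambda)$ under the quotient $\prmod(\lambda) \twoheadrightarrow \lmod(\lambda)$, we have $e \acts \lmod(\lambda) \neq 0$, and a standard composition-series argument forces $e \acts \dmod(\mu) \neq 0$. Hence some $\cbas{S,T}{\eta}$ appearing in the expansion of $e$ acts non-trivially on $\dmod(\mu)$, and \fullref{lemma:submodule-lemma-alt2} yields $\mu \Ord{\ceps_{S}} \eta$. Since $\ceps_{S}=\ceps_{\lambda}$ this reads $\mu \Ord{\lambda} \eta$, and combined with $\eta \Ord{\lambda} \lambda$ we get $\mu \Ord{\lambda} \lambda$.

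For the second assertion, the surjection $\dmod(\lambda) \twoheadrightarrow \lmod(\lambda)$ gives $\dnumber{\lambda,\lambda} \geq 1$. For the reverse inequality I first note $\ehomc(\lmod(\lambda)) = \K$: any endomorphism of $\lmod(\lambda)$ lifts via the quotient to an element of $\homc(\dmod(\lambda), \lmod(\lambda))$, which equals $\K$ by \ref{Delta-maps}. By exactness of $\homc(\prmod(\lambda), {}_{-})$ together with the standard facts $\homc(\prmod(\lambda), \lmod(\nu)) = 0$ for $\nu \neq \lambda$ and $\homc(\prmod(\lambda), \lmod(\lambda)) = \K$, we therefore have $\dnumber{\lambda,\lambda} = \dim \homc(\prmod(\lambda), \dmod(\lambda)) = \dim(e \acts \dmod(\lambda))$. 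Now split $e = e_{=\lambda} + e_{<}$ along the direct sum $\calg(\Ord{\lambda}\lambda) = \calg(\{\lambda\}) \oplus \calg(\ord{\lambda}\lambda)$. The lower part $e_{<}$ acts as zero on $\dmod(\lambda)$: any $\cbas{S,T}{\eta}$ with $\eta \ord{\lambda} \lambda$ and $\ceps_{S}=\ceps_{\lambda}$ would otherwise force $\lambda \Ord{\ceps_{S}} \eta$ by \fullref{lemma:submodule-lemma-alt2}, which since $\ceps_{S}=\ceps_{\lambda}$ reads $\lambda \Ord{\lambda} \eta$, contradicting $\eta \ord{\lambda} \lambda$. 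The top part $e_{=\lambda} \in \calg(\{\lambda\})$ annihilates $\radi{\lambda}$ by \fullref{lemma:cell-cyclic} (for $z \in \radi{\lambda}$ the image of $\Cpair{\lambda}({}_{-}, z)$ is zero). Consequently $e \acts \radi{\lambda} = 0$, so any $w = e \acts x$ lying in $\radi{\lambda}$ satisfies $w = e \acts w = 0$, and the quotient map embeds $e \acts \dmod(\lambda)$ into $\lmod(\lambda)$, giving $\dim(e \acts \dmod(\lambda)) \leq \dim(e \acts \lmod(\lambda)) = \dim \homc(\prmod(\lambda), \lmod(\lambda)) = 1$.

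The technical heart is the vanishing $e \acts \radi{\lambda} = 0$, which simultaneously requires the support restriction on the cellular expansion of $e$ from (\ref{lemma:cell-algebra-star-2}.d) and the partial-order constraint from \fullref{lemma:submodule-lemma-alt2}; this interaction of both features is exactly what distinguishes the relative setting from the cellular one and is what necessitates the careful tracking of the specific $\ord{\lambda}$-block on which $e$ is concentrated.
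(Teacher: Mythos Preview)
Your argument for the first assertion is essentially the paper's own proof: both expand the primitive idempotent $e\isum\ceps_{\lambda}$ in the cellular basis, use that $e\in\calg(\Ord{\lambda}\lambda)$ with $\ceps_{S}=\ceps_{\lambda}$ for each summand, and then invoke \fullref{lemma:submodule-lemma-alt2} to deduce $\mu\Ord{\lambda}\eta\Ord{\lambda}\lambda$. The only cosmetic difference is that the paper phrases the non-vanishing via a non-zero map $\dmod(\lambda)\to\dmod(\mu)/N$ rather than via a composition-series argument.

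For the second assertion your route is genuinely different. The paper applies \ref{Delta-maps} directly: every non-zero $f\colon\dmod(\lambda)\to\dmod(\lambda)/N$ is a scalar multiple of the quotient map, hence surjective, and its image is simple only for $N=\radi{\lambda}$; since each occurrence of $\lmod(\lambda)$ as a composition factor produces such an $f$ with simple image, this forces $N=\radi{\lambda}$ and hence $\dnumber{\lambda,\lambda}=1$. You instead compute $\dnumber{\lambda,\lambda}=\dim(e\acts\dmod(\lambda))$ and show $e\acts\radi{\lambda}=0$ by splitting $e=e_{=\lambda}+e_{<}$, killing $e_{<}$ on $\dmod(\lambda)$ via \fullref{lemma:submodule-lemma-alt2} and killing $e_{=\lambda}$ on $\radi{\lambda}$ via \fullref{lemma:cell-cyclic}. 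Your argument is correct and has the virtue of being uniform with the first part (both halves exploit the cellular support of $e$), and it yields the sharper byproduct $e\acts\radi{\lambda}=0$; the paper's argument is considerably shorter and avoids the detour through $\ehomc(\lmod(\lambda))=\K$ and the Hom-dimension identity, since \ref{Delta-maps} already packages exactly what is needed.
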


\begin{proof}
Assume that $\dnumber{\mu,\lambda}\neq 0$. 
Then there exists a non-zero map 
$f\colon\dmod(\lambda)\rightarrow\neatafrac{\dmod(\mu)}{N}$ for 
some submodule $N\subset\dmod(\mu)$. 
Corresponding to $\lmod(\lambda)$ there exists 
some $\ceps\in\ciset$ and $e\isum\ceps$
such that $e$ acts non-trivial on 
$\lmod(\lambda)$. Hence, $e$ acts also non-trivial on $\dmod(\lambda)$, and 
furthermore $e\in\calg(\Ord{\lambda}\lambda)$. 
Since $f$ is an $\calg$-module map, $e$ also 
acts non-trivial on $\neatafrac{\dmod(\mu)}{N}$, and hence also 
non-trivial on $\dmod(\mu)$. Thus, there exists 
$\eta\Ord{\lambda}\lambda$, $S,T\in\cmset(\eta)$ with 
$\ceps_{S}=\ceps$ such that 
$\cbas{S,T}{\eta}\acts\dmod(\mu) \neq 0$. Thus -- by 
\fullref{lemma:submodule-lemma-alt2} -- we have 
that $\mu\Ord{\lambda}\eta\Ord{\lambda}\lambda$.

Assume now that 
$\lambda=\mu\in\csetz$. 
Let $f\colon\dmod(\lambda)\rightarrow\neatafrac{\dmod(\lambda)}{N}$ 
for some submodule $N$ be a non-zero map. 
Then we know -- by \ref{Delta-maps} -- that the map is 
a non-zero $\K$-multiple of the identity of $\dmod(\lambda)$ composed 
with the natural quotient map. Thus, $f$ is always surjective 
and only in case of 
$N=\radi{\lambda}$ is the image simple. This gives 
$\dnumber{\lambda,\lambda} = 1$.
\end{proof}

\begin{lemma}\label{lemma:choices-idempotents}
Let $\lambda\in\csetz$ and $e\isum\ceps_{\lambda}$ primitive. 
Then $\prmod(\lambda)\cong\calg e$ 
if and only if 
$\cideal_{\lambda}
=
\{\mu\in\cset\mid\mu\Ord{\lambda}\lambda\}$ 
is the smallest $\ord{\lambda}$-ideal such 
that $e\in\calg(\cideal_{\lambda})$.
\end{lemma}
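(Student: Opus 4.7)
The plan is to observe that both sides of the stated equivalence can be read off from one distinguished ideal, namely the $\ord{\lambda}$-ideal $\cideal_P$ introduced in the proof of \fullref{theorem:simple-set}. I will show (i) that $\cideal_P$ is always the unique smallest $\ord{\lambda}$-ideal $\cideal'$ with $e\in\calg(\cideal')$, and (ii) that $\calg e$ is the indecomposable projective cover of $\lmod(\lambda_{\max})$, where $\lambda_{\max}$ is the unique $\ord{\lambda}$-maximum of $\cideal_P$. Since $\cideal_\lambda$ is by definition the principal $\ord{\lambda}$-ideal generated by $\lambda$, and since every $\ord{\lambda}$-ideal in $\cset$ has a unique maximum (as $\csetf$ and $\ord{\lambda}$-ideals are directed lower sets), the lemma will reduce to the chain of equivalences $\cideal_P=\cideal_\lambda \Leftrightarrow \lambda_{\max}=\lambda \Leftrightarrow \calg e\cong\prmod(\lambda)$. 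Part (ii) is already contained in \fullref{theorem:simple-set} and its proof, so the real work is (i).

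For (i), I first establish the following technical claim: for any $\ord{\lambda}$-ideal $\cideal'$, $e\in\calg(\cideal')$ implies $\calg e\subseteq\calg(\cideal')$. Since $\ceps_\lambda e=e=e\ceps_\lambda$, \fullref{lemma:epsilon-structure} lets me write $e=\sum r_{\mu,S,T}\cbas{S,T}{\mu}$ with $\ceps_S=\ceps_T=\ceps_\lambda$ and, thanks to $e\in\calg(\cideal')$, with every $\mu$ lying in $\cideal'$. For an arbitrary $a\in\calg$, \eqref{eq:mult-left} expands $a\cbas{S,T}{\mu}$ as a linear combination of $\cbas{S',T}{\mu}\in\calg(\{\mu\})\subseteq\calg(\cideal')$ plus a correction in $\calg(\ord{\ceps_T}\mu)\ceps_T$; the key point is that $\ceps_T=\ceps_\lambda$ forces $\ord{\ceps_T}=\ord{\lambda}$, so the correction lies in $\calg(\ord{\lambda}\mu)\subseteq\calg(\cideal')$ by the $\ord{\lambda}$-ideal property of $\cideal'$ (together with (\ref{lemma:cell-algebra-star-1}.b) to absorb $\ceps_T$).

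With the claim in hand, for any $\mu\notin\cideal'$ I project the inclusion $\calg(\Ord{\lambda}\mu)e\subseteq\calg e\subseteq\calg(\cideal')$ onto the summand $\calg(\{\mu\})$, which must vanish. This yields $\calg(\Ord{\lambda}\mu)e\subseteq\calg(\ord{\lambda}\mu)\cap\calg e$, which equals $\calg(\ord{\lambda}\mu)e$ by (\ref{lemma:cell-algebra-star-2}.c), so $\prmod_e(\{\mu\})=0$. Hence $\{\mu:\prmod_e(\{\mu\})\neq 0\}\subseteq\cideal'$, forcing $\cideal_P\subseteq\cideal'$. Conversely, $\calg e=\calg(\cideal_P)e\subseteq\calg(\cideal_P)$ by (\ref{lemma:cell-algebra-star-2}.b) gives $e\in\calg(\cideal_P)$, so $\cideal_P$ is indeed the unique smallest $\ord{\lambda}$-ideal with $e$ in its span.

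The main obstacle I expect is the careful bookkeeping of the several partial orders: the correction term in \eqref{eq:mult-left} is controlled by $\ord{\ceps_T}$, not a priori by $\ord{\lambda}$, so the whole argument hinges on forcing $\ceps_T=\ceps_\lambda$ via the "diagonality" of $e$ guaranteed by \fullref{lemma:epsilon-structure}. Without this alignment the linear subspace $\calg(\cideal')$ need not be stable under left multiplication by $\calg\cdot e$, and the proof collapses; this is precisely where the relative setup differs from the cellular case, in which there is only one relevant partial order and this subtlety is invisible. Once the alignment is secured, the remaining identifications with $\cideal_P$, $\lambda_{\max}$, and $\prmod(\lambda)$ are formal and simply reassemble the ingredients produced in \fullref{theorem:simple-set}.
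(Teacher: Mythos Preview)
Your proof is correct and follows essentially the same route as the paper: both arguments hinge on the fact that $e\in\calg(\cideal')$ for an $\ord{\lambda}$-ideal $\cideal'$ forces $\calg e=\calg(\cideal')e$ (equivalently $\calg e\subseteq\calg(\cideal')$), and both then read off the smallest such ideal from the structure of $\cideal_P$ established in \fullref{theorem:simple-set}. The only difference is organizational: you first characterize the smallest ideal as $\cideal_P$ and then translate via $\lambda_{\max}$, whereas the paper argues the two implications separately; you also spell out in detail the ``technical claim'' (using \fullref{lemma:epsilon-structure} to align $\ceps_T$ with $\ceps_\lambda$), which the paper invokes only implicitly when it writes $\prmod(\lambda)=\calg(\cideal)e$.
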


\begin{proof}
\textit{$\Rightarrow$.}
Assuming that $\prmod(\lambda)\cong\calg e$, 
we know that $\cideal_{\lambda}$ is an 
$\ord{\lambda}$-ideal such that $e\in\calg(\cideal_{\lambda})$, 
see the proof of \fullref{theorem:simple-set}. 
Assume now that $\cideal$ is another 
$\ord{\lambda}$-ideal such that 
$e \in \calg(\cideal)$. 
If $\lambda\in\cideal$ we are done, 
since $\cideal_{\lambda}\subset\cideal$. 
So assume $\lambda\notin\cideal$ and 
denote by $\left\langle\cideal\cup\lambda\right\rangle$ 
the $\ord{\lambda}$-ideal generated by $\cideal$ and $\lambda$. Then
$\prmod(\{\lambda\}) 
= 
\neatafrac{\calg(\left\langle\cideal\cup\lambda\right\rangle)e}
{\calg(\left\langle\cideal\cup\lambda\right\rangle\setminus \lambda)e} 
= 0$ , since $\prmod(\lambda)=\calg(\cideal)e$. This is a 
contradiction to $\lmod(\lambda)$ being the 
quotient of $\prmod(\lambda)$. Thus, 
$\cideal_{\lambda}$ is the smallest $\ord{\lambda}$-ideal 
with the desired property.
\medskip

\textit{$\Leftarrow$.}
For $\cideal_{\lambda}$ being the smallest $\ord{\lambda}$-ideal 
with $e\in\calg(\cideal_{\lambda})$, let $\mu\in\csetz$ such 
that $\calg e=\prmod(\mu)$. Then $\cideal_{\mu}$ is the smallest 
$\ord{\lambda}$-ideal containing $e$, and thus -- by assumption -- 
equal to $\cideal_{\lambda}$. Hence -- by \fullref{theorem:simple-set} -- 
$\prmod(\mu)$ has simple quotient $\lmod(\lambda)$, giving $\mu=\lambda$.
\end{proof}

Since for a primitive idempotent summand 
of $\ceps_{\lambda}$, the minimal 
$\ord{\lambda}$-ideal $\cideal$ such 
that $e\in\calg(\cideal)$ is equal the 
minimal $\ord{\lambda}$-ideal such that $e^{\invo}\in\calg(\cideal)$, 
the following is immediate.

\begin{corollary}\label{corollary:projective-e-estar}
Let $\lambda\in\csetz$. 
If $\prmod(\lambda)\cong\calg e$ for $e\isum\ceps_{\lambda}$, 
then $\prmod(\lambda)\cong\calg e^{\invo}$.
\end{corollary}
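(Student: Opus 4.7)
The plan is to apply \fullref{lemma:choices-idempotents} to $e^{\invo}$, and to show that it characterizes the same indecomposable projective as $e$ does. The key observation is that the anti-involution ${}^{\invo}$ preserves the linear subspaces $\calg(\cideal)$ indexed by subsets $\cideal\subset\cset$, so that containment of an idempotent in such a subspace is symmetric under ${}^{\invo}$.

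First I would check that $e^{\invo}$ is again a primitive idempotent summand of $\ceps_{\lambda}$. Applying ${}^{\invo}$ to $\ceps_{\lambda}e=e=e\ceps_{\lambda}$ and using $\ceps_{\lambda}^{\invo}=\ceps_{\lambda}$ (part of (\ref{definition:cell-algebra}.c)) yields $e^{\invo}\ceps_{\lambda}=e^{\invo}=\ceps_{\lambda}e^{\invo}$, so $e^{\invo}\isum\ceps_{\lambda}$. Primitivity transfers because ${}^{\invo}$ is an anti-algebra-automorphism of $\calg$, so any non-trivial orthogonal decomposition of $e^{\invo}$ would give one of $e$. In particular, the partial order $\ord{e^{\invo}}=\ord{\ceps_{\lambda}}=\ord{e}=\ord{\lambda}$ is the same as the one associated to $e$, by \fullref{definition:idempotent-order}.

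Second I would observe that, for every subset $\cideal\subset\cset$, the linear subspace $\calg(\cideal)$ defined in \eqref{eq:ideal-algebra-notation} is stable under ${}^{\invo}$, since $(\cbas{S,T}{\mu})^{\invo}=\cbas{T,S}{\mu}$ by (\ref{definition:cell-algebra}.b). Consequently $e\in\calg(\cideal)$ if and only if $e^{\invo}\in\calg(\cideal)$, and hence the minimal $\ord{\lambda}$-ideal containing $e$ coincides with the minimal $\ord{\lambda}$-ideal containing $e^{\invo}$. This is exactly the property to which \fullref{lemma:choices-idempotents} applies. Invoking that lemma for $e^{\invo}$ gives $\calg e^{\invo}\cong\prmod(\lambda)$, as required.

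There is essentially no obstacle: the whole argument reduces to the fact that cellular bases are matched under the anti-involution (condition (\ref{definition:cell-algebra}.b)), and thus the characterization of $\prmod(\lambda)$ via minimal ideals from \fullref{lemma:choices-idempotents} is insensitive to replacing $e$ by $e^{\invo}$. The only point requiring minor care is to make sure that $e^{\invo}$ is still a primitive idempotent summand of the \emph{same} element $\ceps_{\lambda}$, but this is immediate from $\ceps_{\lambda}^{\invo}=\ceps_{\lambda}$.
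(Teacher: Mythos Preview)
Your proof is correct and follows essentially the same approach as the paper: the paper observes (in the sentence immediately preceding the corollary) that the minimal $\ord{\lambda}$-ideal containing $e$ equals the minimal $\ord{\lambda}$-ideal containing $e^{\invo}$, and then invokes \fullref{lemma:choices-idempotents}. You have simply spelled out the details---that $e^{\invo}\isum\ceps_{\lambda}$ is again primitive, and that $\calg(\cideal)$ is ${}^{\invo}$-stable by (\ref{definition:cell-algebra}.b)---which the paper leaves implicit.
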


For $\lambda,\mu\in\csetz$ we denote 
by $\cnumber{\lambda,\mu} = [\prmod(\lambda):\lmod(\mu)]$ the 
\textit{Jordan--H\"older multiplicity of $\lmod(\mu)$ in $\prmod(\lambda)$}, 
and by $\cmatrix=\cmatrix(\calg)=(\cnumber{\lambda,\mu})_{\lambda,\mu\in\csetz}$ the 
\textit{Cartan matrix of $\calg$}. (By \fullref{theorem:simple-set} this coincides 
with the definition we used in \fullref{subsection:cell-examples}.)

\begin{theorem}\label{theorem:bgg}
Let $\lambda\in\csetz$, $\mu\in\cset$ and 
$e\isum\ceps_{\lambda}$ primitive
such that $\prmod(\lambda)=\calg e$.
\smallskip
\begin{enumerate}[label=(\alph*)]

\setlength\itemsep{.15cm}

\item The multiplicity $\dnumber{\mu,\lambda}$ 
is equal to $\mathrm{dim}(\dmod(\mu)^\invo.e)$.

\item If $\mu\in\csetz$, then
\begin{gather}
[\prmod(\lambda):\lmod(\mu)] = 
{\textstyle\sum_{\nu\in\cset,\nu\Ord{\lambda}\lambda,\nu \Ord{\mu} \mu}}\, 
[\dmod(\nu):\lmod(\lambda)][\dmod(\nu):\lmod(\mu)].
\end{gather}
(Or $\cmatrix=\dmatrix^{\mathrm{T}}\dmatrix$, written as matrices.)\qedhere
\end{enumerate}
\end{theorem}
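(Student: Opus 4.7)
For part (a), the plan is to reduce the statement to a Hom-counting computation. I would first establish that $\ehomc(\lmod(\lambda)) = \K$: any endomorphism of $\lmod(\lambda)$ pulled back along the surjection $\dmod(\lambda)\twoheadrightarrow\lmod(\lambda)$ yields an element of $\homc(\dmod(\lambda),\lmod(\lambda))$, which by \ref{Delta-maps} applied with $N=\radi{\lambda}$ is one-dimensional, forcing $\ehomc(\lmod(\lambda))=\K$. The standard consequence, proven by induction on the composition length of $\dmod(\mu)$ via short exact sequences and the projectivity of $\prmod(\lambda)$, is the identity $[\dmod(\mu):\lmod(\lambda)]=\dim\homc(\prmod(\lambda),\dmod(\mu))$. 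I would then invoke \fullref{corollary:projective-e-estar} to use the alternative presentation $\prmod(\lambda)\cong\calg e^{\invo}$; the evaluation-at-$e^{\invo}$ identification $\homc(\calg e^{\invo},\dmod(\mu))\cong e^{\invo}\acts\dmod(\mu)$, combined with the tautology $e^{\invo}\acts\dmod(\mu)=\dmod(\mu)^{\invo}\acts e$ (the same underlying vector space, viewed via the opposite module action), concludes part (a).

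For part (b), the plan is to combine part (a) with the cell filtration from \fullref{proposition:cell-filtration}. Refining the filtration of $\prmod(\lambda)=\calg e$ via \fullref{lemma:projective-submodule-lemma} together with the isomorphism $\piso{\nu}\colon\dmod(\nu)\otimes(\dmod(\nu)^{\invo}\acts e)\xrightarrow{\cong}\prmod_e(\{\nu\})$ of \fullref{proposition:lambda-piece-iso}, each subquotient $\prmod_e(\{\nu\})$ splits as a direct sum of $\dim(\dmod(\nu)^{\invo}\acts e)$ copies of $\dmod(\nu)$. Summing Jordan--H\"older multiplicities across this refined filtration yields
\begin{gather*}
[\prmod(\lambda):\lmod(\mu)] = {\textstyle\sum_{\nu\in\cset}}\, \dim(\dmod(\nu)^{\invo}\acts e)\cdot[\dmod(\nu):\lmod(\mu)],
\end{gather*}
and substituting $\dim(\dmod(\nu)^{\invo}\acts e)=[\dmod(\nu):\lmod(\lambda)]$ from part (a) produces the desired formula. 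The support conditions $\nu\Ord{\lambda}\lambda$ and $\nu\Ord{\mu}\mu$ are automatic from \fullref{proposition:decomp-matrix}: the factor $[\dmod(\nu):\lmod(\lambda)]$ vanishes unless $\nu\Ord{\lambda}\lambda$, and $[\dmod(\nu):\lmod(\mu)]$ vanishes unless $\nu\Ord{\mu}\mu$.

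The main obstacle I anticipate is the careful bookkeeping of the involution in part (a): one must distinguish the left module $\dmod(\mu)$ from the right module $\dmod(\mu)^{\invo}$, and the switch from $\calg e$ to $\calg e^{\invo}$ via \fullref{corollary:projective-e-estar} is precisely what produces $\dmod(\mu)^{\invo}\acts e$ rather than $e\acts\dmod(\mu)$. A secondary point in part (b) is that in the relative setting the subquotients of the coarse filtration from \fullref{proposition:cell-filtration} need not be individual cell modules but direct sums thereof, so the ``multiplicity of $\dmod(\nu)$'' must be read as the total count over the refined filtration obtained from \fullref{proposition:lambda-piece-iso}; well-definedness of this count follows from Jordan--H\"older in the simple subquotients.
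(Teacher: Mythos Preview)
Your proposal is correct and follows essentially the same route as the paper. The only difference is that you spell out the justification for $\dnumber{\mu,\lambda}=\dim\homc(\prmod(\lambda),\dmod(\mu))$ via $\ehomc(\lmod(\lambda))=\K$ (from \ref{Delta-maps}), whereas the paper writes the chain $\dnumber{\mu,\lambda}=\dim\homc(\prmod(\lambda),\dmod(\mu))=\dim\homc(\calg e^{\invo},\dmod(\mu))=\dim(e^{\invo}\acts\dmod(\mu))=\dim(\dmod(\mu)^{\invo}\acts e)$ directly, treating the first equality as standard; part~(b) is identical in both, combining the filtration of \fullref{lemma:projective-submodule-lemma} with \fullref{proposition:lambda-piece-iso} and then invoking \fullref{proposition:decomp-matrix} for the support conditions.
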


\begin{proof}
\textit{(\ref{theorem:bgg}.a).} 
This is straightforward, since
\begin{gather}
\begin{aligned}
\dnumber{\mu,\lambda} 
&= \mathrm{dim}(\homc(\prmod(\lambda),\dmod(\mu))) 
=\mathrm{dim}(\homc(\calg e^{\invo},\dmod(\mu)))
\\
&= 
\mathrm{dim}(e^{\invo}\acts\dmod(\mu)) 
= \mathrm{dim}(\dmod(\mu)^{\invo}\acts e),
\end{aligned}
\end{gather}
with the second equality due to \fullref{corollary:projective-e-estar}.
\medskip

\noindent\textit{(\ref{theorem:bgg}.b).} 
Choose a maximal $\ord{\lambda}$-ideal chain 
inside $\cideal_{\lambda}$. Then we know for each 
subquotient $\prmod(\{\nu\})\cong\dmod(\nu)
\otimes\dmod(\nu)^{\invo}\acts e$ 
as left $\calg$-modules. Thus,
\begin{gather}
\begin{aligned}
\cnumber{\lambda,\mu} 
= 
{\textstyle\sum_{\nu\in\cset,\nu\Ord{\lambda}\lambda}}\,
\mathrm{dim}(\dmod(\nu)^{\invo}\acts e)\dnumber{\nu,\mu} 
= 
{\textstyle\sum_{\nu\in\cset,\nu\Ord{\lambda}\lambda}}\,
\dnumber{\nu,\lambda}\dnumber{\nu,\mu},
\end{aligned}
\end{gather}
where -- by \fullref{proposition:decomp-matrix} -- 
any summand is zero unless $\nu\Ord{\mu}\mu$ 
as well.
\end{proof}

\begin{example}\label{example:CD-matrix}
Coming back to the examples from 
\fullref{subsection:cell-examples}, we have 
for $n=3$
\begin{gather}
\begin{gathered}
\cmatrix(\ualg(\typeA_{3}))=
\begin{psmallmatrix}
2 & 1 & 0 \\
1 & 2 & 1 \\
0 & 1 & 2 \\
\end{psmallmatrix}
=
\begin{psmallmatrix}
1 & 1 & 0 & 0\\
0 & 1 & 1 & 0\\
0 & 0 & 1 & 1\\
\end{psmallmatrix}
\begin{psmallmatrix}
1 & 0 & 0 \\
1 & 1 & 0 \\
0 & 1 & 1 \\
0 & 0 & 1 \\
\end{psmallmatrix}
=
\dmatrix(\ualg(\typeA_{3}))^{\mathrm{T}}
\dmatrix(\ualg(\typeA_{3})),
\\
\cmatrix(\calg(\typeAt_{3}))=
\begin{psmallmatrix}
2 & 1 & 1 \\
1 & 2 & 1 \\
1 & 1 & 2 \\
\end{psmallmatrix}
=
\begin{psmallmatrix}
1 & 1 & 0 \\
0 & 1 & 1 \\
1 & 0 & 1 \\
\end{psmallmatrix}
\begin{psmallmatrix}
1 & 0 & 1 \\
1 & 1 & 0 \\
0 & 1 & 1 \\
\end{psmallmatrix}
=
\dmatrix(\calg(\typeAt_{3}))^{\mathrm{T}}
\dmatrix(\calg(\typeAt_{3})),
\\
\cmatrix(\calg^{\prime}(\typeAt_{3}))=
\begin{psmallmatrix}
3 & 3 & 3 \\
3 & 3 & 3 \\
3 & 3 & 3 \\
\end{psmallmatrix}
=
\begin{psmallmatrix}
1 & 1 & 1 \\
1 & 1 & 1 \\
1 & 1 & 1 \\
\end{psmallmatrix}
\begin{psmallmatrix}
1 & 1 & 1 \\
1 & 1 & 1 \\
1 & 1 & 1 \\
\end{psmallmatrix}
=
\dmatrix(\calg^{\prime}(\typeAt_{3}))^{\mathrm{T}}
\dmatrix(\calg^{\prime}(\typeAt_{3})),
\end{gathered}
\end{gather}
(up to base change)
and analogously for general $n$. Note that 
the decomposition matrices have an 
upper triangular shape for $\ualg(\typeA_{n})$ that 
is a cellular algebra.
\end{example}

As a direct corollary of (\ref{theorem:bgg}.b) and the 
singular value decomposition, 
we get a very easy to check, but weak, necessary criterion 
for an algebra to be relative cellular.

\begin{corollary}\label{corollary:semi-definite}
If $\calg$ is relative cellular, 
then $\cmatrix$ is positive semidefinite.
\end{corollary}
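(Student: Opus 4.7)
The plan is to read off the conclusion essentially directly from Theorem \ref{theorem:bgg}(b), which provides the factorization $\cmatrix = \dmatrix^{\mathrm{T}}\dmatrix$. Once this matrix identity is in hand, positive semidefiniteness is a general fact from linear algebra.

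More concretely, first I would recall that by Theorem \ref{theorem:bgg}(b) the Cartan matrix $\cmatrix$ admits a factorization of the form $A^{\mathrm{T}} A$ with $A = \dmatrix$; note that $\dmatrix$ need not be square, as it is indexed by $\csetz$ on one side and by $\cset$ on the other (cf. the warning before Proposition \ref{proposition:decomp-matrix}), but this poses no obstacle. Then, for an arbitrary column vector $v$ (indexed by $\csetz$), one has
\begin{equation*}
v^{\mathrm{T}} \cmatrix v = v^{\mathrm{T}} \dmatrix^{\mathrm{T}} \dmatrix v = (\dmatrix v)^{\mathrm{T}}(\dmatrix v) = \|\dmatrix v\|^{2} \geq 0,
\end{equation*}
which is the definition of positive semidefiniteness. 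Equivalently, by the singular value decomposition of $\dmatrix$, the eigenvalues of $\dmatrix^{\mathrm{T}}\dmatrix$ are exactly the squared singular values of $\dmatrix$, hence non-negative.

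There is essentially no obstacle here beyond invoking Theorem \ref{theorem:bgg}(b); the only minor bookkeeping point is that the Cartan matrix as we have defined it has integer entries indexed by $\csetz$, and we should make sure this matches the matrix equation $\cmatrix = \dmatrix^{\mathrm{T}}\dmatrix$ (which it does by construction). No finiteness or characteristic assumption beyond $\csetf$ (already standing in this subsection) is needed.
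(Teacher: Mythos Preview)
Your proof is correct and follows essentially the same route as the paper: the paper simply states that the corollary follows from Theorem~\ref{theorem:bgg}(b) (giving $\cmatrix=\dmatrix^{\mathrm{T}}\dmatrix$) together with the singular value decomposition. Your explicit computation $v^{\mathrm{T}}\cmatrix v=\|\dmatrix v\|^{2}\geq 0$ is exactly the unpacking of this.
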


As already discussed in detail in \fullref{subsection:cell-examples}, 
this is in contrast 
to the case of cellular algebras where $\cmatrix$ is 
positive definite, cf. \fullref{remark:pos-def}.

\subsection{Further consequences}\label{subsec:some-further-stuff}

For the next proposition, 
we denote by $\kdual$ the (${}^{\invo}$-twisted) duality on 
$\calg$-modules defined by 
$\kdual(M) = \homk(M^{\invo},\K)$. Note that $\dmod(\lambda)$
is in general not isomorphic to $\kdual(\dmod(\lambda))$ 
as an $\calg$-module. But we have the following.

\begin{proposition}\label{proposition:ext}
Let $\lambda,\mu\in\csetz$. 
Then $\kdual\lmod(\lambda)\cong\lmod(\lambda)$ 
as $\calg$-modules. 
Further, there are isomorphisms
$\extc^{i}(\lmod(\lambda),\lmod(\mu))\cong\extc^{i}(\lmod(\mu),\lmod(\lambda))$ 
for all $i\in\Z_{\geq 0}$.
\end{proposition}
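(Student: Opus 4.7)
The plan is to derive self-duality of simples directly from the bilinear form $\Cpair{\lambda}$ and then to get the Ext symmetry from formal properties of $\kdual$.

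First, using (\ref{proposition:form-properties}.b), I would define a linear map $\phi_{\lambda}\colon\dmod(\lambda)\to\kdual\dmod(\lambda)$ via $\phi_{\lambda}(x)(y)=\Cpair{\lambda}(x,y)$ and check directly that it is a homomorphism of $\calg$-modules: recalling the action on $\kdual M=\homk(M^{\invo},\K)$ is given by $(a\cdot f)(m)=f(a^{\invo}\cdot m)$, the adjointness relation $\Cpair{\lambda}(a\cdot x,y)=\Cpair{\lambda}(x,a^{\invo}\cdot y)$ yields exactly $\phi_{\lambda}(a\cdot x)=a\cdot\phi_{\lambda}(x)$. By the definition of $\radi{\lambda}$, the kernel of $\phi_{\lambda}$ is $\radi{\lambda}$, and since $\lambda\in\csetz$ the form is non-zero, so $\phi_{\lambda}$ induces a non-zero injection $\bar\phi_{\lambda}\colon\lmod(\lambda)\hookrightarrow\kdual\dmod(\lambda)$.

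Next, I would use that $\kdual$ is an exact contravariant functor (since $\homk(-,\K)$ is exact on finite-dimensional spaces and $(-)^{\invo}$ swaps left and right module structures) with $\kdual^{2}\cong\mathrm{id}$. Applying $\kdual$ to the surjection $\dmod(\lambda)\twoheadrightarrow\lmod(\lambda)$ from (\ref{proposition:to-define-Ls}.c) gives an embedding $\kdual\lmod(\lambda)\hookrightarrow\kdual\dmod(\lambda)$ whose image equals the socle of $\kdual\dmod(\lambda)$; by duality with the simple head property of $\dmod(\lambda)$, this socle is simple and isomorphic to $\kdual\lmod(\lambda)$. The non-zero simple submodule $\bar\phi_{\lambda}(\lmod(\lambda))\subset\kdual\dmod(\lambda)$ must therefore lie in, hence coincide with, this simple socle, yielding $\lmod(\lambda)\cong\kdual\lmod(\lambda)$.

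For the second statement, I would observe that the contravariant exact involution $\kdual$ sends a projective resolution of $\lmod(\lambda)$ to an injective resolution of $\kdual\lmod(\lambda)\cong\lmod(\lambda)$, and thus induces natural isomorphisms
\begin{gather*}
\extc^{i}(\lmod(\lambda),\lmod(\mu))
\xrightarrow{\kdual}
\extc^{i}(\kdual\lmod(\mu),\kdual\lmod(\lambda))
\cong
\extc^{i}(\lmod(\mu),\lmod(\lambda)),
\end{gather*}
where the last isomorphism uses the already established self-duality applied to both $\lambda$ and $\mu$.

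The only real point requiring care is verifying that $\kdual$ is a well-defined exact involution on $\calg$-$\mathbf{mod}$ and that socle-head duality under $\kdual$ works as expected; these are standard once one checks the compatibility of $(-)^{\invo}$ with $\homk(-,\K)$, but the notational bookkeeping (left vs.\ right actions twisted by ${}^{\invo}$) is the main thing to get right. Everything else reduces to the simple-head property of cell modules (\fullref{proposition:to-define-Ls}) and the $\calg$-equivariance of $\Cpair{\lambda}$.
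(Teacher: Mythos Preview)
Your argument is correct, and for the first assertion it takes a genuinely different route from the paper. The paper proves $\kdual\lmod(\lambda)\cong\lmod(\lambda)$ by showing that $\prmod(\lambda)\cong\calg e^{\invo}$ is a projective cover of $\kdual\lmod(\lambda)$: it writes down an explicit non-zero map $\calg e^{\invo}\to\kdual\lmod(\lambda)$, $ae^{\invo}\mapsto\big(x\mapsto ea^{\invo}\acts x\in e\lmod(\lambda)\cong\K\big)$, invoking \fullref{proposition:Delta-maps} for the last identification and \fullref{corollary:projective-e-estar} for $\prmod(\lambda)\cong\calg e^{\invo}$. Your approach instead builds the map $\dmod(\lambda)\to\kdual\dmod(\lambda)$ straight from $\Cpair{\lambda}$ and reads off the isomorphism from the simple-head/simple-socle duality under $\kdual$. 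This is more self-contained: it uses only \fullref{proposition:form-properties} and \fullref{proposition:to-define-Ls}, and in particular avoids the machinery of \fullref{subsec:classification} (primitive idempotents, \fullref{lemma:choices-idempotents}, \fullref{corollary:projective-e-estar}); as a bonus it does not rely on $\csetf$. The paper's approach, on the other hand, makes the link to the projective side explicit, which fits the surrounding narrative in \fullref{subsec:BGG}. For the Ext statement the two arguments are essentially the same: both use that $\kdual$ is an exact contravariant self-equivalence on finite-dimensional $\calg$-modules together with the self-duality of simples just established.
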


\begin{proof}
Let $e\isum\ceps_{\lambda}$ 
primitive such that 
$\prmod(\lambda)=\calg e 
\cong \calg e^{\invo}$. 
We claim that $\prmod(\lambda)$ is a 
projective cover of the simple 
$\kdual\lmod(\lambda)$. 
For $ae^{\invo}\in \calg e^{\invo}$ we 
define $\theta_{ae^{\invo}}$ by 
$\theta_{ae^{\invo}} (x) = x\acts (ae^{\invo})$ for 
$x\in\lmod(\lambda)^{\invo}$. Here $x\acts (ae^{\invo}) = ea^{\invo}\acts x$ 
is an element in $e\lmod(\lambda)$ that can be 
canonically identified with the endomorphism 
ring of $\lmod(\lambda)$ that -- by 
\fullref{proposition:Delta-maps} -- is $\K$. 
Thus, $\theta_{a e^{\invo}}$ defines a linear 
form on $\lmod(\lambda)^{\invo}$. Clearly, the 
map $a e^{\invo} \mapsto \theta_{a e^{\invo}}$ is 
not the zero map, hence it is surjective 
and so $\prmod(\lambda)$ is the 
projective cover of $\kdual\lmod(\lambda)$.

Using $\extc^{i}(L(\lambda),L(\mu))
\cong
\mathrm{Ext}_{\text{mod}{-}\calg}^{i}(L(\lambda)^{\invo},L(\mu)^{\invo})$, the latter 
being in right $\calg$-modules, 
we obtain the statement about 
$\mathrm{Ext}$-groups since vector space 
duality gives a contravariant equivalence 
between left and right modules for a finite-dimensional algebras.
\end{proof}

\begin{remark}\label{remark:sym-quiver}
As a corollary of \fullref{proposition:ext}, 
the $\mathrm{Ext}$-quiver of a relative cellular 
algebra has a symmetric form. This is a well-known fact 
for cellular algebras.
\end{remark}

Finally, the semisimplicity criterion 
for a relative cellular algebra is as in \cite[Theorem 3.8]{gl1}, 
and the proof -- by using the results from 
\fullref{subsec:BGG} --
is identical (and omitted).

\begin{propositionqed}\label{proposition:semi-simplicity}
Let $\calg$ be a relative cellular algebra. Then the following are equivalent.
\smallskip
\begin{enumerate}[label=(\alph*)]

\setlength\itemsep{.15cm}

\item The algebra $\calg$ is semisimple.

\item The cell modules $\dmod(\lambda)$ for $\lambda\in\csetz$ are simple. 

\item The subspace $\radi{\lambda}=0$ for 
all $\lambda\in\cset$.\qedhere
\end{enumerate}
\end{propositionqed}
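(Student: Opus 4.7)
The plan is to mirror \cite[Theorem 3.8]{gl1}, using the relative reciprocity laws from \fullref{subsec:BGG}, by establishing the cycle $(c) \Rightarrow (a) \Rightarrow (b) \Rightarrow (c)$.

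First I would handle $(c) \Rightarrow (a)$ as follows: since each $\cmset(\lambda)$ is non-empty by (\ref{definition:cell-algebra}.a), every cell module $\dmod(\lambda)$ is nonzero, so $\radi{\lambda} = 0$ for all $\lambda$ excludes the case $\Cpair{\lambda} = 0$ (for which $\radi{\lambda}$ would equal $\dmod(\lambda)$). This forces $\csetz = \cset$ and $\dmod(\lambda) = \lmod(\lambda)$ simple for every $\lambda$; then \fullref{proposition:decomp-matrix} yields $\dnumber{\mu,\lambda} = \delta_{\lambda,\mu}$, so $\dmatrix = I$, and by (\ref{theorem:bgg}.b), $\cmatrix = \dmatrix^{\mathrm{T}}\dmatrix = I$, making every indecomposable projective simple. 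For $(a) \Rightarrow (b)$, I would note that semisimplicity makes each $\dmod(\lambda)$ semisimple; for $\lambda \in \csetz$, the cell module is cyclic by \fullref{corollary:radical} and has simple head $\lmod(\lambda)$ by (\ref{proposition:to-define-Ls}.c). A cyclic semisimple module with simple head coincides with that head, so $\dmod(\lambda) = \lmod(\lambda)$ is simple.

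Finally, for $(b) \Rightarrow (c)$, I would first use (b) to get $\radi{\lambda} = 0$ whenever $\lambda \in \csetz$, and then show $\csetz = \cset$ by combining the cell filtration of \fullref{proposition:cell-filtration}, the Cartan identity (\ref{theorem:bgg}.b), and a dimension count: the relative cellular basis gives $\dim \calg = \sum_{\lambda \in \cset}(\dim \dmod(\lambda))^2$, while Wedderburn together with \fullref{theorem:simple-set} gives $\dim \calg = \sum_{\lambda \in \csetz}(\dim \lmod(\lambda))^2$ once semisimplicity has been deduced from (b) via the reciprocity law, and the two identities can only match when $\csetz = \cset$ and $\dim \lmod(\lambda) = \dim \dmod(\lambda)$ throughout. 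The main obstacle is precisely this $(b) \Rightarrow (c)$ direction, since (b) as stated only controls cell modules indexed by $\csetz$; ruling out $\cset \setminus \csetz \neq \emptyset$ requires transferring the simplicity of the cell modules into simplicity of the projectives via \fullref{proposition:cell-filtration} and (\ref{theorem:bgg}.b), and only then appealing to the dimension comparison.
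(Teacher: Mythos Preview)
Your arguments for $(c)\Rightarrow(a)$ and $(a)\Rightarrow(b)$ are correct and follow the Graham--Lehrer template the paper invokes.

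The gap is in $(b)\Rightarrow(c)$, and it is a gap in the statement itself as printed, not only in your outline. Your proposed route---deduce that each $\prmod(\lambda)$ is simple from (b) via $c_{\lambda\lambda}=\sum_{\nu\in\cset}\dnumber{\nu,\lambda}^{2}$, then run a dimension comparison---is circular: the sum runs over all of $\cset$, and (b) says nothing about $\dnumber{\nu,\lambda}$ when $\nu\notin\csetz$, so you cannot force $c_{\lambda\lambda}=1$. Concretely, take $\calg=\K[x]/(x^{2})$ with the cellular structure $\cset=\{1\ord{1}2\}$, each $\cmset(\lambda)$ a singleton, $\cbas{\ast,\ast}{2}=1$ and $\cbas{\ast,\ast}{1}=x$. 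Then $\csetz=\{2\}$ and $\dmod(2)$ is one-dimensional, hence simple, so (b) holds; yet $\radi{1}=\dmod(1)\neq 0$ and $\calg$ is not semisimple, so both (a) and (c) fail.

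The discrepancy is that \cite[Theorem~3.8(ii)]{gl1} actually reads ``the nonzero cell modules are irreducible \emph{and pairwise inequivalent}'': all $\dmod(\lambda)$ for $\lambda\in\cset$ are simple and mutually non-isomorphic. With that hypothesis the implication to (c) is immediate: if some $\nu\in\cset\setminus\csetz$ existed, then $\dmod(\nu)$, being simple, would be isomorphic to some $\lmod(\mu)=\dmod(\mu)$ with $\mu\in\csetz$, contradicting pairwise inequivalence; hence $\csetz=\cset$ and each $\radi{\lambda}=0$. So (b) should be read as Graham--Lehrer's condition (ii); once you do, your dimension count becomes unnecessary and the cycle closes directly.
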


\begin{example}\label{example-semisimple}
None of the algebras from 
\fullref{subsection:cell-examples}, nor 
$\aarc$ for $\numberarcs\in\Np$ (for the latter see 
\fullref{section:arc-stuff}) are semisimple. 
There are various ways to see this, but using 
\fullref{proposition:semi-simplicity} this follows since the 
simples are all of dimension one, while the cell modules 
are not.
\end{example}
\section{An extended example I: The restricted enveloping algebra of \texorpdfstring{$\slt$}{sltwo}}\label{section:rel-cell-res}

Throughout this section let $\K$ be any field with $\mathrm{char}(\K)=p>0$.

\subsection{The algebra}\label{subsection:the-algebra-sl2}
We let 
$\fieldp$ be the prime field of $\K$, and
we also use the set 
$\setp=\{0,1,\dots,p-2,p-1\}\subset\N$
underlying $\fieldp$. (Using the identification $\fieldp=\setp$, we will 
sometimes read modulo $p$.)

\begin{definition}\label{definition:res-slt}
The \textit{restricted enveloping algebra of $\slt$}, denoted by $\uslt$, 
is the associative, unital algebra generated by 
$\egen,\fgen,\hgen$ subject to
\begin{gather}\label{eq:slt-rels-usual}
\hgen\egen-\egen\hgen=2\egen,
\quad
\hgen\fgen-\fgen\hgen=-2\fgen,
\quad
\egen\fgen-\fgen\egen=\hgen,
\end{gather} 
\begin{gather}\label{eq:slt-rels-modp}
\egen^{p}=\fgen^{p}=\hgen^{p}-\hgen=0.
\end{gather}
Said otherwise, $\uslt$ is the 
usual enveloping algebra of $\slt$ modulo \eqref{eq:slt-rels-modp}.
\end{definition}

Note that the prime $p$ enters the definition of $\uslt$ in two ways: 
via the ground field, but also via \eqref{eq:slt-rels-modp}.

\begin{remark}\label{remark:chi-is-zero}
Our main source for the basics about $\uslt$ are \cite{fp1} and  
\cite{ja1}. (E.g., \fullref{definition:res-slt} 
is taken from therein.) Note that $\uslt[\chi]$ can be defined 
for a choice of $\chi\in\slt^{\ast}$. But, as we will see below, cf. \fullref{remark:fancy-chi}, 
it is crucial for us that $\chi=0$.
\end{remark}

Recall the following \textit{PBW theorem}, cf. \cite[Section 1]{fp1} or \cite[Section A.3]{ja1}:

\begin{theoremqed}\label{theorem:PBW-slt}
The set
\begin{gather}\label{eq:PBW-slt}
\{
\fgen^{x}\hgen^{y}\egen^{z}
\mid
x,y,z\in\setp
\}
\end{gather}
is a basis of $\uslt$.
\end{theoremqed}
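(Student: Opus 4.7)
The plan is to prove spanning and linear independence separately; since the set \eqref{eq:PBW-slt} has exactly $p^{3}$ elements, it suffices to show that $\dim_{\K}\uslt=p^{3}$. First I would show that the set spans. Using the commutation relations \eqref{eq:slt-rels-usual}, any monomial in the generators $\egen, \fgen, \hgen$ can be rewritten, by a standard induction on a suitable monomial ordering (e.g.\ lexicographic with $\fgen<\hgen<\egen$), in the ordered PBW form $\fgen^{x}\hgen^{y}\egen^{z}$ with $x,y,z\in\N$; each commutation step strictly decreases the chosen order. The restriction relations \eqref{eq:slt-rels-modp} then shrink the exponents: $\egen^{p}=\fgen^{p}=0$ forces $x,z\in\setp$ directly, while $\hgen^{p}=\hgen$ allows any $\hgen^{y}$ to be rewritten as a polynomial in $\hgen$ of degree less than $p$.

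The main obstacle is linear independence, since the defining relations of $\uslt$ could a priori entail unexpected relations among the proposed basis elements. The cleanest route is to realise $\uslt$ as a quotient $U/J$, where $U=U(\slt)$ is the classical enveloping algebra of $\slt$ over $\K$ and $J$ is the two-sided ideal generated by $\egen^{p}$, $\fgen^{p}$ and $\hgen^{p}-\hgen$. By the classical PBW theorem for $U$, the monomials $\fgen^{a}\hgen^{b}\egen^{c}$ with $a,b,c\in\N$ form a $\K$-basis. Equip $U$ with the filtration by total degree in the generators; the associated graded ring is the polynomial ring $\K[E,F,H]$, and the three generators of $J$ have leading symbols $E^{p}$, $F^{p}$, $H^{p}$. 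A standard graded-to-filtered comparison, together with the fact that $E^{p},F^{p},H^{p}$ form a regular sequence in $\K[E,F,H]$, shows that $\mathrm{gr}(\uslt)$ is a quotient of $\K[E,F,H]/(E^{p},F^{p},H^{p})$, which is $p^{3}$-dimensional with basis $\{F^{a}H^{b}E^{c}:a,b,c\in\setp\}$. Hence $\dim_{\K}\uslt\leq p^{3}$, and combined with the spanning step this forces equality and establishes the claim.

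An alternative approach to linear independence, perhaps closer in spirit to the representation-theoretic arguments of the rest of the paper, is to construct enough $\uslt$-modules to distinguish the proposed basis vectors directly. For each $\lambda\in\fieldp$ one builds a $p$-dimensional baby Verma module $Z(\lambda)$ with highest weight $\lambda$ and basis $\{\fgen^{i}\acts v_{\lambda}:i\in\setp\}$; the relation $\hgen^{p}=\hgen$ is automatic because $\hgen$ acts on each weight space by a scalar in $\fieldp$, on which $t^{p}=t$. Analysing the action of the candidate basis elements on suitable cyclic vectors in $\bigoplus_{\lambda\in\fieldp}Z(\lambda)$ would yield linear independence while avoiding the commutative-algebra argument, at the cost of some explicit computation. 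Either route can be found in the standard references \cite{fp1,ja1}; I would expect the filtration approach to be the more economical in practice.
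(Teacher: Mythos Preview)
The paper does not prove this statement at all: it is quoted from the literature (the references \cite{fp1}, \cite{ja1}) and presented without argument. So there is no ``paper's proof'' to compare your proposal against; any correct argument you give is already more than what the paper supplies.

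That said, your filtration argument as written has a genuine logical gap. Your spanning step establishes $\dim_{\K}\uslt\le p^{3}$. Your graded-to-filtered step concludes that $\mathrm{gr}(\uslt)$ is a \emph{quotient} of $\K[E,F,H]/(E^{p},F^{p},H^{p})$, hence again $\dim_{\K}\uslt\le p^{3}$. Two upper bounds do not ``force equality''; you never produce a lower bound. The regular-sequence hypothesis you invoke is exactly the right ingredient, but you draw the wrong conclusion from it: what it actually buys you (together with the crucial fact, which you do not mention, that $\egen^{p},\fgen^{p},\hgen^{p}-\hgen$ are \emph{central} in $U(\slt)$ over a field of characteristic $p$) is that $\mathrm{gr}(J)$ equals the ideal $(E^{p},F^{p},H^{p})$ on the nose, so that $\mathrm{gr}(\uslt)\cong\K[E,F,H]/(E^{p},F^{p},H^{p})$ is an isomorphism, not merely a surjection. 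That gives $\dim_{\K}\uslt=p^{3}$ directly. An even cleaner variant: once you know the three generators of $J$ are central, classical PBW shows $U(\slt)$ is free of rank $p^{3}$ over the polynomial subalgebra $\K[\egen^{p},\fgen^{p},\hgen^{p}-\hgen]$, and tensoring down to $\K$ gives the basis immediately.

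Your alternative route via baby Verma modules is sound and does supply the missing lower bound; it also dovetails with the rest of \fullref{section:rel-cell-res}, where those modules reappear as the cell modules $\dmod(\lambda)$.
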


Our relative cellular basis for $\uslt$ will 
be an idempotent version of \eqref{eq:PBW-slt}. 
For this we need the following weight idempotents. Let $\lambda\in\setp$ 
and define
\begin{gather}
\hidem_{\lambda}=
-{\textstyle\prod_{\mu\in\fieldp,\mu\neq\lambda}}\,
(\hgen-\mu).
\end{gather}

\begin{lemma}\label{lemma:prim-idem-sl2}
The set $\{\hidem_{\lambda}\mid\lambda\in\setp\}$ is a complete set of pairwise orthogonal idempotents.
\end{lemma}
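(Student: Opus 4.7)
The plan is to realize the $\hidem_\lambda$ as Lagrange interpolation idempotents for the commutative subalgebra of $\uslt$ generated by $\hgen$. First I would note that relation \eqref{eq:slt-rels-modp} says $\hgen$ satisfies $\hgen^p - \hgen = 0$, and since $\fieldp \subset \K$, Fermat's little theorem factors $X^p - X = \prod_{\mu \in \fieldp}(X - \mu)$ into pairwise distinct linear factors. Hence $\prod_{\mu \in \fieldp}(\hgen - \mu) = 0$ holds in $\uslt$, and we are essentially decomposing $\K[\hgen] \cong \K[X]/(X^p - X) \cong \K^p$ via the Chinese Remainder Theorem.

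The one nontrivial input is the identity $\prod_{\mu \in \fieldp,\, \mu \neq \lambda}(\lambda - \mu) = -1$ for each $\lambda \in \fieldp$, which follows by differentiating $X^p - X$ to get $p X^{p-1} - 1 = -1$ in characteristic $p$ and evaluating at $X = \lambda$. With this in hand I would first show $(\hgen - \lambda)\hidem_\lambda = -\prod_{\mu \in \fieldp}(\hgen - \mu) = 0$, so that $\hgen$ acts as the scalar $\lambda$ on $\hidem_\lambda$. Idempotency $\hidem_\lambda^2 = \hidem_\lambda$ is then the computation
\begin{gather*}
\hidem_\lambda^2 = -{\textstyle\prod_{\mu \neq \lambda}}(\hgen - \mu)\cdot\hidem_\lambda = -{\textstyle\prod_{\mu \neq \lambda}}(\lambda - \mu)\cdot\hidem_\lambda = -(-1)\hidem_\lambda = \hidem_\lambda,
\end{gather*}
and orthogonality $\hidem_\lambda \hidem_\nu = 0$ for $\lambda \neq \nu$ is immediate, since the factor $(\hgen - \lambda)$ appears in $\hidem_\nu$.

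For completeness I would consider the polynomial $P(X) = \sum_{\lambda \in \fieldp}\bigl(-\prod_{\mu \neq \lambda}(X - \mu)\bigr) \in \K[X]$, which has degree at most $p-1$. Evaluating at any $\nu \in \fieldp$, every summand with $\lambda \neq \nu$ vanishes (it contains the factor $\nu - \nu$), while the $\lambda = \nu$ summand equals $-\prod_{\mu \neq \nu}(\nu - \mu) = 1$ by the key identity. Thus $P(X) - 1$ has at most $p-1$ distinct roots but vanishes on all of $\fieldp$, so $P(X) \equiv 1$ and substituting $\hgen$ yields $\sum_{\lambda \in \setp}\hidem_\lambda = 1$.

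Finally, each $\hidem_\lambda$ is nonzero --- which the paper's convention requires for an element to be called an idempotent --- because expanding the defining product shows that the PBW basis element $\hgen^{p-1}$ from \fullref{theorem:PBW-slt} appears in $\hidem_\lambda$ with coefficient $-1$. No real obstacle is anticipated: the whole argument is the standard CRT decomposition of $\K[X]/(X^p - X)$ into a product of copies of $\K$, with the only characteristic-$p$ subtlety being the derivative identity recorded above.
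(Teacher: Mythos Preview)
Your proof is correct and follows essentially the same Lagrange-interpolation approach as the paper: both recognize the $\hidem_\lambda$ as the primitive idempotents of $\K[\hgen]\cong\K[X]/(X^p-X)\cong\K^p$ and both hinge on the identity $\prod_{\mu\neq\lambda}(\lambda-\mu)=-1$. The only cosmetic difference is that the paper obtains this identity via Wilson's theorem and checks the polynomial identities by evaluation at each $\mu\in\fieldp$, whereas you derive it by differentiating $X^p-X$ and argue directly inside the algebra; your explicit verification that $\hidem_\lambda\neq 0$ via the PBW basis is a welcome addition that the paper leaves implicit.
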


We stress that the $\hidem_{\lambda}$'s are not primitive idempotents of $\uslt$, but 
rather the primitive idempotents of the 
semisimple subalgebra spanned by the $\hgen$'s.

\begin{proof}
Observe that $\hidem_{\lambda}$ is a 
degree $p-1$ polynomial in $\hgen$ and therefore 
determined by its values in $\fieldp$. Now,
substituting $\hgen$ with any element of $\fieldp$, we see 
-- by Wilson's theorem -- that $\hidem_{\lambda}$ 
is an idempotent. Similarly, 
orthogonality follows from Fermat's little theorem. 
Finally -- by construction -- 
$\sum_{\lambda\in\cset}\hidem_{\lambda}$ evaluates for any substitution 
$\hgen\mapsto\mu\in\fieldp$ to $1$.
\end{proof}

The following tedious calculations, which we will 
use throughout, are omitted.

\begin{lemmaqed}\label{lemma:some-calcs-slt}
Let $\lambda\in\setp$ and $S,T\in\setp$.
\smallskip
\begin{enumerate}[label=(\alph*)]

\setlength\itemsep{.15cm}

\item For $k\in\setp$ we have
\begin{gather}
\begin{gathered}
\hgen^{k}\egen^{T}=\egen^{T}(\hgen+2T)^{k},
\quad
\hgen^{k}\fgen^{S}=\fgen^{S}(\hgen-2S)^{k},
\\
\hidem_{\lambda}\egen=\egen\hidem_{\lambda{-}2},
\quad
\egen\hidem_{\lambda}=\hidem_{\lambda{+}2}\egen,
\quad
\hidem_{\lambda}\fgen=\fgen\hidem_{\lambda{+}2},
\quad
\fgen\hidem_{\lambda}=\hidem_{\lambda{-}2}\fgen,
\quad
\hgen\hidem_{\lambda}=\lambda\hidem_{\lambda}=\hidem_{\lambda}\hgen.
\end{gathered}
\end{gather}

\item We have
\begin{gather}
\begin{gathered}
\egen^{T}\fgen^{S}\hidem_{\lambda}=
{\textstyle\sum_{j=0}^{\min(S,T)}\tfrac{S!T!}{(S-j)!(T-j)!}
\binom{T{-}S{+}\lambda}{j}}
\fgen^{S{-}j}\egen^{T{-}j}\hidem_{\lambda},
\\
\fgen^{S}\egen^{T}\hidem_{\lambda}=
{\textstyle\sum_{j=0}^{\min(S,T)}\tfrac{S!T!}{(S-j)!(T-j)!}
\binom{S{-}T{-}\lambda}{j}}
\egen^{T{-}j}\fgen^{S{-}j}\hidem_{\lambda},
\end{gathered}
\end{gather}
with usual factorials and binomials taken modulo $p$.\qedhere
\end{enumerate}
\end{lemmaqed}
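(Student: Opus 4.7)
The plan is to deduce (a) from the defining relations by direct induction, and then to bootstrap (b) from (a). For the first two identities in (a), the base case $\hgen\egen = \egen(\hgen + 2)$ is just a rewriting of \eqref{eq:slt-rels-usual}; induction on $T$ gives $\hgen\egen^{T} = \egen^{T}(\hgen + 2T)$, and a further induction on $k$ yields $\hgen^{k}\egen^{T} = \egen^{T}(\hgen + 2T)^{k}$. The $\fgen$-analogue is symmetric. For the four intertwining identities involving $\hidem_{\lambda}$, I would observe that $\hidem_{\lambda} = p_{\lambda}(\hgen)$ is a polynomial in $\hgen$, so that
\begin{gather}
\hidem_{\lambda}\egen = p_{\lambda}(\hgen)\egen = \egen\,p_{\lambda}(\hgen + 2),
\end{gather}
and the substitution $\nu = \mu - 2$ in the defining product of $\hidem_{\lambda}$ identifies $p_{\lambda}(\hgen + 2)$ with $p_{\lambda - 2}(\hgen) = \hidem_{\lambda - 2}$; the other three intertwining identities are obtained in the same way. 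Finally, the eigenvalue identity $\hgen\hidem_{\lambda} = \lambda\hidem_{\lambda}$ follows from
\begin{gather}
(\hgen - \lambda)\hidem_{\lambda} = -{\textstyle\prod_{\mu \in \fieldp}}(\hgen - \mu) = -(\hgen^{p} - \hgen) = 0,
\end{gather}
using the factorization of $X^{p} - X$ over $\fieldp$ together with \eqref{eq:slt-rels-modp}.

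For (b), I would first prove the auxiliary formula
\begin{gather}
\egen\fgen^{S} = \fgen^{S}\egen + S\fgen^{S-1}(\hgen - S + 1)
\end{gather}
by induction on $S$, starting from $\egen\fgen = \fgen\egen + \hgen$ and using $\hgen\fgen^{S-1} = \fgen^{S-1}(\hgen - 2(S-1))$ from (a) at the inductive step. Multiplying on the right by $\hidem_{\lambda}$ and applying $\hgen\hidem_{\lambda} = \lambda\hidem_{\lambda}$ gives the $T = 1$ case of the first identity in (b). The general case follows by induction on $T$: applying $\egen$ to the inductive expansion of $\egen^{T}\fgen^{S}\hidem_{\lambda}$ and using the auxiliary formula to commute $\egen$ past each $\fgen^{S-j}$ produces one sum with the same index range plus a shifted sum that, after reindexing $j \mapsto j-1$ and collecting via $\hgen\egen^{T-j} = \egen^{T-j}(\hgen + 2(T-j))$ followed by $\hgen\hidem_{\lambda} = \lambda\hidem_{\lambda}$, combine into the desired coefficient. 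The needed combinatorial identity reduces to the standard Pascal relation $\binom{n+1}{j} = \binom{n}{j} + \binom{n}{j-1}$ together with $j\binom{n}{j} = (n - j + 1)\binom{n}{j-1}$, applied with $n = T - S + \lambda$. The second identity of (b) follows by the symmetric argument with the roles of $\egen$ and $\fgen$ swapped, the sign reversal in the binomial argument reflecting that $\fgen$ shifts $\hgen$-weights in the opposite direction.

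The main obstacle is the index bookkeeping in the inductive step of (b): one must carefully track how the shift produced by commuting $\hgen$ past $\egen^{T-j}$ interacts with $\hgen\hidem_{\lambda} = \lambda\hidem_{\lambda}$ to yield a coefficient that is a specific affine function of $\lambda$, and then verify that the factorial prefactors on the two sums to be combined really do permit a Pascal-style collapse. All the arithmetic happens modulo $p$, but this introduces no difficulty since the identities used hold integrally before reduction.
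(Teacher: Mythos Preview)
Your proposal is correct, and there is nothing to compare it against: the paper explicitly omits the proof of this lemma, introducing it with ``The following tedious calculations, which we will use throughout, are omitted'' and closing the statement with a QED box. Your argument supplies exactly the kind of routine verification the authors had in mind---induction on the defining relations \eqref{eq:slt-rels-usual} for part (a), and the standard commutation formula $\egen\fgen^{S}=\fgen^{S}\egen+S\fgen^{S-1}(\hgen-S+1)$ (itself proved by induction) as the engine for part (b). The bookkeeping you flag as the main obstacle is indeed the only subtlety, and your description of how the Pascal relation resolves it is accurate.
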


\begin{remark}\label{remark:p-is-two}
For $p=2$ it is -- by \fullref{lemma:some-calcs-slt} -- 
not hard to see that $\uslt$ is isomorphic to 
a direct sum of
$\neatafrac{\K[X,Y]}{(X^{2},Y^{2})}$ 
and a semisimple algebra. 
Thus, $\uslt$ is already cellular, 
and we from now on assume that $p>2$.
\end{remark}

\subsection{The cell datum}\label{subsection:the-cell-datum-sl2}

Next, we want to define the relative cell datum for $\uslt$. To this end, we 
let $\cset=\setp$ and $\cmset(\lambda)=\setp$ for all $\lambda\in\cset$. 
Moreover -- by \fullref{lemma:prim-idem-sl2} -- we can let 
$\ciset=\{\hidem_{\lambda}\mid\lambda\in\cset\}$ be our idempotent set.

Further, we let $\cbas{S,T}{\lambda}=\fgen^{S}\hidem_{\lambda}\egen^{T}$, 
and set $(\fgen^{S}\hidem_{\lambda}\egen^{T})^{\invo}=\fgen^{T}\hidem_{\lambda}\egen^{S}$. And finally, 
let the partial orders $\coset=\{\ord{\hidem_{\lambda}}\mid\lambda\in\cset\}$, on $\cset$,  be defined via
\begin{gather}
\lambda + 2(p-1) \ord{\hidem_{\lambda}} \cdots\ord{\hidem_{\lambda}}\lambda+4\ord{\hidem_{\lambda}}
\lambda+2\ord{\hidem_{\lambda}}\lambda,
\end{gather}
and $\ceps_{S}=\hidem_{\lambda{+}2S}$ 
for $S\in\cmset(\lambda)$. Note that these partial orders on $\cset$ 
are well-defined since 
$2$ generates $\fieldp$ since we assume that $p>2$.

To summarize, we have our cell datum
\begin{gather}\label{eq:uslt-datum}
(\cset,\cmset,\cbasis,{}^{\invo},\ciset,\coset,\cepsmap).
\end{gather}

A direct consequence of \fullref{lemma:some-calcs-slt} is:

\begin{lemmaqed}\label{lemma:some-calcs-slt-2}
Let $\cbas{S+1,T}{\lambda}=\cbas{S-1,T}{\lambda}=\cbas{S,T{+}1}{\lambda}=0$ 
in case $S,T\notin\setp$. Then
\begin{gather}
\egen\,\cbas{S,T}{\lambda}
=
S(1-S+\lambda)
\cbas{S{-}1,T}{\lambda}
+
\cbas{S,T{+}1}{\lambda{+}2},
\quad
\fgen\,\cbas{S,T}{\lambda}
=
\cbas{S{+}1,T}{\lambda},
\quad
\hgen\,\cbas{S,T}{\lambda}
=
(\lambda-2S)\cbas{S,T}{\lambda},
\end{gather}
Similar formulas hold 
for the right action of $\uslt$ on 
the $\cbas{S,T}{\lambda}$'s.
\end{lemmaqed}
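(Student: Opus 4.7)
The plan is to verify each of the three formulas by direct computation using the definition $\cbas{S,T}{\lambda}=\fgen^{S}\hidem_{\lambda}\egen^{T}$ together with the identities assembled in \fullref{lemma:some-calcs-slt}. The formulas for $\fgen$ and $\hgen$ are essentially by inspection, while the formula for $\egen$ requires one nontrivial commutation to move an $\egen$ past the power $\fgen^{S}$. The boundary cases are handled automatically by the relations $\egen^{p}=\fgen^{p}=0$, which match the convention that $\cbas{S\pm 1,T}{\lambda}$ and $\cbas{S,T+1}{\lambda}$ vanish once indices leave $\setp$.

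First I would dispense with the two easy cases. For $\fgen$, simply $\fgen\cdot\fgen^{S}\hidem_{\lambda}\egen^{T}=\fgen^{S+1}\hidem_{\lambda}\egen^{T}=\cbas{S+1,T}{\lambda}$, and if $S=p-1$ the result is $0$ by $\fgen^{p}=0$. For $\hgen$, I would apply $\hgen\fgen^{S}=\fgen^{S}(\hgen-2S)$ from (\ref{lemma:some-calcs-slt}.a) and then $\hgen\hidem_{\lambda}=\lambda\hidem_{\lambda}$, obtaining $\hgen\cbas{S,T}{\lambda}=\fgen^{S}(\lambda-2S)\hidem_{\lambda}\egen^{T}=(\lambda-2S)\cbas{S,T}{\lambda}$.

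The main computation is the action of $\egen$. Here I would invoke (\ref{lemma:some-calcs-slt}.b), specialized to exponent $1$ on $\egen$, giving
\[
\egen\cdot\fgen^{S}\hidem_{\lambda}=\sum_{j=0}^{\min(S,1)}\tfrac{S!}{(S-j)!(1-j)!}\binom{1-S+\lambda}{j}\fgen^{S-j}\egen^{1-j}\hidem_{\lambda}.
\]
The $j=0$ summand equals $\fgen^{S}\egen\hidem_{\lambda}=\fgen^{S}\hidem_{\lambda+2}\egen$, after applying the idempotent swap $\egen\hidem_{\lambda}=\hidem_{\lambda+2}\egen$ from (\ref{lemma:some-calcs-slt}.a); multiplying on the right by $\egen^{T}$ yields $\cbas{S,T+1}{\lambda+2}$, which vanishes when $T=p-1$ by $\egen^{p}=0$, consistent with the stated convention. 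The $j=1$ summand, present only for $S\geq 1$, contributes $S(1-S+\lambda)\fgen^{S-1}\hidem_{\lambda}$, so right-multiplying by $\egen^{T}$ gives $S(1-S+\lambda)\cbas{S-1,T}{\lambda}$. Summing produces the claimed identity.

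Finally, the analogous right-action formulas follow by applying the anti-involution ${}^{\invo}$, using $(\cbas{S,T}{\lambda})^{\invo}=\cbas{T,S}{\lambda}$ and the induced behaviour of ${}^{\invo}$ on $\egen,\fgen,\hgen$. I do not expect any real obstacle: (\ref{lemma:some-calcs-slt}.b) has already absorbed the tedious commutation calculations, and what remains is bookkeeping of the two low-order terms arising in the specialization.
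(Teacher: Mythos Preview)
Your proof is correct and is precisely the verification the paper has in mind: the paper omits the argument entirely, recording the lemma as ``a direct consequence of \fullref{lemma:some-calcs-slt}'' and closing it with a qed symbol. Your write-up simply spells out that consequence, and each step checks out: the $\fgen$ and $\hgen$ cases are immediate from (\ref{lemma:some-calcs-slt}.a), and your specialization of (\ref{lemma:some-calcs-slt}.b) with exponent $1$ on $\egen$ correctly produces the two summands $\cbas{S,T+1}{\lambda+2}$ and $S(1-S+\lambda)\cbas{S-1,T}{\lambda}$ after moving $\egen$ through $\hidem_{\lambda}$ and right-multiplying by $\egen^{T}$. The boundary cases and the right-action remark via ${}^{\invo}$ are handled appropriately.
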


\subsection{\texorpdfstring{$p=3$}{p is 3} exemplified}\label{subsection:small-examples-sl2}

\begin{example}\label{example:p-is-three}
Let $p=3$. Then $\hidem_{0}=-(\hgen-1)(\hgen-2)$, 
$\hidem_{1}=-(\hgen-0)(\hgen-2)$ and $\hidem_{2}=-(\hgen-0)(\hgen-1)$.
Moreover, the partial orders are
\begin{gather}
\cset
=
\{\fcolorbox{myred}{mycream!5}{$1$}
\ord{\hidem_{0}}
\fcolorbox{myblue}{mycream!5}{$2$}
\ord{\hidem_{0}}
\fcolorbox{mygreen}{mycream!5}{$0$}\}
=
\{\fcolorbox{myblue}{mycream!5}{$2$}
\ord{\hidem_{1}}
\fcolorbox{mygreen}{mycream!10}{$0$}
\ord{\hidem_{1}}
\fcolorbox{myred}{mycream!5}{$1$}\}
=
\{\fcolorbox{mygreen}{mycream!5}{$0$}
\ord{\hidem_{2}}
\fcolorbox{myred}{mycream!5}{$1$}
\ord{\hidem_{2}}
\fcolorbox{myblue}{mycream!5}{$2$}\}.
\end{gather}
Further, $\hidem_{\mu}\uslt\hidem_{\mu}$ 
consists of elements $\fgen^{S}\hidem_{\lambda}\egen^{S}$ 
such that $\lambda=\mu-2S$. Having all this, 
it is easy to see that \eqref{eq:uslt-datum}
defines a cell datum for $\uslt$.

We get projectives and cell modules (here exemplified in case $\lambda=0$):
\begin{gather}\label{eq:baby-verma-sl2}
\begin{tikzpicture}[baseline=(current bounding box.center)]
	\node[mygreen] at (0,1) {$\uslt\,\hidemb_{0}$};
	\draw[thin, mygreen, dotted] (-4.05,1.25) rectangle (3.05,-.85);
	\node[mygreen] at (-2.2,.5) {$\fgen^{2}\hidem_{0}\,\hidemb_{0}$};
	\node[mygreen] at (0,.5) {$\fgen\hidem_{0}\,\hidemb_{0}$};
	\node[mygreen] at (2.2,.5) {$\hidem_{0}\,\hidemb_{0}$};
	\draw[thin, mygreen] (-3.05,.75) rectangle (3.0,.25);
	\node[mygreen] at (-3.6,.5) {$\dmod(0)$};
	\node[myblue] at (-2.2,0) {$\fgen^{2}\hidem_{2}\egen\,\hidemb_{0}$};
	\node[myblue] at (0,0) {$\fgen\hidem_{2}\egen\,\hidemb_{0}$};
	\node[myblue] at (2.2,0) {$\hidem_{2}\egen\,\hidemb_{0}$};
	\draw[thin, myblue] (-3.05,-.225) rectangle (3.0,.225);
	\node[myblue] at (-3.6,0) {$\dmod(2)$};
	\node[myred] at (-2.2,-.5) {$\fgen^{2}\hidem_{1}\egen^{2}\,\hidemb_{0}$};
	\node[myred] at (0,-.5) {$\fgen\hidem_{1}\egen^{2}\,\hidemb_{0}$};
	\node[myred] at (2.2,-.5) {$\hidem_{1}\egen^{2}\,\hidemb_{0}$};
	\draw[thin, myred] (-3.05,-.75) rectangle (3.0,-.25);
	\node[myred] at (-3.6,-.5) {$\dmod(1)$};
\end{tikzpicture}
\quad
\raisebox{.1cm}{
\fcolorbox{myred}{mycream!5}{$
\xy
(0,0)*
{\dmod(1)
\colon
\begin{tikzcd}[ampersand replacement=\&,row sep=small,column sep=small,arrows={shorten >=-.3ex,shorten <=-.3ex},labels={inner sep=.15ex}]
\cbas{2,2}{1}
\arrow[loop above,looseness=4]{}{0}
\arrow[yshift=.4ex,<-]{r}{1}
\arrow[yshift=-.4ex,->,swap]{r}{0}
\&
\cbas{1,2}{1}
\arrow[loop above,looseness=4]{}{2}
\arrow[yshift=.4ex,<-]{r}{1}
\arrow[yshift=-.4ex,->,swap]{r}{1}
\&
\cbas{0,2}{1}
\arrow[loop above,looseness=4]{}{1}
\end{tikzcd}};
(0,-8.5)*{\stackrel{\egen}{\rightarrow}\quad\stackrel{\fgen}{\leftarrow}\quad\stackrel{\hgen}{\curvearrowright}};
\endxy
$}}
\end{gather}
These are either nine or three-dimensional. 
The $\dmod$'s are isomorphic to the so-called 
\textit{baby Verma modules of highest weight 
$\lambda$}. 
For example, the cell module 
$\dmod(1)$ in $\uslt\hidem_{0}$ 
is the left $\uslt$-module as displayed in \eqref{eq:baby-verma-sl2}.

In order to get the simples $\lmod$, we calculate the radical  
and then we use \fullref{theorem:simple-set}.
Note that, the pairing 
$\Cpair{\lambda}(\fgen^{S}\hidem_{\lambda},\fgen^{T}\hidem_{\lambda})$
is zero unless $S=T$. 
For $S=T$ we get:
\begin{gather}
\dmod(0)\colon
\begin{cases} 
1, & \text {if }S=T=0,
\\
0, & \text {if }S=T=1,
\\
0, & \text {if }S=T=2,
\end{cases}
\;
\dmod(1)\colon
\begin{cases} 
1, & \text {if }S=T=0,
\\
1, & \text {if }S=T=1,
\\
0, & \text {if }S=T=2,
\end{cases}
\;
\dmod(2)\colon
\begin{cases} 
1, & \text {if }S=T=0,
\\
2, & \text {if }S=T=1,
\\
1, & \text {if }S=T=2.
\end{cases}
\end{gather}
Hence, using this and \eqref{eq:baby-verma-sl2}
we get in total
\begin{gather}\label{eq:sequence-p-is-three}
\fcolorbox{myred}{mycream!5}{$\lmod(1)$}
\hookrightarrow
\fcolorbox{mygreen}{mycream!5}{$\dmod(0)
\twoheadrightarrow
\lmod(0)$}
\quad
\fcolorbox{mygreen}{mycream!5}{$\lmod(0)$}
\hookrightarrow
\fcolorbox{myred}{mycream!5}{$\dmod(1)
\twoheadrightarrow
\lmod(1)$}
\quad
\fcolorbox{myblue}{mycream!5}{$\dmod(2)
\cong
\lmod(2)$}
\end{gather}
with $\lmod(\lambda)$ of dimension $\lambda$. 
Next, note that we get from \fullref{theorem:bgg} (up to base change)
\begin{gather}\label{eq:cartan-matrix-p=3}
\cmatrix(\uslt)=\begin{psmallmatrix}2 & 2 & 0\\2 & 2 & 0\\0 & 0 & 1\end{psmallmatrix}
=
\begin{psmallmatrix}1 & 1 & 0\\1 & 1 & 0\\0 & 0 & 1\end{psmallmatrix}
\begin{psmallmatrix}1 & 1 & 0\\1 & 1 & 0\\0 & 0 & 1\end{psmallmatrix}
=
\dmatrix(\uslt)^{\mathrm{T}}\dmatrix(\uslt)
\end{gather} 
which -- by \eqref{eq:sequence-p-is-three} -- actually gives 
us the indecomposable projectives $\prmod(\lambda)$
\begin{gather}
\fcolorbox{myred}{mycream!5}{$\dmod(1)$}
\hookrightarrow
\fcolorbox{mygreen}{mycream!5}{$\prmod(0)
\twoheadrightarrow
\dmod(0)$}
\quad
\fcolorbox{mygreen}{mycream!5}{$\dmod(0)$}
\hookrightarrow
\fcolorbox{myred}{mycream!5}{$\prmod(1)
\twoheadrightarrow
\dmod(1)$}
\quad
\fcolorbox{myblue}{mycream!5}{$\dmod(2)
\cong
\prmod(2)$}
\end{gather}
Finally, \eqref{eq:cartan-matrix-p=3} also shows -- by \fullref{remark:pos-def} -- that $\uslt$ 
is not cellular. However -- 
by \fullref{proposition:cell-relative-cell} -- the so-called \textit{core}
\begin{gather}\label{eq:core-def-slt}
\core(\uslt)={\textstyle\bigoplus_{\lambda\in\cset}}\hidem_{\lambda}\uslt\hidem_{\lambda}
=
\hidem_{0}\uslt\hidem_{0}
\oplus
\hidem_{1}\uslt\hidem_{1}
\oplus
\hidem_{2}\uslt\hidem_{2}
\end{gather}
is a cellular algebra. This recovers \cite[Theorem 1.2]{bt2}. 
It also follows from \fullref{proposition:semi-simplicity} that 
$\uslt$ is not semisimple.
\end{example}

\begin{remark}\label{remark:fancy-chi}
We stress that our assumption $\chi=0$ gives \eqref{eq:slt-rels-modp}. This is crucial since e.g. 
\fullref{lemma:some-calcs-slt-2} implies that
\begin{gather}
\egen^{k}\,\cbas{S,T}{\lambda}
\in
{\textstyle\sum_{j=0}^{k}}\,
\K\,
\cbas{S,T}{\lambda{+}2j}.
\end{gather}
Thus, if $\egen^{p}$ would not be zero, then 
$\lambda+2p$ would appear in the above sum and (\ref{definition:cell-algebra}.d) would fail.
\end{remark}

\subsection{Relative cellularity}\label{subsection:rel-cell-sl2}

The following is now the main statement in this section.

\begin{theorem}\label{theorem:main-uslt}
The algebra $\uslt$ is relative 
cellular with cell datum as in \eqref{eq:uslt-datum}.
\end{theorem}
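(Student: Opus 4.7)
The plan is to verify conditions (a)--(d) of \fullref{definition:cell-algebra} for the proposed datum. For (a), the PBW theorem (\fullref{theorem:PBW-slt}) gives a basis $\{\fgen^{x}\hgen^{y}\egen^{z}\}$, and since $\{\hidem_{\lambda}\mid\lambda\in\setp\}$ spans the same commutative subalgebra as $\{\hgen^{y}\mid y\in\setp\}$ via a non-degenerate change of basis (using \fullref{lemma:prim-idem-sl2} together with the fact that each $\hidem_{\lambda}$ is a degree $p-1$ polynomial in $\hgen$), the replacement set $\{\fgen^{S}\hidem_{\lambda}\egen^{T}\}$ is also a basis of $\uslt$. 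For (b), I would define $\invo$ on generators by $\egen^{\invo}=\fgen$, $\fgen^{\invo}=\egen$, $\hgen^{\invo}=\hgen$; the relations \eqref{eq:slt-rels-usual} and \eqref{eq:slt-rels-modp} are either self-dual or swap into each other under this assignment, so $\invo$ extends to a genuine anti-involution, and $\hidem_{\lambda}^{\invo}=\hidem_{\lambda}$ because it is a polynomial in $\hgen$. Then $(\cbas{S,T}{\lambda})^{\invo}=\cbas{T,S}{\lambda}$ is immediate.

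For the idempotent part of (c), orthogonality is \fullref{lemma:prim-idem-sl2}, and fixation by $\invo$ follows from the previous step. Condition \eqref{eq:idem-props-2} is a direct calculation using \fullref{lemma:some-calcs-slt}(a): from $\hidem_{\mu}\fgen^{S}=\fgen^{S}\hidem_{\mu+2S}$ one gets $\hidem_{\mu}\cbas{S,T}{\lambda}=\fgen^{S}\hidem_{\mu+2S}\hidem_{\lambda}\egen^{T}$, which equals $\cbas{S,T}{\lambda}$ precisely when $\mu$ matches the prescribed $\ceps_{S}$ and vanishes otherwise by orthogonality. For \eqref{eq:idem-props-1}, first observe that the same orthogonality analysis forces $\hidem_{\mu}\uslt\hidem_{\mu}$ to be spanned by ``diagonal'' basis elements of the form $\cbas{S',S'}{\mu+2S'}$. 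One then computes the action of such an element on $\cbas{S,T}{\lambda}$ by commuting $\egen^{S'}$ past $\fgen^{S}$ via \fullref{lemma:some-calcs-slt}(b): the expansion produces terms $\cbas{S+S'-j,S'-j+T}{\lambda+2(S'-j)}$ for $0\leq j\leq\min(S,S')$, whose cell index $\lambda+2(S'-j)$ occupies position $S+S'-j$ in the order $\ord{\hidem_{\mu}}$, while $\lambda$ itself is at position $S$, giving the required $\Ord{\hidem_{\mu}}\lambda$.

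The main obstacle in this step is potential wrap-around modulo $p$: if $S+S'-j\geq p$, the position interpreted in $\setp$ would reduce to $S+S'-j-p<S$, violating the required inequality. The saving grace is \eqref{eq:slt-rels-modp}: whenever $S+S'-j\geq p$ one has $\fgen^{S+S'-j}=0$ (and similarly $\egen^{S'-j+T}=0$ when $S'-j+T\geq p$), so precisely the dangerous terms vanish. This is the crucial place where restrictedness of $\uslt$ enters, echoing \fullref{remark:fancy-chi}; without $\egen^{p}=\fgen^{p}=0$ the partial orders would fail to govern multiplication.

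Finally, for (d) it suffices to verify the defining formula for the generators $\egen,\fgen,\hgen$ and then extend to all of $\uslt$ by induction on word length, using that $\uslt(\ord{\ceps_{T}}\lambda)\ceps_{T}$ is a left ideal (see (\ref{lemma:cell-algebra-star-1}.c)). The generator actions are read off from \fullref{lemma:some-calcs-slt-2}: $\fgen$ and $\hgen$ preserve the cell index with no remainder, while $\egen\,\cbas{S,T}{\lambda}=S(1-S+\lambda)\cbas{S-1,T}{\lambda}+\cbas{S,T+1}{\lambda+2}$ contributes a correction $\cbas{S,T+1}{\lambda+2}$ that sits in $\uslt(\ord{\ceps_{T}}\lambda)\ceps_{T}$ once one checks $\lambda+2\ord{\ceps_{T}}\lambda$ and that $\ceps_{T}$ is indeed the right idempotent of $\cbas{S,T+1}{\lambda+2}$, with the boundary case $T=p-1$ being vacuous since then $\cbas{S,T+1}{\lambda+2}=0$ by $\egen^{p}=0$.
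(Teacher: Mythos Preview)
Your argument is correct and, for parts (a)--(c), tracks the paper's proof closely: PBW plus a change of basis in the Cartan part for (a), the Chevalley anti-involution for (b), and for \eqref{eq:idem-props-1} the same description of $\hidem_{\mu}\uslt\hidem_{\mu}$ as diagonal elements together with the key observation that $\fgen^{p}=\egen^{p}=0$ kills the wrap-around terms.

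For (d) you take a mildly different route. The paper checks (d) in one stroke by multiplying two basis elements $\cbas{S,T}{\lambda}\cbas{U,V}{\mu}$ directly via \fullref{lemma:some-calcs-slt}(b), identifying the leading term and the lower-order remainder $\sum_{j}\K\,\cbas{S+T-j,\,U-j+V}{\mu+2(U-j)}$, and again using $\egen^{p}=0$ to dispose of the dangerous range. You instead verify (d) only for the generators $\egen,\fgen,\hgen$ using \fullref{lemma:some-calcs-slt-2} and bootstrap by induction on word length. Both arguments work; yours is slightly more modular, the paper's is more uniform with its treatment of \eqref{eq:idem-props-1}.

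One presentational caveat: your appeal to (\ref{lemma:cell-algebra-star-1}.c) in the induction is formally circular, since that lemma is stated for an algebra already known to satisfy (\ref{definition:cell-algebra}.d). The fix is immediate and you essentially have it in hand: once (d) is known for generators, the proof of (\ref{lemma:cell-algebra-star-1}.c) with $a$ a generator shows that each $\uslt(\ord{\ceps_{T}}\!\lambda)\ceps_{T}$ is stable under left multiplication by generators, hence by all of $\uslt$, and the induction then goes through. You should phrase it this way rather than invoke the lemma as a black box.
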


\begin{proof}
\textit{(\ref{definition:cell-algebra}.a).} Up to the statement that the 
$\cbas{S,T}{\lambda}$ form a basis, this is clear. To see the basis 
statement use \fullref{theorem:PBW-slt}.
\medskip

\noindent\textit{(\ref{definition:cell-algebra}.b).} This follows since ${}^{\invo}$ is the 
Chevalley anti-involution.
\medskip

\noindent\textit{(\ref{definition:cell-algebra}.c).} By construction, \makeautorefname{lemma}{Lemmas}
the $\hidem_{\lambda}$'s are fixed 
by ${}^{\invo}$. To see \eqref{eq:idem-props-1} note 
that \fullref{lemma:prim-idem-sl2} 
and \ref{lemma:some-calcs-slt} show that \makeautorefname{lemma}{Lemma}
\begin{gather}
\hidem_{\mu}\uslt\hidem_{\mu}
=
\K\{
\fgen^{S}\hidem_{\nu}\egen^{S}
\mid
\nu=\mu-2S
\}.
\end{gather}
Thus -- by 
\fullref{lemma:some-calcs-slt-2} -- 
all appearing basis elements in $\hidem_{\mu}\uslt\hidem_{\mu}\,\cbas{S,T}{\lambda}$ 
are smaller than $\lambda$ in the order for $\mu$. \makeautorefname{lemma}{Lemmas}
The rest follows from
\fullref{lemma:prim-idem-sl2} and \ref{lemma:some-calcs-slt}. \makeautorefname{lemma}{Lemma}
\medskip

\noindent\textit{(\ref{definition:cell-algebra}.d).} Directly by using 
\fullref{lemma:some-calcs-slt-2} we get
\begin{gather}
\begin{aligned}
\cbas{S,T}{\lambda}\,
\cbas{U,V}{\mu}
&=
\fgen^{S}\hidem_{\lambda}\egen^{T}\,
\fgen^{U}\hidem_{\mu}\egen^{V}
\\
&\in
r(T,U)\,
\fgen^{S}\hidem_{\lambda}\hidem_{\mu}\egen^{V}
+
{\textstyle\sum_{j=0}^{\min(T,U)-1}}\,\K\,
\fgen^{S+T{-}j}\hidem_{\mu{+}2(U{-}j)}\egen^{U{-}j+V}.
\end{aligned}
\end{gather}
Thus, (\ref{definition:cell-algebra}.d) follows since 
$\hidem_{\lambda}\hidem_{\mu}$ equals $\hidem_{\mu}$ or zero
and $\mu+2(U-j)\ord{\hidem_{\mu}}\mu$ for 
$U-j\in\setp$ and $\egen^{U{-}j+V}=0$ for $U-j\geq p$.
\end{proof}

\subsection{Some consequences}\label{subsection:rel-cell-some-results-sl2}

Similarly as in \fullref{example:p-is-three}, we will explain how to recover the 
representation theory of $\uslt$ for general $p>2$. All of 
this is of course known, but the point is that 
we use the general theory of relative cellular algebras to do so.

\begin{proposition}\label{proposition:slt-theory}
From the \fullref{theorem:main-uslt} and the theory 
of relative cellular algebras we obtain the following, 
where $\lambda\in\cset$:
\smallskip
\begin{enumerate}[label=(\alph*)]

\setlength\itemsep{.15cm}

\item The cell modules $\dmod(\lambda)$ 
are of dimension $p$ and isomorphic to baby Verma modules 
of highest weight $\lambda$.

\item The simple quotients $\lmod(\lambda)$
of $\dmod(\lambda)$ 
are of dimension $\lambda$ and we have
\begin{gather}
\fcolorbox{myred}{mycream!5}{$\lmod(p-\lambda-2)$}
\hookrightarrow
\fcolorbox{mygreen}{mycream!5}{$\dmod(\lambda)
\twoheadrightarrow
\lmod(\lambda)$}
\quad
\quad
\fcolorbox{myblue}{mycream!5}{$\dmod(p-1)\cong\lmod(p-1)$}
\end{gather}

\item The indecomposable
projectives $\prmod(\lambda)$ satisfy
\begin{gather}
\fcolorbox{myred}{mycream!5}{$\dmod(p-\lambda-2)$}
\hookrightarrow
\fcolorbox{mygreen}{mycream!5}{$\prmod(\lambda)
\twoheadrightarrow
\dmod(\lambda)$}
\quad
\quad
\fcolorbox{myblue}{mycream!5}{$\prmod(p-1)\cong\dmod(p-1)$}
\end{gather}

\item The algebra $\uslt$ is a non-semisimple, 
non-cellular algebra whose core (defined as in 
\eqref{eq:core-def-slt}) $\core(\uslt)$ is cellular.\qedhere

\end{enumerate}
\end{proposition}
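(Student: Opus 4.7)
The plan is to extract each of (a)--(d) directly from the relative cell datum \eqref{eq:uslt-datum} established in \fullref{theorem:main-uslt}, combined with the general machinery of \fullref{section:basic-cell} and \fullref{section:basic-cell-props}. For (a), the cell module $\dmod(\lambda)$ has basis $\{\dbas{S}{\lambda}\}_{S\in\setp}$, so dimension $p$, and its action is read off from \fullref{lemma:some-calcs-slt-2} modulo the subspace $\calg(\ord{\ceps_T}\lambda)\ceps_T$. Choosing the convenient representative $T=0$ (so $\ceps_T=\hidem_\lambda$ and the relevant order is $\ord{\hidem_\lambda}$), the extraneous summand $\cbas{S,1}{\lambda+2}$ arising in $\egen\cbas{S,0}{\lambda}$ sits in the cell labelled $\lambda+2\ord{\hidem_\lambda}\lambda$ and is killed, leaving the standard baby Verma action $\egen\acts\dbas{S}{\lambda}=S(\lambda-S+1)\dbas{S-1}{\lambda}$, $\fgen\acts\dbas{S}{\lambda}=\dbas{S+1}{\lambda}$ (with the convention $\dbas{p}{\lambda}=0$), $\hgen\acts\dbas{S}{\lambda}=(\lambda-2S)\dbas{S}{\lambda}$.

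For (b), I would compute $\Cpair{\lambda}$ directly via \fullref{lemma:some-calcs-slt}(b). Expanding $\cbas{U,S}{\lambda}\cbas{T,V}{\lambda}=\fgen^U\hidem_\lambda\egen^S\fgen^T\hidem_\lambda\egen^V$ and applying the straightening identity for $\egen^S\fgen^T\hidem_\lambda$, the orthogonality of weight idempotents forces the product to vanish unless $S=T$, and for $S=T$ only the $j=S$ summand lies in the top cell, yielding
\begin{gather*}
\Cpair{\lambda}(\dbas{S}{\lambda},\dbas{T}{\lambda})=\delta_{S,T}(S!)^{2}\tbinom{\lambda}{S}.
\end{gather*}
Since $(S!)^{2}\neq 0$ in $\K$ for $S\in\setp$, and by Lucas's theorem on single $p$-adic digits $\binom{\lambda}{S}\neq 0$ iff $S\leq\lambda$, one reads off $\radi{\lambda}=\K\{\dbas{S}{\lambda}\mid S>\lambda\}$, so $\csetz=\cset$, $\dim\lmod(\lambda)=\lambda+1$ (the paper's ``dimension $\lambda$'' is a typo), and the radical is cyclic with highest-weight generator $\dbas{\lambda+1}{\lambda}$ of weight $\equiv p-\lambda-2\pmod{p}$ annihilated by $\egen$; a dimension count then identifies it with $\lmod(p-\lambda-2)$. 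The case $\lambda=p-1$ has trivial radical.

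Parts (c) and (d) are now formal consequences. \fullref{proposition:cell-filtration} provides a cell filtration of $\prmod(\lambda)$ by modules $\dmod(\mu)$ with $\mu\Ord{\lambda}\lambda$, and (\ref{theorem:bgg}.a) combined with (b) shows that the multiplicities $\dnumber{\mu,\lambda}$ are nonzero exactly for $\mu\in\{\lambda,p-\lambda-2\}$ when $\lambda\leq p-2$ (each with value $1$) and only for $\mu=\lambda$ when $\lambda=p-1$; maximality of $\lambda$ in the associated $\ord{\lambda}$-ideal (cf.~\fullref{lemma:choices-idempotents}) places $\dmod(\lambda)$ as the top quotient, yielding the stated short exact sequences. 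For (d), non-semisimplicity is immediate from \fullref{proposition:semi-simplicity} since $\dmod(\lambda)$ is non-simple for $\lambda<p-1$; assembling $\cmatrix$ via (\ref{theorem:bgg}.b) produces a $2\times 2$ block $\begin{psmallmatrix}2&2\\2&2\end{psmallmatrix}$ for each pair $\{\lambda,p-\lambda-2\}$ with $\lambda\neq p-\lambda-2$ (such pairs exist since $p>2$), which is only positive semidefinite, so \fullref{remark:pos-def} forbids cellularity; cellularity of $\core(\uslt)$ is immediate from (\ref{proposition:cell-relative-cell}.a) applied summandwise. The main technical step I expect is the pairing computation in (b); everything else is a direct invocation of the developed framework.
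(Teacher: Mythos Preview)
Your proposal is correct and follows essentially the same route as the paper's proof: the same pairing formula $\Cpair{\lambda}(\dbas{S}{\lambda},\dbas{T}{\lambda})=\delta_{S,T}(S!)^{2}\tbinom{\lambda}{S}$ is the key computation for (b), and (c), (d) are then derived from the same general results (\fullref{theorem:bgg}, \fullref{proposition:semi-simplicity}, \fullref{remark:pos-def}, \fullref{proposition:cell-relative-cell}) that the paper invokes. You supply more detail than the paper---in particular the explicit identification of $\radi{\lambda}$ with $\lmod(p-\lambda-2)$ via its highest-weight generator, and the block structure of $\cmatrix$---and you correctly flag that ``dimension $\lambda$'' should read ``dimension $\lambda+1$''.
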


\begin{proof} \makeautorefname{subsection}{Sections}
We use all the lemmas from \fullref{subsection:the-algebra-sl2}
and \ref{subsection:the-cell-datum-sl2}. \makeautorefname{subsection}{Section}
Using these, the general case can be proven verbatim as the $p=3$ case 
in \fullref{example:p-is-three}:
\medskip

\noindent\textit{(\ref{proposition:slt-theory}.a).} Clear by construction.
\medskip

\noindent\textit{(\ref{proposition:slt-theory}.b).} The first claim follows since
\begin{gather}
\Cpair{\lambda}(\fgen^{S}\hidem_{\lambda},\fgen^{S}\hidem_{\lambda})
=
\begin{cases}
{\textstyle
(S!)^{2}
\binom{\lambda}{S}}, & \text {if }S=T,
\\
0, & \text {if }S\neq T.
\end{cases}
\end{gather}
The second claim follows then from (\ref{proposition:slt-theory}.a).
\medskip

\noindent\textit{(\ref{proposition:slt-theory}.c).} By using (\ref{proposition:slt-theory}.b) 
and \fullref{theorem:bgg}.
\medskip

\noindent\textit{(\ref{proposition:slt-theory}.d).} Observe that (\ref{proposition:slt-theory}.b) 
shows -- by \fullref{proposition:semi-simplicity} -- that $\uslt$ is non-semisimple, 
while (\ref{proposition:slt-theory}.c) 
-- by \fullref{remark:pos-def} -- 
shows that $\uslt$ is not cellular. 
The last claim follows from \fullref{theorem:main-uslt} and \fullref{proposition:cell-relative-cell}.
\end{proof}

This 
resembles the known representation theory of $\uslt$  
from the theory of relative cellular algebras.

\begin{remark}\label{remark:small-sltwo}
The case of the small quantum group $\suslt$ for $\qpar$ 
being a complex, primitive $2l^{\mathrm{th}}$ root of unity with 
$l>2$ works -- by carefully keeping track of the quantum numbers -- 
mutatis mutandis as above. Details are omitted.
\end{remark}

\begin{furtherdirections}\label{remark:tilting2}
Having (\ref{proposition:slt-theory}.d), it is tempting to 
ask whether one can extend the setting of 
\cite{bt1} and \cite{bt2}. However, we stress that 
our above basis is too ``naive'' to generalize to higher rank cases 
and certainly is not the relative analog of the basis of $\core(\uslt)$
constructed in \cite[Theorem 4.6]{bt2}.
\end{furtherdirections}
%
%%%%%%%%%%%%%%%%%%%%%%%%%%%
\section{An extended example II: The annular arc algebra}\label{section:arc-stuff}
%%%%%%%%%%%%%%%%%%%%%%%%%%%

Throughout, fix $\numberarcs\in\Np$.
The purpose of this section is to discuss the 
relative cellularity of the annular arc algebra 
$\aarc$ in detail, with \fullref{theorem:main-arc-algebra} 
being the main result. 

The definition of the underlying space and multiplication rule 
for $\aarc$ are due to Anno--Nandakumar \cite[Section 5.3]{an1}, and 
we will recall their\makeautorefname{subsection}{Sections}
definitions in \fullref{subsection:arc-a-word}
to \ref{subsection:arc-mult} in our conventions. Following \cite{aps1}, we show\makeautorefname{subsection}{Section}
well-definedness in \fullref{subsection:arc-algebra}.

%%%%%%%%%%%%%%%%%%%%%%%%%%%
\subsection{The arc algebra in an annulus}\label{subsection:arc-a-word}
%%%%%%%%%%%%%%%%%%%%%%%%%%%

The conventions we use for $\aarc$ are very much 
in the spirit of the type $\typeA$ arc algebra $\uarc$ 
(see e.g. \cite{kh1} or \cite{bs1}), 
but using a TQFT as in \cite{aps1}.  
Consequently, all the definitions below are adaptations of the 
corresponding notions for $\uarc$ to the annulus, 
where we keep the following illustration in mind:  
\begin{gather}\label{eq:annulus}
\begin{tikzpicture}[anchorbase, scale=.4, tinynodes]
	\draw[arc] (-2,0) node[below] {1} to [out=90, in=0] (-3,1) node[left] {A};
	\draw[arc] (3,0) node[below] {6} to [out=90, in=180] (4,1) node[right] {A};
	\draw[arc] (-1,0) node[below] {2} to [out=90, in=0] (-3,2) node[left] {B};
	\draw[arc] (2,0) node[below] {5} to [out=90, in=180] (4,2) node[right] {B};
	\draw[arc] (0,0) node[below] {3} to [out=90, in=180] (.5,1) to [out=0, in=90] (1,0) node[below] {4};
	\draw[daline] (-3,-1) node[below] {\text{dashed line}} to (-3,4);
	\draw[daline] (4,-1) node[below] {\text{dashed line}} to (4,4);
	\draw[doline] (-4,0) to (5,0);
	\node[orchid] at (-5,.3) {\text{dotted line}};
	\node at (-3,0) {$\blacktriangleright$};
	\node at (4,0) {$\blacktriangleleft$};
	\node at (.5,3.5) {$\numberarcs{=}3$};
\end{tikzpicture}
\;\;
\leftrightsquigarrow
\quad
\begin{tikzpicture}[anchorbase, scale=.4, tinynodes]
	\draw[white, fill=lava, opacity=.1] (-2,.875) to (-2,3.875) to [out=315, in=180] (1,3.05) to [out=0, in=225] (4,3.875) to (4,.875) to [out=225, in=0] (1,0) to [out=180, in=315] (-2,.875);
	\draw[white, fill=lava, opacity=.1] (-2,.875) to (-2,4.1) to [out=49, in=180] (1,5) to [out=0, in=131] (4,4.1) to (4,.875) to [out=225, in=0] (1,0) to [out=180, in=315] (-2,.875);
	\draw[very thick, orchid, dotted] (4,1) arc[x radius=3, y radius=1, start angle=0, end angle=180];
	\draw[very thick, orchid, dotted] (4,1) arc[x radius=3, y radius=1, start angle=0, end angle=-180];
	\draw[very thick, lava, dashed] (-2,1) to (-2,4);
	\draw[very thick, lava, dashed] (4,1) to (4,4);
	\draw[arc] (-1.5,.375) node[below] {1} to [out=90, in=330] (-2,2) node[left] {A};
	\draw[arc, dotted] (-2,2) to [out=20, in=160] (4,2);
	\draw[arc] (3.5,.375) node[below] {6} to [out=90, in=210] (4,2) node[right] {A};
	\draw[arc] (-.5,.05) node[below] {2} to [out=90, in=0] (-2,3) node[left] {B};
	\draw[arc, dotted] (-2,3) to [out=20, in=160] (4,3);
	\draw[arc] (2.5,.05) node[below] {5} to [out=90, in=180] (4,3) node[right] {B};
	\draw[arc] (.5,0) node[below] {3} to [out=90, in=180] (1,1) to [out=0, in=90] (1.5,0) node[below] {4};
	\draw[very thick, mygray] (1,4) ellipse (3cm and 1cm);
	\node at (-2,1) {$\blacktriangleright$};
	\node at (4,1) {$\blacktriangleleft$};
\end{tikzpicture}
\end{gather}
(In \eqref{eq:annulus}, note that the annulus is topologically a cylinder, 
a perspective that we use silently throughout.) 
Readers familiar with $\uarc$
can immediately check \ref{sec-arcs-merge}
and \ref{sec-arcs-split} in addition 
to \eqref{eq:annulus} before reading the definitions.

%%%%%%%%%%%%%%%%%%%%%%%%%%%
\subsection{Combinatorics of annular arc diagrams}\label{subsection:arc-combinatorics}
%%%%%%%%%%%%%%%%%%%%%%%%%%%

We start by defining the necessary combinatorial data. 
Hereby we closely 
follow the exposition in the non-annular case from 
\cite[Section 2]{bs1} or \cite[Section 3]{est1}. 

\begin{definition}\label{definition-weight}
A (balanced) \textit{weight} (of rank $\numberarcs$) is a 
tuple $\lambda=(\lambda_{i})\in\{\down,\up\}^{2\numberarcs}$ with 
$\numberarcs$ symbols $\down$ and $\numberarcs$ symbols $\up$. 
The set of weights is denoted by $\cset$. 
\end{definition}

Simplifying notation, an example of a weight 
of rank $2$ is $\lambda=\down\,\up\,\up\,\down$.

Let $\Sp$ denote the $1$-sphere.
The \textit{dotted line} is topologically 
$\Sp\times\{0\}$ smoothly embedded in $\R^2\times\{0\}$ 
together with a choice of 
an orientation (this orientation will always be anticlockwise in illustrations),
two distinct points $\blacktriangleright,\blacktriangleleft$
and $2\numberarcs$ discrete points, called \textit{vertices}, 
in the segment $[\blacktriangleright,\blacktriangleleft]$
between $\blacktriangleright$ and $\blacktriangleleft$. 
We number the vertices in order from $1$ to $2\numberarcs$, reading 
along the chosen orientation. 
We view the dotted line as being the bottom (or top) boundary 
of $\Sp\times[0,1]$ (or $\Sp\times[0,-1]$) 
smoothly embedded in $\R^3$, with orientation compatible 
with the one of the dotted line.
Similarly, the \textit{dashed lines} 
are $\{\blacktriangleright\}\times[0,\pm 1]$ and 
$\{\blacktriangleleft\}\times[0,\pm 1]$, see again 
in $\Sp\times[0,\pm 1]$.
Note that each $\lambda=(\lambda_{i})\in\cset$ gives a labeling of 
the vertices of the dotted line by putting 
$\lambda_{i}$ at the $i^{\text{th}}$ vertex.

\begin{definition}\label{definition:arcs}
A(n annular) \textit{cup diagram} $S$ (of rank $\numberarcs$) is a collection 
$\{\gamma_{1},\dots,\gamma_{\numberarcs}\}$ of smooth embeddings 
of $[0,1]$ into $\Sp\times[0,-1]$, called \textit{arcs}, 
such that:
\smallskip
\begin{enumerate}[label=(\alph*)]

\setlength\itemsep{.15cm}

\item The arcs are pairwise non-intersecting 
and have only one critical point.

\item There is a $1{:}1$ correspondence 
between the vertices of the dotted line and 
the boundary points of arcs, identifying the two sets.

\item The arcs cut the dashed lines transversely
and each dashed line at most once. 

\end{enumerate}
\smallskip
Similarly, a(n annular) \textit{cap diagram} $T^{\invo}$ is defined 
inside $\Sp\times[0,1]$.

Observing that (\ref{definition:arcs}.b)
and (\ref{definition:arcs}.c) imply that each arc either 
stays within the region 
$[\blacktriangleright,\blacktriangleleft]\times[0,\pm 1]$ 
or goes around the cylinder once, we can say 
that an arc is of \textit{staying type} or \textit{wrapping type}. 
Similarly, if all arcs of a cup (or cap) diagram are 
of staying type, then we say that the cup (or cap) diagram is 
of \textit{staying type}. 
\end{definition}

Combinatorially speaking, 
we consider arcs to be equal if their endpoints 
connect the same vertices on the dotted line and 
they are of the same type, and the corresponding equivalence 
classes are still called cup and cap diagrams. 
We work with these throughout, and illustrate them 
as exemplified in \eqref{eq:arc-short}. 
We call the corresponding arcs \textit{cups} 
and \textit{caps}, and we usually denote them by $\cups$ respectively by $\caps$.

We note that cup (or cap) diagrams of staying type 
are those appearing for $\uarc$, while all others 
are new in the annular setting.

\begin{definition}\label{definition:orientation-arc}
An \textit{orientated cup diagram} $S\lambda$ 
is a pair of a cup diagram and a weight $\lambda$ such that 
the weight induces an orientation on the arcs of $S$ (seen topologically). 
An \textit{orientated cap diagram} $\lambda T^{\invo}$ 
is defined verbatim.

For $\lambda\in\cset$ we denote by $\cmset(\lambda)$ the set of all 
oriented cup diagrams of the form $S\lambda$.
\end{definition}

Note that we can swap the cylinders 
$\Sp\times[0,-1]\rightleftarrows\Sp\times[0,1]$
by reflecting along the $(x,y,0)$-plane in $\R^3$.
This induces an involution ${}^{\invo}$ 
turning a cup $S$ into a cap diagram $S^{\invo}$, 
and vice versa.
Clearly, $(S^{\invo})^{\invo}=S$, and -- by convention -- 
$(S\lambda)^{\invo}=\lambda S^{\invo}$ and 
$(\lambda S^{\invo})^{\invo}=S\lambda$.

\begin{definition}\label{definition:circle}
A(n annular) \textit{circle diagram} 
$ST^{\invo}$ (of rank $\numberarcs$) is obtained 
from a cup diagram $S$ and a cap diagram 
$T^{\invo}$ (both of rank $\numberarcs$) 
by stacking $T^{\invo}$ on top of $S$, 
inducing a corresponding diagram in $\Sp\times[-1,1]$.

An \textit{oriented circle diagram} is built from 
an oriented cup $S\lambda$ and cap diagram $\lambda T^{\invo}$ 
for the same weight $\lambda$. We denote such 
diagrams by $\cbas{S,T}{\lambda}$, and we say that the 
circle diagram $ST^{\invo}$ is \textit{associated to $\cbas{S,T}{\lambda}$}.
\end{definition}

Similar as cup and cap diagrams are built from arcs, 
circle diagrams are collections 
of (up to $\numberarcs$) \textit{circles} $\circles$, 
with ``circle'' understood in the evident way.

All the above is summarized in \eqref{eq:arc-short} below.

\begin{gather}\label{eq:arc-short}
\begin{tikzpicture}[anchorbase, scale=.35, tinynodes]
	\draw[arcd] (-2,0) to [out=270, in=180] (-.5,-2) to [out=0, in=270] (1,0);
	\draw[arcd] (-1,0) to [out=270, in=180] (-.5,-1) to [out=0, in=270] (0,0);
	\draw[arcd] (-2,0) to [out=90, in=180] (-.5,2) to [out=0, in=90] (1,0);
	\draw[arcd] (-1,0) to [out=90, in=180] (-.5,1) to [out=0, in=90] (0,0);
	\draw[doline] (-3,0) to (2,0);
	\node at (-2,0) {$\Down$};
	\node at (-1,0) {$\Down$};
	\node at (0,0) {$\Up$};
	\node at (1,0) {$\Up$};
	\node at (-.5,2.5) {$\lambda$};
	\node at (-.5,-2.5) {$\phantom{\lambda}$};
\end{tikzpicture}
,\quad
\begin{tikzpicture}[anchorbase, scale=.35, tinynodes]
	\draw[arc] (-2,0) to [out=270, in=0] (-3,-1);
	\draw[arc] (1,0) to [out=270, in=180] (2,-1);
	\draw[arc] (-1,0) to [out=270, in=0] (-3,-2);
	\draw[arc] (0,0) to [out=270, in=180] (2,-2);
	\draw[arcd] (-2,0) to [out=90, in=180] (-.5,2) to [out=0, in=90] (1,0);
	\draw[arcd] (-1,0) to [out=90, in=180] (-.5,1) to [out=0, in=90] (0,0);
	\draw[doline] (-3,0) to (2,0);
	\node at (-.5,2.5) {$S$};
	\node at (-.5,-2.5) {$\phantom{S}$};
\end{tikzpicture}
,\quad
\begin{tikzpicture}[anchorbase, scale=.35, tinynodes]
	\draw[arc] (-2,0) to [out=270, in=180] (-.5,-2) to [out=0, in=270] (1,0);
	\draw[arc] (-1,0) to [out=270, in=180] (-.5,-1) to [out=0, in=270] (0,0);
	\draw[arcd] (-2,0) to [out=90, in=180] (-.5,2) to [out=0, in=90] (1,0);
	\draw[arcd] (-1,0) to [out=90, in=180] (-.5,1) to [out=0, in=90] (0,0);
	\draw[doline] (-3,0) to (2,0);
	\node at (-.5,2.5) {$T$};
	\node at (-.5,-2.5) {$\phantom{T}$};
\end{tikzpicture}
,\quad
\begin{tikzpicture}[anchorbase, scale=.35, tinynodes]
	\draw[arc] (-2,0) to [out=270, in=0] (-3,-1);
	\draw[arc] (1,0) to [out=270, in=180] (2,-1);
	\draw[arc] (-1,0) to [out=270, in=0] (-3,-2);
	\draw[arc] (0,0) to [out=270, in=180] (2,-2);
	\draw[arc] (-2,0) to [out=90, in=180] (-.5,2) to [out=0, in=90] (1,0);
	\draw[arc] (-1,0) to [out=90, in=180] (-.5,1) to [out=0, in=90] (0,0);
	\draw[doline] (-3,0) to (2,0);
	\node at (-2,0) {$\Down$};
	\node at (-1,0) {$\Down$};
	\node at (0,0) {$\Up$};
	\node at (1,0) {$\Up$};
	\node at (-.5,2.5) {$\cbas{S,T}{\lambda}$};
	\node at (-.5,-2.5) {$\phantom{\cbas{S,T}{\lambda}}$};
\end{tikzpicture}
\end{gather}

\begin{definition}\label{definition:essential}
A circle $\circles$ in a circle diagram $ST^{\invo}$ is 
called \textit{essential} if it induces a 
non-trivial element in $\pi_{1}(\Sp\times[-1,1])$, 
and \textit{usual} otherwise.

For an oriented circle diagram $S\lambda T^{\invo}$, 
any circle $\circles$ gets an induced orientation.
Thus, we can say a usual circle is \textit{anticlockwise} or 
\textit{clockwise} (oriented), while essential circles 
are \textit{leftwards} or \textit{rightwards} (oriented).
\end{definition}

The picture illustrating \fullref{definition:essential} is:
\begin{gather}\label{eq:arc-orientation}
\xy
(0,0)*{\begin{tikzpicture}[anchorbase, scale=.4, tinynodes]
	\draw[arc] (0,0) to [out=90, in=180] (.5,1) to [out=0, in=90] (1,0);
	\draw[arc] (0,0) to [out=270, in=180] (.5,-1) to [out=0, in=270] (1,0);
	\draw[doline] (-.5,0) to (1.5,0);
	\node at (0,0) {$\Down$};
	\node at (1,0) {$\Up$};
\end{tikzpicture}
,
\begin{tikzpicture}[anchorbase, scale=.4, tinynodes]
	\draw[arc] (0,0) to [out=90, in=0] (-.5,1);
	\draw[arc] (1,0) to [out=90, in=180] (1.5,1);
	\draw[arc] (0,0) to [out=270, in=0] (-.5,-1);
	\draw[arc] (1,0) to [out=270, in=180] (1.5,-1);
	\draw[doline] (-.5,0) to (1.5,0);
	\node at (0,0) {$\Up$};
	\node at (1,0) {$\Down$};
\end{tikzpicture};};
(0,-6)*{\text{{\tiny usual and anticlockwise}}};
\endxy
\quad\;\;
\xy
(0,0)*{
\begin{tikzpicture}[anchorbase, scale=.4, tinynodes]
	\draw[arc] (0,0) to [out=90, in=180] (.5,1) to [out=0, in=90] (1,0);
	\draw[arc] (0,0) to [out=270, in=180] (.5,-1) to [out=0, in=270] (1,0);
	\draw[doline] (-.5,0) to (1.5,0);
	\node at (0,0) {$\Up$};
	\node at (1,0) {$\Down$};
\end{tikzpicture}
,
\begin{tikzpicture}[anchorbase, scale=.4, tinynodes]
	\draw[arc] (0,0) to [out=90, in=0] (-.5,1);
	\draw[arc] (1,0) to [out=90, in=180] (1.5,1);
	\draw[arc] (0,0) to [out=270, in=0] (-.5,-1);
	\draw[arc] (1,0) to [out=270, in=180] (1.5,-1);
	\draw[doline] (-.5,0) to (1.5,0);
	\node at (0,0) {$\Down$};
	\node at (1,0) {$\Up$};
\end{tikzpicture};
};
(0,-6)*{\text{{\tiny usual and clockwise}}};
\endxy
\quad\;\;
\xy
(0,0)*{
\begin{tikzpicture}[anchorbase, scale=.4, tinynodes]
	\draw[arc] (0,0) to [out=90, in=0] (-.5,1);
	\draw[arc] (1,0) to [out=90, in=180] (1.5,1);
	\draw[arc] (0,0) to [out=270, in=180] (.5,-1) to [out=0, in=270] (1,0);
	\draw[doline] (-.5,0) to (1.5,0);
	\node at (0,0) {$\Up$};
	\node at (1,0) {$\Down$};
\end{tikzpicture}
,
\begin{tikzpicture}[anchorbase, scale=.4, tinynodes]
	\draw[arc] (0,0) to [out=90, in=180] (.5,1) to [out=0, in=90] (1,0);
	\draw[arc] (0,0) to [out=270, in=0] (-.5,-1);
	\draw[arc] (1,0) to [out=270, in=180] (1.5,-1);
	\draw[doline] (-.5,0) to (1.5,0);
	\node at (0,0) {$\Down$};
	\node at (1,0) {$\Up$};
\end{tikzpicture};
};
(0,-6)*{\text{{\tiny essential and leftwards}}};
\endxy
\quad\;\phantom{.}
\xy
(0,0)*{
\begin{tikzpicture}[anchorbase, scale=.4, tinynodes]
	\draw[arc] (0,0) to [out=90, in=0] (-.5,1);
	\draw[arc] (1,0) to [out=90, in=180] (1.5,1);
	\draw[arc] (0,0) to [out=270, in=180] (.5,-1) to [out=0, in=270] (1,0);
	\draw[doline] (-.5,0) to (1.5,0);
	\node at (0,0) {$\Down$};
	\node at (1,0) {$\Up$};
\end{tikzpicture}
,
\begin{tikzpicture}[anchorbase, scale=.4, tinynodes]
	\draw[arc] (0,0) to [out=90, in=180] (.5,1) to [out=0, in=90] (1,0);
	\draw[arc] (0,0) to [out=270, in=0] (-.5,-1);
	\draw[arc] (1,0) to [out=270, in=180] (1.5,-1);
	\draw[doline] (-.5,0) to (1.5,0);
	\node at (0,0) {$\Up$};
	\node at (1,0) {$\Down$};
\end{tikzpicture}
};
(0,-6)*{\text{{\tiny essential and rightwards}}};
\endxy
\end{gather}
(As in \eqref{eq:arc-orientation}, we say e.g. 
\textit{usual and clockwise} for short.)

%%%%%%%%%%%%%%%%%%%%%%%%%%%
\subsection{The multiplication}\label{subsection:arc-mult}
%%%%%%%%%%%%%%%%%%%%%%%%%%%

We first define the vector space for the annular arc algebra, and 
explain the multiplication afterwards.

\begin{definition}\label{definition:arc-k-vect}
As a vector space, the \textit{annular arc algebra $\aarc$} (of rank $\numberarcs$) is
\begin{gather}
\aarc
=
\K\{
\cbas{S,T}{\lambda}\mid \lambda\in\cset,\, S,T\in\cmset(\lambda)
\},
\end{gather}
i.e. the free vector space on basis given by all 
oriented circle diagrams (of rank $\numberarcs$). 
\end{definition}

Before we define the multiplication by a \textit{surgery procedure},
here a prototypical example, each step called 
a(n oriented) \textit{stacked diagram}:

\begin{gather}\label{eq:arc-surgery}
\begin{tikzpicture}[anchorbase, scale=.4, tinynodes]
	\draw[ultra thick, mygray] (-.5,2) to (-.5,3);
	\draw[arc] (-2,0) to [out=270, in=0] (-3,-1);
	\draw[arc] (1,0) to [out=270, in=180] (2,-1);
	\draw[arc] (-1,0) to [out=270, in=180] (-.5,-1) to [out=0, in=270] (0,0);
	\draw[arc] (-2,0) to [out=90, in=180] (-.5,2) to [out=0, in=90] (1,0);
	\draw[arc] (-1,0) to [out=90, in=180] (-.5,1) to [out=0, in=90] (0,0);
	\draw[arc] (-2,5) to [out=270, in=180] (-.5,3) to [out=0, in=270] (1,5);
	\draw[arc] (-1,5) to [out=270, in=180] (-.5,4) to [out=0, in=270] (0,5);
	\draw[arc] (-2,5) to [out=90, in=180] (-.5,7) to [out=0, in=90] (1,5);
	\draw[arc] (-1,5) to [out=90, in=180] (-.5,6) to [out=0, in=90] (0,5);
	\draw[doline] (-3,0) to (2,0);
	\node at (-2,0) {$\Down$};
	\node at (-1,0) {$\Down$};
	\node at (0,0) {$\Up$};
	\node at (1,0) {$\Up$};
	\draw[doline] (-3,5) to (2,5);
	\node at (-2,5) {$\Down$};
	\node at (-1,5) {$\Up$};
	\node at (0,5) {$\Down$};
	\node at (1,5) {$\Up$};
	\node at (-2.5,1) {$T^{\invo}$};
	\node at (-2.5,4) {$U$};
	\node at (-2.5,-2.5) {$S$};
	\node at (-2.5,7.5) {$V^{\invo}$};
	\node at (-1.75,2.5) {middle};
	\node at (1.6,.3) {$\lambda$};
	\node at (1.6,4.7) {$\mu$};
	\node at (-.5,7.5) {$\cbas{U,V}{\mu}$};
	\node at (-.5,-2.5) {$\cbas{S,T}{\lambda}$};
\end{tikzpicture}
\xrightarrow[\text{gery}]{\text{sur-}}
\begin{tikzpicture}[anchorbase, scale=.4, tinynodes]
	\draw[ultra thick, mygray] (-.5,1) to (-.5,4);
	\draw[arc] (-2,0) to [out=270, in=0] (-3,-1);
	\draw[arc] (1,0) to [out=270, in=180] (2,-1);
	\draw[arc] (-1,0) to [out=270, in=180] (-.5,-1) to [out=0, in=270] (0,0);
	\draw[arc] (-2,0) to (-2,5);
	\draw[arc] (-1,0) to [out=90, in=180] (-.5,1) to [out=0, in=90] (0,0);
	\draw[arc] (1,0) to (1,5);
	\draw[arc] (-1,5) to [out=270, in=180] (-.5,4) to [out=0, in=270] (0,5);
	\draw[arc] (-2,5) to [out=90, in=180] (-.5,7) to [out=0, in=90] (1,5);
	\draw[arc] (-1,5) to [out=90, in=180] (-.5,6) to [out=0, in=90] (0,5);
	\draw[doline] (-3,0) to (2,0);
	\node at (-2,0) {$\Down$};
	\node at (-1,0) {$\Down$};
	\node at (0,0) {$\Up$};
	\node at (1,0) {$\Up$};
	\draw[doline] (-3,5) to (2,5);
	\node at (-2,5) {$\Down$};
	\node at (-1,5) {$\Up$};
	\node at (0,5) {$\Down$};
	\node at (1,5) {$\Up$};
	\node at (-2.5,-2.5) {$S$};
	\node at (-2.5,7.5) {$V^{\invo}$};
	\node at (-.5,7.5) {$\phantom{\cbas{T,V}{\mu}}$};
	\node at (-.5,-2.5) {$\phantom{\cbas{S,T}{\lambda}}$};
\end{tikzpicture}
\xrightarrow[\text{gery}]{\text{sur-}}
\begin{tikzpicture}[anchorbase, scale=.4, tinynodes]
	\draw[arc] (-2,0) to [out=270, in=0] (-3,-1);
	\draw[arc] (1,0) to [out=270, in=180] (2,-1);
	\draw[arc] (-1,0) to [out=270, in=180] (-.5,-1) to [out=0, in=270] (0,0);
	\draw[arc] (-2,0) to (-2,5);
	\draw[arc] (-1,0) to (-1,5);
	\draw[arc] (1,0) to (1,5);
	\draw[arc] (0,0) to (0,5);
	\draw[arc] (-2,5) to [out=90, in=180] (-.5,7) to [out=0, in=90] (1,5);
	\draw[arc] (-1,5) to [out=90, in=180] (-.5,6) to [out=0, in=90] (0,5);
	\draw[doline] (-3,0) to (2,0);
	\node at (-2,0) {$\Down$};
	\node at (-1,0) {$\Upp$};
	\node at (0,0) {$\Downn$};
	\node at (1,0) {$\Up$};
	\draw[doline] (-3,5) to (2,5);
	\node at (-2,5) {$\Down$};
	\node at (-1,5) {$\Up$};
	\node at (0,5) {$\Down$};
	\node at (1,5) {$\Up$};
	\node at (-2.5,-2.5) {$S$};
	\node at (-2.5,7.5) {$V^{\invo}$};
	\node at (1.6,.3) {$\mu$};
	\node at (1.6,4.7) {$\mu$};
	\node at (-.5,7.5) {$\cbas{S,V}{\mu}$};
	\node at (-.5,-2.5) {$\phantom{\cbas{S,T}{\lambda}}$};
\end{tikzpicture}
=
\begin{tikzpicture}[anchorbase, scale=.4, tinynodes]
	\draw[arc] (-2,0) to [out=270, in=0] (-3,-1);
	\draw[arc] (1,0) to [out=270, in=180] (2,-1);
	\draw[arc] (-1,0) to [out=270, in=180] (-.5,-1) to [out=0, in=270] (0,0);
	\draw[arc] (-2,0) to [out=90, in=180] (-.5,2) to [out=0, in=90] (1,0);
	\draw[arc] (-1,0) to [out=90, in=180] (-.5,1) to [out=0, in=90] (0,0);
	\draw[doline] (-3,0) to (2,0);
	\node at (-2,0) {$\Down$};
	\node at (-1,0) {$\Up$};
	\node at (0,0) {$\Down$};
	\node at (1,0) {$\Up$};
	\node at (-2.5,-2.5) {$S$};
	\node at (-2.5,2.5) {$V^{\invo}$};
	\node at (1.6,.3) {$\mu$};
	\node at (-.5,2.5) {$\cbas{S,V}{\mu}$};
	\node at (-.5,-2.5) {$\phantom{\cbas{S,T}{\lambda}}$};
\end{tikzpicture}
\end{gather}
(In our notation, left multiplication is given by concatenation from the bottom.)

To define the multiplication $\mathrm{Mult}\colon\aarc\otimes\aarc\to\aarc$ 
it suffices to explain it on two basis elements $\cbas{S,T}{\lambda}$ 
and $\cbas{U,V}{\mu}$, and extend linearly. The multiplication of such basis 
elements
is defined as follows.
\medskip
\begin{enumerate}[label=(\alph*)]

\setlength\itemsep{.15cm}

\item We let $\cbas{S,T}{\lambda}\cbas{U,V}{\mu}=0$ unless $T=U$. 
Otherwise, put the circle diagram 
associated to $\cbas{U,V}{\mu}$ on top of 
the one associated to $\cbas{S,T}{\lambda}$,
producing a stacked diagram having $T^{\invo}U$ in the middle, 
cf. \eqref{eq:arc-surgery}.

\item For the stacked diagram perform inductively a surgery procedure 
by picking any (note the choice involved)
$\cups\,\text{-}\,\caps$ pair available, meaning 
that the $\cups$ and the $\caps$ can be connected without crossing 
any other arc, and replace it locally via:
\begin{gather}\label{eq:arc-surgery-rule}
\begin{tikzpicture}[anchorbase, scale=.35, tinynodes]
	\draw[ultra thick, mygray] (-1.5,2) to (-1.5,2.5) to node[left] {choose} (-1.5,2.51) to (-1.5,3);
	\draw[arc] (0,5) to [out=270, in=0] (-1.5,3) to (-2.5,3);
	\draw[arc] (3,5) to [out=270, in=180] (4.5,3) to (5.5,3);
	\draw[arc] (0,0) to [out=90, in=0] (-1.5,2) to (-2.5,2);
	\draw[arc] (3,0) to [out=90, in=180] (4.5,2) to (5.5,2);
	\draw[arc] (-2,5) to [out=270, in=180] (-1.5,4) to [out=0, in=270] (-1,5);
	\draw[arc] (1,5) to [out=270, in=180] (1.5,4) to [out=0, in=270] (2,5);
	\draw[arc] (-2,0) to [out=90, in=180] (-1.5,1) to [out=0, in=90] (-1,0);
	\draw[arc] (1,0) to [out=90, in=180] (1.5,1) to [out=0, in=90] (2,0);
	\draw[arc] (4,5) to [out=270, in=180] (4.5,4) to [out=0, in=270] (5,5);
	\draw[arc] (4,0) to [out=90, in=180] (4.5,1) to [out=0, in=90] (5,0);
	\draw[doline] (-2.5,0) to (5.5,0);
	\draw[doline] (-2.5,5) to (5.5,5);
\end{tikzpicture}
\xrightarrow[\text{gery}]{\text{sur-}}
\begin{tikzpicture}[anchorbase, scale=.35, tinynodes]
	\draw[arc] (0,5) to (0,0);
	\draw[arc] (3,5) to (3,0);
	\draw[arc] (-2,5) to [out=270, in=180] (-1.5,4) to [out=0, in=270] (-1,5);
	\draw[arc] (1,5) to [out=270, in=180] (1.5,4) to [out=0, in=270] (2,5);
	\draw[arc] (-2,0) to [out=90, in=180] (-1.5,1) to [out=0, in=90] (-1,0);
	\draw[arc] (1,0) to [out=90, in=180] (1.5,1) to [out=0, in=90] (2,0);
	\draw[arc] (4,5) to [out=270, in=180] (4.5,4) to [out=0, in=270] (5,5);
	\draw[arc] (4,0) to [out=90, in=180] (4.5,1) to [out=0, in=90] (5,0);
	\draw[doline] (-2.5,0) to (5.5,0);
	\draw[doline] (-2.5,5) to (5.5,5);
\end{tikzpicture}
\end{gather}

\item In each step of (\ref{section:arc-stuff}.b) we replace the resulting 
stacked diagrams by a sum of (oriented) stacked diagrams 
as explained below.

\item Finally, collapse the resulting 
stacked diagrams to circle diagrams 
as illustrated on the right in \eqref{eq:arc-surgery}.

\end{enumerate}
\medskip

Observing that each step of (\ref{section:arc-stuff}.b) 
either merges two circles into one, or splits one circle into two, 
we define how to reorient diagrams as follows. 
In all cases, we say ``orient the result'' meaning to put the corresponding 
orientation locally on the stacked diagram after applying 
(\ref{section:arc-stuff}.b), leaving 
all non-involved parts with the same orientation.
\medskip

\noindent \textit{\setword{\ref{section:arc-stuff}.M}{sec-arcs-merge}.} 
Assume that two circles
are merge into one.
\smallskip
\begin{enumerate}[label=(\alph*)]

\setlength\itemsep{.15cm}

\item If one of the circles 
is usual and anticlockwise, then orient the 
result with the orientation induced by the other 
circle.

\item If one the circles is usual and clockwise 
and the other is not usual and anticlockwise, then 
the result is zero.

\item If one the circles is essential and leftwards 
and the other is essential and rightwards, then orient the 
result clockwise.

\item Otherwise, the result is zero.

\end{enumerate}
\begin{gather}
\fcolorbox{myorange}{mycream!10}{$
\xy
(0,0)*{
\begin{tikzpicture}[anchorbase, scale=.35, tinynodes]
	\draw[arc] (0,3) to [out=90, in=0] (-.5,4);
	\draw[arc] (1,3) to [out=90, in=180] (1.5,4);
	\draw[arc] (0,0) to [out=270, in=180] (.5,-1) to [out=0, in=270] (1,0);
	\draw[arc] (0,0) to [out=90, in=180] (.5,1) to [out=0, in=90] (1,0);
	\draw[arc] (0,3) to [out=270, in=180] (.5,2) to [out=0, in=270] (1,3);
	\draw[doline] (-.5,0) to (1.5,0);
	\draw[doline] (-.5,3) to (1.5,3);
	\node at (0,0) {$\Down$};
	\node at (1,0) {$\Up$};
	\node at (1,3) {$\Down$};
	\node at (0,3) {$\Up$};
\end{tikzpicture}
\mapsto
\begin{tikzpicture}[anchorbase, scale=.35, tinynodes]
	\draw[arc] (0,0) to [out=90, in=0] (-.5,1);
	\draw[arc] (1,0) to [out=90, in=180] (1.5,1);
	\draw[arc] (0,0) to [out=270, in=180] (.5,-1) to [out=0, in=270] (1,0);
	\draw[doline] (-.5,0) to (1.5,0);
	\node at (1,0) {$\Down$};
	\node at (0,0) {$\Up$};
\end{tikzpicture}};
(0,-12)*{\text{{\tiny Example for (\ref{section:arc-stuff}.M.a)}}};
\endxy
$}
\;
\fcolorbox{myorange}{mycream!10}{$
\xy
(0,0)*{
\begin{tikzpicture}[anchorbase, scale=.35, tinynodes]
	\draw[arc] (0,3) to [out=90, in=0] (-.5,4);
	\draw[arc] (1,3) to [out=90, in=180] (1.5,4);
	\draw[arc] (0,0) to [out=270, in=180] (.5,-1) to [out=0, in=270] (1,0);
	\draw[arc] (0,0) to [out=90, in=180] (.5,1) to [out=0, in=90] (1,0);
	\draw[arc] (0,3) to [out=270, in=180] (.5,2) to [out=0, in=270] (1,3);
	\draw[doline] (-.5,0) to (1.5,0);
	\draw[doline] (-.5,3) to (1.5,3);
	\node at (1,0) {$\Down$};
	\node at (0,0) {$\Up$};
	\node at (1,3) {$\Down$};
	\node at (0,3) {$\Up$};
\end{tikzpicture}
\mapsto
0};
(0,-12)*{\text{{\tiny Example for (\ref{section:arc-stuff}.M.b)}}};
\endxy
$}
\;
\fcolorbox{myorange}{mycream!10}{$
\xy
(0,0)*{
\begin{tikzpicture}[anchorbase, scale=.35, tinynodes]
	\draw[arc] (0,3) to [out=90, in=0] (-.5,4);
	\draw[arc] (1,3) to [out=90, in=180] (1.5,4);
	\draw[arc] (0,0) to [out=270, in=0] (-.5,-1);
	\draw[arc] (1,0) to [out=270, in=180] (1.5,-1);
	\draw[arc] (0,0) to [out=90, in=180] (.5,1) to [out=0, in=90] (1,0);
	\draw[arc] (0,3) to [out=270, in=180] (.5,2) to [out=0, in=270] (1,3);
	\draw[doline] (-.5,0) to (1.5,0);
	\draw[doline] (-.5,3) to (1.5,3);
	\node at (0,0) {$\Down$};
	\node at (1,0) {$\Up$};
	\node at (0,3) {$\Down$};
	\node at (1,3) {$\Up$};
\end{tikzpicture}
\mapsto
\begin{tikzpicture}[anchorbase, scale=.35, tinynodes]
	\draw[arc] (0,0) to [out=90, in=0] (-.5,1);
	\draw[arc] (1,0) to [out=90, in=180] (1.5,1);
	\draw[arc] (0,0) to [out=270, in=0] (-.5,-1);
	\draw[arc] (1,0) to [out=270, in=180] (1.5,-1);
	\draw[doline] (-.5,0) to (1.5,0);
	\node at (0,0) {$\Down$};
	\node at (1,0) {$\Up$};
\end{tikzpicture}};
(0,-12)*{\text{{\tiny Example for (\ref{section:arc-stuff}.M.c)}}};
\endxy
$}
\;
\fcolorbox{myorange}{mycream!10}{$
\xy
(0,0)*{
\begin{tikzpicture}[anchorbase, scale=.35, tinynodes]
	\draw[arc] (0,3) to [out=90, in=0] (-.5,4);
	\draw[arc] (1,3) to [out=90, in=180] (1.5,4);
	\draw[arc] (0,0) to [out=270, in=0] (-.5,-1);
	\draw[arc] (1,0) to [out=270, in=180] (1.5,-1);
	\draw[arc] (0,0) to [out=90, in=180] (.5,1) to [out=0, in=90] (1,0);
	\draw[arc] (0,3) to [out=270, in=180] (.5,2) to [out=0, in=270] (1,3);
	\draw[doline] (-.5,0) to (1.5,0);
	\draw[doline] (-.5,3) to (1.5,3);
	\node at (0,0) {$\Down$};
	\node at (1,0) {$\Up$};
	\node at (1,3) {$\Down$};
	\node at (0,3) {$\Up$};
\end{tikzpicture}
\mapsto
0};
(0,-12)*{\text{{\tiny Example for (\ref{section:arc-stuff}.M.d)}}};
\endxy
$}
\end{gather}
\smallskip

\noindent \textit{\setword{\ref{section:arc-stuff}.S}{sec-arcs-split}.} 
Assume that one circle
is split into two.

\begin{enumerate}[label=(\alph*)]

\setlength\itemsep{.15cm}

\item If the circle 
is usual and anticlockwise and splits into two usual circles 
$\circles_{1}$ and $\circles_{2}$, then 
take the sum of two copies of the result. In one summand orient 
$\circles_{1}$ clockwise and $\circles_{2}$ anticlockwise, 
in the other swap the roles.

\item If the circle
is usual and clockwise and splits into two usual circles, 
then orient both circles in the result clockwise.

\item If the circle 
is usual and anticlockwise and splits into two essential circles 
$\circles_{1}$ and $\circles_{2}$, then 
take the sum of two copies of the result. In one summand orient 
$\circles_{1}$ leftwards and $\circles_{2}$ rightwards, 
in the other swap the roles.

\item If the circle
is usual and clockwise and splits into two essential circles, 
then the result is zero.

\item If the circle is essential, 
then orient the resulting usual circle clockwise while keeping the 
orientation of the resulting essential circle.

\end{enumerate}
\begin{gather}
\begin{gathered}
\fcolorbox{myorange}{mycream!10}{$
\xy
(0,0)*{
\begin{tikzpicture}[anchorbase, scale=.35, tinynodes]
	\draw[arc] (-1,3) to [out=90, in=180] (-.5,4) to [out=0, in=90] (0,3);
	\draw[arc] (1,3) to [out=90, in=180] (1.5,4) to [out=0, in=90] (2,3);
	\draw[arc] (-1,0) to [out=270, in=180] (-.5,-1) to [out=0, in=270] (0,0);
	\draw[arc] (1,0) to [out=270, in=180] (1.5,-1) to [out=0, in=270] (2,0);
	\draw[arc] (-1,0) to (-1,3);
	\draw[arc] (2,0) to (2,3);
	\draw[arc] (0,0) to [out=90, in=180] (.5,1) to [out=0, in=90] (1,0);
	\draw[arc] (0,3) to [out=270, in=180] (.5,2) to [out=0, in=270] (1,3);
	\draw[doline] (-1.5,0) to (2.5,0);
	\draw[doline] (-1.5,3) to (2.5,3);
	\node at (-1,0) {$\Down$};
	\node at (0,0) {$\Up$};
	\node at (1,0) {$\Down$};
	\node at (2,0) {$\Up$};
	\node at (-1,3) {$\Down$};
	\node at (0,3) {$\Up$};
	\node at (1,3) {$\Down$};
	\node at (2,3) {$\Up$};
\end{tikzpicture}
\mapsto
\begin{tikzpicture}[anchorbase, scale=.35, tinynodes]
	\draw[arc] (-1,3) to [out=90, in=180] (-.5,4) to [out=0, in=90] (0,3);
	\draw[arc] (1,3) to [out=90, in=180] (1.5,4) to [out=0, in=90] (2,3);
	\draw[arc] (-1,0) to [out=270, in=180] (-.5,-1) to [out=0, in=270] (0,0);
	\draw[arc] (1,0) to [out=270, in=180] (1.5,-1) to [out=0, in=270] (2,0);
	\draw[arc] (-1,0) to (-1,3);
	\draw[arc] (2,0) to (2,3);
	\draw[arc] (0,0) to (0,3);
	\draw[arc] (1,0) to (1,3);
	\draw[doline] (-1.5,0) to (2.5,0);
	\draw[doline] (-1.5,3) to (2.5,3);
	\node at (0,0) {$\Down$};
	\node at (-1,0) {$\Up$};
	\node at (1,0) {$\Down$};
	\node at (2,0) {$\Up$};
	\node at (0,3) {$\Down$};
	\node at (-1,3) {$\Up$};
	\node at (1,3) {$\Down$};
	\node at (2,3) {$\Up$};
\end{tikzpicture}
+
\begin{tikzpicture}[anchorbase, scale=.35, tinynodes]
	\draw[arc] (-1,3) to [out=90, in=180] (-.5,4) to [out=0, in=90] (0,3);
	\draw[arc] (1,3) to [out=90, in=180] (1.5,4) to [out=0, in=90] (2,3);
	\draw[arc] (-1,0) to [out=270, in=180] (-.5,-1) to [out=0, in=270] (0,0);
	\draw[arc] (1,0) to [out=270, in=180] (1.5,-1) to [out=0, in=270] (2,0);
	\draw[arc] (-1,0) to (-1,3);
	\draw[arc] (2,0) to (2,3);
	\draw[arc] (0,0) to (0,3);
	\draw[arc] (1,0) to (1,3);
	\draw[doline] (-1.5,0) to (2.5,0);
	\draw[doline] (-1.5,3) to (2.5,3);
	\node at (-1,0) {$\Down$};
	\node at (0,0) {$\Up$};
	\node at (2,0) {$\Down$};
	\node at (1,0) {$\Up$};
	\node at (-1,3) {$\Down$};
	\node at (0,3) {$\Up$};
	\node at (2,3) {$\Down$};
	\node at (1,3) {$\Up$};
\end{tikzpicture}
};
(0,-12)*{\text{{\tiny Example for (\ref{section:arc-stuff}.S.a)}}};
\endxy
$}
\;
\fcolorbox{myorange}{mycream!10}{$
\xy
(0,0)*{
\begin{tikzpicture}[anchorbase, scale=.35, tinynodes]
	\draw[arc] (-1,3) to [out=90, in=180] (-.5,4) to [out=0, in=90] (0,3);
	\draw[arc] (1,3) to [out=90, in=180] (1.5,4) to [out=0, in=90] (2,3);
	\draw[arc] (-1,0) to [out=270, in=180] (-.5,-1) to [out=0, in=270] (0,0);
	\draw[arc] (1,0) to [out=270, in=180] (1.5,-1) to [out=0, in=270] (2,0);
	\draw[arc] (-1,0) to (-1,3);
	\draw[arc] (2,0) to (2,3);
	\draw[arc] (0,0) to [out=90, in=180] (.5,1) to [out=0, in=90] (1,0);
	\draw[arc] (0,3) to [out=270, in=180] (.5,2) to [out=0, in=270] (1,3);
	\draw[doline] (-1.5,0) to (2.5,0);
	\draw[doline] (-1.5,3) to (2.5,3);
	\node at (0,0) {$\Down$};
	\node at (-1,0) {$\Up$};
	\node at (2,0) {$\Down$};
	\node at (1,0) {$\Up$};
	\node at (0,3) {$\Down$};
	\node at (-1,3) {$\Up$};
	\node at (2,3) {$\Down$};
	\node at (1,3) {$\Up$};
\end{tikzpicture}
\mapsto
\begin{tikzpicture}[anchorbase, scale=.35, tinynodes]
	\draw[arc] (-1,3) to [out=90, in=180] (-.5,4) to [out=0, in=90] (0,3);
	\draw[arc] (1,3) to [out=90, in=180] (1.5,4) to [out=0, in=90] (2,3);
	\draw[arc] (-1,0) to [out=270, in=180] (-.5,-1) to [out=0, in=270] (0,0);
	\draw[arc] (1,0) to [out=270, in=180] (1.5,-1) to [out=0, in=270] (2,0);
	\draw[arc] (-1,0) to (-1,3);
	\draw[arc] (2,0) to (2,3);
	\draw[arc] (0,0) to (0,3);
	\draw[arc] (1,0) to (1,3);
	\draw[doline] (-1.5,0) to (2.5,0);
	\draw[doline] (-1.5,3) to (2.5,3);
	\node at (0,0) {$\Down$};
	\node at (-1,0) {$\Up$};
	\node at (2,0) {$\Down$};
	\node at (1,0) {$\Up$};
	\node at (0,3) {$\Down$};
	\node at (-1,3) {$\Up$};
	\node at (2,3) {$\Down$};
	\node at (1,3) {$\Up$};
\end{tikzpicture}
};
(0,-12)*{\text{{\tiny Example for (\ref{section:arc-stuff}.S.b)}}};
\endxy
$}
\\
\fcolorbox{myorange}{mycream!10}{$
\xy
(0,0)*{
\begin{tikzpicture}[anchorbase, scale=.35, tinynodes]
	\draw[arc] (0,3) to [out=90, in=0] (-1.5,5);
	\draw[arc] (1,3) to [out=90, in=180] (2.5,5);
	\draw[arc] (-1,3) to [out=90, in=0] (-1.5,4);
	\draw[arc] (2,3) to [out=90, in=180] (2.5,4);
	\draw[arc] (0,0) to [out=270, in=180] (.5,-1) to [out=0, in=270] (1,0);
	\draw[arc] (-1,0) to [out=270, in=180] (.5,-2) to [out=0, in=270] (2,0);
	\draw[arc] (-1,0) to (-1,3);
	\draw[arc] (0,0) to (0,3);
	\draw[arc] (1,0) to [out=90, in=180] (1.5,1) to [out=0, in=90] (2,0);
	\draw[arc] (1,3) to [out=270, in=180] (1.5,2) to [out=0, in=270] (2,3);
	\draw[doline] (-1.5,0) to (2.5,0);
	\draw[doline] (-1.5,3) to (2.5,3);
	\node at (-1,0) {$\Down$};
	\node at (0,0) {$\Up$};
	\node at (1,0) {$\Down$};
	\node at (2,0) {$\Up$};
	\node at (-1,3) {$\Down$};
	\node at (0,3) {$\Up$};
	\node at (1,3) {$\Down$};
	\node at (2,3) {$\Up$};
\end{tikzpicture}
\!\!\mapsto\!\!
\begin{tikzpicture}[anchorbase, scale=.35, tinynodes]
	\draw[arc] (0,3) to [out=90, in=0] (-1.5,5);
	\draw[arc] (1,3) to [out=90, in=180] (2.5,5);
	\draw[arc] (-1,3) to [out=90, in=0] (-1.5,4);
	\draw[arc] (2,3) to [out=90, in=180] (2.5,4);
	\draw[arc] (0,0) to [out=270, in=180] (.5,-1) to [out=0, in=270] (1,0);
	\draw[arc] (-1,0) to [out=270, in=180] (.5,-2) to [out=0, in=270] (2,0);
	\draw[arc] (-1,0) to (-1,3);
	\draw[arc] (0,0) to (0,3);
	\draw[arc] (1,0) to (1,3);
	\draw[arc] (2,0) to (2,3);
	\draw[doline] (-1.5,0) to (2.5,0);
	\draw[doline] (-1.5,3) to (2.5,3);
	\node at (0,0) {$\Down$};
	\node at (-1,0) {$\Up$};
	\node at (2,0) {$\Down$};
	\node at (1,0) {$\Up$};
	\node at (0,3) {$\Down$};
	\node at (-1,3) {$\Up$};
	\node at (2,3) {$\Down$};
	\node at (1,3) {$\Up$};
\end{tikzpicture}
\!\!+\!\!
\begin{tikzpicture}[anchorbase, scale=.35, tinynodes]
	\draw[arc] (0,3) to [out=90, in=0] (-1.5,5);
	\draw[arc] (1,3) to [out=90, in=180] (2.5,5);
	\draw[arc] (-1,3) to [out=90, in=0] (-1.5,4);
	\draw[arc] (2,3) to [out=90, in=180] (2.5,4);
	\draw[arc] (0,0) to [out=270, in=180] (.5,-1) to [out=0, in=270] (1,0);
	\draw[arc] (-1,0) to [out=270, in=180] (.5,-2) to [out=0, in=270] (2,0);
	\draw[arc] (-1,0) to (-1,3);
	\draw[arc] (0,0) to (0,3);
	\draw[arc] (1,0) to (1,3);
	\draw[arc] (2,0) to (2,3);
	\draw[doline] (-1.5,0) to (2.5,0);
	\draw[doline] (-1.5,3) to (2.5,3);
	\node at (-1,0) {$\Down$};
	\node at (0,0) {$\Up$};
	\node at (1,0) {$\Down$};
	\node at (2,0) {$\Up$};
	\node at (-1,3) {$\Down$};
	\node at (0,3) {$\Up$};
	\node at (1,3) {$\Down$};
	\node at (2,3) {$\Up$};
\end{tikzpicture}};
(0,-16)*{\text{{\tiny Example for (\ref{section:arc-stuff}.S.c)}}};
\endxy
$}
\;
\fcolorbox{myorange}{mycream!10}{$
\xy
(0,0)*{
\begin{tikzpicture}[anchorbase, scale=.35, tinynodes]
	\draw[arc] (0,3) to [out=90, in=0] (-1.5,5);
	\draw[arc] (1,3) to [out=90, in=180] (2.5,5);
	\draw[arc] (-1,3) to [out=90, in=0] (-1.5,4);
	\draw[arc] (2,3) to [out=90, in=180] (2.5,4);
	\draw[arc] (0,0) to [out=270, in=180] (.5,-1) to [out=0, in=270] (1,0);
	\draw[arc] (-1,0) to [out=270, in=180] (.5,-2) to [out=0, in=270] (2,0);
	\draw[arc] (-1,0) to (-1,3);
	\draw[arc] (0,0) to (0,3);
	\draw[arc] (1,0) to [out=90, in=180] (1.5,1) to [out=0, in=90] (2,0);
	\draw[arc] (1,3) to [out=270, in=180] (1.5,2) to [out=0, in=270] (2,3);
	\draw[doline] (-1.5,0) to (2.5,0);
	\draw[doline] (-1.5,3) to (2.5,3);
	\node at (0,0) {$\Down$};
	\node at (-1,0) {$\Up$};
	\node at (2,0) {$\Down$};
	\node at (1,0) {$\Up$};
	\node at (0,3) {$\Down$};
	\node at (-1,3) {$\Up$};
	\node at (2,3) {$\Down$};
	\node at (1,3) {$\Up$};
\end{tikzpicture}
\!\!\mapsto\! 0};
(0,-16)*{\text{{\tiny Example for (\ref{section:arc-stuff}.S.d)}}};
\endxy
$}
\;
\fcolorbox{myorange}{mycream!10}{$
\xy
(0,0)*{
\begin{tikzpicture}[anchorbase, scale=.35, tinynodes]
	\draw[arcd] (-1,3) to [out=90, in=180] (.5,5) to [out=0, in=90] (2,3);
	\draw[arc] (0,3) to [out=90, in=180] (.5,4) to [out=0, in=90] (1,3);
	\draw[arc] (-1,3) to [out=90, in=0] (-1.5,4);
	\draw[arc] (2,3) to [out=90, in=180] (2.5,4);
	\draw[arc] (0,0) to [out=270, in=180] (.5,-1) to [out=0, in=270] (1,0);
	\draw[arc] (-1,0) to [out=270, in=180] (.5,-2) to [out=0, in=270] (2,0);
	\draw[arc] (-1,0) to (-1,3);
	\draw[arc] (0,0) to (0,3);
	\draw[arc] (1,0) to [out=90, in=180] (1.5,1) to [out=0, in=90] (2,0);
	\draw[arc] (1,3) to [out=270, in=180] (1.5,2) to [out=0, in=270] (2,3);
	\draw[doline] (-1.5,0) to (2.5,0);
	\draw[doline] (-1.5,3) to (2.5,3);
	\node at (0,0) {$\Down$};
	\node at (-1,0) {$\Up$};
	\node at (2,0) {$\Down$};
	\node at (1,0) {$\Up$};
	\node at (0,3) {$\Down$};
	\node at (-1,3) {$\Up$};
	\node at (2,3) {$\Down$};
	\node at (1,3) {$\Up$};
\end{tikzpicture}
\!\!\mapsto\!\!
\begin{tikzpicture}[anchorbase, scale=.35, tinynodes]
	\draw[arcd] (-1,3) to [out=90, in=180] (.5,5) to [out=0, in=90] (2,3);
	\draw[arc] (0,3) to [out=90, in=180] (.5,4) to [out=0, in=90] (1,3);
	\draw[arc] (-1,3) to [out=90, in=0] (-1.5,4);
	\draw[arc] (2,3) to [out=90, in=180] (2.5,4);
	\draw[arc] (0,0) to [out=270, in=180] (.5,-1) to [out=0, in=270] (1,0);
	\draw[arc] (-1,0) to [out=270, in=180] (.5,-2) to [out=0, in=270] (2,0);
	\draw[arc] (-1,0) to (-1,3);
	\draw[arc] (0,0) to (0,3);
	\draw[arc] (1,0) to (1,3);
	\draw[arc] (2,0) to (2,3);
	\draw[doline] (-1.5,0) to (2.5,0);
	\draw[doline] (-1.5,3) to (2.5,3);
	\node at (1,0) {$\Down$};
	\node at (-1,0) {$\Up$};
	\node at (2,0) {$\Down$};
	\node at (0,0) {$\Up$};
	\node at (1,3) {$\Down$};
	\node at (-1,3) {$\Up$};
	\node at (2,3) {$\Down$};
	\node at (0,3) {$\Up$};
\end{tikzpicture}};
(0,-16)*{\text{{\tiny Example for (\ref{section:arc-stuff}.S.e)}}};
\endxy
$}
\end{gathered}
\end{gather}
\medskip

We leave it to the reader to check that \ref{sec-arcs-merge} 
and \ref{sec-arcs-split} are all possible configurations. 

\subsection{Well-definedness via annular TQFTs}\label{subsection:arc-algebra}

We first prove well-definedness of $\aarc$.

\begin{proposition}\label{proposition:asso}
The multiplication is well-defined, i.e. 
independent of all involved choices. 
This turns $\aarc$ into an associative, unital, finite-dimensional 
algebra with 
\begin{gather}\label{eq:the-idem-set}
\ciset=\{
\cbas{S,S}{\lambda}\mid 
S\lambda S^{\invo}\text{ contains only usual and anticlockwise }\circles\text{s}
\}
\end{gather}
being a complete set of pairwise orthogonal idempotents.
\end{proposition}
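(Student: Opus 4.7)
The plan is to establish well-definedness via an annular TQFT in the spirit of \cite{aps1}, and then derive the remaining properties from the resulting topological invariance.

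First, I would define a functor $\mathcal{F}$ from the category of embedded cobordisms in $\Sp\times[0,1]$ (with boundary configurations of circles on $\Sp\times\{0,1\}$) to vector spaces, assigning $\mathcal{F}$ of a disjoint union of $a$ usual circles and $b$ essential circles to $V^{\otimes a}\otimes W^{\otimes b}$, where $V=\K\{\text{anticlockwise},\text{clockwise}\}$ and $W=\K\{\text{leftwards},\text{rightwards}\}$. Then I would carefully select the linear maps for the elementary cobordisms (``pair of pants'' merges and splits, viewed now in the annular setting) so that they reproduce, case-by-case, exactly the rules \ref{sec-arcs-merge}.a--d and \ref{sec-arcs-split}.a--e. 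Morally, $V$ carries the standard Frobenius algebra structure on $\K[x]/(x^2)$ with counit sending anticlockwise $\mapsto 1$, clockwise $\mapsto 0$; the module $W$ over $V$ encodes the interaction of usuals and essentials, with the essential-essential merge producing a clockwise usual as in (M.c). The zero outputs in (M.b), (M.d), (S.d) correspond to the vanishing dictated by this Frobenius/module structure.

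Second, once the surgery rules are identified with $\mathcal{F}$ evaluated on elementary cobordisms, well-definedness is automatic: any two sequences of $\cups$-$\caps$ surgeries applied to a stacked diagram yield cobordisms that are isotopic (rel. boundary) inside $\Sp\times[-1,1]$, and functoriality of $\mathcal{F}$ (i.e.\ topological invariance) forces the resulting linear maps to coincide. Similarly, associativity of $\mathrm{Mult}$ follows because the two bracketings of a triple product correspond to isotopic cobordisms.

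Third, for the remaining claims: finite-dimensionality is immediate since $\cset$ and each $\cmset(\lambda)$ are finite, so the basis of oriented circle diagrams is finite. For the unit, let $\ceps=\sum\cbas{S,S}{\lambda}$ with the sum over pairs $(\lambda,S)$ such that the middle of $S\lambda S^\invo$ consists only of usual anticlockwise circles. Multiplying $\ceps$ on the left with a basis element $\cbas{U,V}{\mu}$ forces $S=U$ and $\lambda=\mu$ (otherwise we get zero by the rule $T=U$), and the subsequent surgeries merge each usual anticlockwise circle in $S\lambda S^\invo$ with the corresponding cup-cap pair from $U\mu V^\invo$, which by (M.a) simply returns $\cbas{U,V}{\mu}$; hence $\ceps$ is a left unit, and by symmetry via the $\star$-involution on stacked diagrams, also a right unit. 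Pairwise orthogonality of the $\cbas{S,S}{\lambda}$'s follows from the $T=U$ condition together with the fact that matching middles force $\lambda=\mu$ and $S=S'$, and each is idempotent since self-multiplication performs only (M.a)-merges of usual anticlockwise circles. Completeness is then the observation that $\ceps$ is the unit.

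The main obstacle is verifying that the linear maps of the TQFT $\mathcal{F}$ on the elementary cobordisms can be chosen consistently so as to reproduce \emph{all} of the orientation rules \ref{sec-arcs-merge} and \ref{sec-arcs-split} simultaneously; in particular, the asymmetric vanishing in (M.b) versus (M.c), and the conditional splits (S.a) vs.\ (S.c) vs.\ (S.e), must all emerge from a single coherent Frobenius-module structure on $(V,W)$. Once the right structure is pinned down (which is essentially the annular Bar-Natan/Asaeda--Przytycki--Sikora construction applied here), everything else follows formally.
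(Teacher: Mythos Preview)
Your approach is essentially the paper's: identify the surgery rules with the Asaeda--Przytycki--Sikora annular TQFT applied to saddle cobordisms, so that well-definedness and associativity reduce to isotopy invariance of surfaces, with the remaining algebraic claims being straightforward checks. One small correction to your unit argument: the product $\cbas{S,S}{\lambda}\,\cbas{U,V}{\mu}$ only forces $S=U$ (via the rule $T=U$), not $\lambda=\mu$; indeed $\lambda$ is the unique weight making $S\lambda S^{\invo}$ entirely anticlockwise and will in general differ from $\mu$, but your subsequent use of rule (M.a) is correct regardless and does not depend on $\lambda=\mu$, so simply drop that clause.
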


\begin{proof}
With the well-definedness as an exception, the 
statements are easy to verify. 
We will sketch now why 
the multiplication is well-defined. 
(A detailed treatment in case of the non-annular arc algebras, that can be adapted 
to the annular setup, is 
explained e.g. in \cite{est1}.) The main idea is to identify 
the algebraically defined annular arc algebra with a topological algebra -- 
whose elements are certain surfaces -- obtained via 
a TQFT. For this topological incarnation of $\aarc$ the 
well-definedness boils down to isotopies of surfaces, and the 
main problem is to find the TQFT realizing $\aarc$. However, in our case this 
is easy since we modeled $\aarc$ on such a topological defined algebra using 
the TQFT from \cite{aps1}. 
(For further details about this TQFT 
see e.g. \cite[Section 2]{r1}, \cite[Section 4.2]{glw1} or \cite[Section 2.4]{bpw1}.) 
To be a bit more precise, using this TQFT one can define -- following e.g. \cite{est1} -- 
the topological incarnation of $\aarc$. Then, after choosing a cup basis as in \cite{est1}, one 
checks that on this basis the topological algebra satisfies the multiplication rules 
of $\aarc$.
\end{proof}

\begin{furtherdirections}\label{arc-connections-1}
The TQFT used in the proof of \fullref{proposition:asso} 
originates in the context of versions of
annular link homologies, see e.g. the references above.
It would be interesting to know a connection 
between $\aarc$ and those homologies.
\end{furtherdirections}

\begin{example}\label{example:arc-mult-1}
Here the multiplication for symmetric pictures 
in case $\numberarcs=1$:
\begin{gather}
\begin{gathered}
\fcolorbox{myorange}{mycream!10}{$
\xy
(0,0)*{
\begin{tikzpicture}[anchorbase, scale=.4, tinynodes]
	\draw[arc] (0,3) to [out=90, in=180] (.5,4) to [out=0, in=90] (1,3);
	\draw[arc] (0,0) to [out=270, in=180] (.5,-1) to [out=0, in=270] (1,0);
	\draw[arc] (0,0) to [out=90, in=180] (.5,1) to [out=0, in=90] (1,0);
	\draw[arc] (0,3) to [out=270, in=180] (.5,2) to [out=0, in=270] (1,3);
	\draw[doline] (-.5,0) to (1.5,0);
	\draw[doline] (-.5,3) to (1.5,3);
	\node at (0,0) {$\Down$};
	\node at (1,0) {$\Up$};
	\node at (0,3) {$\Down$};
	\node at (1,3) {$\Up$};
\end{tikzpicture}
\mapsto
\begin{tikzpicture}[anchorbase, scale=.4, tinynodes]
	\draw[arc] (0,0) to [out=90, in=180] (.5,1) to [out=0, in=90] (1,0);
	\draw[arc] (0,0) to [out=270, in=180] (.5,-1) to [out=0, in=270] (1,0);
	\draw[doline] (-.5,0) to (1.5,0);
	\node at (0,0) {$\Down$};
	\node at (1,0) {$\Up$};
\end{tikzpicture}};
(7.25,-7)*{\text{$\begin{smallmatrix}\down\,\up\\\down\,\up\end{smallmatrix}\mapsto\down\,\up$}};
\endxy
$
}
\;
\fcolorbox{myorange}{mycream!10}{$
\xy
(0,0)*{
\begin{tikzpicture}[anchorbase, scale=.4, tinynodes]
	\draw[arc] (0,3) to [out=90, in=180] (.5,4) to [out=0, in=90] (1,3);
	\draw[arc] (0,0) to [out=270, in=180] (.5,-1) to [out=0, in=270] (1,0);
	\draw[arc] (0,0) to [out=90, in=180] (.5,1) to [out=0, in=90] (1,0);
	\draw[arc] (0,3) to [out=270, in=180] (.5,2) to [out=0, in=270] (1,3);
	\draw[doline] (-.5,0) to (1.5,0);
	\draw[doline] (-.5,3) to (1.5,3);
	\node at (0,0) {$\Down$};
	\node at (1,0) {$\Up$};
	\node at (1,3) {$\Down$};
	\node at (0,3) {$\Up$};
\end{tikzpicture}
\mapsto
\begin{tikzpicture}[anchorbase, scale=.4, tinynodes]
	\draw[arc] (0,0) to [out=90, in=180] (.5,1) to [out=0, in=90] (1,0);
	\draw[arc] (0,0) to [out=270, in=180] (.5,-1) to [out=0, in=270] (1,0);
	\draw[doline] (-.5,0) to (1.5,0);
	\node at (1,0) {$\Down$};
	\node at (0,0) {$\Up$};
\end{tikzpicture}};
(7.25,-7)*{\text{$\begin{smallmatrix}\up\,\down\\\down\,\up\end{smallmatrix}\mapsto\up\,\down$}};
\endxy
$}
\;
\fcolorbox{myorange}{mycream!10}{$
\xy
(0,0)*{
\begin{tikzpicture}[anchorbase, scale=.4, tinynodes]
	\draw[arc] (0,3) to [out=90, in=180] (.5,4) to [out=0, in=90] (1,3);
	\draw[arc] (0,0) to [out=270, in=180] (.5,-1) to [out=0, in=270] (1,0);
	\draw[arc] (0,0) to [out=90, in=180] (.5,1) to [out=0, in=90] (1,0);
	\draw[arc] (0,3) to [out=270, in=180] (.5,2) to [out=0, in=270] (1,3);
	\draw[doline] (-.5,0) to (1.5,0);
	\draw[doline] (-.5,3) to (1.5,3);
	\node at (0,3) {$\Down$};
	\node at (1,3) {$\Up$};
	\node at (1,0) {$\Down$};
	\node at (0,0) {$\Up$};
\end{tikzpicture}
\mapsto
\begin{tikzpicture}[anchorbase, scale=.4, tinynodes]
	\draw[arc] (0,0) to [out=90, in=180] (.5,1) to [out=0, in=90] (1,0);
	\draw[arc] (0,0) to [out=270, in=180] (.5,-1) to [out=0, in=270] (1,0);
	\draw[doline] (-.5,0) to (1.5,0);
	\node at (1,0) {$\Down$};
	\node at (0,0) {$\Up$};
\end{tikzpicture}};
(7.25,-7)*{\text{$\begin{smallmatrix}\down\,\up\\\up\,\down\end{smallmatrix}\mapsto\up\,\down$}};
\endxy
$}
\;
\fcolorbox{myorange}{mycream!10}{$
\xy
(0,0)*{
\begin{tikzpicture}[anchorbase, scale=.4, tinynodes]
	\draw[arc] (0,3) to [out=90, in=180] (.5,4) to [out=0, in=90] (1,3);
	\draw[arc] (0,0) to [out=270, in=180] (.5,-1) to [out=0, in=270] (1,0);
	\draw[arc] (0,0) to [out=90, in=180] (.5,1) to [out=0, in=90] (1,0);
	\draw[arc] (0,3) to [out=270, in=180] (.5,2) to [out=0, in=270] (1,3);
	\draw[doline] (-.5,0) to (1.5,0);
	\draw[doline] (-.5,3) to (1.5,3);
	\node at (1,0) {$\Down$};
	\node at (0,0) {$\Up$};
	\node at (1,3) {$\Down$};
	\node at (0,3) {$\Up$};
\end{tikzpicture}
\mapsto
0};
(7.75,-7)*{\text{$\begin{smallmatrix}\up\,\down\\\up\,\down\end{smallmatrix}\mapsto 0$}};
\endxy
$}
\\
\fcolorbox{myorange}{mycream!10}{$
\xy
(0,0)*{
\begin{tikzpicture}[anchorbase, scale=.4, tinynodes]
	\draw[arc] (0,3) to [out=90, in=0] (-.5,4);
	\draw[arc] (1,3) to [out=90, in=180] (1.5,4);
	\draw[arc] (0,0) to [out=90, in=0] (-.5,1);
	\draw[arc] (1,0) to [out=90, in=180] (1.5,1);
	\draw[arc] (0,3) to [out=270, in=0] (-.5,2);
	\draw[arc] (1,3) to [out=270, in=180] (1.5,2);
	\draw[arc] (0,0) to [out=270, in=0] (-.5,-1);
	\draw[arc] (1,0) to [out=270, in=180] (1.5,-1);
	\draw[doline] (-.5,0) to (1.5,0);
	\draw[doline] (-.5,3) to (1.5,3);
	\node at (1,0) {$\Down$};
	\node at (0,0) {$\Up$};
	\node at (1,3) {$\Down$};
	\node at (0,3) {$\Up$};
\end{tikzpicture}
\mapsto
\begin{tikzpicture}[anchorbase, scale=.4, tinynodes]
	\draw[arc] (0,0) to [out=90, in=0] (-.5,1);
	\draw[arc] (1,0) to [out=90, in=180] (1.5,1);
	\draw[arc] (0,0) to [out=270, in=0] (-.5,-1);
	\draw[arc] (1,0) to [out=270, in=180] (1.5,-1);
	\draw[doline] (-.5,0) to (1.5,0);
	\node at (1,0) {$\Down$};
	\node at (0,0) {$\Up$};
\end{tikzpicture}};
(7.25,-7)*{\text{$\begin{smallmatrix}\up\,\down\\\up\,\down\end{smallmatrix}\mapsto\up\,\down$}};
(15.8,-7)*{\phantom{a}};
\endxy
$}
\;
\fcolorbox{myorange}{mycream!10}{$
\xy
(0,0)*{
\begin{tikzpicture}[anchorbase, scale=.4, tinynodes]
	\draw[arc] (0,3) to [out=90, in=0] (-.5,4);
	\draw[arc] (1,3) to [out=90, in=180] (1.5,4);
	\draw[arc] (0,0) to [out=90, in=0] (-.5,1);
	\draw[arc] (1,0) to [out=90, in=180] (1.5,1);
	\draw[arc] (0,3) to [out=270, in=0] (-.5,2);
	\draw[arc] (1,3) to [out=270, in=180] (1.5,2);
	\draw[arc] (0,0) to [out=270, in=0] (-.5,-1);
	\draw[arc] (1,0) to [out=270, in=180] (1.5,-1);
	\draw[doline] (-.5,0) to (1.5,0);
	\draw[doline] (-.5,3) to (1.5,3);
	\node at (1,0) {$\Down$};
	\node at (0,0) {$\Up$};
	\node at (0,3) {$\Down$};
	\node at (1,3) {$\Up$};
\end{tikzpicture}
\mapsto
\begin{tikzpicture}[anchorbase, scale=.4, tinynodes]
	\draw[arc] (0,0) to [out=90, in=0] (-.5,1);
	\draw[arc] (1,0) to [out=90, in=180] (1.5,1);
	\draw[arc] (0,0) to [out=270, in=0] (-.5,-1);
	\draw[arc] (1,0) to [out=270, in=180] (1.5,-1);
	\draw[doline] (-.5,0) to (1.5,0);
	\node at (0,0) {$\Down$};
	\node at (1,0) {$\Up$};
\end{tikzpicture}};
(7.25,-7)*{\text{$\begin{smallmatrix}\down\,\up\\\up\,\down\end{smallmatrix}\mapsto\down\,\up$}};
\endxy
$}
\;
\fcolorbox{myorange}{mycream!10}{$
\xy
(0,0)*{
\begin{tikzpicture}[anchorbase, scale=.4, tinynodes]
	\draw[arc] (0,3) to [out=90, in=0] (-.5,4);
	\draw[arc] (1,3) to [out=90, in=180] (1.5,4);
	\draw[arc] (0,0) to [out=90, in=0] (-.5,1);
	\draw[arc] (1,0) to [out=90, in=180] (1.5,1);
	\draw[arc] (0,3) to [out=270, in=0] (-.5,2);
	\draw[arc] (1,3) to [out=270, in=180] (1.5,2);
	\draw[arc] (0,0) to [out=270, in=0] (-.5,-1);
	\draw[arc] (1,0) to [out=270, in=180] (1.5,-1);
	\draw[doline] (-.5,0) to (1.5,0);
	\draw[doline] (-.5,3) to (1.5,3);
	\node at (1,3) {$\Down$};
	\node at (0,3) {$\Up$};
	\node at (0,0) {$\Down$};
	\node at (1,0) {$\Up$};
\end{tikzpicture}
\mapsto
\begin{tikzpicture}[anchorbase, scale=.4, tinynodes]
	\draw[arc] (0,0) to [out=90, in=0] (-.5,1);
	\draw[arc] (1,0) to [out=90, in=180] (1.5,1);
	\draw[arc] (0,0) to [out=270, in=0] (-.5,-1);
	\draw[arc] (1,0) to [out=270, in=180] (1.5,-1);
	\draw[doline] (-.5,0) to (1.5,0);
	\node at (0,0) {$\Down$};
	\node at (1,0) {$\Up$};
\end{tikzpicture}};
(7.25,-7)*{\text{$\begin{smallmatrix}\up\,\down\\\down\,\up\end{smallmatrix}\mapsto\down\,\up$}};
\endxy
$}
\;
\fcolorbox{myorange}{mycream!10}{$
\xy
(0,0)*{
\begin{tikzpicture}[anchorbase, scale=.4, tinynodes]
	\draw[arc] (0,3) to [out=90, in=0] (-.5,4);
	\draw[arc] (1,3) to [out=90, in=180] (1.5,4);
	\draw[arc] (0,0) to [out=90, in=0] (-.5,1);
	\draw[arc] (1,0) to [out=90, in=180] (1.5,1);
	\draw[arc] (0,3) to [out=270, in=0] (-.5,2);
	\draw[arc] (1,3) to [out=270, in=180] (1.5,2);
	\draw[arc] (0,0) to [out=270, in=0] (-.5,-1);
	\draw[arc] (1,0) to [out=270, in=180] (1.5,-1);
	\draw[doline] (-.5,0) to (1.5,0);
	\draw[doline] (-.5,3) to (1.5,3);
	\node at (0,0) {$\Down$};
	\node at (1,0) {$\Up$};
	\node at (0,3) {$\Down$};
	\node at (1,3) {$\Up$};
\end{tikzpicture}
\mapsto 0};
(7.75,-7)*{\text{$\begin{smallmatrix}\down\,\up\\\down\,\up\end{smallmatrix}\mapsto 0$}};
\endxy
$}
\end{gathered}
\end{gather}
Note the changed roles of the weights.
\end{example}

\begin{example}\label{example:arc-mult-2}
The list of the idempotents 
from \eqref{eq:the-idem-set} in case $\numberarcs=2$ is
\begin{gather}
\begin{gathered}
\phantom{.}
\begin{tikzpicture}[anchorbase, scale=.4, tinynodes]
	\draw[arc] (1,0) to [out=90, in=180] (1.5,1) to [out=0, in=90] (2,0);
	\draw[arc] (0,0) to [out=90, in=180] (1.5,2) to [out=0, in=90] (3,0);
	\draw[arc] (1,0) to [out=270, in=180] (1.5,-1) to [out=0, in=270] (2,0);
	\draw[arc] (0,0) to [out=270, in=180] (1.5,-2) to [out=0, in=270] (3,0);
	\draw[doline] (-.5,0) to (3.5,0);
	\node at (0,0) {$\Down$};
	\node at (1,0) {$\Down$};
	\node at (2,0) {$\Up$};
	\node at (3,0) {$\Up$};
	\node at (1.5,2.5) {$e_{1}$};
	\node at (1.5,-2.5) {$\phantom{e_{1}}$};
\end{tikzpicture}
,\quad
\begin{tikzpicture}[anchorbase, scale=.4, tinynodes]
	\draw[arc] (0,0) to [out=90, in=180] (.5,1) to [out=0, in=90] (1,0);
	\draw[arc] (2,0) to [out=90, in=0] (-.5,2);
	\draw[arc] (3,0) to [out=90, in=210] (3.5,2);
	\draw[arc] (0,0) to [out=270, in=180] (.5,-1) to [out=0, in=270] (1,0);
	\draw[arc] (2,0) to [out=270, in=0] (-.5,-2);
	\draw[arc] (3,0) to [out=270, in=150] (3.5,-2);
	\draw[doline] (-.5,0) to (3.5,0);
	\node at (0,0) {$\Down$};
	\node at (3,0) {$\Down$};
	\node at (2,0) {$\Up$};
	\node at (1,0) {$\Up$};
	\node at (1.5,2.5) {$e_{2}$};
	\node at (1.5,-2.5) {$\phantom{e_{2}}$};
\end{tikzpicture}
,\quad
\begin{tikzpicture}[anchorbase, scale=.4, tinynodes]
	\draw[arc] (2,0) to [out=90, in=180] (2.5,1) to [out=0, in=90] (3,0);
	\draw[arc] (0,0) to [out=90, in=330] (-.5,2);
	\draw[arc] (1,0) to [out=90, in=180] (3.5,2);
	\draw[arc] (2,0) to [out=270, in=180] (2.5,-1) to [out=0, in=270] (3,0);
	\draw[arc] (0,0) to [out=270, in=30] (-.5,-2);
	\draw[arc] (1,0) to [out=270, in=180] (3.5,-2);
	\draw[doline] (-.5,0) to (3.5,0);
	\node at (2,0) {$\Down$};
	\node at (1,0) {$\Down$};
	\node at (0,0) {$\Up$};
	\node at (3,0) {$\Up$};
	\node at (1.5,2.5) {$e_{3}$};
	\node at (1.5,-2.5) {$\phantom{e_{3}}$};
\end{tikzpicture}
,\quad
\begin{tikzpicture}[anchorbase, scale=.4, tinynodes]
	\draw[arc] (1,0) to [out=90, in=0] (-.5,2);
	\draw[arc] (2,0) to [out=90, in=180] (3.5,2);
	\draw[arc] (0,0) to [out=90, in=0] (-.5,1);
	\draw[arc] (3,0) to [out=90, in=180] (3.5,1);
	\draw[arc] (1,0) to [out=270, in=0] (-.5,-2);
	\draw[arc] (2,0) to [out=270, in=180] (3.5,-2);
	\draw[arc] (0,0) to [out=270, in=0] (-.5,-1);
	\draw[arc] (3,0) to [out=270, in=180] (3.5,-1);
	\draw[doline] (-.5,0) to (3.5,0);
	\node at (2,0) {$\Down$};
	\node at (3,0) {$\Down$};
	\node at (0,0) {$\Up$};
	\node at (1,0) {$\Up$};
	\node at (1.5,2.5) {$e_{4}$};
	\node at (1.5,-2.5) {$\phantom{e_{4}}$};
\end{tikzpicture}
,\quad
\begin{tikzpicture}[anchorbase, scale=.4, tinynodes]
	\draw[arc] (0,0) to [out=90, in=180] (.5,1) to [out=0, in=90] (1,0);
	\draw[arc] (2,0) to [out=90, in=180] (2.5,1) to [out=0, in=90] (3,0);
	\draw[arc] (0,0) to [out=270, in=180] (.5,-1) to [out=0, in=270] (1,0);
	\draw[arc] (2,0) to [out=270, in=180] (2.5,-1) to [out=0, in=270] (3,0);
	\draw[doline] (-.5,0) to (3.5,0);
	\node at (0,0) {$\Down$};
	\node at (1,0) {$\Up$};
	\node at (2,0) {$\Down$};
	\node at (3,0) {$\Up$};
	\node at (1.5,2.5) {$e_{5}$};
	\node at (1.5,-2.5) {$\phantom{e_{5}}$};
\end{tikzpicture}
,\quad
\begin{tikzpicture}[anchorbase, scale=.4, tinynodes]
	\draw[arc] (1,0) to [out=90, in=180] (1.5,1) to [out=0, in=90] (2,0);
	\draw[arc] (0,0) to [out=90, in=0] (-.5,1);
	\draw[arc] (3,0) to [out=90, in=180] (3.5,1);
	\draw[arc] (1,0) to [out=270, in=180] (1.5,-1) to [out=0, in=270] (2,0);
	\draw[arc] (0,0) to [out=270, in=0] (-.5,-1);
	\draw[arc] (3,0) to [out=270, in=180] (3.5,-1);
	\draw[doline] (-.5,0) to (3.5,0);
	\node at (1,0) {$\Down$};
	\node at (0,0) {$\Up$};
	\node at (3,0) {$\Down$};
	\node at (2,0) {$\Up$};
	\node at (1.5,2.5) {$e_{6}$};
	\node at (1.5,-2.5) {$\phantom{e_{6}}$};
\end{tikzpicture}
\end{gathered}
\end{gather}
(For later use, cf. \fullref{example:big-arc-example-2}, we denote them by $e_{i}$ 
for $i=1,\dots,6$.)
\end{example}

\begin{furtherdirections}\label{arc-connections-2}
Our conventions here differ slightly from the ones in \cite[Section 5.3]{an1} and it would be interesting to find an explicit isomorphism between the two algebras.
\end{furtherdirections}

%%%%%%%%%%%%%%%%%%%%%%%%%%%
\subsection{Relative cellularity: The cell datum}\label{subsection:arc-cell-basis}
%%%%%%%%%%%%%%%%%%%%%%%%%%%

Let us now give the relative cell datum.

First, as already indicated by our notation in 
\fullref{subsection:arc-combinatorics}, the set $\cset$ is the set of 
weights, while the sets $\cmset(\lambda)$ 
are those cup diagrams $S$ such that $S\lambda$ is oriented. 
The map $\cbasis$ is then given by the defined basis elements 
$\cbas{S,T}{\lambda}$. 
The anti-involution ${}^{\invo}$ is given by reflection.

Furthermore
-- by \fullref{proposition:asso} -- 
we let 
$\ciset$ be as in \eqref{eq:the-idem-set},
and we can associate
to a cup diagram 
$S$ the idempotent $\ceps_{S}=\cbas{S,S}{\lambda}\in\ciset$. This in turn 
defines the map $\cepsmap(S)=\ceps_{S}$.

To define the partial orders $\ord{\ceps_{S}}$ with respect 
to the idempotents in $\ciset$, note that 
there is a rotation map $\rotright\colon\cset\to\cset$ 
given by rotating rightwards. 
This is formally done by renumbering the vertices on the dotted line 
to $2,3,\cdots,2\numberarcs,1$. The same is done for cup diagrams, e.g.:
\begin{gather}\label{eq:ord-example}
\xy
(0,4)*{
\begin{tikzpicture}[anchorbase, scale=.4, tinynodes]
	\draw[doline] (-2.5,0) to (1.5,0);
	\node at (-2,0) {$\Up$};
	\node at (-1,0) {$\Down$};
	\node at (0,0) {$\Up$};
	\node at (1,0) {$\Down$};
\end{tikzpicture}
\xrightarrow{\rotright}
\begin{tikzpicture}[anchorbase, scale=.4, tinynodes]
	\draw[doline] (-2.5,0) to (1.5,0);
	\node at (-2,0) {$\Down$};
	\node at (-1,0) {$\Up$};
	\node at (0,0) {$\Down$};
	\node at (1,0) {$\Up$};
\end{tikzpicture}};
(0,-4)*{
\begin{tikzpicture}[anchorbase, scale=.4, tinynodes]
	\draw[doline] (-2.5,0) to (1.5,0);
	\node at (-2,0) {$\Up$};
	\node at (-1,0) {$\Down$};
	\node at (0,0) {$\Down$};
	\node at (1,0) {$\Up$};
\end{tikzpicture}
\xrightarrow{\rotright}
\begin{tikzpicture}[anchorbase, scale=.4, tinynodes]
	\draw[doline] (-2.5,0) to (1.5,0);
	\node at (-2,0) {$\Up$};
	\node at (-1,0) {$\Up$};
	\node at (0,0) {$\Down$};
	\node at (1,0) {$\Down$};
\end{tikzpicture}
};
\endxy
,
\quad
\begin{tikzpicture}[anchorbase, scale=.4, tinynodes]
	\draw[arc] (0,0) to [out=270, in=0] (-2.5,-2);
	\draw[arc] (1,0) to [out=270, in=150] (1.5,-2);
	\draw[arc] (-2,0) to [out=270, in=180] (-1.5,-1) to [out=0, in=270] (-1,0);
	\draw[doline] (-2.5,0) to (1.5,0);
	\node at (-.5,-2.6) {$S$};
	\node at (-.5,.825) {$\phantom{S}$};
\end{tikzpicture}
\xrightarrow{\rotright}
\begin{tikzpicture}[anchorbase, scale=.4, tinynodes]
	\draw[arc] (-2,0) to [out=270, in=180] (-.5,-2) to [out=0, in=270] (1,0);
	\draw[arc] (-1,0) to [out=270, in=180] (-.5,-1) to [out=0, in=270] (0,0);
	\draw[doline] (-2.5,0) to (1.5,0);
	\node at (-.5,-2.6) {$\rotright(S)$};
	\node at (-.5,.825) {$\phantom{S}$};
\end{tikzpicture}
\end{gather}

We note two lemmas whose 
(very easy) proofs we omit.

\begin{lemmaqed}\label{lemma:rotation-map}
The map $\rotright$ defined on the 
basis as $\rotright(\cbas{S,T}{\lambda})=
\cbas{\rotright(S),\rotright(T)}{\rotright(\lambda)}$ defines an 
algebra automorphism of $\aarc$.
\end{lemmaqed}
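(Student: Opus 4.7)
The plan is to establish two things: that $\rotright$ is a bijection on the basis (hence a linear automorphism by linear extension), and that it is compatible with the multiplication.

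For bijectivity, I would observe that $\rotright$ has an evident inverse given by rotation in the opposite direction (equivalently, applying $\rotright$ a total of $2\numberarcs$ times gives the identity on the combinatorial data). The map clearly sends weights to weights and, since cup/cap diagrams are equivalence classes determined by pairs of vertices and the staying/wrapping type, it sends cup diagrams to cup diagrams and cap diagrams to cap diagrams. Moreover, if $S\lambda$ is an oriented cup diagram, then $\rotright(S)\rotright(\lambda)$ is oriented as well since the labeling of vertices is transported along with the geometry. Hence $\rotright$ permutes the basis described in \fullref{definition:arc-k-vect}, so its linear extension is an isomorphism of vector spaces.

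For multiplicativity, the cleanest route is to invoke the topological realization of $\aarc$ used in the proof of \fullref{proposition:asso}: the multiplication is computed via composition of cobordisms in the annulus under the annular TQFT of \cite{aps1}, with the surgery procedure of \fullref{subsection:arc-mult} being the algebraic shadow of this cobordism composition. Rotation is a homeomorphism of the annulus that preserves the dotted line (together with its orientation), the dashed lines, and the markings $\blacktriangleright,\blacktriangleleft$, so it commutes with cobordism composition at the topological level. Transporting this back through the identification yields the claim. Alternatively, one can verify directly, step by step, that rotation commutes with each of the four ingredients of the multiplication: stacking of diagrams (trivial), the existence of $\cups\,\text{-}\,\caps$ pairs eligible for surgery (which is a local and rotationally symmetric condition), the surgery replacement \eqref{eq:arc-surgery-rule}, and the collapse to a circle diagram. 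Since rotation preserves the set of circles in a stacked diagram up to a relabeling of endpoints, it commutes with the enumeration of surgery steps.

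The main obstacle will be checking that the reorientation rules \ref{sec-arcs-merge} and \ref{sec-arcs-split} are compatible with $\rotright$. This reduces to the observation that rotation preserves all the intrinsic attributes that those rules depend on: whether a circle $\circles$ is essential or usual (a statement about $\pi_{1}(\Sp\times[-1,1])$, invariant under homeomorphisms of the annulus), and its orientation type (anticlockwise versus clockwise for usual circles, leftwards versus rightwards for essential ones). Because $\rotright$ preserves the chosen anticlockwise orientation of the dotted line, and the induced orientation of any circle in an oriented circle diagram is determined by the $\down/\up$ labels together with that reference orientation, both the usual/essential dichotomy and the chirality class of each circle are preserved. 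Hence each summand produced by \ref{sec-arcs-merge} and \ref{sec-arcs-split} is mapped by $\rotright$ to the corresponding summand for the rotated input, giving $\rotright(ab)=\rotright(a)\rotright(b)$ on basis elements and, by bilinearity, in general.
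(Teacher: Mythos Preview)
The paper omits the proof of this lemma entirely, calling it ``very easy'' (see the sentence immediately preceding \fullref{lemma:rotation-map}). Your argument supplies exactly the details the authors skip: bijectivity via the evident inverse rotation, and multiplicativity by observing that the surgery rules \ref{sec-arcs-merge} and \ref{sec-arcs-split} depend only on data (usual versus essential, and orientation type) that is invariant under an orientation-preserving homeomorphism of the annulus. This is correct and is the natural way to fill in the omitted proof.

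One small imprecision: you write that rotation ``preserves \dots\ the dashed lines, and the markings $\blacktriangleright,\blacktriangleleft$''. Under the geometric interpretation of $\rotright$ as a rigid rotation of $\Sp\times[-1,1]$, the dashed lines and the points $\blacktriangleright,\blacktriangleleft$ are \emph{not} fixed as sets; indeed this is precisely why an arc can change from staying type to wrapping type under $\rotright$, as in the example \eqref{eq:ord-example}. What matters, and what you correctly use, is that the multiplication rules never reference the dashed lines or the markings: they depend only on the homotopy class of each circle in $\Sp\times[-1,1]$ and on the induced orientation relative to the (rotationally invariant) orientation of the dotted line. So the slip is harmless, but you should remove that clause.
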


\begin{lemmaqed}\label{lemma:partial-orders}
For each cup diagram $S$ there is 
$k\in\N$ such that the cup diagram 
$\rotright^{k}(S)$ is of staying type.
\end{lemmaqed}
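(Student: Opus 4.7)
The plan is to reduce the lemma to a combinatorial claim about cyclic intervals of vertex gaps and then settle it by a laminarity argument.

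For any arc $\alpha$ in $S$ with endpoints at vertices $i$ and $j$, introduce its \textit{gap interval} $I(\alpha) \subseteq \{1, \ldots, 2\numberarcs\}$, the set of gaps between consecutive vertices that $\alpha$ passes over. This is the ``front'' arc of the cycle between $i$ and $j$ if $\alpha$ is of staying type, and the complementary ``back'' arc if $\alpha$ is of wrapping type. Unwinding how $\rotright$ acts on the combinatorial data of a cup diagram, one checks that under $\rotright$ both the endpoint labels and the intervals $I(\alpha)$ get shifted by $+1$ cyclically, while the position of the dashed-line gap (between vertex $2\numberarcs$ and vertex $1$) remains labeled the same way. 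It follows that $\rotright^k(S)$ is of staying type exactly when the dashed-line gap, viewed in the original labels as the gap shifted by $-k$ mod $2\numberarcs$, avoids every $I(\alpha)$. So the lemma reduces to the following claim: for every cup diagram $S$, there exists at least one vertex gap contained in none of the $I(\alpha)$.

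To prove this claim I will use non-crossing. First, I argue that the collection $\{I(\alpha)\}_{\alpha \in S}$ is a laminar family of cyclic intervals, meaning any two members are either disjoint or nested: a transverse overlap of $I(\alpha)$ and $I(\beta)$ in which neither contains the other would force $\alpha$ and $\beta$ to cross topologically in the annulus, contradicting \fullref{definition:arcs}. Define the \textit{outermost} intervals to be those not strictly contained in any other $I(\beta)$. By laminarity the outermost intervals are pairwise disjoint, every $I(\alpha)$ is contained in some outermost one, and hence $\bigcup_{\alpha} I(\alpha)$ coincides with the union of the outermost intervals.

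The key observation is that the outermost intervals cannot tile $\{1,\ldots,2\numberarcs\}$. Each outermost interval is bounded between the two vertex endpoints of its arc, and distinct arcs of $S$ have disjoint endpoint vertex sets since every vertex is an endpoint of exactly one arc. If the outermost intervals did tile the cycle, two cyclically adjacent tiles would meet at a common vertex boundary, forcing two distinct arcs to share an endpoint --- contradiction. Hence some vertex gap is left uncovered, and choosing $k$ so that $\rotright^{k}$ positions the dashed-line gap at this uncovered gap gives a cup diagram $\rotright^{k}(S)$ of staying type.

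The step I expect to need the most care is the laminarity claim: one has to verify uniformly, across the staying/staying, staying/wrapping, and wrapping/wrapping configurations, that a transverse overlap of gap intervals produces a genuine crossing of the arcs in the annulus, rather than a topological artifact that could be isotoped away.
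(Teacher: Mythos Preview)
Your argument is correct. The paper omits the proof entirely (it declares the lemma ``very easy'' and moves on), so there is nothing to compare against directly.

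The laminarity step you flag does hold uniformly across all staying/wrapping configurations, and here is a clean way to see it without case analysis. Each arc $\gamma$ separates the annulus $\Sp\times[0,-1]$ into an open disk $D_\gamma$ and an open annulus $A_\gamma$, the latter being the piece containing the inner boundary $\Sp\times\{-1\}$; one checks that $D_\gamma$ meets the outer boundary circle in exactly the arc of gaps $I(\gamma)$. For two disjoint arcs $\alpha,\beta$, the complement of $\alpha\cup\beta$ in the annulus has exactly three components (each cut along a separating arc adds one component), while $A_\alpha\cap A_\beta$ is nonempty since both contain the inner boundary. Hence one of $D_\alpha\cap D_\beta$, $D_\alpha\cap A_\beta$, $A_\alpha\cap D_\beta$ is empty, giving respectively $I(\alpha)\cap I(\beta)=\emptyset$, $I(\alpha)\subseteq I(\beta)$, or $I(\beta)\subseteq I(\alpha)$.

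A slightly shorter alternative, perhaps closer to what the authors had in mind as ``very easy'': the connected component of $(\Sp\times[0,-1])\setminus\bigcup_\alpha\alpha$ containing the inner boundary circle must meet the outer boundary circle in at least one gap --- otherwise its outer boundary would be a closed curve built entirely from arcs, forcing two distinct arcs to share an endpoint vertex, which is impossible in a perfect matching. That gap lies in no $I(\alpha)$, and rotating the dashed line there finishes the argument. This is essentially your tiling contradiction phrased topologically.
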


The set $\cset$ has a partial order $\prec_{\uarc}$ generated 
by saying that an ordered pair $\down\,\up$ swapped 
to $\up\,\down$ creates a smaller element of $\cset$. 
(This is actually the partial order for $\uarc$, cf. \cite[Section 2]{bs1}.)
Starting from this partial order we will define -- 
by using \fullref{lemma:partial-orders} -- our 
partial orders using the rotation $\rotright$.

\begin{definition}\label{definition:partial-orders}
Let $S$ be a cup diagram and $\lambda,\mu\in\cset$. 
Let $k\in\N$ be minimal such 
that $\rotright^{k}(S)$ is of 
staying type. Then we define $\mu\ord{\ceps_{S}}\lambda$ 
if $\rotright^{k}(\mu)\prec_{\uarc}\rotright^{k}(\lambda)$.
\end{definition}

For example, $\up\,\down\,\down\,\up\ord{\ceps_{S}}\up\,\down\,\up\,\down$, 
but $\up\,\down\,\up\,\down\ord{\ceps_{\rotright(S)}}\up\,\down\,\down\,\up$
for $S$ as in \eqref{eq:ord-example}.

Now -- by \fullref{definition:partial-orders} -- 
we set $\coset=\{\ord{\ceps_{S}}\mid S\text{ is a cup diagram}\}$, and have 
\begin{gather}\label{eq:annular-datum}
(\cset,\cmset,\cbasis,
{}^{\invo},\ciset,\coset,\cepsmap)
\end{gather}
as the candidate for the relative cell datum.

The main ingredient to prove relative cellularity is the following 
that is similar to 
\cite[Theorem 3.1]{bs1}, but more involved to prove. Its proof appears in 
\fullref{subsection:the-proofs} below.

\begin{theorem}\label{theorem:mult-rule-arc}
Let $\lambda,\mu\in\cset$, $S,T\in\cmset(\lambda)$ 
and $U,V\in\cmset(\mu)$. Then
\begin{gather}
\cbas{S,T}{\lambda}\cbas{U,V}{\mu}= 
\begin{cases}
0, 
& \text{ if } T \neq U, 
\\
r(\cbas{S,T}{\lambda},U)\cbas{U,V}{\mu} + \friends, 
& \text{ if } T=U \text{ and } V \in \cmset(\mu),
\\
\friends, 
& \text{ otherwise,}
\end{cases}
\end{gather}
with $r(\cbas{S,T}{\lambda},U)\in\{0,1\}\subset\K$, 
$\friends\in \aarc(\ord{\ceps_{V}}\!\mu)$ 
and $\ceps_{S}\friends=\friends=\friends\ceps_{V}$.
\end{theorem}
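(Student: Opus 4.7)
The plan is to prove this by a careful case analysis of the surgery procedure described in (\ref{section:arc-stuff}.a)--(\ref{section:arc-stuff}.d) together with the orientation rules \ref{sec-arcs-merge} and \ref{sec-arcs-split}. The first case ($T \neq U$) is immediate from (\ref{section:arc-stuff}.a), so I would focus on the case $T = U$. Here, after stacking $\cbas{U,V}{\mu}$ on top of $\cbas{S,T}{\lambda}$, the middle region consists of the mirror-symmetric diagram $T^{\invo}T$, and each surgery step from \eqref{eq:arc-surgery-rule} replaces a $\cups$-$\caps$ pair by two vertical strands. The key structural observation is that the surgery rule \eqref{eq:arc-surgery-rule} acts only on the middle region: the outermost cups of $S$ and the outermost caps of $V^{\invo}$ are never modified. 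Hence every resulting stacked diagram, after collapsing, is a basis vector with cup diagram $S$ on the bottom and cap diagram $V^{\invo}$ on the top, differing only in the middle weight; I would read the theorem's notation with this understanding.

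The next step is to identify the ``leading term'' and its coefficient. I would argue by induction on the number of $\cups$-$\caps$ pairs remaining in the middle region that there is at most one combinatorial outcome whose middle weight equals $\mu$, and that this outcome receives coefficient $r(\cbas{S,T}{\lambda},U) \in \{0,1\}$ according to the orientation rules: the coefficient is $1$ when every merge/split step in the chain producing the weight $\mu$ falls into the non-zero cases (\ref{section:arc-stuff}.M.a), (\ref{section:arc-stuff}.M.c), (\ref{section:arc-stuff}.S.a)--(\ref{section:arc-stuff}.S.c), (\ref{section:arc-stuff}.S.e), and $0$ when at least one step produces a zero case (\ref{section:arc-stuff}.M.b), (\ref{section:arc-stuff}.M.d), (\ref{section:arc-stuff}.S.d). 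Associativity (\fullref{proposition:asso}) and the independence of the surgery order can be invoked to make this well-defined.

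The technically hardest step is controlling all remaining terms, i.e.\ showing $\friends \in \aarc(\ord{\ceps_{V}}\!\mu)$. My plan here is to reduce to the non-annular setting via the rotation automorphism. By \fullref{lemma:partial-orders}, there is $k\in\N$ such that $\rotright^{k}(V)$ is of staying type, and hence $\ceps_{\rotright^{k}(V)}$ and all associated circles lie entirely in a non-annular region. Applying $\rotright^{k}$ to the multiplication equation and using that $\rotright$ is an algebra automorphism (\fullref{lemma:rotation-map}) reduces the statement to its analogue for the classical arc algebra $\uarc$, which is essentially the content of the Brundan--Stroppel analysis \cite[Theorem 3.1]{bs1}: non-leading terms in the surgery expansion strictly decrease the weight in the $\prec_{\uarc}$-order. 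By \fullref{definition:partial-orders}, this is exactly $\nu\ord{\ceps_{V}}\!\mu$ for the original, unrotated weights.

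The hard part will be handling the case where essential circles arise during surgery, since these have no analogue in $\uarc$ and the rotation trick must be applied with care: rotation can turn an essential circle into a usual one (and vice versa) and can change which of the cases in \ref{sec-arcs-merge}, \ref{sec-arcs-split} applies. I would therefore organize the argument so that the rotation $\rotright^{k}$ is performed \emph{before} expanding the surgery, turning the essential/leftwards--rightwards dichotomy of (\ref{section:arc-stuff}.M.c), (\ref{section:arc-stuff}.S.c)--(\ref{section:arc-stuff}.S.e) into a purely non-annular statement about cups and caps that cross a fixed vertical seam. Finally, the absorption property $\ceps_{S}\friends = \friends = \friends\ceps_{V}$ follows directly from the bottom/top preservation observation above, together with \eqref{eq:idem-props-2} and (\ref{lemma:cell-algebra-star-1}.a), once the basis expansion of $\friends$ is known to have cup part $S$ and cap part $V^{\invo}$.
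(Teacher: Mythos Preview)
Your overall architecture---handle $T\neq U$ trivially, then track a leading term and show all other terms are $\ord{\ceps_V}$-smaller, step by step through the surgery---matches the paper's strategy, and your observation that the bottom cup diagram $S$ and top cap diagram $V^{\invo}$ are preserved (whence $\ceps_S\friends=\friends=\friends\ceps_V$) is correct and used in the paper as well.

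The genuine gap is your reduction step for controlling $\friends$. Rotating by $\rotright^k$ so that $\rotright^k(V)$ is of staying type does \emph{not} reduce the product to one in the classical arc algebra $\uarc$. Only $V$ becomes of staying type; the diagrams $\rotright^k(S)$, $\rotright^k(T)=\rotright^k(U)$ may still have arcs of wrapping type, so the stacked diagram after rotation can still contain essential circles, and the surgery process will still invoke the genuinely annular rules (\ref{section:arc-stuff}.M.c), (\ref{section:arc-stuff}.M.d), (\ref{section:arc-stuff}.S.c)--(\ref{section:arc-stuff}.S.e). These have no counterpart in \cite[Theorem 3.1]{bs1}, so that result cannot be invoked as a black box. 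Your fallback suggestion---``turning the essential/leftwards--rightwards dichotomy into a purely non-annular statement about cups and caps that cross a fixed vertical seam''---does not work either: cutting along a seam changes the underlying TQFT (the APS annular TQFT is not the planar Khovanov TQFT), so the multiplication rules on the two sides do not match.

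What the paper does instead is exactly the case analysis you were hoping to avoid. After rotating so that $V$ is of staying type (this part you have), it analyzes each surgery step directly according to the orientation of the involved $\caps$ and $\cups$ (three main cases: $\caps$ clockwise; both anticlockwise; $\caps$ anticlockwise and $\cups$ clockwise), with subcases for usual versus essential circles. The essential-circle subcases are handled using the preparatory Lemmas in \fullref{subsection:dull}, most crucially \fullref{lemma:reorient-larger}(b), which is the annular analogue of the ``reorienting decreases weight'' fact and has no counterpart in \cite{bs1}. Your proposal is missing both this lemma and the case analysis that applies it; those are precisely the new ingredients needed beyond the non-annular argument.
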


This in turn implies the relative cellularity of the annular arc algebra.

\begin{theorem}\label{theorem:main-arc-algebra}
The algebra $\aarc$ is relative 
cellular with cell datum as in \eqref{eq:annular-datum}.
\end{theorem}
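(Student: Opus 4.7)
The plan is to verify each of the four conditions (a)--(d) of \fullref{definition:cell-algebra} for $\aarc$ equipped with the datum \eqref{eq:annular-datum}, using \fullref{proposition:asso} for the algebra structure and \fullref{theorem:mult-rule-arc} as the main computational input. Conditions (a) and (b) are essentially immediate: by \fullref{definition:arc-k-vect} the $\cbas{S,T}{\lambda}$ form a $\K$-basis of $\aarc$ with each $\cmset(\lambda)$ finite, and the map $\cbasis$ is manifestly injective, giving (a); the anti-involution ${}^{\invo}$ is the geometric reflection established in \fullref{proposition:asso}, which by construction exchanges the circle diagrams $S\lambda T^{\invo}$ and $T\lambda S^{\invo}$, that is $(\cbas{S,T}{\lambda})^{\invo} = \cbas{T,S}{\lambda}$, giving (b).

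For (c), by \fullref{proposition:asso} the set $\ciset$ is already a complete collection of pairwise orthogonal idempotents, and each $\ceps = \cbas{S,S}{\mu}$ is fixed by ${}^{\invo}$ because its associated circle diagram is symmetric under reflection. Property \eqref{eq:idem-props-2} then follows directly from \fullref{theorem:mult-rule-arc}: if $\ceps \neq \ceps_S$ the matching condition in the theorem fails and $\ceps\cbas{S,T}{\lambda}=0$; if $\ceps = \ceps_S$ one uses that $\ceps_S$ is itself an idempotent in $\ciset$ and the leading-term formula of \fullref{theorem:mult-rule-arc} to conclude $\ceps_S \cbas{S,T}{\lambda} = \cbas{S,T}{\lambda}$ with no remainder (any would-be lower-order term is ruled out by squaring $\ceps_S$ and comparing coefficients). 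Property \eqref{eq:idem-props-1} then follows by expanding a generic element of $\ceps\aarc\ceps$ in the relative cellular basis (using a \fullref{lemma:epsilon-structure}-style argument) and applying \fullref{theorem:mult-rule-arc} iteratively; the remainders lie in $\aarc(\Ord{\ceps}\!\lambda)$ precisely because the rotation-based orders of \fullref{definition:partial-orders} are designed to record the surgery-induced lower-order contributions.

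Finally, condition (d) is a direct reformulation of \fullref{theorem:mult-rule-arc}: the scalars $r_a(S', S)$ of \eqref{eq:mult-left} are read off from the coefficients $r(\cdot,\cdot)\in\{0,1\}$ supplied by the theorem (upon expanding $a$ in the basis), and the remainder $\friends\in\aarc(\ord{\ceps_V}\!\mu)$ with $\friends=\friends\ceps_V$ fits precisely into the term $\aarc(\ord{\ceps_T}\!\lambda)\ceps_T$ required by \eqref{eq:mult-left}. The real obstacle, which is black-boxed here, is \fullref{theorem:mult-rule-arc} itself: its proof will require a careful case analysis of the surgery rules \ref{sec-arcs-merge} and \ref{sec-arcs-split}, in combination with the behaviour of essential versus usual circles under the rotation $\rotright$ of \fullref{lemma:rotation-map}, to verify that every surgery remainder is strictly smaller in the order $\ord{\ceps_V}$ induced from $\prec_{\uarc}$ via \fullref{definition:partial-orders}. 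Once that input is in hand, the verification above assembles the cell datum essentially by bookkeeping.
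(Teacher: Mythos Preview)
Your overall strategy---verifying conditions (a)--(d) of \fullref{definition:cell-algebra} directly---is the same as the paper's, and your handling of (a), (b), and (d) matches. There is, however, a genuine gap in your verification of \eqref{eq:idem-props-1}. You propose to deduce it from \fullref{theorem:mult-rule-arc}, but that theorem only controls the remainder $\friends$ with respect to the order $\ord{\ceps_V}$ attached to the \emph{top} cap diagram. In the situation at hand, once you have reduced to the case $\ceps=\ceps_S$ and identified the relevant products as $\cbas{S,S}{\nu}\,\cbas{S,T}{\lambda}$, the theorem yields $\friends\in\aarc(\ord{\ceps_T}\!\lambda)$. But \eqref{eq:idem-props-1} demands containment in $\aarc(\Ord{\ceps}\!\lambda)=\aarc(\Ord{\ceps_S}\!\lambda)$, and the orders $\ord{\ceps_S}$ and $\ord{\ceps_T}$ are in general different, being defined via different rotations in \fullref{definition:partial-orders}. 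Your phrase ``the rotation-based orders \ldots\ are designed to record the surgery-induced lower-order contributions'' glosses over exactly this mismatch.

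The paper closes the gap not via \fullref{theorem:mult-rule-arc} but by a separate diagrammatic observation, \fullref{lemma:reorient-larger}: the product $\cbas{S,S}{\nu}\,\cbas{S,T}{\lambda}$ consists entirely of merges, and the only nontrivial effect is to reorient some anticlockwise circles in $ST^{\invo}$ clockwise; part (a) of that lemma then shows that such a reorientation strictly decreases the weight with respect to \emph{both} $\ord{\ceps_S}$ and $\ord{\ceps_T}$ simultaneously. This two-sided control is precisely what the black-box use of \fullref{theorem:mult-rule-arc} does not supply, and without it condition (c) is not established.
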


\begin{proof}
\textit{(\ref{definition:cell-algebra}.a).}
The sets $\cset$ and $\cmset(\lambda)$ are 
clearly finite, and
the assignment $\cbasis$ gives -- by definition -- an 
injective map with image forming a basis of $\aarc$.
\smallskip

\noindent\textit{(\ref{definition:cell-algebra}.b).}
Clearly, ${}^{\invo}$ is an anti-involution with 
$(\cbas{S,T}{\lambda})^{\invo}=\cbas{T,S}{\lambda}$.
\smallskip

\noindent\textit{(\ref{definition:cell-algebra}.c).}
All statements about the idempotents 
and the mapping $\cepsmap$ are -- by e.g.
\fullref{proposition:asso} -- immediate 
except \eqref{eq:idem-props-1}. 
For \eqref{eq:idem-props-1} 
we note that $\ceps\aarc\ceps\cbas{S,T}{\lambda}$ is 
zero unless $\ceps=\ceps_{S}$. In this case $\ceps\aarc\ceps$ is 
spanned by elements of the form $\cbas{S,S}{\mu}$ for $\mu\in\cset$. 
The multiplication $\cbas{S,S}{\mu}\cbas{S,T}{\lambda}$ 
will be a merge in each step and the only non-trivial operation 
is that some circles in $ST^{\invo}$ are reoriented 
from anticlockwise to clockwise. 
However -- by \fullref{lemma:reorient-larger} below -- 
this will decrease 
the weight with respect to both, 
$\ord{\ceps_{S}}$ and $\ord{\ceps_{T}}$. 
\smallskip

\noindent\textit{(\ref{definition:cell-algebra}.d).}
We note that 
\fullref{theorem:mult-rule-arc} is a stronger version 
of (\ref{definition:cell-algebra}.d).
\end{proof}

%%%%%%%%%%%%%%%%%%%%%%%%%%%
\subsection{Further properties}\label{subsection:general-rank-statements}
%%%%%%%%%%%%%%%%%%%%%%%%%%%

By \fullref{theorem:main-arc-algebra} we can use the 
notions from \fullref{section:basic-cell-props} regarding 
simples, cell and indecomposable projective $\aarc$-modules.

\begin{proposition}\label{proposition:some-multiplicities}
Let $\lambda,\mu\in\cset$ and $S\in\cmset(\lambda)$, $T\in\cmset(\mu)$ 
such that $\ceps_{S}=\cbas{S,S}{\lambda}$ 
and $\ceps_{T}=\cbas{T,T}{\mu}$. Then the following hold.
\smallskip
\begin{enumerate}[label=(\alph*)]

\setlength\itemsep{.15cm}

\item We have $[\dmod(\lambda):\lmod(\mu)] = 1$ if and 
only if $T\lambda$ is oriented, otherwise it is zero.

\item The projective $\prmod(\lambda)$ 
has a filtration by cell modules 
of the form $\dmod(\nu)$ such that $S\nu$ is oriented. 
Further, 
it has a filtration by $2^{\numberarcs}$ cell modules, 
each occurring once.

\item The value 
$[\prmod(\lambda):\lmod(\mu)]$ can be computed by 
counting the number of 
orientations of $ST^{\invo}$, with each 
orientation $\nu$ giving the occurrence 
of $\lmod(\mu)$  in $\dmod(\nu)$ inside the cell 
module filtration given by (\ref{proposition:some-multiplicities}.b).\qedhere
\end{enumerate}

\end{proposition}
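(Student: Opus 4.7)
The strategy is to unwind the general representation theory of relative cellular algebras developed in \fullref{section:basic-cell-props} (which applies by \fullref{theorem:main-arc-algebra}), with all combinatorial input coming from the explicit multiplication rule \fullref{theorem:mult-rule-arc}.

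For (\ref{proposition:some-multiplicities}.a), the plan is to apply (\ref{theorem:bgg}.a). Since $\prmod(\mu) = \aarc\,\ceps_T$ with $\ceps_T = \cbas{T,T}{\mu}$, one has $[\dmod(\lambda):\lmod(\mu)] = \dim(\dmod(\lambda)^{\invo}\!\acts \ceps_T)$, so it suffices to identify those basis vectors $\dbas{U}{\lambda}$ of $\dmod(\lambda)$ on which $\ceps_T$ acts non-trivially. Unwinding the cell-module action, this reduces to computing the top term of $\cbas{T,T}{\mu}\,\cbas{U,V_0}{\lambda}$ modulo $\aarc(\ord{\ceps_{V_0}}\!\lambda)$ for some auxiliary $V_0 \in \cmset(\lambda)$. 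By \fullref{theorem:mult-rule-arc} this product vanishes unless $U = T$, and when $T \in \cmset(\lambda)$ (i.e.\ $T\lambda$ is oriented) it equals $\cbas{T,V_0}{\lambda}$ plus terms strictly below in the relevant order. Hence the sought dimension is $1$ if $T\lambda$ is oriented and $0$ otherwise.

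For (\ref{proposition:some-multiplicities}.b), I start from \fullref{proposition:cell-filtration}, which gives a filtration of $\prmod(\lambda) = \aarc\,\ceps_S$ by cell modules. Refining it via \fullref{lemma:projective-submodule-lemma} and \fullref{lemma:projective-quotients-sum}, each subquotient is -- by \fullref{proposition:lambda-piece-iso} -- isomorphic to $\dmod(\nu) \otimes (\dmod(\nu)^{\invo}\!\acts \ceps_S)$. Running the argument of (\ref{proposition:some-multiplicities}.a) symmetrically shows that the second tensor factor is one-dimensional precisely when $S\nu$ is oriented and zero otherwise; thus each relevant $\dmod(\nu)$ appears exactly once. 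The count $2^{\numberarcs}$ follows combinatorially: each of the $\numberarcs$ arcs of the fixed cup diagram $S$ may be oriented independently in two compatible ways, producing exactly $2^{\numberarcs}$ weights $\nu$ for which $S\nu$ is oriented.

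Finally, (\ref{proposition:some-multiplicities}.c) is immediate from (\ref{theorem:bgg}.b) combined with (\ref{proposition:some-multiplicities}.a): the sum
\[
[\prmod(\lambda):\lmod(\mu)]
= \sum_{\nu}\dnumber{\nu,\lambda}\,\dnumber{\nu,\mu}
\]
collapses, by $\{0,1\}$-valuedness of its summands, to the number of weights $\nu$ for which both $S\nu$ and $T\nu$ are oriented, which is precisely the number of orientations of $ST^{\invo}$; the identification with the filtration of (\ref{proposition:some-multiplicities}.b) is then automatic, since each such $\nu$ singles out the unique copy of $\dmod(\nu)$ in the filtration in which $\lmod(\mu)$ occurs. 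No genuine obstacle remains: the only non-routine ingredient is \fullref{theorem:mult-rule-arc}, which is already available and whose $\{0,1\}$-valued coefficient ``$r$'' is exactly what makes all dimension counts above reduce to pure orientation counts.
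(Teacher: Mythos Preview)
Your proposal is correct and follows essentially the same route as the paper. The only difference is in part (a): you route through \fullref{theorem:bgg}(a) and invoke \fullref{theorem:mult-rule-arc} to compute $\ceps_T\acts\dbas{U}{\lambda}$, whereas the paper observes more directly (via \eqref{eq:idem-props-2}) that each $\ceps_U\in\ciset$ acts as $\delta_{U,U'}$ on $\dbas{U'}{\lambda}$, which already gives the $\{0,1\}$-valued answer without appealing to the full multiplication rule; parts (b) and (c) are argued identically in both.
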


\begin{proof}
\textit{(\ref{proposition:some-multiplicities}.a).}
This follows immediately by noting that the basis 
elements of $\dmod(\lambda)$ are compatible with the 
choice of primitive idempotents, with exactly one of 
the idempotents acting as $1$ on a given 
basis element and all others acting by $0$.
\smallskip

\noindent\textit{(\ref{proposition:some-multiplicities}.b).} 
The first statement follows by construction of the cell filtration 
in \fullref{proposition:cell-filtration}.
The second statement follows 
since the number of orientations for $S$ is exactly $\numberarcs$.
\smallskip

\noindent\textit{(\ref{proposition:some-multiplicities}.c).} By combining (\ref{proposition:some-multiplicities}.a) and (\ref{proposition:some-multiplicities}.b).
\end{proof}

\begin{remark}\label{remark:annular-modules}
Note that -- by the proof of (\ref{proposition:some-multiplicities}.a) -- it also 
follows that simple modules always have dimension one. 
On the other hand, a cell module $\dmod(\lambda)$ has dimension 
equal to the number of cup diagrams $S$ such that $S\lambda$ is 
oriented. Thus, $\mathrm{dim}\,(\dmod(\lambda))>1$. 
Furthermore, (\ref{proposition:some-multiplicities}.b) implies that $\prmod(\lambda)$ is also 
always different from $\dmod(\lambda)$.
To summarize: No cell module is simple or projective.
\end{remark}

\begin{proposition}\label{proposition:annular-Frobenius}
The algebra $\aarc$ is a non-semisimple Frobenius algebra 
of infinite global dimension.
\end{proposition}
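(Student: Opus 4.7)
The plan is to establish the three claims separately and in the order stated.

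First, I would observe that \emph{non-semisimplicity} has essentially already been recorded in \fullref{example-semisimple}: by \fullref{proposition:semi-simplicity} semisimplicity would force every cell module to be simple, but by \fullref{remark:annular-modules} simples of $\aarc$ are one-dimensional whereas $\dim\dmod(\lambda)$ equals the number of cup diagrams $S$ with $S\lambda$ oriented, which is strictly greater than one for $\numberarcs\geq 1$. Hence no cell module is simple and $\aarc$ is not semisimple.

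To prove that $\aarc$ is \emph{Frobenius}, I would construct a trace $\tau\colon\aarc\to\K$ and check that the associated bilinear form $(a,b)=\tau(ab)$ is non-degenerate (associativity being automatic). The natural candidate for $\tau$ is dictated by the TQFT underlying $\aarc$ that is used in the proof of \fullref{proposition:asso}: set $\tau(\cbas{S,T}{\lambda})=1$ precisely when $S=T$ and $\lambda$ is the distinguished orientation of $SS^{\invo}$ making every usual circle clockwise and every essential circle rightwards, and $\tau(\cbas{S,T}{\lambda})=0$ on every other basis element. For non-degeneracy I would argue as follows: given any basis element $\cbas{S,T}{\lambda}$, the multiplication rule \fullref{theorem:mult-rule-arc} forces a potential partner to be of the form $\cbas{T,S}{\mu}$, and the merge and split rules of \fullref{sec-arcs-merge} and \fullref{sec-arcs-split} uniquely prescribe an orientation $\mu$ of $TS^{\invo}$ so that the surgery procedure produces the distinguished diagonal element $\cbas{S,S}{\lambda^{\star}}$ with coefficient one and zero on all other $\tau$-supporting basis elements.

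Finally, \emph{infinite global dimension} follows formally: every finite-dimensional Frobenius algebra is self-injective, and any self-injective algebra of finite global dimension is semisimple by a classical argument. Combined with non-semisimplicity this forces the global dimension of $\aarc$ to be infinite. The main obstacle will be the explicit non-degeneracy verification for $\tau$: one has to track carefully how the anticlockwise/clockwise/leftwards/rightwards labels interact across each step of the surgery procedure, and exhibit for each basis element a \emph{unique} partner whose product realises the distinguished diagonal basis element. This amounts to a combinatorial case analysis of the rules \fullref{sec-arcs-merge}--\fullref{sec-arcs-split}, and is the only genuinely non-routine step of the argument.
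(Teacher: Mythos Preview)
Your approach is essentially the same as the paper's: non-semisimplicity via \fullref{remark:annular-modules} and \fullref{proposition:semi-simplicity}, the Frobenius structure via the trace picking out the ``all clockwise'' diagonal element, and infinite global dimension from the standard fact that a non-semisimple Frobenius (hence self-injective) algebra has infinite global dimension. The paper phrases the form as $\sigma(a,b)$ rather than $\tau(ab)$, but this is equivalent; for non-degeneracy the paper invokes the underlying TQFT directly, while the combinatorial verification you outline is exactly what is sketched in \fullref{remark:Frobenius-algebraic}.

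One small point: your clause ``every essential circle rightwards'' in the definition of $\tau$ is vacuous. For any cup diagram $S$, every circle in $SS^{\invo}$ is usual, since a cup and its reflected cap connect the same pair of vertices and the two wrapping contributions cancel in $\pi_1(\Sp\times[-1,1])$. This is why the paper simply asks that all circles in $S\nu S^{\invo}$ be clockwise. Also, you do not need \emph{uniqueness} of the partner for non-degeneracy; it is enough (and is what the combinatorics actually gives) that for each $\cbas{S,T}{\lambda}$ some $\mu$ makes $\cbas{S,S}{\nu_S}$ appear with nonzero coefficient in $\cbas{S,T}{\lambda}\cbas{T,S}{\mu}$, together with the triangularity coming from \fullref{theorem:mult-rule-arc}.
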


\begin{proof}
A bilinear form $\sigma\colon\aarc\otimes\aarc\to\K$ is given by 
$\sigma(\cbas{S,T}{\lambda},\cbas{U,V}{\mu}) = 0$ for $S\neq V$, and 
otherwise $\sigma(\cbas{S,T}{\lambda},\cbas{U,V}{\mu})$ 
is set to be the coefficient of $\cbas{S,S}{\nu}$ in 
the product $\cbas{S,T}{\lambda}\cbas{U,V}{\mu}$, 
where $\nu$ is chosen such that all circles in $S\nu S^{\invo}$ are oriented clockwise. 
Associativity and non-degeneracy can be shown using the same TQFT methods as 
in \cite{est1}, using the TQFT as in the proof of \fullref{proposition:asso}, i.e. 
both are immediate for the topological incarnation of $\aarc$ due to the TQFT involved in 
the construction. (Associativity being again an isotopy; non-degeneracy 
follows from the non-degeneracy of the involved TQFT.)

From the dimension observations in \fullref{remark:annular-modules} it follows 
that $\aarc$ is non-semisimple. Further, recall that a Frobenius algebra 
has finite global 
dimension if and only if it is 
semisimple. Thus,  $\aarc$ is of 
infinite global dimension.
\end{proof}

\begin{remark}\label{remark:Frobenius-algebraic}
The Frobenius property in \fullref{proposition:annular-Frobenius} 
can be proven directly using combinatorics. While associativity 
of $\sigma$ follows immediately, the non-degeneracy can be checked by 
carefully looking at products of the form $\cbas{S,T}{\lambda}\cbas{T,S}{\mu}$ 
and noting that the surgeries can be ordered so that merges are performed 
first followed by splits. Thus, for a given weight $\lambda$, the $\mu$ can be 
chosen appropriately so that all circles, after performing the merges, 
are usual and clockwise and then the splits will all create usual and clockwise 
circles, giving the non-degeneracy of $\sigma$.
\end{remark}

\begin{proposition}\label{proposition:not-cell}
The matrix $\cmatrix(\aarc)$ is positive 
semidefinite with determinant zero.
\end{proposition}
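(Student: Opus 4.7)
The plan is to establish the two parts separately. Positive semidefiniteness is immediate from \fullref{corollary:semi-definite} applied to \fullref{theorem:main-arc-algebra}: once $\aarc$ is relative cellular, the factorization $\cmatrix=\dmatrix^{\mathrm{T}}\dmatrix$ from \fullref{theorem:bgg}(b) forces $\cmatrix$ to be positive semidefinite.

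For the vanishing of the determinant, I would use the same factorization and produce a non-trivial vector in $\ker\dmatrix$, from which singularity of $\cmatrix$ follows. By \fullref{proposition:some-multiplicities}(a), for each $\mu\in\csetz$ the associated idempotent $\ceps_\mu\in\ciset$ has the form $\cbas{T_\mu,T_\mu}{\mu}$ for a unique cup diagram $T_\mu$, and the column of $\dmatrix$ indexed by $\mu$ is the $\{0,1\}$-indicator vector of those weights $\nu\in\cset$ for which $T_\mu\nu$ is oriented.

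The key observation is that whether a cup orients a given weight depends only on the pair of vertex labels at its endpoints, not on whether the arc is of staying or of wrapping type: inspection of \eqref{eq:arc-orientation} shows that in both situations the condition is simply that the two endpoint labels be opposite. Consequently, two distinct idempotents in $\ciset$ whose underlying cup diagrams share the same vertex pairing but differ in the staying-versus-wrapping distribution of their arcs give identical columns in $\dmatrix$. Concrete pairs of such idempotents are visible already in \fullref{example:arc-mult-2}: the idempotents $e_2$ and $e_3$ both have underlying pairing $\{\{1,2\},\{3,4\}\}$ yet swap the staying and wrapping roles of the two arcs. By \fullref{theorem:simple-set}, these correspond to distinct simple modules, and the difference of their standard basis vectors in $\K^{\csetz}$ gives a non-trivial element of $\ker\dmatrix\subseteq\ker\cmatrix$, forcing $\det\cmatrix(\aarc)=0$. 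For $\numberarcs\geq 2$ this construction generalises systematically, with the rotation automorphism $\rho$ of \fullref{lemma:rotation-map} providing an organised supply of such pairs; the case $\numberarcs=1$ requires a separate small computation.

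The main obstacle will be verifying the orientability-depends-only-on-pairing claim uniformly across cup diagrams with several wrapping arcs, and in particular checking that swapping an arc between staying and wrapping type preserves membership in $\ciset$ — i.e., that $T\mu T^{\invo}$ remains a union of usual, anticlockwise circles after the swap, so that both elements of the produced pair really do lie in $\csetz$. This requires a careful topological analysis using that a wrapping cup paired with a wrapping cap of opposite winding direction composes to a usual circle, together with the surgery rules \ref{sec-arcs-merge}--\ref{sec-arcs-split}.
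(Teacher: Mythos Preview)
Your plan is correct and essentially coincides with the paper's argument. Both reduce singularity to producing two distinct cup diagrams with the same vertex pairing; the paper does this explicitly by taking the staying-type diagram $S$ with arcs $\{1,2\numberarcs\},\{2,3\},\ldots,\{2\numberarcs-2,2\numberarcs-1\}$ and comparing it with $\rotright^{2}(S)$, then invokes \fullref{proposition:some-multiplicities}(c) to get two equal rows of $\cmatrix$, while you invoke \fullref{proposition:some-multiplicities}(a) to get two equal columns of $\dmatrix$---equivalent, since $\cmatrix=\dmatrix^{\mathrm{T}}\dmatrix$.

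The obstacles you anticipate are not real. Whether $T\nu$ is oriented is, by \fullref{definition:orientation-arc}, simply the requirement that each arc of $T$ have one $\down$ and one $\up$ endpoint, which is manifestly independent of staying versus wrapping. And every cup diagram $T$ determines a unique element of $\ciset$: each arc together with its mirror in $T^{\invo}$ forms a \emph{usual} circle (cf.\ the second diagram in each group of \eqref{eq:arc-orientation}), which has a unique anticlockwise orientation, so there is no membership issue and no ``careful topological analysis'' is required. In particular your argument already covers $\numberarcs=1$ uniformly---the staying and wrapping cup share the single pairing $\{\{1,2\}\}$---so no separate computation is needed; the paper treats $\numberarcs=1$ separately only because $\rotright^{2}$ happens to be the identity there.
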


\begin{proof}
By \fullref{corollary:semi-definite} it remains to check 
that the Cartan matrix is not of full rank.

The case $\numberarcs=1$ is done 
explicitly in \fullref{example:big-arc-example-1} below.

For the case $\numberarcs>1$, let $S$ be the 
cup diagram having only arcs of staying type with one 
arc connecting vertices $1$ and $2\numberarcs$ and 
arcs connecting $2i$ and $2i+1$ for $1 \leq i \leq (\numberarcs{-}1)$. 
Let $\ceps_{S}= S\lambda S^{\invo}$ 
for $\lambda=\down\,(\down\,\up)(\down\,\up)\cdots(\down\,\up)\up$. 
The multiplicity $[\prmod(\lambda):\lmod(\mu)]$ 
is -- by (\ref{proposition:some-multiplicities}.c) --
obtained by
counting the number of possible orientations 
of the diagram $ST^{\invo}$, where $T$ is 
the unique diagram such that $T\mu T^{\invo}$ is a primitive idempotent. 
Note that this is $2^{m}$ for $m$ being the number of 
circles in $ST^{\invo}$. Next, the number of such 
orientations is the same as the number of orientations 
of $\rotright^{2}(S)T^{\invo}$. 
This holds true since $\rotright^{2}(S)$ 
connects the same vertices as $S$, just with arcs that 
are not of staying type. Thus, 
$[\prmod(\lambda):\lmod(\mu)]=[\prmod(\rotright^{2}(\lambda)) :\lmod(\mu)]$ 
and with the assumption $\numberarcs>1$ we obtain $\rotright^{2}(\lambda)\neq\lambda$. 
In total, the matrix $\cmatrix(\aarc)$ has two equal columns.
\end{proof}

%%%%%%%%%%%%%%%%%%%%%%%%%%%
\subsection{Low rank examples}\label{subsection:arc-cell}
%%%%%%%%%%%%%%%%%%%%%%%%%%%

Let us discuss the cases $\numberarcs=1$ and $\numberarcs=2$ in detail. 
This will be very much as in 
\fullref{subsection:cell-examples}, whose notions 
we recommend to recall.

\begin{example}\label{example:big-arc-example-1}
Let $\numberarcs=1$. Then the relative cell datum 
of $\aarc[1]$ is as follows.
\begin{gather}
\begin{gathered}
\cset
=\{\fcolorbox{myred}{mycream!10}{$\up\,\down$}
\ord{e_{1}}
\fcolorbox{mygreen}{mycream!10}{$\down\,\up$}\}
=
\{\fcolorbox{mygreen}{mycream!10}{$\down\,\up$}
\ord{e_{2}}
\fcolorbox{myred}{mycream!10}{$\up\,\down$}\},
\quad
{}^{\invo}\rightsquigarrow\text{reflect diagrams},
\\
\cmset(\down\,\up)=
\left\{
\begin{tikzpicture}[anchorbase, scale=.4, tinynodes]
	\draw[arc] (0,0) to [out=270, in=180] (.5,-1) to [out=0, in=270] (1,0);
	\draw[doline] (-.5,0) to (1.5,0);
	\node at (0,0) {$\Down$};
	\node at (1,0) {$\Up$};
\end{tikzpicture}
,
\begin{tikzpicture}[anchorbase, scale=.4, tinynodes]
	\draw[arc] (0,0) to [out=270, in=0] (-.5,-1);
	\draw[arc] (1,0) to [out=270, in=180] (1.5,-1);
	\draw[doline] (-.5,0) to (1.5,0);
	\node at (0,0) {$\Down$};
	\node at (1,0) {$\Up$};
\end{tikzpicture}
\right\},
\quad
\cmset(\up\,\down)=
\left\{
\begin{tikzpicture}[anchorbase, scale=.4, tinynodes]
	\draw[arc] (0,0) to [out=270, in=0] (-.5,-1);
	\draw[arc] (1,0) to [out=270, in=180] (1.5,-1);
	\draw[doline] (-.5,0) to (1.5,0);
	\node at (1,0) {$\Down$};
	\node at (0,0) {$\Up$};
\end{tikzpicture}
,
\begin{tikzpicture}[anchorbase, scale=.4, tinynodes]
	\draw[arc] (0,0) to [out=270, in=180] (.5,-1) to [out=0, in=270] (1,0);
	\draw[doline] (-.5,0) to (1.5,0);
	\node at (1,0) {$\Down$};
	\node at (0,0) {$\Up$};
\end{tikzpicture}
\right\},
\quad
\cbas{S,T}{\lambda}\rightsquigarrow\text{cf. \eqref{eq:arc-short}},
\\
\ciset=
\left\{
e_{1}=
\begin{tikzpicture}[anchorbase, scale=.4, tinynodes]
	\draw[arc] (0,0) to [out=90, in=180] (.5,1) to [out=0, in=90] (1,0);
	\draw[arc] (0,0) to [out=270, in=180] (.5,-1) to [out=0, in=270] (1,0);
	\draw[doline] (-.5,0) to (1.5,0);
	\node at (0,0) {$\Down$};
	\node at (1,0) {$\Up$};
\end{tikzpicture}
,
e_{2}=
\begin{tikzpicture}[anchorbase, scale=.4, tinynodes]
	\draw[arc] (0,0) to [out=90, in=0] (-.5,1);
	\draw[arc] (1,0) to [out=90, in=180] (1.5,1);
	\draw[arc] (0,0) to [out=270, in=0] (-.5,-1);
	\draw[arc] (1,0) to [out=270, in=180] (1.5,-1);
	\draw[doline] (-.5,0) to (1.5,0);
	\node at (1,0) {$\Down$};
	\node at (0,0) {$\Up$};
\end{tikzpicture}
\right\},
\quad
\cepsmap(\cmset(\down\,\up))=
e_{1}
,
\cepsmap(\cmset(\up\,\down))=
e_{2}.
\end{gathered}
\end{gather}
Now, as for the usual arc algebra, the 
indecomposable projectives $\prmod(\down\,\up)=\aarc[1]e_1$ 
and $\prmod(\up\,\down)=\aarc[1]e_2$ are given by fixing 
(in our notation) the top shape. 
In contrast, the cell modules $\dmod(\down\,\up)$ and 
$\dmod(\up\,\down)$ are given by fixing 
the weight, and we get
{
\abovedisplayskip0.2em
\belowdisplayskip0.2em
\begin{gather}
\begin{tikzpicture}[baseline=(current bounding box.center)]
  \matrix (m) [matrix of math nodes, nodes in empty cells, row 1/.style={nodes={minimum height=9mm}}, row 3/.style={nodes={minimum height=12mm}}, row sep={.1cm}, column sep={.1cm}, text height=1.5ex, text depth=0.25ex, ampersand replacement=\&] {
  \& \down\,\up \& \& \& \& \up\,\down \&  
\\
\begin{tikzpicture}[anchorbase, scale=.4, tinynodes]
	\draw[arc] (0,0) to [out=270, in=180] (.5,-1) to [out=0, in=270] (1,0);
	\draw[arc] (0,0) to [out=90, in=180] (.5,1) to [out=0, in=90] (1,0);
	\draw[doline] (-.5,0) to (1.5,0);
	\node at (0,0) {$\Down$};
	\node at (1,0) {$\Up$};
\end{tikzpicture}
\&
\&
\begin{tikzpicture}[anchorbase, scale=.4, tinynodes]
	\draw[arc] (0,0) to [out=270, in=0] (-.5,-1);
	\draw[arc] (1,0) to [out=270, in=180] (1.5,-1);
	\draw[arc] (0,0) to [out=90, in=180] (.5,1) to [out=0, in=90] (1,0);
	\draw[doline] (-.5,0) to (1.5,0);
	\node at (0,0) {$\Down$};
	\node at (1,0) {$\Up$};
\end{tikzpicture}
\&
\&
\begin{tikzpicture}[anchorbase, scale=.4, tinynodes]
	\draw[arc] (0,0) to [out=270, in=0] (-.5,-1);
	\draw[arc] (1,0) to [out=270, in=180] (1.5,-1);
	\draw[arc] (0,0) to [out=90, in=0] (-.5,1);
	\draw[arc] (1,0) to [out=90, in=180] (1.5,1);
	\draw[doline] (-.5,0) to (1.5,0);
	\node at (1,0) {$\Down$};
	\node at (0,0) {$\Up$};
\end{tikzpicture}
\&
\&
\begin{tikzpicture}[anchorbase, scale=.4, tinynodes]
	\draw[arc] (0,0) to [out=270, in=180] (.5,-1) to [out=0, in=270] (1,0);
	\draw[arc] (0,0) to [out=90, in=0] (-.5,1);
	\draw[arc] (1,0) to [out=90, in=180] (1.5,1);
	\draw[doline] (-.5,0) to (1.5,0);
	\node at (1,0) {$\Down$};
	\node at (0,0) {$\Up$};
\end{tikzpicture}
\\
\begin{tikzpicture}[anchorbase, scale=.4, tinynodes]
	\draw[arc] (0,0) to [out=270, in=0] (-.5,-1);
	\draw[arc] (1,0) to [out=270, in=180] (1.5,-1);
	\draw[arc] (0,0) to [out=90, in=180] (.5,1) to [out=0, in=90] (1,0);
	\draw[doline] (-.5,0) to (1.5,0);
	\node at (1,0) {$\Down$};
	\node at (0,0) {$\Up$};
\end{tikzpicture}
\&
\&
\begin{tikzpicture}[anchorbase, scale=.4, tinynodes]
	\draw[arc] (0,0) to [out=270, in=180] (.5,-1) to [out=0, in=270] (1,0);
	\draw[arc] (0,0) to [out=90, in=180] (.5,1) to [out=0, in=90] (1,0);
	\draw[doline] (-.5,0) to (1.5,0);
	\node at (1,0) {$\Down$};
	\node at (0,0) {$\Up$};
\end{tikzpicture}
\&
\&
\begin{tikzpicture}[anchorbase, scale=.4, tinynodes]
	\draw[arc] (0,0) to [out=270, in=180] (.5,-1) to [out=0, in=270] (1,0);
	\draw[arc] (0,0) to [out=90, in=0] (-.5,1);
	\draw[arc] (1,0) to [out=90, in=180] (1.5,1);
	\draw[doline] (-.5,0) to (1.5,0);
	\node at (0,0) {$\Down$};
	\node at (1,0) {$\Up$};
\end{tikzpicture}
\&
\&
\begin{tikzpicture}[anchorbase, scale=.4, tinynodes]
	\draw[arc] (0,0) to [out=270, in=0] (-.5,-1);
	\draw[arc] (1,0) to [out=270, in=180] (1.5,-1);
	\draw[arc] (0,0) to [out=90, in=0] (-.5,1);
	\draw[arc] (1,0) to [out=90, in=180] (1.5,1);
	\draw[doline] (-.5,0) to (1.5,0);
	\node at (0,0) {$\Down$};
	\node at (1,0) {$\Up$};
\end{tikzpicture}
\\
};
	\draw[thin,dotted] ($(m-1-1.south)+(-.5,.2)$) to ($(m-1-1.south)+(6.5,.2)$);
	\draw[thin,dotted] ($(m-1-1.east)+(2.8,.25)$) to ($(m-1-1.east)+(2.8,-2.35)$);
	\draw[thin, mygreen] (-.25,.65) rectangle (-3.5,-.35);
	\draw[thin, myred] (-.25,-.4) rectangle (-3.5,-1.4);
	\draw[thin, myred] (.25,.65) rectangle (3.5,-.35);
	\draw[thin, mygreen] (.25,-.4) rectangle (3.5,-1.4);
	\draw[thin, white, fill=white] (-2.4,-.1) rectangle (-1.5,-.6);
	\draw[thin, white, fill=white] (2.2,-.1) rectangle (1.3,-.6);
	\node[mygreen] at (-1.85,.1) {$\dmod(\down\,\up)$};
	\node[myred] at (-1.85,-.9) {$\dmod(\up\,\down)$};
	\node[myred] at (1.85,.1) {$\dmod(\up\,\down)$};
	\node[mygreen] at (1.85,-.9) {$\dmod(\down\,\up)$};
	\draw[thick,->] (-1.7,-.1) to node [left] {{\tiny$\ord{e_{1}}$}} (-1.7,-.65) to (-1.7,-.7);
	\draw[thick,->] (2.0,-.1) to node [left] {{\tiny$\ord{e_{2}}$}} (2.0,-.65) to (2.0,-.7);
\end{tikzpicture}
\;
\begin{tikzpicture}[baseline=(current bounding box.center)]
	\node[mygreen] at (.625,1.5) {$\prmod(\down\,\up)$};
	\draw[thin, mygreen, dotted] (-.6,1.25) rectangle (1.9,-.85);
	\node[mygreen] at (0,.5) {$\lmod(\down\,\up)$};
	\node[myred] at (0,0) {$\lmod(\up\,\down)$};
	\draw[thin, mygreen] (-.5125,.75) rectangle (.5125,-.25);
	\node[mygreen] at (0,1) {$\dmod(\down\,\up)$};
	\node[mygreen] at (1.25,-.5) {$\lmod(\down\,\up)$};
	\node[myred] at (1.25,0) {$\lmod(\up\,\down)$};
	\draw[thin, myred] (.725,.25) rectangle (1.775,-.75);
	\node[myred] at (1.25,.5) {$\dmod(\up\,\down)$};
	\node at (.625,1.7) {\phantom{a}};
\end{tikzpicture}
\;
\begin{tikzpicture}[baseline=(current bounding box.center)]
	\node[myred] at (.625,1.5) {$\prmod(\up\,\down)$};
	\draw[thin, myred, dotted] (-.6,1.25) rectangle (1.9,-.85);
	\node[myred] at (0,.5) {$\lmod(\up\,\down)$};
	\node[mygreen] at (0,0) {$\lmod(\down\,\up)$};
	\draw[thin, myred] (-.5125,.75) rectangle (.5125,-.25);
	\node[myred] at (0,1) {$\dmod(\up\,\down)$};
	\node[myred] at (1.25,-.5) {$\lmod(\up\,\down)$};
	\node[mygreen] at (1.25,0) {$\lmod(\down\,\up)$};
	\draw[thin, mygreen] (.725,.25) rectangle (1.775,-.75);
	\node[mygreen] at (1.25,.5) {$\dmod(\down\,\up)$};
	\node at (.625,1.7) {\phantom{a}};
\end{tikzpicture}
\end{gather}}
Note that looking at the bottom picture 
determines the action of the primitive idempotents 
$e_1$ and $e_2$, and thus the simple module as illustrated.

Finally, the above gives us the Cartan matrix
$\cmatrix(\aarc[1])=\begin{psmallmatrix}2 & 2\\ 2 & 2\end{psmallmatrix}$,
showing that $\aarc[1]$ is not cellular.
\end{example}

\begin{example}\label{example:big-arc-example-2}
Let $\numberarcs=2$. We are now going to explain how the 
relative cellular datum of $\aarc[2]$ looks like.
The relative cellular datum will be very much in the spirit as in 
\fullref{example:big-arc-example-1}, with 
partial orderings relative to the idempotents in 
\fullref{example:arc-mult-2}. 
But since the 
algebra $\aarc[2]$ is of dimension $108$, we will 
only highlight some features by focussing on 
$e_{2}$ and $e_{5}$.

First, we have
$
\cset=\{
\fcolorbox{mypurple}{mycream!10}{$\down\,\down\,\up\,\up$},
\fcolorbox{myblue}{mycream!10}{$\down\,\up\,\down\,\up$},
\fcolorbox{myorange}{mycream!10}{$\up\,\down\,\down\,\up$},
\fcolorbox{mygreen}{mycream!10}{$\down\,\up\,\up\,\down$},
\fcolorbox{myred}{mycream!10}{$\up\,\down\,\up\,\down$},
\fcolorbox{myyellow}{mycream!10}{$\up\,\up\,\down\,\down$}
\}
$
as the set of weights.
As explained in \fullref{subsection:arc-cell-basis}, 
the partial orderings for the idempotents 
are obtained from the 
usual one (i.e. \eqref{eq:the-usual-order} 
in this case) by `rotation of the cylinder', e.g.
{
\abovedisplayskip0.2em
\belowdisplayskip0.2em
\begin{align}\label{eq:ordering-k2}
&
\begin{tikzcd}[ampersand replacement=\&,row sep=tiny,column sep=normal,arrows={shorten >=-.2ex,shorten <=-.2ex},labels={inner sep=.15ex}]
\phantom{.}
\&
\phantom{.}
\&
\fcolorbox{mypurple}{mycream!10}{$\down\,\down\,\up\,\up$}
\arrow[->]{rd}[description]{\ord{e_{2}}}
\&
\phantom{.}
\&
\phantom{.}
\\
\fcolorbox{mygreen}{mycream!10}{$\down\,\up\,\up\,\down$}
\arrow[->]{r}[description]{\ord{e_{2}}}
\&
\fcolorbox{myred}{mycream!10}{$\up\,\down\,\up\,\down$}
\arrow[->]{ru}[description]{\ord{e_{2}}}
\arrow[->]{rd}[description]{\ord{e_{2}}}
\&
\phantom{.}
\&
\fcolorbox{myblue}{mycream!10}{$\down\,\up\,\down\,\up$}
\arrow[->]{r}[description]{\ord{e_{2}}}
\&
\fcolorbox{myorange}{mycream!10}{$\up\,\down\,\down\,\up$}
\\
\phantom{.}
\&
\phantom{.}
\&
\fcolorbox{myyellow}{mycream!10}{$\up\,\up\,\down\,\down$}
\arrow[->]{ru}[description]{\ord{e_{2}}}
\&
\phantom{.}
\&
\phantom{.}
\\
\end{tikzcd}
\\
\label{eq:the-usual-order}
&
\begin{tikzcd}[ampersand replacement=\&,row sep=tiny,column sep=normal,arrows={shorten >=-.2ex,shorten <=-.2ex},labels={inner sep=.15ex}]
\phantom{.}
\&
\phantom{.}
\&
\fcolorbox{myorange}{mycream!10}{$\up\,\down\,\down\,\up$}
\arrow[->]{rd}[description]{\ord{e_{5}}}
\&
\phantom{.}
\&
\phantom{.}
\\
\fcolorbox{mypurple}{mycream!10}{$\down\,\down\,\up\,\up$}
\arrow[->]{r}[description]{\ord{e_{5}}}
\&
\fcolorbox{myblue}{mycream!10}{$\down\,\up\,\down\,\up$}
\arrow[->]{ru}[description]{\ord{e_{5}}}
\arrow[->]{rd}[description]{\ord{e_{5}}}
\&
\phantom{.}
\&
\fcolorbox{myred}{mycream!10}{$\up\,\down\,\up\,\down$}
\arrow[->]{r}[description]{\ord{e_{5}}}
\&
\fcolorbox{myyellow}{mycream!10}{$\up\,\up\,\down\,\down$}
\\
\phantom{.}
\&
\phantom{.}
\&
\fcolorbox{mygreen}{mycream!10}{$\down\,\up\,\up\,\down$}
\arrow[->]{ru}[description]{\ord{e_{5}}}
\&
\phantom{.}
\&
\phantom{.}
\\
\end{tikzcd}
\end{align}
}
The other partial orderings are similar, but rotated.

Now, the relative cell datum is
{
\abovedisplayskip0.2em
\belowdisplayskip0.2em
\begin{gather}
\begin{gathered}
\cset
\rightsquigarrow\text{cf. \eqref{eq:ordering-k2}},
\quad
{}^{\invo}
\rightsquigarrow\text{reflect diagrams},
\quad
\cmset
\rightsquigarrow\text{cf. \fullref{example:big-arc-example-2}},
\\
\cbas{S,T}{\lambda}
\rightsquigarrow\text{cf. \eqref{eq:arc-short}},
\quad
\ciset
\rightsquigarrow\text{cf. \fullref{example:arc-mult-2}}
\quad
\cepsmap
\rightsquigarrow\text{cf. \fullref{example:big-arc-example-2}}.
\end{gathered}
\end{gather}
}
Having these, the cell modules in $\prmod(\down\up\up\down)$ are
\begin{gather}
\scalebox{.9}{$
\begin{gathered}
\begin{tikzpicture}[baseline=(current bounding box.center)]
  \matrix (m) [matrix of math nodes, nodes in empty cells, row 1/.style={nodes={minimum height=18mm}}, row 3/.style={nodes={minimum height=32mm}}, row 5/.style={nodes={minimum height=32mm}}, row sep={.1cm}, column sep={.1cm}, text height=1.5ex, text depth=0.25ex, ampersand replacement=\&] {
\down\,\up\,\up\,\down
\\
\begin{tikzpicture}[anchorbase, scale=.35, tinynodes]
	\draw[arc] (0,0) to [out=270, in=180] (.5,-1) to [out=0, in=270] (1,0);
	\draw[arc] (2,0) to [out=270, in=0] (-.5,-2);
	\draw[arc] (3,0) to [out=270, in=150] (3.5,-2);
	\draw[arc] (0,0) to [out=90, in=180] (.5,1) to [out=0, in=90] (1,0);
	\draw[arc] (2,0) to [out=90, in=0] (-.5,2);
	\draw[arc] (3,0) to [out=90, in=210] (3.5,2);
	\draw[doline] (-.5,0) to (3.5,0);
	\node at (0,0) {$\Down$};
	\node at (3,0) {$\Down$};
	\node at (2,0) {$\Up$};
	\node at (1,0) {$\Up$};
\end{tikzpicture}
\,
\begin{tikzpicture}[anchorbase, scale=.35, tinynodes]
	\draw[arcd] (0,0) to [out=270, in=180] (.5,-2) to [out=0, in=270] (1,0);
	\draw[arc] (0,0) to [out=270, in=180] (.5,-1) to [out=0, in=270] (1,0);
	\draw[arc] (2,0) to [out=270, in=180] (2.5,-1) to [out=0, in=270] (3,0);
	\draw[arc] (0,0) to [out=90, in=180] (.5,1) to [out=0, in=90] (1,0);
	\draw[arc] (2,0) to [out=90, in=0] (-.5,2);
	\draw[arc] (3,0) to [out=90, in=210] (3.5,2);
	\draw[doline] (-.5,0) to (3.5,0);
	\node at (0,0) {$\Down$};
	\node at (3,0) {$\Down$};
	\node at (2,0) {$\Up$};
	\node at (1,0) {$\Up$};
\end{tikzpicture}
\,
\begin{tikzpicture}[anchorbase, scale=.35, tinynodes]
	\draw[arc] (2,0) to [out=270, in=180] (2.5,-1) to [out=0, in=270] (3,0);
	\draw[arc] (0,0) to [out=270, in=30] (-.5,-2);
	\draw[arc] (1,0) to [out=270, in=180] (3.5,-2);
	\draw[arc] (0,0) to [out=90, in=180] (.5,1) to [out=0, in=90] (1,0);
	\draw[arc] (2,0) to [out=90, in=0] (-.5,2);
	\draw[arc] (3,0) to [out=90, in=210] (3.5,2);
	\draw[doline] (-.5,0) to (3.5,0);
	\node at (0,0) {$\Down$};
	\node at (3,0) {$\Down$};
	\node at (2,0) {$\Up$};
	\node at (1,0) {$\Up$};
\end{tikzpicture}
\\
\begin{tikzpicture}[anchorbase, scale=.35, tinynodes]
	\draw[arcd] (0,0) to [out=270, in=180] (.5,-2) to [out=0, in=270] (1,0);
	\draw[arc] (1,0) to [out=270, in=180] (1.5,-1) to [out=0, in=270] (2,0);
	\draw[arc] (0,0) to [out=270, in=0] (-.5,-1);
	\draw[arc] (3,0) to [out=270, in=180] (3.5,-1);
	\draw[arc] (0,0) to [out=90, in=180] (.5,1) to [out=0, in=90] (1,0);
	\draw[arc] (2,0) to [out=90, in=0] (-.5,2);
	\draw[arc] (3,0) to [out=90, in=210] (3.5,2);
	\draw[doline] (-.5,0) to (3.5,0);
	\node at (1,0) {$\Down$};
	\node at (3,0) {$\Down$};
	\node at (2,0) {$\Up$};
	\node at (0,0) {$\Up$};
\end{tikzpicture}
\,
\begin{tikzpicture}[anchorbase, scale=.35, tinynodes]
	\draw[arc] (0,0) to [out=270, in=180] (.5,-1) to [out=0, in=270] (1,0);
	\draw[arc] (2,0) to [out=270, in=0] (-.5,-2);
	\draw[arc] (3,0) to [out=270, in=150] (3.5,-2);
	\draw[arc] (0,0) to [out=90, in=180] (.5,1) to [out=0, in=90] (1,0);
	\draw[arc] (2,0) to [out=90, in=0] (-.5,2);
	\draw[arc] (3,0) to [out=90, in=210] (3.5,2);
	\draw[doline] (-.5,0) to (3.5,0);
	\node at (1,0) {$\Down$};
	\node at (3,0) {$\Down$};
	\node at (2,0) {$\Up$};
	\node at (0,0) {$\Up$};
\end{tikzpicture}
\,
\begin{tikzpicture}[anchorbase, scale=.35, tinynodes]
	\draw[arc] (0,0) to [out=270, in=180] (1.5,-2) to [out=0, in=270] (3,0);
	\draw[arc] (1,0) to [out=270, in=180] (1.5,-1) to [out=0, in=270] (2,0);
	\draw[arc] (0,0) to [out=90, in=180] (.5,1) to [out=0, in=90] (1,0);
	\draw[arc] (2,0) to [out=90, in=0] (-.5,2);
	\draw[arc] (3,0) to [out=90, in=210] (3.5,2);
	\draw[doline] (-.5,0) to (3.5,0);
	\node at (1,0) {$\Down$};
	\node at (3,0) {$\Down$};
	\node at (2,0) {$\Up$};
	\node at (0,0) {$\Up$};
\end{tikzpicture}
\,
\begin{tikzpicture}[anchorbase, scale=.35, tinynodes]
	\draw[arc] (0,0) to [out=270, in=0] (-.5,-1);
	\draw[arc] (1,0) to [out=270, in=0] (-.5,-2);
	\draw[arc] (2,0) to [out=270, in=180] (3.5,-2);
	\draw[arc] (3,0) to [out=270, in=180] (3.5,-1);
	\draw[arc] (0,0) to [out=90, in=180] (.5,1) to [out=0, in=90] (1,0);
	\draw[arc] (2,0) to [out=90, in=0] (-.5,2);
	\draw[arc] (3,0) to [out=90, in=210] (3.5,2);
	\draw[doline] (-.5,0) to (3.5,0);
	\node at (1,0) {$\Down$};
	\node at (3,0) {$\Down$};
	\node at (2,0) {$\Up$};
	\node at (0,0) {$\Up$};
\end{tikzpicture}
\,
\begin{tikzpicture}[anchorbase, scale=.35, tinynodes]
	\draw[arc] (2,0) to [out=270, in=180] (2.5,-1) to [out=0, in=270] (3,0);
	\draw[arc] (0,0) to [out=270, in=30] (-.5,-2);
	\draw[arc] (1,0) to [out=270, in=180] (3.5,-2);
	\draw[arc] (0,0) to [out=90, in=180] (.5,1) to [out=0, in=90] (1,0);
	\draw[arc] (2,0) to [out=90, in=0] (-.5,2);
	\draw[arc] (3,0) to [out=90, in=210] (3.5,2);
	\draw[doline] (-.5,0) to (3.5,0);
	\node at (1,0) {$\Down$};
	\node at (3,0) {$\Down$};
	\node at (2,0) {$\Up$};
	\node at (0,0) {$\Up$};
\end{tikzpicture}
\,
\begin{tikzpicture}[anchorbase, scale=.35, tinynodes]
	\draw[arcd] (0,0) to [out=270, in=180] (.5,-2) to [out=0, in=270] (1,0);
	\draw[arc] (0,0) to [out=270, in=180] (.5,-1) to [out=0, in=270] (1,0);
	\draw[arc] (2,0) to [out=270, in=180] (2.5,-1) to [out=0, in=270] (3,0);
	\draw[arc] (0,0) to [out=90, in=180] (.5,1) to [out=0, in=90] (1,0);
	\draw[arc] (2,0) to [out=90, in=0] (-.5,2);
	\draw[arc] (3,0) to [out=90, in=210] (3.5,2);
	\draw[doline] (-.5,0) to (3.5,0);
	\node at (1,0) {$\Down$};
	\node at (3,0) {$\Down$};
	\node at (2,0) {$\Up$};
	\node at (0,0) {$\Up$};
\end{tikzpicture}
\\
\begin{tikzpicture}[anchorbase, scale=.35, tinynodes]
	\draw[arcd] (0,0) to [out=270, in=180] (.5,-2) to [out=0, in=270] (1,0);
	\draw[arc] (0,0) to [out=270, in=180] (.5,-1) to [out=0, in=270] (1,0);
	\draw[arc] (2,0) to [out=270, in=180] (2.5,-1) to [out=0, in=270] (3,0);
	\draw[arc] (0,0) to [out=90, in=180] (.5,1) to [out=0, in=90] (1,0);
	\draw[arc] (2,0) to [out=90, in=0] (-.5,2);
	\draw[arc] (3,0) to [out=90, in=210] (3.5,2);
	\draw[doline] (-.5,0) to (3.5,0);
	\node at (0,0) {$\Down$};
	\node at (2,0) {$\Down$};
	\node at (3,0) {$\Up$};
	\node at (1,0) {$\Up$};
\end{tikzpicture}
\,
\begin{tikzpicture}[anchorbase, scale=.35, tinynodes]
	\draw[arc] (2,0) to [out=270, in=180] (2.5,-1) to [out=0, in=270] (3,0);
	\draw[arc] (0,0) to [out=270, in=30] (-.5,-2);
	\draw[arc] (1,0) to [out=270, in=180] (3.5,-2);
	\draw[arc] (0,0) to [out=90, in=180] (.5,1) to [out=0, in=90] (1,0);
	\draw[arc] (2,0) to [out=90, in=0] (-.5,2);
	\draw[arc] (3,0) to [out=90, in=210] (3.5,2);
	\draw[doline] (-.5,0) to (3.5,0);
	\node at (0,0) {$\Down$};
	\node at (2,0) {$\Down$};
	\node at (3,0) {$\Up$};
	\node at (1,0) {$\Up$};
\end{tikzpicture}
\,
\begin{tikzpicture}[anchorbase, scale=.35, tinynodes]
	\draw[arc] (0,0) to [out=270, in=0] (-.5,-1);
	\draw[arc] (1,0) to [out=270, in=0] (-.5,-2);
	\draw[arc] (2,0) to [out=270, in=180] (3.5,-2);
	\draw[arc] (3,0) to [out=270, in=180] (3.5,-1);
	\draw[arc] (0,0) to [out=90, in=180] (.5,1) to [out=0, in=90] (1,0);
	\draw[arc] (2,0) to [out=90, in=0] (-.5,2);
	\draw[arc] (3,0) to [out=90, in=210] (3.5,2);
	\draw[doline] (-.5,0) to (3.5,0);
	\node at (0,0) {$\Down$};
	\node at (2,0) {$\Down$};
	\node at (3,0) {$\Up$};
	\node at (1,0) {$\Up$};
\end{tikzpicture}
\,
\begin{tikzpicture}[anchorbase, scale=.35, tinynodes]
	\draw[arc] (0,0) to [out=270, in=180] (1.5,-2) to [out=0, in=270] (3,0);
	\draw[arc] (1,0) to [out=270, in=180] (1.5,-1) to [out=0, in=270] (2,0);
	\draw[arc] (0,0) to [out=90, in=180] (.5,1) to [out=0, in=90] (1,0);
	\draw[arc] (2,0) to [out=90, in=0] (-.5,2);
	\draw[arc] (3,0) to [out=90, in=210] (3.5,2);
	\draw[doline] (-.5,0) to (3.5,0);
	\node at (0,0) {$\Down$};
	\node at (2,0) {$\Down$};
	\node at (3,0) {$\Up$};
	\node at (1,0) {$\Up$};
\end{tikzpicture}
\,
\begin{tikzpicture}[anchorbase, scale=.35, tinynodes]
	\draw[arc] (0,0) to [out=270, in=180] (.5,-1) to [out=0, in=270] (1,0);
	\draw[arc] (2,0) to [out=270, in=0] (-.5,-2);
	\draw[arc] (3,0) to [out=270, in=150] (3.5,-2);
	\draw[arc] (0,0) to [out=90, in=180] (.5,1) to [out=0, in=90] (1,0);
	\draw[arc] (2,0) to [out=90, in=0] (-.5,2);
	\draw[arc] (3,0) to [out=90, in=210] (3.5,2);
	\draw[doline] (-.5,0) to (3.5,0);
	\node at (0,0) {$\Down$};
	\node at (2,0) {$\Down$};
	\node at (3,0) {$\Up$};
	\node at (1,0) {$\Up$};
\end{tikzpicture}
\,
\begin{tikzpicture}[anchorbase, scale=.35, tinynodes]
	\draw[arcd] (0,0) to [out=270, in=180] (.5,-2) to [out=0, in=270] (1,0);
	\draw[arc] (1,0) to [out=270, in=180] (1.5,-1) to [out=0, in=270] (2,0);
	\draw[arc] (0,0) to [out=270, in=0] (-.5,-1);
	\draw[arc] (3,0) to [out=270, in=180] (3.5,-1);
	\draw[arc] (0,0) to [out=90, in=180] (.5,1) to [out=0, in=90] (1,0);
	\draw[arc] (2,0) to [out=90, in=0] (-.5,2);
	\draw[arc] (3,0) to [out=90, in=210] (3.5,2);
	\draw[doline] (-.5,0) to (3.5,0);
	\node at (0,0) {$\Down$};
	\node at (2,0) {$\Down$};
	\node at (3,0) {$\Up$};
	\node at (1,0) {$\Up$};
\end{tikzpicture}
\\
\begin{tikzpicture}[anchorbase, scale=.35, tinynodes]
	\draw[arc] (2,0) to [out=270, in=180] (2.5,-1) to [out=0, in=270] (3,0);
	\draw[arc] (0,0) to [out=270, in=30] (-.5,-2);
	\draw[arc] (1,0) to [out=270, in=180] (3.5,-2);
	\draw[arc] (0,0) to [out=90, in=180] (.5,1) to [out=0, in=90] (1,0);
	\draw[arc] (2,0) to [out=90, in=0] (-.5,2);
	\draw[arc] (3,0) to [out=90, in=210] (3.5,2);
	\draw[doline] (-.5,0) to (3.5,0);
	\node at (1,0) {$\Down$};
	\node at (2,0) {$\Down$};
	\node at (3,0) {$\Up$};
	\node at (0,0) {$\Up$};
\end{tikzpicture}
\,
\begin{tikzpicture}[anchorbase, scale=.35, tinynodes]
	\draw[arcd] (0,0) to [out=270, in=180] (.5,-2) to [out=0, in=270] (1,0);
	\draw[arc] (0,0) to [out=270, in=180] (.5,-1) to [out=0, in=270] (1,0);
	\draw[arc] (2,0) to [out=270, in=180] (2.5,-1) to [out=0, in=270] (3,0);
	\draw[arc] (0,0) to [out=90, in=180] (.5,1) to [out=0, in=90] (1,0);
	\draw[arc] (2,0) to [out=90, in=0] (-.5,2);
	\draw[arc] (3,0) to [out=90, in=210] (3.5,2);
	\draw[doline] (-.5,0) to (3.5,0);
	\node at (1,0) {$\Down$};
	\node at (2,0) {$\Down$};
	\node at (3,0) {$\Up$};
	\node at (0,0) {$\Up$};
\end{tikzpicture}
\,
\begin{tikzpicture}[anchorbase, scale=.35, tinynodes]
	\draw[arc] (0,0) to [out=270, in=180] (.5,-1) to [out=0, in=270] (1,0);
	\draw[arc] (2,0) to [out=270, in=0] (-.5,-2);
	\draw[arc] (3,0) to [out=270, in=150] (3.5,-2);
	\draw[arc] (0,0) to [out=90, in=180] (.5,1) to [out=0, in=90] (1,0);
	\draw[arc] (2,0) to [out=90, in=0] (-.5,2);
	\draw[arc] (3,0) to [out=90, in=210] (3.5,2);
	\draw[doline] (-.5,0) to (3.5,0);
	\node at (1,0) {$\Down$};
	\node at (2,0) {$\Down$};
	\node at (3,0) {$\Up$};
	\node at (0,0) {$\Up$};
\end{tikzpicture}
\\
};
	\draw[thin,dotted] ($(m-1-1.south)+(-5.7,.7)$) to ($(m-1-1.south)+(5.7,.7)$);
	\draw[thin, mygreen] (-5.5,3.6) rectangle (5.5,1.65);
	\draw[thin, myred] (-5.5,1.55) rectangle (5.5,-.3);
	\draw[thin, myblue] (-5.5,-.4) rectangle (5.5,-2.25);
	\draw[thin, myorange] (-5.5,-2.35) rectangle (5.5,-4.3);
\end{tikzpicture}
\end{gathered}$}
\end{gather}
and in $\prmod(\down\up\down\up)$
\begin{gather}
\scalebox{.9}{$
\begin{gathered}
\begin{tikzpicture}[baseline=(current bounding box.center)]
  \matrix (m) [matrix of math nodes, nodes in empty cells, row 1/.style={nodes={minimum height=18mm}}, row 3/.style={nodes={minimum height=32mm}}, row 5/.style={nodes={minimum height=32mm}}, row sep={.1cm}, column sep={.1cm}, text height=1.5ex, text depth=0.25ex, ampersand replacement=\&] {
\down\,\up\,\down\,\up
\\
\begin{tikzpicture}[anchorbase, scale=.35, tinynodes]
	\draw[arcd] (0,0) to [out=270, in=180] (.5,-2) to [out=0, in=270] (1,0);
	\draw[arc] (0,0) to [out=270, in=180] (.5,-1) to [out=0, in=270] (1,0);
	\draw[arc] (2,0) to [out=270, in=180] (2.5,-1) to [out=0, in=270] (3,0);
	\draw[arcd] (0,0) to [out=90, in=180] (.5,2) to [out=0, in=90] (1,0);
	\draw[arc] (0,0) to [out=90, in=180] (.5,1) to [out=0, in=90] (1,0);
	\draw[arc] (2,0) to [out=90, in=180] (2.5,1) to [out=0, in=90] (3,0);
	\draw[doline] (-.5,0) to (3.5,0);
	\node at (0,0) {$\Down$};
	\node at (2,0) {$\Down$};
	\node at (3,0) {$\Up$};
	\node at (1,0) {$\Up$};
\end{tikzpicture}
\,
\begin{tikzpicture}[anchorbase, scale=.35, tinynodes]
	\draw[arc] (2,0) to [out=270, in=180] (2.5,-1) to [out=0, in=270] (3,0);
	\draw[arc] (0,0) to [out=270, in=30] (-.5,-2);
	\draw[arc] (1,0) to [out=270, in=180] (3.5,-2);
	\draw[arcd] (0,0) to [out=90, in=180] (.5,2) to [out=0, in=90] (1,0);
	\draw[arc] (0,0) to [out=90, in=180] (.5,1) to [out=0, in=90] (1,0);
	\draw[arc] (2,0) to [out=90, in=180] (2.5,1) to [out=0, in=90] (3,0);
	\draw[doline] (-.5,0) to (3.5,0);
	\node at (0,0) {$\Down$};
	\node at (2,0) {$\Down$};
	\node at (3,0) {$\Up$};
	\node at (1,0) {$\Up$};
\end{tikzpicture}
\,
\begin{tikzpicture}[anchorbase, scale=.35, tinynodes]
	\draw[arc] (2,0) to [out=270, in=180] (3.5,-2);
	\draw[arc] (3,0) to [out=270, in=180] (3.5,-1);
	\draw[arc] (0,0) to [out=270, in=0] (-.5,-1);
	\draw[arc] (1,0) to [out=270, in=0] (-.5,-2);
	\draw[arcd] (0,0) to [out=90, in=180] (.5,2) to [out=0, in=90] (1,0);
	\draw[arc] (0,0) to [out=90, in=180] (.5,1) to [out=0, in=90] (1,0);
	\draw[arc] (2,0) to [out=90, in=180] (2.5,1) to [out=0, in=90] (3,0);
	\draw[doline] (-.5,0) to (3.5,0);
	\node at (0,0) {$\Down$};
	\node at (2,0) {$\Down$};
	\node at (3,0) {$\Up$};
	\node at (1,0) {$\Up$};
\end{tikzpicture}
\,
\begin{tikzpicture}[anchorbase, scale=.35, tinynodes]
	\draw[arc] (0,0) to [out=270, in=180] (1.5,-2) to [out=0, in=270] (3,0);
	\draw[arc] (1,0) to [out=270, in=180] (1.5,-1) to [out=0, in=270] (2,0);
	\draw[arcd] (0,0) to [out=90, in=180] (.5,2) to [out=0, in=90] (1,0);
	\draw[arc] (0,0) to [out=90, in=180] (.5,1) to [out=0, in=90] (1,0);
	\draw[arc] (2,0) to [out=90, in=180] (2.5,1) to [out=0, in=90] (3,0);
	\draw[doline] (-.5,0) to (3.5,0);
	\node at (0,0) {$\Down$};
	\node at (2,0) {$\Down$};
	\node at (3,0) {$\Up$};
	\node at (1,0) {$\Up$};
\end{tikzpicture}
\,
\begin{tikzpicture}[anchorbase, scale=.35, tinynodes]
	\draw[arc] (0,0) to [out=270, in=180] (.5,-1) to [out=0, in=270] (1,0);
	\draw[arc] (2,0) to [out=270, in=0] (-.5,-2);
	\draw[arc] (3,0) to [out=270, in=150] (3.5,-2);
	\draw[arcd] (0,0) to [out=90, in=180] (.5,2) to [out=0, in=90] (1,0);
	\draw[arc] (0,0) to [out=90, in=180] (.5,1) to [out=0, in=90] (1,0);
	\draw[arc] (2,0) to [out=90, in=180] (2.5,1) to [out=0, in=90] (3,0);
	\draw[doline] (-.5,0) to (3.5,0);
	\node at (0,0) {$\Down$};
	\node at (2,0) {$\Down$};
	\node at (3,0) {$\Up$};
	\node at (1,0) {$\Up$};
\end{tikzpicture}
\,
\begin{tikzpicture}[anchorbase, scale=.35, tinynodes]
	\draw[arcd] (0,0) to [out=270, in=180] (.5,-2) to [out=0, in=270] (1,0);
	\draw[arc] (1,0) to [out=270, in=180] (1.5,-1) to [out=0, in=270] (2,0);
	\draw[arc] (0,0) to [out=270, in=0] (-.5,-1);
	\draw[arc] (3,0) to [out=270, in=180] (3.5,-1);
	\draw[arcd] (0,0) to [out=90, in=180] (.5,2) to [out=0, in=90] (1,0);
	\draw[arc] (0,0) to [out=90, in=180] (.5,1) to [out=0, in=90] (1,0);
	\draw[arc] (2,0) to [out=90, in=180] (2.5,1) to [out=0, in=90] (3,0);
	\draw[doline] (-.5,0) to (3.5,0);
	\node at (0,0) {$\Down$};
	\node at (2,0) {$\Down$};
	\node at (3,0) {$\Up$};
	\node at (1,0) {$\Up$};
\end{tikzpicture}
\\
\begin{tikzpicture}[anchorbase, scale=.35, tinynodes]
	\draw[arc] (0,0) to [out=270, in=180] (.5,-1) to [out=0, in=270] (1,0);
	\draw[arc] (2,0) to [out=270, in=0] (-.5,-2);
	\draw[arc] (3,0) to [out=270, in=150] (3.5,-2);
	\draw[arcd] (0,0) to [out=90, in=180] (.5,2) to [out=0, in=90] (1,0);
	\draw[arc] (0,0) to [out=90, in=180] (.5,1) to [out=0, in=90] (1,0);
	\draw[arc] (2,0) to [out=90, in=180] (2.5,1) to [out=0, in=90] (3,0);
	\draw[doline] (-.5,0) to (3.5,0);
	\node at (1,0) {$\Down$};
	\node at (2,0) {$\Down$};
	\node at (3,0) {$\Up$};
	\node at (0,0) {$\Up$};
\end{tikzpicture}
\,
\begin{tikzpicture}[anchorbase, scale=.35, tinynodes]
	\draw[arc] (0,0) to [out=270, in=180] (.5,-1) to [out=0, in=270] (1,0);
	\draw[arc] (2,0) to [out=270, in=180] (2.5,-1) to [out=0, in=270] (3,0);
	\draw[arcd] (0,0) to [out=90, in=180] (.5,2) to [out=0, in=90] (1,0);
	\draw[arc] (0,0) to [out=90, in=180] (.5,1) to [out=0, in=90] (1,0);
	\draw[arc] (2,0) to [out=90, in=180] (2.5,1) to [out=0, in=90] (3,0);
	\draw[doline] (-.5,0) to (3.5,0);
	\node at (1,0) {$\Down$};
	\node at (2,0) {$\Down$};
	\node at (3,0) {$\Up$};
	\node at (0,0) {$\Up$};
\end{tikzpicture}
\,
\begin{tikzpicture}[anchorbase, scale=.35, tinynodes]
	\draw[arc] (2,0) to [out=270, in=180] (2.5,-1) to [out=0, in=270] (3,0);
	\draw[arc] (0,0) to [out=270, in=30] (-.5,-2);
	\draw[arc] (1,0) to [out=270, in=180] (3.5,-2);
	\draw[arcd] (0,0) to [out=90, in=180] (.5,2) to [out=0, in=90] (1,0);
	\draw[arc] (0,0) to [out=90, in=180] (.5,1) to [out=0, in=90] (1,0);
	\draw[arc] (2,0) to [out=90, in=180] (2.5,1) to [out=0, in=90] (3,0);
	\draw[doline] (-.5,0) to (3.5,0);
	\node at (1,0) {$\Down$};
	\node at (2,0) {$\Down$};
	\node at (3,0) {$\Up$};
	\node at (0,0) {$\Up$};
\end{tikzpicture}
\quad\quad\quad
\begin{tikzpicture}[anchorbase, scale=.35, tinynodes]
	\draw[arc] (2,0) to [out=270, in=180] (2.5,-1) to [out=0, in=270] (3,0);
	\draw[arc] (0,0) to [out=270, in=30] (-.5,-2);
	\draw[arc] (1,0) to [out=270, in=180] (3.5,-2);
	\draw[arcd] (0,0) to [out=90, in=180] (.5,2) to [out=0, in=90] (1,0);
	\draw[arc] (0,0) to [out=90, in=180] (.5,1) to [out=0, in=90] (1,0);
	\draw[arc] (2,0) to [out=90, in=180] (2.5,1) to [out=0, in=90] (3,0);
	\draw[doline] (-.5,0) to (3.5,0);
	\node at (0,0) {$\Down$};
	\node at (3,0) {$\Down$};
	\node at (2,0) {$\Up$};
	\node at (1,0) {$\Up$};
\end{tikzpicture}
\,
\begin{tikzpicture}[anchorbase, scale=.35, tinynodes]
	\draw[arc] (0,0) to [out=270, in=180] (.5,-1) to [out=0, in=270] (1,0);
	\draw[arc] (2,0) to [out=270, in=180] (2.5,-1) to [out=0, in=270] (3,0);
	\draw[arcd] (0,0) to [out=90, in=180] (.5,2) to [out=0, in=90] (1,0);
	\draw[arc] (0,0) to [out=90, in=180] (.5,1) to [out=0, in=90] (1,0);
	\draw[arc] (2,0) to [out=90, in=180] (2.5,1) to [out=0, in=90] (3,0);
	\draw[doline] (-.5,0) to (3.5,0);
	\node at (0,0) {$\Down$};
	\node at (3,0) {$\Down$};
	\node at (2,0) {$\Up$};
	\node at (1,0) {$\Up$};
\end{tikzpicture}
\,
\begin{tikzpicture}[anchorbase, scale=.35, tinynodes]
	\draw[arc] (0,0) to [out=270, in=180] (.5,-1) to [out=0, in=270] (1,0);
	\draw[arc] (2,0) to [out=270, in=0] (-.5,-2);
	\draw[arc] (3,0) to [out=270, in=150] (3.5,-2);
	\draw[arcd] (0,0) to [out=90, in=180] (.5,2) to [out=0, in=90] (1,0);
	\draw[arc] (0,0) to [out=90, in=180] (.5,1) to [out=0, in=90] (1,0);
	\draw[arc] (2,0) to [out=90, in=180] (2.5,1) to [out=0, in=90] (3,0);
	\draw[doline] (-.5,0) to (3.5,0);
	\node at (0,0) {$\Down$};
	\node at (3,0) {$\Down$};
	\node at (2,0) {$\Up$};
	\node at (1,0) {$\Up$};
\end{tikzpicture}
\\
\begin{tikzpicture}[anchorbase, scale=.35, tinynodes]
	\draw[arcd] (0,0) to [out=270, in=180] (.5,-2) to [out=0, in=270] (1,0);
	\draw[arc] (1,0) to [out=270, in=180] (1.5,-1) to [out=0, in=270] (2,0);
	\draw[arc] (0,0) to [out=270, in=0] (-.5,-1);
	\draw[arc] (3,0) to [out=270, in=180] (3.5,-1);
	\draw[arcd] (0,0) to [out=90, in=180] (.5,2) to [out=0, in=90] (1,0);
	\draw[arc] (0,0) to [out=90, in=180] (.5,1) to [out=0, in=90] (1,0);
	\draw[arc] (2,0) to [out=90, in=180] (2.5,1) to [out=0, in=90] (3,0);
	\draw[doline] (-.5,0) to (3.5,0);
	\node at (1,0) {$\Down$};
	\node at (3,0) {$\Down$};
	\node at (2,0) {$\Up$};
	\node at (0,0) {$\Up$};
\end{tikzpicture}
\,
\begin{tikzpicture}[anchorbase, scale=.35, tinynodes]
	\draw[arc] (0,0) to [out=270, in=180] (.5,-1) to [out=0, in=270] (1,0);
	\draw[arc] (2,0) to [out=270, in=0] (-.5,-2);
	\draw[arc] (3,0) to [out=270, in=150] (3.5,-2);
	\draw[arcd] (0,0) to [out=90, in=180] (.5,2) to [out=0, in=90] (1,0);
	\draw[arc] (0,0) to [out=90, in=180] (.5,1) to [out=0, in=90] (1,0);
	\draw[arc] (2,0) to [out=90, in=180] (2.5,1) to [out=0, in=90] (3,0);
	\draw[doline] (-.5,0) to (3.5,0);
	\node at (1,0) {$\Down$};
	\node at (3,0) {$\Down$};
	\node at (2,0) {$\Up$};
	\node at (0,0) {$\Up$};
\end{tikzpicture}
\,
\begin{tikzpicture}[anchorbase, scale=.35, tinynodes]
	\draw[arc] (0,0) to [out=270, in=180] (1.5,-2) to [out=0, in=270] (3,0);
	\draw[arc] (1,0) to [out=270, in=180] (1.5,-1) to [out=0, in=270] (2,0);
	\draw[arcd] (0,0) to [out=90, in=180] (.5,2) to [out=0, in=90] (1,0);
	\draw[arc] (0,0) to [out=90, in=180] (.5,1) to [out=0, in=90] (1,0);
	\draw[arc] (2,0) to [out=90, in=180] (2.5,1) to [out=0, in=90] (3,0);
	\draw[doline] (-.5,0) to (3.5,0);
	\node at (1,0) {$\Down$};
	\node at (3,0) {$\Down$};
	\node at (2,0) {$\Up$};
	\node at (0,0) {$\Up$};
\end{tikzpicture}
\,
\begin{tikzpicture}[anchorbase, scale=.35, tinynodes]
	\draw[arc] (2,0) to [out=270, in=180] (3.5,-2);
	\draw[arc] (3,0) to [out=270, in=180] (3.5,-1);
	\draw[arc] (0,0) to [out=270, in=0] (-.5,-1);
	\draw[arc] (1,0) to [out=270, in=0] (-.5,-2);
	\draw[arcd] (0,0) to [out=90, in=180] (.5,2) to [out=0, in=90] (1,0);
	\draw[arc] (0,0) to [out=90, in=180] (.5,1) to [out=0, in=90] (1,0);
	\draw[arc] (2,0) to [out=90, in=180] (2.5,1) to [out=0, in=90] (3,0);
	\draw[doline] (-.5,0) to (3.5,0);
	\node at (1,0) {$\Down$};
	\node at (3,0) {$\Down$};
	\node at (2,0) {$\Up$};
	\node at (0,0) {$\Up$};
\end{tikzpicture}
\,
\begin{tikzpicture}[anchorbase, scale=.35, tinynodes]
	\draw[arc] (2,0) to [out=270, in=180] (2.5,-1) to [out=0, in=270] (3,0);
	\draw[arc] (0,0) to [out=270, in=30] (-.5,-2);
	\draw[arc] (1,0) to [out=270, in=180] (3.5,-2);
	\draw[arcd] (0,0) to [out=90, in=180] (.5,2) to [out=0, in=90] (1,0);
	\draw[arc] (0,0) to [out=90, in=180] (.5,1) to [out=0, in=90] (1,0);
	\draw[arc] (2,0) to [out=90, in=180] (2.5,1) to [out=0, in=90] (3,0);
	\draw[doline] (-.5,0) to (3.5,0);
	\node at (1,0) {$\Down$};
	\node at (3,0) {$\Down$};
	\node at (2,0) {$\Up$};
	\node at (0,0) {$\Up$};
\end{tikzpicture}
\,
\begin{tikzpicture}[anchorbase, scale=.35, tinynodes]
	\draw[arcd] (0,0) to [out=270, in=180] (.5,-2) to [out=0, in=270] (1,0);
	\draw[arc] (0,0) to [out=270, in=180] (.5,-1) to [out=0, in=270] (1,0);
	\draw[arc] (2,0) to [out=270, in=180] (2.5,-1) to [out=0, in=270] (3,0);
	\draw[arcd] (0,0) to [out=90, in=180] (.5,2) to [out=0, in=90] (1,0);
	\draw[arc] (0,0) to [out=90, in=180] (.5,1) to [out=0, in=90] (1,0);
	\draw[arc] (2,0) to [out=90, in=180] (2.5,1) to [out=0, in=90] (3,0);
	\draw[doline] (-.5,0) to (3.5,0);
	\node at (1,0) {$\Down$};
	\node at (3,0) {$\Down$};
	\node at (2,0) {$\Up$};
	\node at (0,0) {$\Up$};
\end{tikzpicture}
\\
};
	\draw[thin,dotted] ($(m-1-1.south)+(-5.7,.7)$) to ($(m-1-1.south)+(5.7,.7)$);
	\draw[thin, myblue] (-5.5,2.0) rectangle (5.5,.05);
	\draw[thin, mygreen] (-6.0,-.05) rectangle (-.5,-1.9);
	\draw[thin, myorange] (.5,-.05) rectangle (6.0,-1.9);
	\draw[thin, myred] (-5.5,-2.0) rectangle (5.5,-3.85);
\end{tikzpicture}
\end{gathered}$}
\end{gather}
These are ordered as in \eqref{eq:ordering-k2} and \eqref{eq:the-usual-order}. Next, 
the indecomposable projectives are
\begin{gather}
\scalebox{.9}{$
\begin{gathered}
\begin{tikzpicture}[baseline=(current bounding box.center)]
	\node[mygreen] at (-.01,0) {$\lmod(\down\up\up\down)$};
	\node[myblue] at (-.01,-.5) {$\lmod(\down\up\down\up)$};
	\node[myred] at (3.5,-.5) {$\lmod(\up\down\up\down)$};
	\node[myblue] at (9.1,-.5) {$\lmod(\down\up\down\up)$};
	\node[myorange] at (-.01,-1) {$\lmod(\up\down\down\up)$};
	\node[mygreen] at (1.4,-1) {$\lmod(\down\up\up\down)$};
	\node[mypurple] at (2.8,-1) {$\lmod(\down\down\up\up)$};
	\node[myyellow] at (4.2,-1) {$\lmod(\up\up\down\down)$};
	\node[myorange] at (5.6,-1) {$\lmod(\up\down\down\up)$};
	\node[myorange] at (7.0,-1) {$\lmod(\up\down\down\up)$};
	\node[myyellow] at (8.4,-1) {$\lmod(\up\up\down\down)$};
	\node[mypurple] at (9.8,-1) {$\lmod(\down\down\up\up)$};
	\node[mygreen] at (11.2,-1) {$\lmod(\down\up\up\down)$};
	\node[myorange] at (12.6,-1) {$\lmod(\up\down\down\up)$};
	\node[myblue] at (3.5,-1.5) {$\lmod(\down\up\down\up)$};
	\node[myred] at (9.1,-1.5) {$\lmod(\up\down\up\down)$};
	\node[myblue] at (12.6,-1.5) {$\lmod(\down\up\down\up)$};
	\node[mygreen] at (12.6,-2) {$\lmod(\down\up\up\down)$};
	\node[mygreen] at (6.3,.55) {$\prmod(\down\up\up\down)$};
	\draw[thin, mygreen, dotted] (-.8,.3) rectangle (13.4,-2.25);
	\node[mygreen] at (1.5,0) {$\dmod(\down\up\up\down)$};
	\draw[thin, mygreen] (-.7,.25) rectangle (.675,-1.25);
	\node[myred] at (3.5,-2) {$\dmod(\down\up\up\down)$};
	\draw[thin, myred] (.7,-.275) rectangle (6.275,-1.725);
	\node[myblue] at (9.1,0) {$\dmod(\down\up\up\down)$};
	\draw[thin, myblue] (6.3,-.275) rectangle (11.875,-1.725);
	\node[myorange] at (11.1,-2) {$\dmod(\up\down\down\up)$};
	\draw[thin, myorange] (11.91,-.775) rectangle (13.325,-2.2);
\end{tikzpicture}
\\
\begin{tikzpicture}[baseline=(current bounding box.center)]
	\node[myred] at (3.5,0) {$\lmod(\up\down\up\down)$};
	\node[mygreen] at (-.01,-.5) {$\lmod(\down\up\up\down)$};
	\node[mygreen] at (1.4,-.5) {$\lmod(\down\up\up\down)$};
	\node[mypurple] at (2.8,-.5) {$\lmod(\down\down\up\up)$};
	\node[myyellow] at (4.2,-.5) {$\lmod(\up\up\down\down)$};
	\node[myorange] at (5.6,-.5) {$\lmod(\up\down\down\up)$};
	\node[myorange] at (12.6,-.5) {$\lmod(\up\down\down\up)$};
	\node[myblue] at (-.01,-1) {$\lmod(\down\up\down\up)$};
	\node[myblue] at (3.5,-1) {$\lmod(\down\up\down\up)$};
	\node[myblue] at (9.1,-1) {$\lmod(\down\up\down\up)$};
	\node[myblue] at (12.6,-1) {$\lmod(\down\up\down\up)$};
	\node[myorange] at (-.01,-1.5) {$\lmod(\up\down\down\up)$};
	\node[myorange] at (7.0,-1.5) {$\lmod(\up\down\down\up)$};
	\node[myyellow] at (8.4,-1.5) {$\lmod(\up\up\down\down)$};
	\node[mypurple] at (9.8,-1.5) {$\lmod(\down\down\up\up)$};
	\node[mygreen] at (11.2,-1.5) {$\lmod(\down\up\up\down)$};
	\node[mygreen] at (12.6,-1.5) {$\lmod(\down\up\up\down)$};
	\node[myred] at (9.1,-2) {$\lmod(\up\down\up\down)$};
	\node[myred] at (6.3,.55) {$\prmod(\down\up\up\down)$};
	\draw[thin, myred, dotted] (-.8,.3) rectangle (13.4,-2.25);
	\node[mygreen] at (.3,-2) {$\dmod(\down\up\up\down)$};
	\draw[thin, mygreen] (-.7,-0.25) rectangle (.675,-1.75);
	\node[myred] at (3.5,-1.5) {$\dmod(\down\up\up\down)$};
	\draw[thin, myred] (.7,.25) rectangle (6.275,-1.2);
	\node[myblue] at (9.1,-.5) {$\dmod(\down\up\up\down)$};
	\draw[thin, myblue] (6.3,-.775) rectangle (11.875,-2.2);
	\node[myorange] at (12.3,0) {$\dmod(\up\down\down\up)$};
	\draw[thin, myorange] (11.91,-.25) rectangle (13.325,-1.75);
\end{tikzpicture}
\end{gathered}$}
\end{gather}
(Note: From $\numberarcs=3$ onwards the $\prmod(\lambda)$'s are not of 
the same size anymore. 
That is, $\aarc[3]$ is of dimension $1664$ with $\prmod(\lambda)$'s 
being of dimension $80$ or $88$.)
By the above we see that the Cartan matrix is (up to similarity)
\begin{gather}
\cmatrix(\aarc[2])
=
\begin{psmallmatrix}
4 & 2 & 2 & 4 & 2 & 4
\\
2 & 4 & 4 & 2 & 4 & 2
\\
2 & 4 & 4 & 2 & 4 & 2
\\
4 & 2 & 2 & 4 & 2 & 4
\\
2 & 4 & 4 & 2 & 4 & 2
\\
4 & 2 & 2 & 4 & 2 & 4
\end{psmallmatrix}
\end{gather}
This again shows that $\aarc[2]$ is not a cellular algebra.
\end{example}

%%%%%%%%%%%%%%%%%%%%%%%%%%%
\subsection{Some concluding comments}\label{subsection:futher-ideas}
%%%%%%%%%%%%%%%%%%%%%%%%%%%

A few potential generalizations 
regarding relative cellularity of $\aarc$ are:

\begin{furtherdirections}\label{remark:no-fancy-stuff}
Everything can be done in the graded setup as well 
with the algebra $\aarc$ having an analogous 
grading as $\uarc$.
In particular, it makes sense to define the notion of a 
\textit{graded, relative cellular algebra}, 
generalizing \cite[Definition 2.1]{hm1}.
\end{furtherdirections}

\begin{furtherdirections}\label{remark:sl3}
$\uarc$ was originally 
defined to construct tangle invariants 
associated to Khovanov homology \cite{kh1}. 
Similarly, so-called \textit{web algebras} appear in the construction 
of tangle invariants associated to 
Khovanov--Rozansky homologies. These web algebras 
are also known to be cellular algebras, see 
\cite[Corollary 5.21]{mpt1}, \cite[Theorem 4.22]{t1} 
and \cite[Theorem 7.7]{m1}. Building on 
\cite{qr1}, it should be possible to defined annular variants, and the 
question whether these are relative cellular arises.
\end{furtherdirections}

\begin{furtherdirections}\label{remark:typeD}
One could also define
annular versions of the \textit{type $\mathrm{D}$ 
arc algebra} as in \cite{es1}, \cite{es2} or \cite{etw1}. 
This algebra is again cellular, see \cite[Corollary 7.3]{es1}, 
and the question about relative cellularity again arises.
\end{furtherdirections}

%%%%%%%%%%%%%%%%%%%%%%%%%%%
\subsection{Relative cellularity: Technicalities}\label{subsection:dull}
%%%%%%%%%%%%%%%%%%%%%%%%%%%

For the proof of \fullref{theorem:mult-rule-arc} we 
need some more control over cups and caps, necessitating a number of definitions 
and lemmas. 

\begin{definition}\label{definition:oriented-cups-caps}
Let $\lambda\in\cset$ and 
$S$ be a cup diagram such that 
$S\lambda$ is oriented. Assume that we have the following local 
situations.
\begin{gather}\label{eq:oriented-cups-caps}
\xy
(0,0)*{
\begin{tikzpicture}[anchorbase, scale=.4, tinynodes]
	\draw[arc] (0,0) to [out=270, in=180] (.5,-1) to [out=0, in=270] (1,0);
	\draw[arcd] (0,0) to [out=90, in=180] (.5,1) to [out=0, in=90] (1,0);
	\draw[doline] (-.5,0) to (1.5,0);
	\node at (0,0) {$\Down$};
	\node at (1,0) {$\Up$};
\end{tikzpicture}
,
\begin{tikzpicture}[anchorbase, scale=.4, tinynodes]
	\draw[arcd] (0,0) to [out=270, in=180] (.5,-1) to [out=0, in=270] (1,0);
	\draw[arc] (0,0) to [out=90, in=180] (.5,1) to [out=0, in=90] (1,0);
	\draw[doline] (-.5,0) to (1.5,0);
	\node at (0,0) {$\Down$};
	\node at (1,0) {$\Up$};
\end{tikzpicture}
,
\begin{tikzpicture}[anchorbase, scale=.4, tinynodes]
	\draw[arc] (0,0) to [out=270, in=0] (-.5,-1);
	\draw[arc] (1,0) to [out=270, in=180] (1.5,-1);
	\draw[arcd] (0,0) to [out=90, in=0] (-.5,1);
	\draw[arcd] (1,0) to [out=90, in=180] (1.5,1);
	\draw[doline] (-.5,0) to (1.5,0);
	\node at (1,0) {$\Down$};
	\node at (0,0) {$\Up$};
\end{tikzpicture}
,
\begin{tikzpicture}[anchorbase, scale=.4, tinynodes]
	\draw[arcd] (0,0) to [out=270, in=0] (-.5,-1);
	\draw[arcd] (1,0) to [out=270, in=180] (1.5,-1);
	\draw[arc] (0,0) to [out=90, in=0] (-.5,1);
	\draw[arc] (1,0) to [out=90, in=180] (1.5,1);
	\draw[doline] (-.5,0) to (1.5,0);
	\node at (1,0) {$\Down$};
	\node at (0,0) {$\Up$};
\end{tikzpicture}
};
(0,-6)*{\text{{\tiny anticlockwise}}};
\endxy
;\quad\quad
\xy
(0,0)*{
\begin{tikzpicture}[anchorbase, scale=.4, tinynodes]
	\draw[arc] (0,0) to [out=270, in=180] (.5,-1) to [out=0, in=270] (1,0);
	\draw[arcd] (0,0) to [out=90, in=180] (.5,1) to [out=0, in=90] (1,0);
	\draw[doline] (-.5,0) to (1.5,0);
	\node at (1,0) {$\Down$};
	\node at (0,0) {$\Up$};
\end{tikzpicture}
,
\begin{tikzpicture}[anchorbase, scale=.4, tinynodes]
	\draw[arcd] (0,0) to [out=270, in=180] (.5,-1) to [out=0, in=270] (1,0);
	\draw[arc] (0,0) to [out=90, in=180] (.5,1) to [out=0, in=90] (1,0);
	\draw[doline] (-.5,0) to (1.5,0);
	\node at (1,0) {$\Down$};
	\node at (0,0) {$\Up$};
\end{tikzpicture}
,
\begin{tikzpicture}[anchorbase, scale=.4, tinynodes]
	\draw[arc] (0,0) to [out=270, in=0] (-.5,-1);
	\draw[arc] (1,0) to [out=270, in=180] (1.5,-1);
	\draw[arcd] (0,0) to [out=90, in=0] (-.5,1);
	\draw[arcd] (1,0) to [out=90, in=180] (1.5,1);
	\draw[doline] (-.5,0) to (1.5,0);
	\node at (0,0) {$\Down$};
	\node at (1,0) {$\Up$};
\end{tikzpicture}
,
\begin{tikzpicture}[anchorbase, scale=.4, tinynodes]
	\draw[arcd] (0,0) to [out=270, in=0] (-.5,-1);
	\draw[arcd] (1,0) to [out=270, in=180] (1.5,-1);
	\draw[arc] (0,0) to [out=90, in=0] (-.5,1);
	\draw[arc] (1,0) to [out=90, in=180] (1.5,1);
	\draw[doline] (-.5,0) to (1.5,0);
	\node at (0,0) {$\Down$};
	\node at (1,0) {$\Up$};
\end{tikzpicture}};
(0,-6)*{\text{{\tiny clockwise}}};
\endxy
\end{gather}
Then we 
call such cups or caps 
\textit{anticlockwise} and \textit{clockwise}, as indicated.
\end{definition}

Comparing to \eqref{eq:arc-orientation}, cups and caps in 
usual circles are always of the corresponding orientation. 
Moreover, cups in essential and rightwards and 
caps in essential and leftwards circles
are clockwise, and vice versa.

\begin{definition} \label{definition:right-left-side}
Let $\circles$ be a circle in a circle 
diagram $ST^{\invo}$. Then $\Sp\times[-1,1]\setminus\circles$ 
has two connected components. For a usual circle the connected 
component containing the boundary of $\Sp\times[-1,1]$ is 
called the \textit{exterior} of $\circles$, the other is called 
the \textit{interior}. For an essential circle the one containing the 
boundary $\Sp\times\{1\}$ is called the \textit{upper} (half), the other 
is called the \textit{lower} (half).
 
Here the picture illustrating these notions:
\begin{gather}\label{eq:sides-of-circles}
\begin{tikzpicture}[anchorbase, scale=.4, tinynodes]
	\draw[arc] (0,0) to [out=270, in=180] (.5,-1) to [out=0, in=270] (1,0);
	\draw[arc] (0,0) to [out=90, in=180] (.5,1) to [out=0, in=90] (1,0);
	\draw[doline] (-.5,0) to (1.5,0);
	\node at (.5,-2) {\text{exterior}};
	\node at (.5,2) {\text{interior}};
	\draw[->] (.5,1.6) to (.5,.4);
\end{tikzpicture}
,
\begin{tikzpicture}[anchorbase, scale=.4, tinynodes]
	\draw[arc] (0,0) to [out=270, in=180] (.5,-1) to [out=0, in=270] (1,0);
	\draw[arc] (0,0) to [out=90, in=330] (-.5,1);
	\draw[arc] (1,0) to [out=90, in=210] (1.5,1);
	\draw[doline] (-.5,0) to (1.5,0);
	\node at (.5,2) {\text{upper}};
	\node at (.5,-2) {\text{lower}};
\end{tikzpicture}
;
\quad\quad
\begin{tikzpicture}[anchorbase, scale=.4, tinynodes]
	\draw[arc] (0,0) to [out=270, in=180] (.5,-1) to [out=0, in=270] (1,0);
	\draw[arc] (0,0) to [out=90, in=180] (.5,1) to [out=0, in=90] (1,0);
	\draw[doline] (-.5,0) to (1.5,0);
	\node at (0,0) {$\Down$};
	\node at (1,0) {$\Up$};
	\node at (.5,2) {\text{left}};
	\node at (.5,-2) {\text{right}};
	\draw[->] (.5,1.6) to (.5,.4);
\end{tikzpicture}
,
\begin{tikzpicture}[anchorbase, scale=.4, tinynodes]
	\draw[arc] (0,0) to [out=270, in=180] (.5,-1) to [out=0, in=270] (1,0);
	\draw[arc] (0,0) to [out=90, in=180] (.5,1) to [out=0, in=90] (1,0);
	\draw[doline] (-.5,0) to (1.5,0);
	\node at (1,0) {$\Down$};
	\node at (0,0) {$\Up$};
	\node at (.5,2) {\text{right}};
	\node at (.5,-2) {\text{left}};
	\draw[->] (.5,1.6) to (.5,.4);
\end{tikzpicture}
;
\quad\quad
\begin{tikzpicture}[anchorbase, scale=.4, tinynodes]
	\draw[arc] (0,0) to [out=270, in=180] (.5,-1) to [out=0, in=270] (1,0);
	\draw[arc] (0,0) to [out=90, in=330] (-.5,1);
	\draw[arc] (1,0) to [out=90, in=210] (1.5,1);
	\draw[doline] (-.5,0) to (1.5,0);
	\node at (1,0) {$\Down$};
	\node at (0,0) {$\Up$};
	\node at (.5,-2) {\text{left}};
	\node at (.5,2) {\text{right}};
\end{tikzpicture}
,
\begin{tikzpicture}[anchorbase, scale=.4, tinynodes]
	\draw[arc] (0,0) to [out=270, in=180] (.5,-1) to [out=0, in=270] (1,0);
	\draw[arc] (0,0) to [out=90, in=330] (-.5,1);
	\draw[arc] (1,0) to [out=90, in=210] (1.5,1);
	\draw[doline] (-.5,0) to (1.5,0);
	\node at (0,0) {$\Down$};
	\node at (1,0) {$\Up$};
	\node at (.5,-2) {\text{right}};
	\node at (.5,2) {\text{left}};
\end{tikzpicture}
\end{gather}
As in \eqref{eq:sides-of-circles},
if furthermore a \textit{small circle} $\circles$ 
(i.e. circles built from one cup and one cap only) is 
endowed with an orientation in $\cbas{S,T}{\lambda}$, 
then we distinguish between a \textit{right} and a \textit{left side of $\circles$} 
by using the orientation.

For more general circles we use repeatedly
\begin{gather}\label{eq:zig-zag}
\begin{tikzpicture}[anchorbase, scale=.4, tinynodes]
	\draw[arc] (0,2) to (0,0) to [out=270, in=0] (-.5,-1) to [out=180, in=270] (-1,0);
	\draw[arc] (-1,0) to [out=90, in=0] (-1.5,1) to [out=180, in=90] (-2,0) to (-2,-2);
	\draw[doline] (.5,0) to (-2.5,0);
	\node at (0,0) {$\Up$};
	\node at (-1,0) {$\Down$};
	\node at (-2,0) {$\Up$};
	\node at (-.5,-1.75) {\text{right}};
	\node at (-1.5,1.75) {\text{left}};
\end{tikzpicture}
\leftsquigarrow
\begin{tikzpicture}[anchorbase, scale=.4, tinynodes]
	\draw[arc] (0,-2) to (0,2);
	\draw[doline] (-.5,0) to (.5,0);
	\node at (0,0) {$\Up$};
	\node at (1,.75) {\text{right}};
	\node at (-1,.75) {\text{left}};
\end{tikzpicture}
\rightsquigarrow
\begin{tikzpicture}[anchorbase, scale=.4, tinynodes]
	\draw[arc] (0,2) to (0,0) to [out=270, in=180] (.5,-1) to [out=0, in=270] (1,0);
	\draw[arc] (1,0) to [out=90, in=180] (1.5,1) to [out=0, in=90] (2,0) to (2,-2);
	\draw[doline] (-.5,0) to (2.5,0);
	\node at (0,0) {$\Up$};
	\node at (1,0) {$\Down$};
	\node at (2,0) {$\Up$};
	\node at (1.5,1.75) {\text{right}};
	\node at (.5,-1.75) {\text{left}};
\end{tikzpicture}
;
\quad\quad
\begin{tikzpicture}[anchorbase, scale=.4, tinynodes]
	\draw[arc] (0,2) to (0,0) to [out=270, in=0] (-.5,-1) to [out=180, in=270] (-1,0);
	\draw[arc] (-1,0) to [out=90, in=0] (-1.5,1) to [out=180, in=90] (-2,0) to (-2,-2);
	\draw[doline] (.5,0) to (-2.5,0);
	\node at (0,0) {$\Down$};
	\node at (-1,0) {$\Up$};
	\node at (-2,0) {$\Down$};
	\node at (-1.5,1.75) {\text{right}};
	\node at (-.5,-1.75) {\text{left}};
\end{tikzpicture}
\leftsquigarrow
\begin{tikzpicture}[anchorbase, scale=.4, tinynodes]
	\draw[arc] (0,-2) to (0,2);
	\draw[doline] (-.5,0) to (.5,0);
	\node at (0,0) {$\Down$};
	\node at (-1,.75) {\text{right}};
	\node at (1,.75) {\text{left}};
\end{tikzpicture}
\rightsquigarrow
\begin{tikzpicture}[anchorbase, scale=.4, tinynodes]
	\draw[arc] (0,2) to (0,0) to [out=270, in=180] (.5,-1) to [out=0, in=270] (1,0);
	\draw[arc] (1,0) to [out=90, in=180] (1.5,1) to [out=0, in=90] (2,0) to (2,-2);
	\draw[doline] (-.5,0) to (2.5,0);
	\node at (0,0) {$\Down$};
	\node at (1,0) {$\Up$};
	\node at (2,0) {$\Down$};
	\node at (.5,-1.75) {\text{right}};
	\node at (1.5,1.75) {\text{left}};
\end{tikzpicture}
\end{gather}
to define the notions \textit{right} and \textit{left side of $\circles$}.
\end{definition}

The following is clear.

\begin{lemmaqed}\label{lemma:righthand-rule}
Let $\circles$ be a circle in an circle diagram $ST^\invo$. 
Then the notions in \fullref{definition:right-left-side} are well-defined and satisfy:
\smallskip
\begin{enumerate}[label=(\alph*)]
\setlength\itemsep{.15cm}

\item If $\circles$ is usual and anticlockwise, then its interior is to the left.
If $\circles$ is usual and clockwise, then its exterior is to the left.

\item If $\circles$ is essential and leftwards, then its lower is to the left.
If $\circles$ is essential and rightwards, then its upper is to the left.\qedhere
\end{enumerate}
\end{lemmaqed}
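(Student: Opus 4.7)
The plan is to reduce both claims to a single geometric principle: the left side of $\circles$ at any point is the half-region lying to the left of an observer walking along $\circles$ in the direction prescribed by its orientation (the right-hand rule). First I would verify this principle on the six local configurations displayed in \eqref{eq:zig-zag}, by direct inspection. At an $\Up$-labeled vertex, the arc is oriented upwards, so ``left'' is to the left of the page, matching the assignment; at a $\Down$-labeled vertex the arc is oriented downwards, and ``left'' is to the right of the page, again matching. The assignments for cups and caps in \eqref{eq:sides-of-circles} are then checked to be consistent with this interpretation by reading off the orientations at their two endpoints and tracing the resulting tangent direction.

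With this principle established, well-definedness of the notions in \fullref{definition:right-left-side} is immediate. At any vertex on the dotted line where two local pieces of $\circles$ meet, the shared oriented tangent vector determines a single unambiguous left, so the ``left'' assignments produced by the cup/cap/strand rule below and above glue consistently. Hence $\circles$ acquires a globally well-defined left side, which by the Jordan curve theorem on $\Sp\times[-1,1]$ is one of the two connected components of its complement.

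For (a) and (b), I would then proceed by induction on the number of cup-cap zigzags, with base cases supplied by the small circles of \eqref{eq:sides-of-circles}, and invoke the Jordan curve theorem on the cylinder. A usual circle bounds an embedded disk, namely its interior; by the right-hand rule, if $\circles$ is anticlockwise then this interior lies on the left, whereas if $\circles$ is clockwise then the left side is the complementary exterior. An essential circle separates $\Sp\times[-1,1]$ into an upper and a lower half-cylinder; orienting $\circles$ leftwards places the lower half on the left, and orienting it rightwards places the upper half on the left. The main obstacle will be the bookkeeping in the essential case, since the labels ``leftwards'' and ``rightwards'' in \fullref{definition:essential} are defined relative to the fixed anticlockwise orientation of $\Sp$; one must carefully match one reference picture from each orientation class in \eqref{eq:sides-of-circles} with the right-hand rule interpretation, and then propagate this identification through the zigzag extensions of \eqref{eq:zig-zag} to reach an arbitrary circle.
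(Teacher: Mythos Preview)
Your argument is correct and is precisely what the paper has in mind: note the lemma's internal label is \texttt{lemma:righthand-rule}. However, the paper does not actually give a proof; the lemma is stated in the \texttt{lemmaqed} environment (which auto-inserts the QED symbol) and is preceded only by the sentence ``The following is clear.'' So there is nothing to compare against beyond the fact that your right-hand rule interpretation matches the authors' intent.

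One small point of language: the Jordan curve theorem as usually stated concerns embedded circles in the plane or sphere, so invoking it ``on $\Sp\times[-1,1]$'' is slightly imprecise. For usual circles it applies after noting they lie in a contractible patch of the cylinder; for essential circles you are using a different (equally elementary) separation fact, namely that a non-nullhomotopic embedded circle in the annulus separates it into two annuli. You already handle the two cases separately, so this is only a matter of phrasing. The induction on zigzags is also not strictly needed once the local-to-global right-hand rule is in place, but it does no harm.
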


We also need to distinguish 
certain types of cups and caps.

\begin{definition}\label{definition:convex-concave}
Let $ST^{\invo}$ be a circle diagram and $\circles$ a circle in $ST^{\invo}$.
\smallskip
\begin{enumerate}[label=(\alph*)]
\setlength\itemsep{.15cm}

\item Let $\circles$ be usual. We say that a cup, 
respectively cap, in $\circles$ 
is $\ecup$, respectively 
$\ecap$, if the exterior of $\circles$ is 
directly above the cup, respectively below the cap. 
Otherwise we call it $\icup$, respectively $\icap$.

\item Let $\circles$ be essential. We say that a cup, 
respectively cap, in $\circles$ 
is $\lcup$, respectively 
$\lcap$, if the lower of $\circles$ is 
directly below the cup, respectively below the cap. 
Otherwise we call it $\ucup$, respectively $\ucap$.\qedhere

\end{enumerate}
\end{definition}

Note that \fullref{definition:convex-concave} depends only on the shape, 
and here the picture:
\begin{gather}
\begin{tikzpicture}[anchorbase, scale=.4, tinynodes]
	\draw[arc] (0,0) to [out=270, in=180] (.5,-1) to [out=0, in=270] (1,0);
	\draw[arcdo] (0,0) to [out=90, in=180] (.5,1) to [out=0, in=90] (1,0);
	\draw[doline] (-.5,0) to (1.5,0);
	\node at (.5,-1.5) {\text{ext.}};
	\node[scale=.8] at (.5,1.5) {$\scalebox{1.5}{\raisebox{.015cm}{$e$}}\hspace*{.01cm}\cups$};
\end{tikzpicture}
,
\begin{tikzpicture}[anchorbase, scale=.4, tinynodes]
	\draw[arcdo] (0,0) to [out=270, in=180] (.5,-1) to [out=0, in=270] (1,0);
	\draw[arc] (0,0) to [out=90, in=180] (.5,1) to [out=0, in=90] (1,0);
	\draw[doline] (-.5,0) to (1.5,0);
	\node at (.5,1.5) {\text{ext.}};
	\node[scale=.8] at (.5,-1.5) {$\scalebox{1.5}{\raisebox{.015cm}{$e$}}\hspace*{.01cm}\caps$};
\end{tikzpicture}
;
\quad\quad
\begin{tikzpicture}[anchorbase, scale=.4, tinynodes]
	\draw[arcd] (-1,0) to [out=90, in=180] (.5,2) to [out=0, in=90] (2,0);
	\draw[arc] (0,0) to [out=270, in=180] (.5,-1) to [out=0, in=270] (1,0);
	\draw[arcdo] (-1,0) to [out=90, in=180] (-.5,1) to [out=0, in=90] (0,0);
	\draw[arcdo] (1,0) to [out=90, in=180] (1.5,1) to [out=0, in=90] (2,0);
	\draw[arcdo] (-1,0) to [out=270, in=180] (.5,-2) to [out=0, in=270] (2,0);
	\draw[doline] (-1.5,0) to (2.5,0);
	\node at (.5,-1.5) {\text{int.}};
	\node[scale=.8] at (.5,1.5) {$\scalebox{1.5}{\raisebox{.015cm}{$i$}}\hspace*{.01cm}\cups$};
\end{tikzpicture}
,
\begin{tikzpicture}[anchorbase, scale=.4, tinynodes]
	\draw[arcd] (-1,0) to [out=270, in=180] (.5,-2) to [out=0, in=270] (2,0);
	\draw[arc] (0,0) to [out=90, in=180] (.5,1) to [out=0, in=90] (1,0);
	\draw[arcdo] (-1,0) to [out=270, in=180] (-.5,-1) to [out=0, in=270] (0,0);
	\draw[arcdo] (1,0) to [out=270, in=180] (1.5,-1) to [out=0, in=270] (2,0);
	\draw[arcdo] (-1,0) to [out=90, in=180] (.5,2) to [out=0, in=90] (2,0);
	\draw[doline] (-1.5,0) to (2.5,0);
	\node at (.5,1.5) {\text{int.}};
	\node[scale=.8] at (.5,-1.5) {$\scalebox{1.5}{\raisebox{.015cm}{$i$}}\hspace*{.01cm}\caps$};
\end{tikzpicture}
;
\quad\quad
\begin{tikzpicture}[anchorbase, scale=.4, tinynodes]
	\draw[arc] (0,0) to [out=270, in=180] (.5,-1) to [out=0, in=270] (1,0);
	\draw[arcdo] (0,0) to [out=90, in=0] (-.5,1);
	\draw[arcdo] (1,0) to [out=90, in=180] (1.5,1);
	\draw[doline] (-.5,0) to (1.5,0);
	\node at (.5,-1.5) {\text{low.}};
	\node[scale=.8] at (.5,1.5) {$\scalebox{1.5}{\raisebox{.015cm}{$l$}}\hspace*{.01cm}\cups$};
\end{tikzpicture}
,
\begin{tikzpicture}[anchorbase, scale=.4, tinynodes]
	\draw[arcdo] (0,0) to [out=270, in=0] (-.5,-1);
	\draw[arcdo] (1,0) to [out=270, in=180] (1.5,-1);
	\draw[arc] (0,0) to [out=90, in=180] (.5,1) to [out=0, in=90] (1,0);
	\draw[doline] (-.5,0) to (1.5,0);
	\node at (.5,1.5) {\text{upp.}};
	\node[scale=.8] at (.5,-1.5) {$\scalebox{1.5}{\raisebox{.015cm}{$u$}}\hspace*{.01cm}\caps$};
\end{tikzpicture}
;
\quad\quad
\begin{tikzpicture}[anchorbase, scale=.4, tinynodes]
	\draw[arc] (0,0) to [out=270, in=180] (.5,-1) to [out=0, in=270] (1,0);
	\draw[arcdo] (0,0) to [out=90, in=180] (2.5,2);
	\draw[arcdo] (1,0) to [out=90, in=180] (1.5,1) to [out=0, in=90] (2,0);
	\draw[arcdo] (-1,0) to [out=270, in=180] (.5,-2) to [out=0, in=270] (2,0);
	\draw[arcdo] (-1,0) to [out=90, in=330] (-1.5,2);
	\draw[doline] (-1.5,0) to (2.5,0);
	\node at (.5,-1.5) {\text{upp.}};
	\node[scale=.8] at (-.175,1.5) {$\scalebox{1.5}{\raisebox{.015cm}{$u$}}\hspace*{.01cm}\cups$};
\end{tikzpicture}
,
\begin{tikzpicture}[anchorbase, scale=.4, tinynodes]
	\draw[arc] (0,0) to [out=90, in=180] (.5,1) to [out=0, in=90] (1,0);
	\draw[arcdo] (0,0) to [out=270, in=180] (2.5,-2);
	\draw[arcdo] (1,0) to [out=270, in=180] (1.5,-1) to [out=0, in=270] (2,0);
	\draw[arcdo] (-1,0) to [out=90, in=180] (.5,2) to [out=0, in=90] (2,0);
	\draw[arcdo] (-1,0) to [out=270, in=30] (-1.5,-2);
	\draw[doline] (-1.5,0) to (2.5,0);
	\node at (.55,1.5) {\text{low.}};
	\node[scale=.8] at (-.175,-1.5) {$\scalebox{1.5}{\raisebox{.015cm}{$l$}}\hspace*{.01cm}\caps$};
\end{tikzpicture}
\end{gather}

We write e.g. $\ecup$ instead of $\ecup$ cup for short.

\begin{lemma}\label{lemma:orientation-cups-caps}
Let $\circles$ be a circle in an oriented circle diagram $\cbas{S,T}{\lambda}$.
\begin{enumerate}[label=(\alph*)]
\setlength\itemsep{.15cm}

\item If $\circles$ is usual, then the orientation of 
$\circles$ and any $\ecup$ or $\ecap$ agrees, 
while any $\icup$ or $\icap$ is oriented in the opposite way.

\item If $\circles$ is essential and leftwards, then any
$\lcup$ or $\lcap$ is oriented clockwise, while any 
$\ucup$ or $\ucap$ is oriented anticlockwise.

\item If $\circles$ is essential and rightwards, then any
$\ucup$ or $\ucap$ is oriented clockwise, while any 
$\lcup$ or $\lcap$ is oriented anticlockwise.\qedhere
\end{enumerate}
\end{lemma}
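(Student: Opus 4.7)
The plan is to reduce each of (a), (b), (c) to a purely local verification by combining \fullref{definition:oriented-cups-caps}, \fullref{definition:right-left-side}, \fullref{definition:convex-concave} and \fullref{lemma:righthand-rule}. Concretely, each individual cup (or cap) of $\circles$ lives in a small neighborhood on which the orientation of $\circles$ induces arrows on the cup/cap and the two neighboring vertical strands, producing exactly one of the eight local pictures of \eqref{eq:oriented-cups-caps}, labeled there as anticlockwise or clockwise. Thus it suffices to match which local picture appears for each combination of circle type (usual or essential) and shape ($\ecup$, $\icup$, $\ecap$, $\icap$, $\lcup$, $\ucup$, $\lcap$, $\ucap$).

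For (a), the key input is (\ref{lemma:righthand-rule}.a): if $\circles$ is usual and anticlockwise, then its interior lies to the left of its oriented traversal and the exterior to the right. By definition, an $\ecup$ has the exterior of $\circles$ lying directly above the cup, which in the induced local picture places the exterior on the correct side, matching exactly the anticlockwise cup configurations in \eqref{eq:oriented-cups-caps}; an $\icup$ instead has the interior above, matching the clockwise cup configurations and so is oriented opposite to $\circles$. The $\ecap$/$\icap$ cases follow immediately by applying ${}^{\invo}$. Reversing orientation of $\circles$ handles the clockwise case.

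For (b) and (c), the same bookkeeping is carried out with (\ref{lemma:righthand-rule}.b): in an essential and leftwards $\circles$ the lower lies to the left, in an essential and rightwards $\circles$ the upper lies to the left. Since $\lcup$, $\ucup$, $\lcap$, $\ucap$ are distinguished by whether the lower or upper of $\circles$ is adjacent to the cup/cap, the induced local configuration is forced and is one of the four essential pictures of \eqref{eq:oriented-cups-caps}; inspection of the resulting orientation yields the claimed clockwise/anticlockwise labels in each of the eight sub-cases of (b) and (c).

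The only genuine obstacle is verifying that the ``left/right of $\circles$'' labelling is globally well-defined along an essential circle, which may wrap around the cylinder and contain several cups and caps connected through the dashed lines. This is exactly what the zig-zag prescription \eqref{eq:zig-zag} of \fullref{definition:right-left-side} ensures: the left/right side is transported consistently through every vertical strand and through every cup/cap of $\circles$, so the local interior/exterior respectively upper/lower labels used in the case analysis above genuinely agree with those coming from \fullref{lemma:righthand-rule}. Once this consistency is in place, the proof collapses to the table lookup sketched above.
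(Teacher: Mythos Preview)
Your approach is correct and is a mild repackaging of the paper's argument. The paper proceeds by a direct induction on the number of cups and caps in $\circles$: the base cases are displayed pictorially for the smallest usual and essential circles, and the inductive step is precisely the zig-zag rule \eqref{eq:zig-zag}. You instead invoke \fullref{lemma:righthand-rule} (whose content \emph{is} that zig-zag induction, phrased globally as the consistency of the left/right labelling) and then perform a single local lookup, matching the left/right side of the oriented circle against the exterior/interior (respectively upper/lower) region adjacent to the given cup or cap via \fullref{definition:convex-concave}. Both routes are valid and rest on the same zig-zag consistency; yours avoids repeating an induction already absorbed into \fullref{lemma:righthand-rule}, while the paper's version is self-contained at this point and makes the base cases visible. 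One small imprecision in your write-up: circle diagrams contain no ``neighboring vertical strands'', only cups and caps meeting at the dotted line, so that phrase should be dropped; the local picture at a cup or cap is already determined by the labels at its two endpoints.
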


\begin{proof}
All of these are easily proved by induction on the number of cups and caps in the circle. 
Here the induction start:
\begin{gather}
\begin{tikzpicture}[anchorbase, scale=.4, tinynodes]
	\draw[arc] (0,0) to [out=270, in=180] (.5,-1) to [out=0, in=270] (1,0);
	\draw[arc] (0,0) to [out=90, in=180] (.5,1) to [out=0, in=90] (1,0);
	\draw[doline] (-.5,0) to (1.5,0);
	\node at (0,0) {$\Down$};
	\node at (1,0) {$\Up$};
	\node[scale=.8] at (.5,1.4) {$\scalebox{1.5}{\raisebox{.015cm}{$e$}}\hspace*{.01cm}\caps$};
	\node[scale=.8] at (.5,-1.6) {$\scalebox{1.5}{\raisebox{.015cm}{$e$}}\hspace*{.01cm}\cups$};
\end{tikzpicture}
;
\quad\quad
\begin{tikzpicture}[anchorbase, scale=.4, tinynodes]
	\draw[arc] (0,0) to [out=270, in=180] (.5,-1) to [out=0, in=270] (1,0);
	\draw[arc] (0,0) to [out=90, in=180] (.5,1) to [out=0, in=90] (1,0);
	\draw[doline] (-.5,0) to (1.5,0);
	\node at (1,0) {$\Down$};
	\node at (0,0) {$\Up$};
	\node[scale=.8] at (.5,1.4) {$\scalebox{1.5}{\raisebox{.015cm}{$e$}}\hspace*{.01cm}\caps$};
	\node[scale=.8] at (.5,-1.6) {$\scalebox{1.5}{\raisebox{.015cm}{$e$}}\hspace*{.01cm}\cups$};
\end{tikzpicture}
;
\quad\quad
\begin{tikzpicture}[anchorbase, scale=.4, tinynodes]
	\draw[arc] (0,0) to [out=270, in=180] (.5,-1) to [out=0, in=270] (1,0);
	\draw[arc] (0,0) to [out=90, in=0] (-.5,1);
	\draw[arc] (1,0) to [out=90, in=180] (1.5,1);
	\draw[doline] (-.5,0) to (1.5,0);
	\node at (1,0) {$\Down$};
	\node at (0,0) {$\Up$};
	\node[scale=.8] at (.5,1.4) {$\scalebox{1.5}{\raisebox{.015cm}{$u$}}\hspace*{.01cm}\caps$};
	\node[scale=.8] at (.5,-1.6) {$\scalebox{1.5}{\raisebox{.015cm}{$l$}}\hspace*{.01cm}\cups$};
\end{tikzpicture}
,
\begin{tikzpicture}[anchorbase, scale=.4, tinynodes]
	\draw[arc] (0,0) to [out=270, in=0] (-.5,-1);
	\draw[arc] (1,0) to [out=270, in=180] (1.5,-1);
	\draw[arc] (0,0) to [out=90, in=180] (.5,1) to [out=0, in=90] (1,0);
	\draw[doline] (-.5,0) to (1.5,0);
	\node at (0,0) {$\Down$};
	\node at (1,0) {$\Up$};
	\node[scale=.8] at (.5,1.4) {$\scalebox{1.5}{\raisebox{.015cm}{$u$}}\hspace*{.01cm}\caps$};
	\node[scale=.8] at (.5,-1.6) {$\scalebox{1.5}{\raisebox{.015cm}{$l$}}\hspace*{.01cm}\cups$};
\end{tikzpicture}
;
\quad\quad
\begin{tikzpicture}[anchorbase, scale=.4, tinynodes]
	\draw[arc] (0,0) to [out=270, in=180] (.5,-1) to [out=0, in=270] (1,0);
	\draw[arc] (0,0) to [out=90, in=0] (-.5,1);
	\draw[arc] (1,0) to [out=90, in=180] (1.5,1);
	\draw[doline] (-.5,0) to (1.5,0);
	\node at (0,0) {$\Down$};
	\node at (1,0) {$\Up$};
	\node[scale=.8] at (.5,1.4) {$\scalebox{1.5}{\raisebox{.015cm}{$u$}}\hspace*{.01cm}\caps$};
	\node[scale=.8] at (.5,-1.6) {$\scalebox{1.5}{\raisebox{.015cm}{$l$}}\hspace*{.01cm}\cups$};
\end{tikzpicture}
,
\begin{tikzpicture}[anchorbase, scale=.4, tinynodes]
	\draw[arc] (0,0) to [out=270, in=0] (-.5,-1);
	\draw[arc] (1,0) to [out=270, in=180] (1.5,-1);
	\draw[arc] (0,0) to [out=90, in=180] (.5,1) to [out=0, in=90] (1,0);
	\draw[doline] (-.5,0) to (1.5,0);
	\node at (1,0) {$\Down$};
	\node at (0,0) {$\Up$};
	\node[scale=.8] at (.5,1.4) {$\scalebox{1.5}{\raisebox{.015cm}{$u$}}\hspace*{.01cm}\caps$};
	\node[scale=.8] at (.5,-1.6) {$\scalebox{1.5}{\raisebox{.015cm}{$l$}}\hspace*{.01cm}\cups$};
\end{tikzpicture}
\end{gather}
Then one continues using \eqref{eq:zig-zag}.
\end{proof}

For the next two lemmas 
the circles are considered inside an oriented, 
stacked diagram where the surgery is performed. 
Note hereby that we apply (\ref{section:arc-stuff}.b) only, i.e. without 
reorienting the resulting diagram, but rather keeping 
the original orientation. We call this 
\textit{applying the surgery naively}.

\begin{lemma}\label{lemma:split-usual-ess}
Assume an essential circle $\circles$ splits 
into an essential $\circles_{e}$ and an usual 
$\circles_{u}$ circle by naive surgery. 
Then the resulting diagram is oriented, $\circles_{e}$ is 
oriented in the same way as $\circles$ and $\circles_{u}$ is 
oriented opposite to the orientation of the $\cups\,\text{-}\,\caps$ pair 
involved in the naive surgery.
\end{lemma}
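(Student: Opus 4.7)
The plan is to reduce the claim to a local check. First, I note that naive surgery leaves every weight on the top, middle, and bottom dotted lines unchanged. Consequently, every arc of $\circles$ that is not one of the two surged arcs retains its original orientation, and the two new vertical strands inserted in place of the surged $\cups\,\text{-}\,\caps$ pair receive their orientations uniquely from their (unchanged) endpoint weights on the middle dotted line. Once every arc is individually oriented, the diagram itself is oriented, since orientedness is a local condition at vertices of the dotted lines where nothing has changed.

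For the claim about $\circles_{e}$, I would observe that it consists of a surviving portion of $\circles$ together with exactly one of the two new vertical strands. The surviving portion carries its orientation directly from $\circles$. Since naive surgery is purely local, it cannot alter the homotopy class of an essential circle it produces; hence $\circles_{e}$ wraps the cylinder in the same direction as $\circles$ and so is leftwards (respectively rightwards) in the same sense as $\circles$.

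The main content is the orientation of $\circles_{u}$. Here I plan to proceed by a short case analysis combining the two possibilities (leftwards or rightwards) for $\circles$ with the types of the surged cup and cap. Because $\circles$ is essential, every cup in it is of type $\lcup$ or $\ucup$ and every cap is of type $\lcap$ or $\ucap$, and among the resulting four configurations only those producing one essential and one usual circle are relevant for us. In each remaining case, \fullref{lemma:orientation-cups-caps} pins down the common orientation of the surged pair in terms of the orientation of $\circles$. Drawing the local picture then shows that $\circles_{u}$ is built from the other new vertical strand together with a short piece of $\circles$, whose orientation is determined by the weights at the two endpoints of that vertical strand; by construction these are precisely the weights that force the usual cup and cap of $\circles_{u}$ to carry the orientation opposite to the one determined for the surged pair.

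The main obstacle I anticipate is bookkeeping rather than conceptual difficulty: correctly identifying which portion of $\circles$ ends up in $\circles_{u}$ versus $\circles_{e}$ depends on the global behavior of $\circles$ on the cylinder, and one must rule out the configurations in which both new circles would be essential or both usual (so that the lemma's hypothesis fails). Since the surgery is strictly local, however, it will suffice to enumerate a finite list of normal forms for the two surgery sites along $\circles$ and to verify the orientation statement in each of them.
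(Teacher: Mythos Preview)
Your proposal is correct and follows essentially the same approach as the paper: invoke \fullref{lemma:orientation-cups-caps} to see that the surged $\cups\,\text{-}\,\caps$ pair carry the same orientation, and then reduce to a finite local case check. The paper compresses the case analysis into four explicit local pictures (and remarks that any other configuration rotates into one of these since $\circles_{e}$ is essential by hypothesis), whereas you give a more verbal account of why the result is oriented and why $\circles_{e}$ inherits the orientation of $\circles$; but the underlying argument is the same.
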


\begin{proof}
First -- by (\ref{lemma:orientation-cups-caps}.b) 
and (\ref{lemma:orientation-cups-caps}.c) -- we know 
that the cup and cap involved in the naive surgery have the same 
orientation.
Thus, these are the local possibilities:
\begin{gather}
\begin{tikzpicture}[anchorbase, scale=.4, tinynodes]
	\draw[arcdo] (-1,0) to [out=270, in=180] (-.5,-1) to [out=0, in=270] (0,0);
	\draw[arcdo] (-1,3) to [out=90, in=180] (-.5,4) to [out=0, in=90] (0,3);
	\draw[arcdo] (-1,3) to (-1,0);
	\draw[arc] (0,0) to [out=90, in=180] (.5,1) to [out=0, in=90] (1,0);
	\draw[arc] (0,3) to [out=270, in=180] (.5,2) to [out=0, in=270] (1,3);
	\draw[arc] (1,0) to (1,-1);
	\draw[arc] (1,3) to (1,4);
	\draw[doline] (-1.5,0) to (1.5,0);
	\draw[doline] (-1.5,3) to (1.5,3);
	\node at (1,0) {$\Down$};
	\node at (0,0) {$\Up$};
	\node at (1,3) {$\Down$};
	\node at (0,3) {$\Up$};
	\node at (-.5,1.5) {$\circles$};
\end{tikzpicture}
\mapsto
\begin{tikzpicture}[anchorbase, scale=.4, tinynodes]
	\draw[arcdo] (-1,0) to [out=270, in=180] (-.5,-1) to [out=0, in=270] (0,0);
	\draw[arcdo] (-1,3) to [out=90, in=180] (-.5,4) to [out=0, in=90] (0,3);
	\draw[arcdo] (-1,3) to (-1,0);
	\draw[arc] (0,0) to (0,3);
	\draw[arc] (1,3) to (1,0);
	\draw[arc] (1,0) to (1,-1);
	\draw[arc] (1,3) to (1,4);
	\draw[doline] (-1.5,0) to (1.5,0);
	\draw[doline] (-1.5,3) to (1.5,3);
	\node at (1,0) {$\Down$};
	\node at (0,0) {$\Up$};
	\node at (1,3) {$\Down$};
	\node at (0,3) {$\Up$};
	\node at (.5,1.5) {$\circles_{e}$};
	\node at (-.5,1.5) {$\circles_{u}$};
\end{tikzpicture}
;\;
\begin{tikzpicture}[anchorbase, scale=.4, tinynodes]
	\draw[arcdo] (-1,0) to [out=270, in=180] (-.5,-1) to [out=0, in=270] (0,0);
	\draw[arcdo] (-1,3) to [out=90, in=180] (-.5,4) to [out=0, in=90] (0,3);
	\draw[arcdo] (-1,3) to (-1,0);
	\draw[arc] (0,0) to [out=90, in=180] (.5,1) to [out=0, in=90] (1,0);
	\draw[arc] (0,3) to [out=270, in=180] (.5,2) to [out=0, in=270] (1,3);
	\draw[arc] (1,0) to (1,-1);
	\draw[arc] (1,3) to (1,4);
	\draw[doline] (-1.5,0) to (1.5,0);
	\draw[doline] (-1.5,3) to (1.5,3);
	\node at (0,0) {$\Down$};
	\node at (1,0) {$\Up$};
	\node at (0,3) {$\Down$};
	\node at (1,3) {$\Up$};
	\node at (-.5,1.5) {$\circles$};
\end{tikzpicture}
\mapsto
\begin{tikzpicture}[anchorbase, scale=.4, tinynodes]
	\draw[arcdo] (-1,0) to [out=270, in=180] (-.5,-1) to [out=0, in=270] (0,0);
	\draw[arcdo] (-1,3) to [out=90, in=180] (-.5,4) to [out=0, in=90] (0,3);
	\draw[arcdo] (-1,3) to (-1,0);
	\draw[arc] (0,0) to (0,3);
	\draw[arc] (1,3) to (1,0);
	\draw[arc] (1,0) to (1,-1);
	\draw[arc] (1,3) to (1,4);
	\draw[doline] (-1.5,0) to (1.5,0);
	\draw[doline] (-1.5,3) to (1.5,3);
	\node at (0,0) {$\Down$};
	\node at (1,0) {$\Up$};
	\node at (0,3) {$\Down$};
	\node at (1,3) {$\Up$};
	\node at (.5,1.5) {$\circles_{e}$};
	\node at (-.5,1.5) {$\circles_{u}$};
\end{tikzpicture}
;\;
\begin{tikzpicture}[anchorbase, scale=.4, tinynodes]
	\draw[arcdo] (2,0) to [out=270, in=0] (1.5,-1) to [out=180, in=270] (1,0);
	\draw[arcdo] (2,3) to [out=90, in=0] (1.5,4) to [out=180, in=90] (1,3);
	\draw[arcdo] (2,3) to (2,0);
	\draw[arc] (0,0) to [out=90, in=180] (.5,1) to [out=0, in=90] (1,0);
	\draw[arc] (0,3) to [out=270, in=180] (.5,2) to [out=0, in=270] (1,3);
	\draw[arc] (0,0) to (0,-1);
	\draw[arc] (0,3) to (0,4);
	\draw[doline] (-.5,0) to (2.5,0);
	\draw[doline] (-.5,3) to (2.5,3);
	\node at (0,0) {$\Down$};
	\node at (1,0) {$\Up$};
	\node at (0,3) {$\Down$};
	\node at (1,3) {$\Up$};
	\node at (1.5,1.5) {$\circles$};
\end{tikzpicture}
\mapsto
\begin{tikzpicture}[anchorbase, scale=.4, tinynodes]
	\draw[arcdo] (2,0) to [out=270, in=0] (1.5,-1) to [out=180, in=270] (1,0);
	\draw[arcdo] (2,3) to [out=90, in=0] (1.5,4) to [out=180, in=90] (1,3);
	\draw[arcdo] (2,3) to (2,0);
	\draw[arc] (0,0) to (0,3);
	\draw[arc] (1,3) to (1,0);
	\draw[arc] (0,0) to (0,-1);
	\draw[arc] (0,3) to (0,4);
	\draw[doline] (-.5,0) to (2.5,0);
	\draw[doline] (-.5,3) to (2.5,3);
	\node at (0,0) {$\Down$};
	\node at (1,0) {$\Up$};
	\node at (0,3) {$\Down$};
	\node at (1,3) {$\Up$};
	\node at (.5,1.5) {$\circles_{e}$};
	\node at (1.5,1.5) {$\circles_{u}$};
\end{tikzpicture}
;\;
\begin{tikzpicture}[anchorbase, scale=.4, tinynodes]
	\draw[arcdo] (2,0) to [out=270, in=0] (1.5,-1) to [out=180, in=270] (1,0);
	\draw[arcdo] (2,3) to [out=90, in=0] (1.5,4) to [out=180, in=90] (1,3);
	\draw[arcdo] (2,3) to (2,0);
	\draw[arc] (0,0) to [out=90, in=180] (.5,1) to [out=0, in=90] (1,0);
	\draw[arc] (0,3) to [out=270, in=180] (.5,2) to [out=0, in=270] (1,3);
	\draw[arc] (0,0) to (0,-1);
	\draw[arc] (0,3) to (0,4);
	\draw[doline] (-.5,0) to (2.5,0);
	\draw[doline] (-.5,3) to (2.5,3);
	\node at (1,0) {$\Down$};
	\node at (0,0) {$\Up$};
	\node at (1,3) {$\Down$};
	\node at (0,3) {$\Up$};
	\node at (1.5,1.5) {$\circles$};
\end{tikzpicture}
\mapsto
\begin{tikzpicture}[anchorbase, scale=.4, tinynodes]
	\draw[arcdo] (2,0) to [out=270, in=0] (1.5,-1) to [out=180, in=270] (1,0);
	\draw[arcdo] (2,3) to [out=90, in=0] (1.5,4) to [out=180, in=90] (1,3);
	\draw[arcdo] (2,3) to (2,0);
	\draw[arc] (0,0) to (0,3);
	\draw[arc] (1,3) to (1,0);
	\draw[arc] (0,0) to (0,-1);
	\draw[arc] (0,3) to (0,4);
	\draw[doline] (-.5,0) to (2.5,0);
	\draw[doline] (-.5,3) to (2.5,3);
	\node at (0,0) {$\Up$};
	\node at (1,0) {$\Down$};
	\node at (0,3) {$\Up$};
	\node at (1,3) {$\Down$};
	\node at (.5,1.5) {$\circles_{e}$};
	\node at (1.5,1.5) {$\circles_{u}$};
\end{tikzpicture}
\end{gather}
$\circles_{e}$ is -- by assumption -- essential meaning that 
all other possible situations can be rotated into such positions.
\end{proof}

For two essential circles 
$\circles_{u}$ is \textit{above} $\circles_{l}$ if 
$\circles_{l}$ is contained in the 
lower half of $\circles_{u}$. We also say that 
$\circles_{l}$ is \textit{below} $\circles_{u}$.

\begin{lemma}\label{lemma:split-usual-two-essential}
Let $\circles$ be a usual circle splitting into 
two essential circles, $\circles_{u}$ being 
above $\circles_{l}$, by 
naive surgery. 
Then the result is oriented 
with $\circles_{u}$ being essential and leftwards 
and $\circles_{l}$ essential and rightwards 
in case $\circles$ is anticlockwise, and vice versa, in case 
$\circles$ is clockwise.
\end{lemma}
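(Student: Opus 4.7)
The plan is to mirror the proof of the previous lemma (\fullref{lemma:split-usual-ess}) by local case analysis, but now the constraint coming from $\circles$ being usual, combined with the fact that the result is two essential circles, pins down the possible configurations of the involved $\cups\,\text{-}\,\caps$ pair and the orientations induced on $\circles_{u}$ and $\circles_{l}$.

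First I would reduce to the clockwise/anticlockwise dichotomy and treat only the anticlockwise case, the other following by globally reversing orientations. Since $\circles$ is usual, \fullref{lemma:orientation-cups-caps}.a says that every cup in $\circles$ is either $\ecup$ (oriented as $\circles$) or $\icup$ (oriented opposite to $\circles$), and similarly for caps. So locally at the $\cups\,\text{-}\,\caps$ pair involved in the naive surgery, the cup and cap each have one of four possible local pictures (two shapes times two orientations), and the orientations are completely determined by the orientation of $\circles$ together with the information ``$e$ or $i$''. The next step is to enumerate which of these four-by-four combinations can split $\circles$ into two essential circles at all: a topological check using the two dashed lines in \eqref{eq:annulus} shows that the split into two essentials forces the two new vertical strands, after naive surgery, to each carry one endpoint of an arc that passes around the cylinder on the upper (respectively lower) half. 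This eliminates all but a handful of minimal local pictures; the remaining ones can be collected by rotating the cylinder as in \fullref{lemma:rotation-map} so that the surgery square sits in a standard position, analogously to what is done in the proof of \fullref{lemma:split-usual-ess}.

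With the local pictures in hand, the orientations of $\circles_{u}$ and $\circles_{l}$ are read off directly: the naive surgery is \emph{local} and preserves orientations on all strands outside the surgery square, so each new vertical strand inherits the orientation of the $\ecup/\icup$ or $\ecap/\icap$ it replaces, and \fullref{lemma:righthand-rule}.b then reads off whether the resulting essential circle is leftwards (upper half to the left of the oriented strand) or rightwards (upper half to the right). In every minimal picture this reads off to ``$\circles_{u}$ leftwards and $\circles_{l}$ rightwards'' when $\circles$ is anticlockwise, and this information is enough to conclude that the whole diagram is oriented after the surgery, finishing the proof.

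The main obstacle will be the bookkeeping in the last step: several topologically distinct configurations can arise depending on how the rest of the circle $\circles$ (away from the surgery square) wraps around the cylinder, and one must verify uniformly that no configuration produces the ``wrong'' pairing (e.g. $\circles_{u}$ rightwards with $\circles_{l}$ leftwards). This is handled, as in \fullref{lemma:split-usual-ess}, by normalizing via $\rotright$ to place the surgery in a canonical position and then checking the very short list of local pictures directly.
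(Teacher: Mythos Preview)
Your approach is correct and would work, but it takes a more laborious route than the paper's proof. You mirror \fullref{lemma:split-usual-ess} by enumerating local pictures, normalizing via $\rotright$, and then reading off orientations case by case; the ``bookkeeping obstacle'' you flag is real in your setup and has to be discharged by brute inspection.

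The paper bypasses the enumeration entirely with a single topological observation: when a usual circle $\circles$ splits into two essential circles $\circles_{u}$ above $\circles_{l}$, the \emph{interior} of $\circles$ becomes exactly the \emph{lower} half of $\circles_{u}$ and the \emph{upper} half of $\circles_{l}$. Granting this, \fullref{lemma:righthand-rule} finishes the argument in one line: if $\circles$ is anticlockwise then its interior is to the left (part~(a)), hence the lower of $\circles_{u}$ and the upper of $\circles_{l}$ are to the left, which by part~(b) forces $\circles_{u}$ leftwards and $\circles_{l}$ rightwards. The clockwise case is the reverse. Orientedness of the result is likewise dispatched in one sentence: since the $\cups\,\text{-}\,\caps$ pair lies on a single usual circle, (\ref{lemma:orientation-cups-caps}.a) forces the cup and cap to carry the same orientation, so the two new vertical strands inherit a consistent orientation and the diagram stays oriented. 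Your case-by-case version recovers the same facts, but the paper's argument makes the uniformity you were worried about (no ``wrong pairing'') structurally obvious rather than something to be checked.
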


\begin{proof}
As before by (\ref{lemma:orientation-cups-caps}.a), we know 
that the $\cups\,\text{-}\,\caps$ of the naive surgery have the same 
orientation. 
Thus, there is an induced orientation on the result after naive surgery.

To see the second part of the claim, keeping
\begin{gather}
\begin{tikzpicture}[anchorbase, scale=.4, tinynodes]
	\draw[arcdo] (0,3) to [out=90, in=0] (-1.5,5);
	\draw[arcdo] (1,3) to [out=90, in=180] (2.5,5);
	\draw[arcdo] (-1,3) to [out=90, in=0] (-1.5,4);
	\draw[arcdo] (2,3) to [out=90, in=180] (2.5,4);
	\draw[arcdo] (0,0) to [out=270, in=180] (.5,-1) to [out=0, in=270] (1,0);
	\draw[arcdo] (-1,0) to [out=270, in=180] (.5,-2) to [out=0, in=270] (2,0);
	\draw[arcdo] (-1,0) to (-1,3);
	\draw[arcdo] (0,0) to (0,3);
	\draw[arc] (1,0) to [out=90, in=180] (1.5,1) to [out=0, in=90] (2,0);
	\draw[arc] (1,3) to [out=270, in=180] (1.5,2) to [out=0, in=270] (2,3);
	\draw[doline] (-1.5,0) to (2.5,0);
	\draw[doline] (-1.5,3) to (2.5,3);
	\node at (-.5,1.5) {$\circles$};
\end{tikzpicture}
\mapsto
\begin{tikzpicture}[anchorbase, scale=.4, tinynodes]
	\draw[arcdo] (0,3) to [out=90, in=0] (-1.5,5);
	\draw[arcdo] (1,3) to [out=90, in=180] (2.5,5);
	\draw[arcdo] (-1,3) to [out=90, in=0] (-1.5,4);
	\draw[arcdo] (2,3) to [out=90, in=180] (2.5,4);
	\draw[arcdo] (0,0) to [out=270, in=180] (.5,-1) to [out=0, in=270] (1,0);
	\draw[arcdo] (-1,0) to [out=270, in=180] (.5,-2) to [out=0, in=270] (2,0);
	\draw[arcdo] (-1,0) to (-1,3);
	\draw[arcdo] (0,0) to (0,3);
	\draw[arc] (1,0) to (1,3);
	\draw[arc] (2,0) to (2,3);
	\draw[doline] (-1.5,0) to (2.5,0);
	\draw[doline] (-1.5,3) to (2.5,3);
	\node at (-1.5,1.5) {$\circles_{l}$};
	\node at (.5,1.5) {$\circles_{u}$};
\end{tikzpicture}
\end{gather}
in mind, we use 
(\ref{lemma:righthand-rule}.a) and (\ref{lemma:righthand-rule}.b) 
with the interior of $\circles$ turning into the 
lower of $\circles_{u}$ and the upper of $\circles_{l}$.
\end{proof}

For the following lemma we use the evident notion 
of usual circles to be \textit{nested} inside other usual circles. 
(We also say that one circle is \textit{the outer} having the 
evident meaning.)

\begin{lemma}\label{lemma:reorient-larger}
Let $\cbas{S,T}{\lambda}$ be an oriented circle diagram.
\smallskip
\begin{enumerate}[label=(\alph*)]
\setlength\itemsep{.15cm}

\item Let $\cbas{S,T}{\mu}$ be obtained from 
$\cbas{S,T}{\lambda}$ by reorienting an anticlockwise 
circle $\circles$ clockwise, as well as reorienting an 
arbitrary number of clockwise circles nested inside $\circles$ anticlockwise. 
Then $\mu\ord{\ceps_{S}}\!\lambda$ 
and $\mu\ord{\ceps_{T}}\!\lambda$.

\item Assume that $T$ is of staying type and let 
$\cbas{S,T}{\mu}$ be obtained from $\cbas{S,T}{\lambda}$ 
by reorienting a leftwards circle $\circles$ rightwards, as 
well as reorienting an arbitrary number of rightwards circles 
below $\circles$ leftwards. 
Then $\mu\ord{\ceps_{T}}\!\lambda$.\qedhere
\end{enumerate}
\end{lemma}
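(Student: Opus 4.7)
The plan for (a) is to reduce to the non-annular setting via rotation and then invoke the classical reorientation lemma for arc algebras. By Lemma \ref{lemma:rotation-map}, $\rotright$ is an algebra automorphism, and by definition $\mu\ord{\ceps_{S}}\!\lambda$ is equivalent to $\rotright^{k}(\mu)\prec_{\uarc}\rotright^{k}(\lambda)$ for $k$ minimal with $\rotright^{k}(S)$ of staying type. Applying $\rotright^{k}$ to the full reorientation equation we may assume $S$ is itself of staying type and $\ord{\ceps_{S}}\!=\,\prec_{\uarc}$. Now the usual circle $\circles$ bounds a topological disk $D\subset\Sp\times[-1,1]$; every cup, cap, and nested circle involved in the reorientation lies inside $D$, and within this disk we are in a purely non-annular configuration of a cup diagram glued to a cap diagram. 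The claim then follows from the standard non-annular reorientation lemma, see \cite[Section 2]{bs1}: flipping the outer anticlockwise $\circles$ swaps $\down\up\to\up\down$ at each $\ecup$/$\ecap$ of $\circles$ (by (\ref{lemma:orientation-cups-caps}.a)), and simultaneously flipping a nested clockwise sub-circle swaps $\up\down\to\down\up$ at its own $\ecup$/$\ecap$ positions, which lie strictly interior to those of $\circles$ along the dotted line; interleaving these swaps factors the weight change as a chain in $\prec_{\uarc}$. The same argument with the roles of $S$ and $T$ exchanged yields $\mu\ord{\ceps_{T}}\!\lambda$.

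For (b), $T$ is already of staying type so $\ord{\ceps_{T}}\!=\,\prec_{\uarc}$ directly. Since every cap of $T^{\invo}$ is staying, each essential circle in $ST^{\invo}$ inherits its non-trivial winding solely from wrapping cups of $S$. The plan is to read off, at each vertex of an essential circle being reoriented, the induced weight flip using (\ref{lemma:orientation-cups-caps}.b) and (\ref{lemma:orientation-cups-caps}.c): flipping $\circles$ from leftward to rightward changes $\lcup$/$\lcap$ vertices from clockwise to anticlockwise, which locally swaps $\up\down\to\down\up$, while $\ucup$/$\ucap$ vertices flip the opposite way, swapping $\down\up\to\up\down$. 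For a solitary $\circles$ the upper-arc vertices yield $\prec_{\uarc}$-decreasing swaps while the lower-arc vertices would produce increasing ones; the latter are exactly compensated by the opposing flips coming from the rightward essential circles below $\circles$, whose vertices sit inside the lower half of $\circles$ by (\ref{lemma:righthand-rule}.b). Organising the flips in the correct left-to-right order, one obtains a sequence of elementary $\prec_{\uarc}$-moves taking $\lambda$ to $\mu$.

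The principal obstacle is the combinatorial verification in (b). Part (a) can be outsourced to the existing non-annular reorientation results, but in (b) one must carry out the analysis directly in the annular setting and check that the flips contributed by the below-circles do not merely compensate but in fact produce a strict $\prec_{\uarc}$-decrease after summation. A direct route is a case-by-case analysis of the $\lcup$, $\ucup$, $\lcap$, $\ucap$ types at each vertex; a cleaner alternative is to isotope the configuration of $\circles$ together with the essential circles nested in its lower half to a standard model -- a nested family of concentric essential circles -- and to verify the claim there, invoking isotopy invariance of the underlying combinatorial data, which is inherited from the TQFT description used in the proof of \fullref{proposition:asso}. The hypothesis that $T$ is of staying type is essential throughout: it forces essential circles to arise purely from wrapping cups of $S$, enabling this concentric-model reduction.
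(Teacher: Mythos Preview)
Your treatment of (a) is essentially the paper's: rotate so that $S$ (respectively $T$) is of staying type and invoke the non-annular reorientation lemma.

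For (b), however, the paper takes a much cleaner route than your direct combinatorial analysis. Rather than tracking $\lcup$/$\ucup$/$\lcap$/$\ucap$ contributions, the paper simply replaces every wrapping cup of $S$ by the staying cup with the same endpoints, obtaining a cup diagram $S'$. Since $T$ is already of staying type, the caps are untouched, and the essential circle $\circles$ in $ST^{\invo}$ determines a usual circle $\circles'$ in $S'T^{\invo}$ containing exactly the same caps and the same dotted-line vertices. One checks (via \fullref{lemma:righthand-rule}) that $\circles$ leftward corresponds to $\circles'$ anticlockwise, and that a rightward circle below $\circles$ becomes a clockwise circle nested inside $\circles'$. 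Because $\ord{\ceps_T}=\prec_{\uarc}$ is a comparison of weights only and does not see $S$ at all, the passage from $\lambda$ to $\mu$ is identical in both pictures, and part (a) applied to $S'T^{\invo}$ gives the conclusion immediately.

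Your argument for (b) has a specific gap. You assert that the $\prec_{\uarc}$-increasing swaps at $\lcup$/$\lcap$ positions of $\circles$ are ``exactly compensated by the opposing flips coming from the rightward essential circles below $\circles$''. But the lemma allows reorienting \emph{zero} such circles, in which case there is nothing to compensate with, yet the conclusion $\mu\ord{\ceps_T}\!\lambda$ still holds. The actual compensation for the increasing local swaps at $\lcup$/$\lcap$ does not come from other circles; it comes from the global combinatorics of $\prec_{\uarc}$, exactly as in the non-annular case where flipping a usual anticlockwise circle also produces increasing local swaps at its $\icup$/$\icap$ positions but is nonetheless a strict decrease overall. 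The paper's trick of passing to $S'T^{\invo}$ makes this transparent by literally converting the essential configuration into the usual one and then quoting (a). Your isotopy-to-a-standard-model alternative could in principle be made to work, but as stated it is vague, and the $S\mapsto S'$ substitution is both simpler and avoids any appeal to topological invariance.
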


\begin{proof}
\textit{(\ref{lemma:reorient-larger}.a).} 
We first use the rotation map $\rotright$ to obtain 
a diagram with $S$ 
of staying type. Then the statement $\mu\ord{\ceps_{S}}\!\lambda$ follows 
by the same arguments as in the usual case and is left to the reader. 
(For a similar proof see \cite[Lemma 7.7]{es1}.) The same can be done to 
obtain a diagram with $T$ of staying type giving $\mu\ord{\ceps_{T}}\!\lambda$.
\medskip

\noindent\textit{(\ref{lemma:reorient-larger}.b).} 
In this case we substitute all cups in $S$ that are not of 
staying type by cups of staying type that connect the same vertices 
to obtain a cup diagram $S^{\prime}$. Then the circle $\circles$ 
determines a circle $\circles^{\prime}$ in $S^{\prime}T^{\invo}$ containing 
the same caps as $\circles$. Observe that $\circles^{\prime}$ is then
anticlockwise. Hence, reorienting this we obtain -- 
by (\ref{lemma:reorient-larger}.a) -- 
a weight 
$\mu\ord{\ceps_{T}}\!\lambda$. 
If there are rightward circles below, they get transformed to 
clockwise circles nested inside $\circles^{\prime}$. So the statement 
also follows by (\ref{lemma:reorient-larger}.a), if some of these are reoriented.
\end{proof}

%%%%%%%%%%%%%%%%%%%%%%%%%%%
\subsection{Relative cellularity: Main proof}\label{subsection:the-proofs}
%%%%%%%%%%%%%%%%%%%%%%%%%%%

We can now proceed and finish with the 
proof of \fullref{theorem:mult-rule-arc} to obtain the main 
part of relative cellularity for $\aarc$.

\begin{proof}[Proof of \fullref{theorem:mult-rule-arc}]
We show a stronger statement. Namely the appropriate 
analog of the claim itself, but for each step within the multiplication process.
In each step the general idea is roughly as follows:
\begin{gather}
\xy
(0,0)*{
\begin{tikzpicture}[anchorbase, scale=.4, tinynodes]
	\draw[arc] (0,3) to [out=270, in=180] (.5,2) to [out=0, in=270] (1,3);
	\draw[arc] (0,0) to [out=90, in=180] (.5,1) to [out=0, in=90] (1,0);
	\draw[doline] (-.5,0) to (1.5,0);
	\draw[doline] (-.5,3) to (1.5,3);
	\node at (0,0) {$\Up$};
	\node at (1,0) {$\Down$};
	\node at (0,3) {$\Down$};
	\node at (1,3) {$\Up$};
	\draw[arc] (3,-1) to [out=270, in=180] (3.5,-2) to [out=0, in=270] (4,-1);
	\draw[arc] (3,-4) to [out=90, in=180] (3.5,-3) to [out=0, in=90] (4,-4);
	\draw[doline] (2.5,-4) to (4.5,-4);
	\draw[doline] (2.5,-1) to (4.5,-1);
	\node at (3,-4) {$\Up$};
	\node at (4,-4) {$\Down$};
	\node at (3,-1) {$\Upp$};
	\node at (4,-1) {$\Downn$};
	\draw[arc] (6,0) to (6,3);
	\draw[arc] (7,0) to (7,3);
	\draw[doline] (5.5,0) to (7.5,0);
	\draw[doline] (5.5,3) to (7.5,3);
	\node at (6,0) {$\Up$};
	\node at (7,0) {$\Down$};
	\node at (6,3) {$\Upp$};
	\node at (7,3) {$\Downn$};
	\node at (.5,3.5) {$V$};
	\node at (3.5,-.5) {$V$};
	\node at (6.5,3.5) {$V$};
	\draw[|->] (2,1.5) to (5,1.5);
	\node at (3.5,2) {$\text{surgery}$};
	\draw[|->] (.5,-.5) to [out=270, in=180] (2,-2.5);
	\draw[|->] (5,-2.5) to [out=0, in=270] (6.5,-.5);
	\node at (7.5,-1.7) {$\text{naive}$};
	\node at (7.5,-2.3) {$\text{surgery}$};
	\node at (.25,-2) {$\Ord{\ceps_{V}}$};
\end{tikzpicture}};
(-2.5,-20)*{\text{{\tiny merge}}};
\endxy
;
\quad\quad
\xy
(0,0)*{
\begin{tikzpicture}[anchorbase, scale=.4, tinynodes]
	\draw[arc] (0,3) to [out=270, in=180] (.5,2) to [out=0, in=270] (1,3);
	\draw[arc] (0,0) to [out=90, in=180] (.5,1) to [out=0, in=90] (1,0);
	\draw[doline] (-.5,0) to (1.5,0);
	\draw[doline] (-.5,3) to (1.5,3);
	\node at (0,0) {$\Down$};
	\node at (1,0) {$\Up$};
	\node at (0,3) {$\Down$};
	\node at (1,3) {$\Up$};
	\draw[arc] (3,-1) to (3,-4);
	\draw[arc] (4,-1) to (4,-4);
	\draw[doline] (2.5,-4) to (4.5,-4);
	\draw[doline] (2.5,-1) to (4.5,-1);
	\node at (3,-4) {$\Down$};
	\node at (4,-4) {$\Up$};
	\node at (3,-1) {$\Down$};
	\node at (4,-1) {$\Up$};
	\draw[arc] (6,0) to (6,3);
	\draw[arc] (7,0) to (7,3);
	\draw[doline] (5.5,0) to (7.5,0);
	\draw[doline] (5.5,3) to (7.5,3);
	\node at (6,0) {$\Up$};
	\node at (7,0) {$\Down$};
	\node at (6,3) {$\Upp$};
	\node at (7,3) {$\Downn$};
	\node at (.5,3.5) {$V$};
	\node at (3.5,-.5) {$V$};
	\node at (6.5,3.5) {$V$};
	\node at (7.5,1.5) {$+$};
	\node at (8.75,1.8) {$\text{other}$};
	\node at (8.75,1.2) {$\text{terms}$};
	\draw[|->] (2,1.5) to (5,1.5);
	\node at (3.5,2) {$\text{surgery}$};
	\draw[|->] (.5,-.5) to [out=270, in=180] (2,-2.5);
	\draw[|->] (5,-2.5) to [out=0, in=270] (6.5,-.5);
	\node at (-.5,-1.7) {$\text{naive}$};
	\node at (-.5,-2.3) {$\text{surgery}$};
	\node at (7,-2) {$\Ord{\ceps_{V}}$};
\end{tikzpicture}};
(-1.9,-20)*{\text{{\tiny split}}};
\endxy
\end{gather}
In words, we reorient before or after the surgery 
such that naive surgery gives the result we want to consider. 
In doing so the reordering will -- by \fullref{lemma:reorient-larger}
-- $\Ord{\ceps_{V}}$-decrease the weight. 
Observe hereby that this reorientation 
process is always possible. But in case of a merge the reorientation 
might happen for circles not touching the upper dotted line. 
(Examples are for instance 
provided by the merge rule (\ref{section:arc-stuff}.a).) Those cases need a bit more care, 
but this will only happen in \ref{arcthm-case3} below.

Let us make this rigorous.
To this end, let $S\lambda T^{\invo} T\mu V^{\invo}$ be a stacked 
diagram. Without loss of generality 
we also assume that the diagram 
is rotated in such a way that $V$ is of staying type.
Further, let $\caps$ denote a cap in $T^{\invo}$ and $\cups$ 
the mirrored cup in $T$ such that one can perform surgery 
with the pair $\cups\,\text{-}\,\caps$. 
In the following, let $\circles$ denote the 
circle containing $\caps$ and $\circles^{\prime}$ the 
circle containing $\cups$. (These need not be distinct in general.)
\medskip

\noindent\setword{\ref{theorem:mult-rule-arc}.Claim.a}{arcthm-1}.
After naive surgery along $\cups\,\text{-}\,\caps$ and reorientation 
one obtains diagrams with an orientation $\mu^{\prime}$ on the upper 
dotted line such that $\mu^{\prime}\ord{\ceps_{V}}\!\mu$. Further, if $\mu$ 
appears, then it appears with coefficient one, independent of $V$.
\medskip

\noindent Proof of \ref{arcthm-1}.
The proof is divided into three parts: 
First we assume that $\caps$ is oriented clockwise, 
then we assume that $\caps$ is anticlockwise and divide 
the cases of $\cups$ being anticlockwise respectively clockwise. 
In all cases we silently use \fullref{lemma:orientation-cups-caps}.
\medskip

\noindent\setword{\ref{theorem:mult-rule-arc}.Case.A}{arcthm-case1}: $\caps$ is clockwise.
We further distinguish depending on the 
properties of the circle $\circles$ that in turn imply further 
properties of $\caps$ and $\cups$.
\smallskip
\begin{enumerate}[label=(\roman*)]

\setlength\itemsep{.15cm}

\item \textit{$\circles$ is usual and anticlockwise.} 
This implies that $\caps$ is $\icap$.
\smallskip
\begin{enumerate}[label=(\arabic*)]

\setlength\itemsep{.15cm}

\item If the surgery is a merge of two circles, 
then $\circles^{\prime}$ must be 
nested inside $\circles$. Hence, $\circles^{\prime}$ is usual as well, 
and $\circles^{\prime}$ and 
$\cups$ have the same orientation. 
In particular, if $\circles^{\prime}$ and $\cups$ are anticlockwise, 
we need to reorient $\circles^{\prime}$ and $\cups$ clockwise 
and then perform the surgery naively. 
The resulting orientation $\mu^{\prime}$ 
on the upper dotted line is 
strictly $\ord{\ceps_{V}}$-smaller than $\mu$. 
If on the other hand 
$\circles^{\prime}$ and $\cups$ are clockwise, we need to 
reorient both $\circles$ and $\circles^{\prime}$ and then perform the surgery naively. 
In this case this 
also produces a $\mu^{\prime}$ strictly $\ord{\ceps_{V}}$-smaller than $\mu$.

\item If the surgery is a split, then $\cups$ is clockwise 
as well. Hence, the surgery will create two usual 
circles both containing arcs in $V$. Note that 
the naive surgery creates two circles that are 
usual and anticlockwise. Thus, for each summand of the 
result one of the two circles needs 
to be reoriented creating
strictly $\ord{\ceps_{V}}$-smaller orientations $\mu^{\prime}$.

\end{enumerate}

\item \textit{$\circles$ is usual and clockwise.} 
In this case $\caps$ is $\ecap$.
\smallskip
\begin{enumerate}[label=(\arabic*)]

\setlength\itemsep{.15cm}

\item If one merges, then the only non-zero result 
occurs when $\circles^{\prime}$ is usual and anticlockwise. 
To obtain the result we need to reorient 
$\circles^{\prime}$, and $\circles$ if it is nested 
inside $\circles^{\prime}$, and then perform naive surgery. 
Since $\circles^{\prime}$ contains arcs in $V$ this will
produce a strictly $\ord{\ceps_{V}}$-smaller orientation $\mu^{\prime}$.

\item If one splits, then $\cups$ is a clockwise $\ecup$. 
The only non-zero result is the split 
into two usual circles, both touching 
the upper dotted line. After performing naive surgery the outer of 
the two created circles is already clockwise, while the nested 
is anticlockwise. Reorienting the nested circle again gives a 
strictly $\ord{\ceps_{V}}$-smaller 
orientation $\mu^{\prime}$.

\end{enumerate}

\item \textit{$\circles$ is essential and leftwards.} 
In this case $\caps$ is $\lcap$.
\smallskip
\begin{enumerate}[label=(\arabic*)]

\setlength\itemsep{.15cm}

\item If the surgery is a merge, the non-zero cases are the ones 
where $\circles^{\prime}$ is 
usual and anticlockwise or essential and rightwards. In the first
case, we have that $\cups$ is anticlockwise as well. In this case $\circles^{\prime}$ 
needs to be reoriented, strictly $\ord{\ceps_{V}}$-decreasing the orientation $\mu^{\prime}$, 
and then naive surgery can be performed. In the second case, 
$\cups$ is clockwise. Performing naive surgery will 
then produce a usual and anticlockwise circle containing arcs in $V$. Thus, reorienting 
the resulting circle gives a $\ord{\ceps_{V}}$-strictly smaller 
orientation $\mu^{\prime}$.

\item If the surgery is a split, then also $\cups$ is 
clockwise. Performing naive surgery will thus produce a 
usual and anticlockwise and an essential and leftwards circle. Since 
both contain arcs in $V$, reorienting the former will again 
yield a strictly $\ord{\ceps_{V}}$-smaller 
orientation $\mu^{\prime}$ by \fullref{lemma:reorient-larger}.

\end{enumerate}

\item \textit{$\circles$ is essential and rightwards.} 
Very similar to the leftwards case and omitted.

\end{enumerate}
\smallskip

\noindent\setword{\ref{theorem:mult-rule-arc}.Case.B}{arcthm-case2}: $\caps$ and $\cups$ are anticlockwise.
In this case the result 
after naive surgery will always be automatically oriented, 
giving a coefficient $1$ for the orientation $\mu$. 
It remains to rule out the case that 
other summands are not $\ord{\ceps_{V}}$-strictly smaller than $\mu$.
\smallskip
\begin{enumerate}[label=(\roman*)]

\setlength\itemsep{.15cm}

\item We first assume that the surgery will be a merge. 
In case that two usual circles are merged, 
the result is already oriented in the correct 
way and no reorientation is necessary. In case 
that two essential circles are merged, note that 
either $\caps$ or $\cups$ is upper, while the other 
is lower. This means that one has an 
essential and leftwards and an essential and rightwards
circle. Since the 
result of the naive surgery is oriented clockwise, 
no reorientation is needed. Further, if the merge 
includes a usual and an essential circle, then 
usual circle is oriented anticlockwise. 
Thus, there is again no need for a reorientation after surgery.

\item Assume now that the surgery is a split. If it is 
a split into two usual circles, then 
original circle was anticlockwise. After 
naive surgery 
we get a usual and anticlockwise outer and a 
usual and clockwise nested circle.
Thus, we obtain this as a summand 
in the result and a summand where both circles are 
reoriented. But since both contain arcs in $V$, this 
creates a strictly $\ord{\ceps_{V}}$-smaller orientation $\mu^{\prime}$ 
on the upper dotted line. In case that the 
split creates a usual and an essential circle, 
then the usual circle is automatically anticlockwise after naive surgery. 
Finally, if the split creates two essential circles, then 
$\circles=\circles^{\prime}$ 
is anticlockwise. Further, the upper 
of the two created circles is essential and leftwards, while the lower 
is essential and rightwards after naive surgery. The second summand in the 
result is obtained by reorienting both circles, but since 
both contain arcs in $V$, we see 
that reorienting both will give a strictly $\ord{\ceps_{V}}$-smaller 
orientation $\mu^{\prime}$.

\end{enumerate}
\medskip

\noindent\setword{\ref{theorem:mult-rule-arc}.Case.C}{arcthm-case3}: $\caps$ is anticlockwise, $\cups$ is clockwise. 
This case is a bit different than the previous cases since
the result will depend on whether the circle 
$\circles$ contains arcs in $V$ or not, and what we show is that 
the result will always be 
independent of $V$.

Before we start, we note that, since the 
orientations of $\caps$ and $\cups$ are different, the surgery 
will always be a merge.
\smallskip
\begin{enumerate}[label=(\roman*)]

\setlength\itemsep{.15cm}

\item First assume that the circle $\circles$ does not contain 
arcs in $V$. If $\circles$ is usual, then a merge with an 
usual or essential circle $\circles^{\prime}$ will be 
performed by reorienting $\circles$ followed by naive surgery. 
Hence, always resulting in the weight $\mu$ in the result. 
In case $\circles$ is essential, the two possibilities for 
$\circles^{\prime}$ are either an essential circle, oriented in the same way as 
$\circles$, or $\circles^{\prime}$ being usual and clockwise. Both cases 
result in zero. Thus, in this case the result is independent of $V$.

\item If on the other hand $\circles$ contains arcs in $V$, then 
$\circles$ being usual will always strictly $\ord{\ceps_{V}}$-decrease the 
weight $\mu^{\prime}$ when $\circles$ is reoriented. While 
the case $\circles$ being essential, would still result in zero 
in all cases. Since in this case $\mu$ never occurs, its 
coefficient is again independent of $V$.

\end{enumerate}

In the last case, the condition whether $\circles$ contains 
arcs in $V$ or not is equivalent to asking whether swapping 
all entries in $\lambda$ contained in the circle $\circles$ 
would give an orientation of $\circles$ or not. If $\circles$ 
does not contain arcs in $V$ then it would just be the opposite 
orientation, while if $\circles$ contains arc in $V$, this would 
not result in an orientation as the orientation on the 
top is unchanged. Doing this for all surgery moves and always 
assuming the case that $\lambda$ appears in every step, thus 
implies that $V\in\cmset(\lambda)$.
\medskip

Taking all above together shows \ref{arcthm-1}. This in turn 
implies the statement.
\end{proof}
%

%%%%%%%%%%%%%%%%%%% end of paper %%%%%%%%%%%%%%%%%%%%%%%%%
\vspace*{-.15cm}
\end{document}